\numberwithin{equation}{section}
\definecolor{orange}{rgb}{1,0.5,0}
\def\DS{\displaystyle}
\def \qed{\hfill$\square$}
\def\diam{{\rm diam}}
\def\eps{{\varepsilon}}
\def\teps{{\tilde\varepsilon}}
\def\mes{{\rm mes}}
\def\supp{{\rm supp}}
\def\Card{{\rm Card}}
\def\reals{\mathbb{R}}
\def\bK{\mathbf{K}}
\def\bb{\mathbf{b}}
\def\bp{\mathbf{p}}
\def\br{\mathbf{r}}
\def\brho{\boldsymbol{\rho}}
\def\brJ{{\bar J}}
\def\brN{{\bar N}}
\def\brcW{{\overline{\cW}}}
\def\bra{{\bar a}}
\def\brz{{\bar z}}
\def\breps{{\bar\eps}}
\def\breta{{\bar \eta}}
\def\brxi{{\bar \xi}}
\def\cB{\mathcal{B}}
\def\cC{\mathcal{C}}
\def\cD{\mathcal{D}}
\def\cF{\mathcal{F}}
\def\cK{\mathcal{K}}
\def\cL{\mathcal{L}}
\def\cP{\mathcal{P}}
\def\cR{\mathcal{R}}
\def\cT{\mathcal{T}}
\def\cW{\mathcal{W}}
\def\fB{\mathfrak{B}}
\def\fK{\mathfrak{K}}
\def\fP{\mathfrak{P}}
\def\fR{\mathfrak{R}}
\def\fT{\mathfrak{T}}
\def\fW{\mathfrak{W}}
\def\fb{\mathfrak{b}}
\def\hJ{{\hat J}}
\def\hK{{\hat K}}
\def\hP{{\hat P}}
\def\hcW{{\widehat{\cW}}}
\def\hgamma{{\hat\gamma}}
\def\heps{{\hat{\eps}}}
\def\hmu{{\hat\mu}}
\def\hnu{{\hat\nu}}
\def\heta{{\hat\eta}}
\def\tB{{\tilde B}}
\def\tC{{\tilde C}}
\def\tK{{\tilde K}}
\def\tP{{\tilde P}}
\def\tR{{\tilde R}}
\def\tW{{\widetilde{W}}}
\def\tb{{\tilde b}}
\def\tf{{\tilde f}}
\def\tr{{\tilde r}}
\def\tdelta{{\tilde\delta}}
\def\teps{{\tilde\varepsilon}}
\def\teta{{\tilde\eta}}
\def\tphi{{\tilde\phi}}
\DeclareMathAlphabet{\mathpzc}{OT1}{pzc}{L}{it} 
\newtheorem{definition}{Definition}[section]
\newtheorem{theorem}[definition]{Theorem}
\newtheorem{proposition}[definition]{Proposition}
\newtheorem{corollary}[definition]{Corollary}
\newtheorem{lemma}[definition]{Lemma}
\newtheorem{remark}[definition]{Remark}
\newtheorem{question}[definition]{Question}
\def\cP{\mathcal{P}}
\def\geq{\geqslant}
\def\leq{\leqslant}
\def\R{\mathbb{R}}
\def\eps{\varepsilon}
\def\Z{\mathbb{Z}}
\def\N{\mathbb{N}}
\def\cB{\mathcal{B}}
\def\cF{\mathcal F}
\def\epsilon{\varepsilon}
\def\cC{\mathcal{C}}
\def\graph{{\mathrm{graph}}}
\newtheorem*{claim*}{Claim}
\newcommand{\bea}{\begin{eqnarray}}
  \newcommand{\eea}{\end{eqnarray}}
  \newcommand{\beab}{\begin{eqnarray*}}
  \newcommand{\eeab}{\end{eqnarray*}}
  \newcommand{\be}{\begin{equation}}
  \newcommand{\ee}{\end{equation}}
\author{D.\ Dolgopyat, A.\ Kanigowski, F.\ Rodriguez-Hertz}
\title{Exponential mixing implies Bernoulli}
\date{}
\begin{document}
\maketitle

\begin{abstract}Let $f$ be a $C^{1+\alpha}$ diffeomorphism of a compact manifold $M$ preserving a smooth measure $\mu$.
We show that if $f:(M,\mu)\to (M,\mu)$  is exponentially mixing then it is  Bernoulli.
\end{abstract}

\section{Introduction}
\subsection{Main result}

Let $f:(M,\mu)\to (M,\mu)$ be a $C^{1+\alpha}$ diffeomorphism  of a compact manifold $M$ that preserves a smooth measure $\mu$. We say that $f$ is {\em exponentially mixing}
(for smooth functions) if there exists
\footnote{A standard interpolation argument 
(see e.g. Lemma \ref{LmEMAllCr} in Appendix \ref{AppEM})
shows that if \eqref{EMr} holds for some $\br$ then it holds for all $\br>0$ (but taking $\br$ small would require making $\eta$ small).}
 ${\br}\in \N$, $C>0$, and $\eta>0$  such that 
for any $\phi,\psi\in C^{\br}(M)$ 
\begin{equation}
\label{EMr}
\left|\int_M\phi(x)\psi(f^nx)d\mu-\int_M\phi \,d\mu \int_M\psi \,d\mu \right|
\leq Ce^{-\eta n}\|\phi\|_\br \|\psi\|_\br,
\end{equation}
where $\|\cdot\|_\br$ is the norm on $C^\br(M)$. 

Recall that $(f,M,\mu)$ is a {\em Bernoulli system} (or Bernoulli) if for some $m$ it is measure theoretically isomorphic to the shift on $ \{1,\ldots, \ell\}^\Z$ with the measure $\bp^\Z$ where
$\bp=(p_1,\ldots, p_\ell)$ is a probability vector. Our main result is the following:

\begin{theorem}\label{thm:main} 
If $f$ is exponentially mixing then it is Bernoulli.
\end{theorem}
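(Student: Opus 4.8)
The plan is to produce an independent generating partition by verifying a very-weak-Bernoulli (VWB) type estimate and then invoking Ornstein's isomorphism theory. The soft consequences of \eqref{EMr} come first: letting $n\to\infty$ shows $f$ is mixing, hence ergodic; by the interpolation argument recorded in the footnote one may assume \eqref{EMr} holds for all $C^1$ observables, at the cost of a smaller $\eta$; and $f^{-1}$ is exponentially mixing with the same constants, since the left-hand side of \eqref{EMr} is symmetric under $n\mapsto-n$, $\phi\leftrightarrow\psi$, $f\leftrightarrow f^{-1}$. Also $h_\mu(f)<\infty$ as $M$ is a compact manifold and $f$ is $C^1$, so finite generators exist.

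A preliminary step is to show $h_\mu(f)>0$; this is genuinely needed, since the only zero-entropy Bernoulli system is the trivial one. By Pesin's entropy formula for $C^{1+\alpha}$ diffeomorphisms preserving a smooth measure, $h_\mu(f)=\int_M\sum_i\lambda_i^+\,d\mu$, so $h_\mu(f)=0$ would force all Lyapunov exponents of $f$ to be nonpositive a.e., and applying the same to $f^{-1}$ would force them all to vanish $\mu$-a.e. I would then rule this out: when every exponent vanishes, Oseledets together with Egorov gives subexponential distortion of $f^n$ at every scale on sets of measure close to one, and a ``parabolic'' system of this kind should not be able to decorrelate faster than polynomially, contradicting \eqref{EMr}. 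The subtlety here is that the crude comparison of $\diam f^n(B_r)$ with $r$ loses precisely the logarithmic factor on which the argument turns, so one must follow the geometry of the iterated supports more carefully, using both that $f^n$ is near-isometric in all directions and that $\mu$ is smooth.

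Granting $h_\mu(f)>0$, the heart of the proof is to choose a refining sequence of finite partitions $\mathcal P_j$ with $\diam\mathcal P_j\to0$ and piecewise-smooth boundaries such that each associated process is VWB; since the $\mathcal P_j$ generate, Bernoullicity of $f$ then follows (an inverse limit of Bernoulli processes is Bernoulli), and the $K$-property drops out along the way. To check VWB for a fixed $\mathcal P=\mathcal P_j$ one must bound, on $L^1$-average over the conditioning, the $\bar d$-distance between the conditional law of a block of length $m$ of the future, given the past truncated at a far-away time $-N$, and its unconditional law. Estimate \eqref{EMr} is an assertion of exactly this shape, but only for smooth test functions, so the work is to pass to the partitions and their conditional measures, which are singular. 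The scheme I have in mind is multi-scale: smooth the indicators of the dynamically relevant atoms at a scale $\delta$ — their $C^1$ norms are then of size $\sim\delta^{-1}e^{\Lambda m}$, where $\Lambda$ bounds the expansion of $f$ — so \eqref{EMr} becomes effective only after a time $n\gtrsim\eta^{-1}(\Lambda m+\log(1/\delta))$; choose $N$ comparably large; fragment each conditional measure into pieces carried by nearly-smooth densities; evolve each piece by exponential mixing (which amounts to coupling it with $\mu$ exponentially fast); recombine; and control the accumulated errors by a maximal/martingale inequality, which is what converts the integrated bound of \eqref{EMr} into the $L^1$-in-the-conditioning bound VWB requires.

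The hard part is exactly this transfer. Estimate \eqref{EMr} controls correlations only in integrated form against smooth functions, whereas VWB demands control of conditional measures over $L^1$-generic atoms of the past, and those measures are singular and dynamically intricate. Surmounting this should require (i) partitions whose forward iterates have controlled geometry and small-measure boundary neighbourhoods; (ii) a fragmentation of the conditional measures into almost-smooth pieces at a well-chosen scale $\delta$; and, crucially, (iii) a budget in which the mixing exponent $\eta$ strictly beats the exponential rate $\Lambda$ at which the complexity of dynamical partitions grows. Item (iii) is the reason the conclusion can be expected to hold under exponential mixing while failing under merely polynomial mixing, for which genuine counterexamples (e.g. parabolic flows) exist.
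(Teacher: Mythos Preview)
Your proposal has a genuine gap, and you have in fact identified it yourself in item (iii): you need ``a budget in which the mixing exponent $\eta$ strictly beats the exponential rate $\Lambda$ at which the complexity of dynamical partitions grows.'' No such inequality is available. For a generic Anosov diffeomorphism, or for the skew products in Section~\ref{ScSkew}, the mixing exponent $\eta$ in \eqref{EMr} is typically much smaller than the top Lyapunov exponent, and nothing in the hypotheses forces $\eta>\Lambda$. Concretely: VWB requires that for \emph{fixed} past-truncation depth $N$ you control the $\bar d$-distance for \emph{all} future block lengths $S$. Smoothing the indicator of a future $S$-block at scale $\delta$ produces a function of $C^r$ norm $\asymp \delta^{-r} e^{\Lambda S}$, so \eqref{EMr} yields an error of order $e^{-\eta N}\delta^{-2r}e^{2\Lambda S}$; since $S$ is unbounded while $N$ is fixed, this diverges regardless of how you choose $\delta$. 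Your step (ii) does not help either: the conditional measures on atoms of $\bigvee_N^{N'}f^i\mathcal P$ are not approximable by smooth densities in any way that survives $S$ iterations --- they are supported on exponentially thin sets and are close, in the relevant sense, to averages of Lebesgue measures on \emph{unstable} leaves.

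The paper's proof circumvents this obstruction by replacing the norm competition $\eta$ vs.\ $\Lambda$ with a geometric coupling. First, past atoms are shown to be almost $u$-saturated (Lemma~\ref{lem:satu}), so their conditional measures decompose as integrals over unstable boxes (Lemma~\ref{LmUDeco}) --- this is the correct substitute for your ``almost-smooth pieces.'' Second, exponential mixing is used not against the future block directly, but to prove that iterated unstable leaves equidistribute at an exponentially small scale $e^{-\eps n}$ with $\eps\ll\eta$ (Proposition~\ref{prop:expeq}). Third, and this is the key device, the coupling between two unstable leaves $\cW_1,\cW_2$ is built at an \emph{intermediate} time $n'=\eps n$: one matches pieces of $f^{n'}\cW_1$ and $f^{n'}\cW_2$ that land in the same small box $B_i$ via a \emph{fake center-stable holonomy} (Section~\ref{sec:fake}), whose Jacobian is controlled precisely because the boxes are exponentially small (Proposition~\ref{lem:abscont}). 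The closeness of coupled orbits over the remaining $(1-\eps)n$ iterates then comes not from any mixing estimate but from the sub-exponential growth along center-stable directions (property \textbf{F4} of Lemma~\ref{lem:ff}). In short, the paper trades your analytic budget for Pesin-theoretic geometry; without that geometry, the direct smoothing approach cannot close.
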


\subsection{Broader context}
\label{SSBroad}
One of the central discoveries made in the last century in the theory of dynamical systems is that smooth systems can exhibit chaotic behavior. The strongest ergodic property that describes chaoticity is
the Bernoulli property, i.e. being (measure-theoretically) isomorphic to a Bernoulli shift. Some weaker ergodic properties describing chaoticity are (see e.g. a survey article by Ya.\ Sinai, \cite{Sin}):
the $K$-property, positive entropy, mixing of all orders, mixing, weak mixing and ergodicity.  It is easy to see that Bernoulli implies $K$ and that mixing of all orders implies mixing which implies weak mixing which implies ergodicity. It follows by \cite{RS61} that $K$-property implies positive entropy. Moreover $K$-property also implies mixing of all orders, see e.g \cite{CFS} 
(this inclusion is probably least trivial from all the inclusions mentioned above). 
It is not known if mixing implies mixing of all orders; this is known as the Rokhlin problem, \cite{Roh49}. Except for the Rokhlin problem it is known that all the above inclusions are strict 
(also in the smooth setting see e.g.  the discussion in \cite{DDKN2}).
All the above mentioned properties do not require a smooth structure and can be defined for an arbitrary measure preserving system.

Classical statistical properties that require a smooth structure (see e.g. \cite{Sin}) are: central limit theorem, large deviations and exponential mixing. These properties provide quantitative information on the system. All the three properties imply ergodicity, but central limit theorem and large deviations do not imply weak mixing and hence also do not imply stronger ergodic properties, see e.g. \cite{DDKN2}. In this paper we focus on consequences of exponential mixing. Notice that trivially exponential mixing implies mixing. However it was not known if it implies any stronger ergodic properties. Our main result (see Theorem \ref{thm:main}) shows that exponential mixing implies the strongest ergodic property: Bernoullicity. In particular, it has the following non-trivial corollary:
\begin{corollary}
\label{CrEM-AllOrd}
Let $f\in C^{1+\alpha}(M)$. If $f$ is exponentially mixing 
 with respect to a smooth measure $\mu$
then it is mixing of all orders and also has positive entropy. 
\end{corollary}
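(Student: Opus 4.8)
The corollary follows formally from Theorem~\ref{thm:main} and the classical implications recalled in Section~\ref{SSBroad}. The plan is as follows. First, apply Theorem~\ref{thm:main}: exponential mixing of $f$ yields that $(f,M,\mu)$ is Bernoulli, i.e.\ measure-theoretically isomorphic to a Bernoulli shift on $\{1,\ldots,\ell\}^{\Z}$ equipped with a product measure $\bp^{\Z}$, where $\bp=(p_1,\ldots,p_\ell)$ is a probability vector. Since mixing of all orders and positive entropy are invariants of measure-theoretic isomorphism, it suffices to check them for this Bernoulli shift.

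For mixing of all orders, recall (Section~\ref{SSBroad}) that every Bernoulli system is a $K$-system and that the $K$-property implies mixing of all orders, by \cite{CFS}; hence $f$ is mixing of all orders. For positive entropy, the Kolmogorov--Sinai entropy of the shift above equals $-\sum_{i=1}^{\ell}p_i\log p_i$, which is strictly positive as soon as $\ell\geq 2$. To rule out the trivial case $\ell=1$, note that the smooth measure $\mu$ is non-atomic (the manifold $M$ being of positive dimension), whereas the one-symbol shift is carried by a single atom; a measure-space isomorphism cannot identify a non-atomic space with an atomic one, so necessarily $\ell\geq 2$ and the entropy is positive. Equivalently, one may quote \cite{RS61}, as a $K$-automorphism of a non-atomic Lebesgue space has positive entropy.

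As this deduction is entirely formal, there is no genuine obstacle beyond Theorem~\ref{thm:main} itself, which is the substance of the paper. The only point here that requires a word of care is passing from ``entropy $\geq 0$'' to ``entropy $>0$'', and this is secured by the non-atomicity of the smooth invariant measure $\mu$.
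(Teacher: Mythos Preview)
Your proof is correct and follows exactly the chain of implications the paper records in Section~\ref{SSBroad}: Bernoulli $\Rightarrow$ $K$ $\Rightarrow$ mixing of all orders (via \cite{CFS}) and positive entropy (via \cite{RS61}), with the non-atomicity of the smooth measure ruling out the trivial one-point shift. The paper does not write out a separate proof of the corollary, treating it as immediate from Theorem~\ref{thm:main}; it does, however, also note an independent route to positive entropy via Proposition~\ref{PrLyapNZ} and the Pesin entropy formula, which avoids invoking the full Bernoulli conclusion.
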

We note that \cite{Host91} shows that mixing implies mixing of all orders for systems whose spectral 
measure is singular. Corollary \ref{CrEM-AllOrd} treats the opposite case where the spectral measure
has analytic density for smooth observables. \smallskip

 We in fact show in Section \ref{sec:posent} that if $f\in C^{1+\alpha}(M)$
is exponentially mixing  for a $f$-invariant measure $\mu$ which is not supported 
on a fixed point of $f$, then $f$ has a non-zero Lyapunov exponent with respect to $\mu$, i.e. we have:
\begin{proposition}
\label{PrLyapNZ}
 If $f: M\to M$ is a $C^{1+\alpha}$ diffeomorphism which is exponentially mixing with respect to a non atomic measure $\mu$ then $f$ has at least one positive Lyapunov exponent (for the measure $\mu$).
\end{proposition}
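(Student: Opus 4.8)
\emph{Set-up.} Letting $n\to\infty$ in \eqref{EMr} shows that $f$ is mixing, hence ergodic, so by the Oseledets theorem the Lyapunov exponents are $\mu$-a.e.\ constant; let $\lambda_1$ be the largest one. I argue by contradiction and assume that $f$ has no positive exponent, i.e.\ $\lambda_1\le 0$. The dynamical input is a Pesin-type localization estimate: for $\mu$-a.e.\ $x_0$ and every $\epsilon>0$ there is $r_1=r_1(x_0,\epsilon)>0$ such that for all $0<r\le r_1$,
\[
f^n\big(B(x_0,r)\big)\subset B\big(f^nx_0,\sqrt r\,\big)\qquad\text{for all }\ 1\le n\le \tfrac1{2\epsilon}\log\tfrac1r .
\]
Indeed $\lambda_1\le 0$ gives $\|Df^n_{x_0}\|\le C_0(x_0)e^{\epsilon n}$ for all $n$, and because $f$ is $C^{1+\alpha}$ the Hölder continuity of $Df$ lets one transport this bound to a neighbourhood of the orbit segment $x_0,fx_0,\dots,f^nx_0$ whose radius need only shrink subexponentially; this is standard in nonuniform hyperbolicity theory and I will quote it.

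\emph{Test functions.} Since $\mu$ is non-atomic its support is infinite, so there are a smooth $\psi$ and constants $c,\delta_0>0$ with $\mu\big\{\,|\psi-\int\psi\,d\mu|\ge 2c\,\big\}\ge 2\delta_0$. Fix such a $\psi$, and fix a point $x_0$ that is Lyapunov-regular, Birkhoff-generic for the indicator of $\{|\psi-\int\psi\,d\mu|\ge 2c\}$, enjoys the localization property above, and satisfies $\mu(B(x_0,\rho))\ge\rho^{D}$ along some sequence $\rho\to 0$, where $D:=\dim M+2$; $\mu$-a.e.\ $x_0$ has all four properties. For $r$ in this sequence let $\phi=\phi_r\ge 0$ be a bump function with $\phi\equiv 1$ on $B(x_0,r/2)$, $\supp\phi\subset B(x_0,r)$ and $\|\phi\|_\br\le c_\br r^{-\br}$. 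If $n\le\frac1{2\epsilon}\log\frac1r$ then $f^n(\supp\phi)$ has diameter $\le 2\sqrt r$, so on $\supp\phi$ the function $\psi\circ f^n$ differs from the constant $\psi(f^nx_0)$ by at most $C_2\sqrt r$; hence, whenever also $|\psi(f^nx_0)-\int\psi\,d\mu|\ge 2c$ and $r$ is small,
\[
\Big|\int\phi\,(\psi\circ f^n)\,d\mu-\int\phi\,d\mu\int\psi\,d\mu\Big|
\ \ge\ \big(\,|\psi(f^nx_0)-{\textstyle\int}\psi\,d\mu|-C_2\sqrt r\,\big)\int\phi\,d\mu
\ \ge\ c\,\mu\big(B(x_0,r/2)\big).
\]
Comparing with the bound $Ce^{-\eta n}\|\phi\|_\br\|\psi\|_\br\le C_1e^{-\eta n}r^{-\br}$ coming from \eqref{EMr}, and using $\mu(B(x_0,r/2))\ge r^{D}$, every such $n$ must satisfy $n\le N_1(r):=\frac1\eta\big((D+\br)\log\frac1r+O(1)\big)$.

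\emph{The contradiction.} By Birkhoff's ergodic theorem applied to $x_0$, for all large $N$ the number of $n\le N$ with $|\psi(f^nx_0)-\int\psi\,d\mu|\ge 2c$ is at least $\delta_0 N$. Take $N:=\frac1{2\epsilon}\log\frac1r$, which tends to $\infty$ along our sequence of $r$'s; all of these $\ge\delta_0 N$ ``bad'' indices lie in the range to which the previous paragraph applies, so each is $\le N_1(r)$, forcing $\delta_0 N\le N_1(r)$. Since $N_1(r)\sim\frac{D+\br}{\eta}\log\frac1r$ while $\delta_0 N=\frac{\delta_0}{2\epsilon}\log\frac1r$, choosing $\epsilon<\frac{\delta_0\eta}{2(D+\br)}$ at the very start makes $\delta_0 N>N_1(r)$ for all small $r$ — a contradiction. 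Hence $\lambda_1>0$.

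\emph{Main obstacle.} The crux is the Pesin-type localization of the first paragraph: one must guarantee that the tiny ball $B(x_0,r)$ does not, within its first $\sim\log\frac1r$ iterates, drift into a region where $Df$ is large. This is exactly where the $C^{1+\alpha}$ hypothesis is essential (Hölder control of $Df$, hence bounded distortion along the orbit), together with a tempered, subexponentially shrinking choice of chart radii. The remaining ingredients — existence of a suitable $\psi$ (which is where non-atomicity of $\mu$ enters), the Birkhoff count of large deviations of $\psi(f^nx_0)$, and the final comparison obtained by taking $\epsilon$ small — are routine.
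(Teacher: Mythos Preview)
Your argument is correct and follows the same strategy as the paper: assume $\lambda_1\le 0$, deduce that exponentially small balls stay small for $\sim\log(1/r)$ iterates, and contradict this with exponential mixing. The packaging differs only cosmetically: the paper first isolates the mixing consequence as a separate lemma (Lemma~\ref{cor:CE}, proved in Appendix~\ref{AppEM} by essentially your bump-function correlation bound), then derives the contradiction; you merge the two steps. Your use of Birkhoff to count times when $\psi(f^nx_0)$ deviates, together with the ball-measure lower bound $\mu(B(x_0,\rho))\ge\rho^{D}$ (which does hold for $\mu$-a.e.\ $x_0$ by a Besicovitch covering argument, for any Borel probability on a manifold), is a clean way to close the argument.

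The one place where your write-up is thinner is exactly the point you flag: the localization $f^n(B(x_0,r))\subset B(f^nx_0,\sqrt r)$ for $n\le\frac{1}{2\epsilon}\log\frac1r$. You quote this as standard Pesin theory, but note that the usual Pesin statements are formulated for \emph{nonzero} exponents, so one cannot literally cite them here. The paper instead proves this directly: starting from $\|Df^n_{x_0}\|\le Ce^{\zeta n}$ (Oseledets plus an ergodic-theorem argument to control excursions outside a good set), it establishes the propagated bound $\|Df^i_z\|\le C'e^{3\zeta i}$ for all $z$ in the small ball by an induction on $i$ --- writing $Df^i_x-Df^i_z$ as a telescoping sum $\sum_\ell(\prod_{j>\ell}A_j)(A_\ell-B_\ell)(\prod_{j<\ell}B_j)$ and bounding $\|A_\ell-B_\ell\|$ via the $C^\alpha$ regularity of $Df$ and the inductive smallness of $d(f^\ell x,f^\ell z)$. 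This bootstrapping (equations \eqref{eq:erg}--\eqref{eq:sd1} in the paper) is the entire technical content of the proposition, so your proof would be complete once you include it rather than cite it.
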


The following questions are  natural:
\begin{question}
Let $f:X\to X$ be a $C^{1+\alpha}$ map preserving a non atomic measure $\mu$.  Assume that $(f, \mu)$ is exponentially mixing for H\"older observables. Does $f$ have positive topological entropy? Does $(f, \mu)$ have positive metric entropy? Is $(f, \mu) $ a K system? Is it Bernoulli?

\end{question}
Our main result provides positive answers to all those questions if the measure $\mu$ 
is smooth, but it is interesting to weaken assumptions on the invariant measure. \\

We also remark that while our results show that exponential mixing implies mixing of all orders we do 
not get any quantitative 
bounds on the rate of multiple mixing. 
In particular, the following question is natural. We say 
that $(f, \mu)$ is {\em exponentially mixing of order $k$} if there exist constants $\br_k, C_k$, $\eta_k$ such that
if $\phi_1, \phi_2, \dots, \phi_k\in C^\br(M)$ then
\begin{equation}
\label{MEMr}
\left|\int_M \left(\prod_{j=1}^k \phi_j (f^{n_j} x)\right)d\mu-\prod_{j=1}^k \int_M\phi_j \,d\mu  \right|
\leq C_k e^{-\eta_k L} \prod_{j=1}^k \|\phi_j\|_{\br_k},
\end{equation}
where $\DS L=\min_{1\leq j\leq k-1} (n_{j+1}-n_j).$
\begin{question}
Does exponential mixing imply exponential mixing of all orders?
\end{question}

We note that exponential mixing of all orders implies several statistical properties such as
the Central Limit Theorem \cite{BG} and Poisson Limit Theorem for close returns \cite{DFL}.

The reason why our method does not provide quantitative bounds on multiple mixing is because 
we rely on the Pesin theory, which in particular uses a Multiplicative Ergodic Theorem which is a 
non constructive result. It seems of interest to obtain quantitative bounds assuming some estimates
on the measure of points where the convergence in the Multiplicative Ergodic Theorem is slow.
Such results were previously obtained in \cite{ALP, AP10} where instead of exponential
mixing the authors assume non-uniform hyperbolicity and dominated splitting.


The Bernoulli property was shown to hold for many classes of natural dynamical systems: ergodic toral automorphisms \cite{Kat71},  Axiom $A$ diffeomorphisms \cite{Bow74},  
quadratic maps\footnote{For non invertible systems the Bernoulli property means that the natural
extension of $f$ is isomorphic to a Bernoulli shift.}
 with absolutely continuous invariant measure \cite{Led81}, geodesic flows on surfaces of constant negative curvature  
\cite{OrnsteinWeiss}, geodesic flows on higher dimensional manifolds (without focal points) \cite{Pes77}, 
Anosov flows \cite{Rat74},  non-uniformly hyperbolic maps and flows (with singularities) \cite{CH96}.
Recently in \cite{Ka-Bern} it was shown that partially hyperbolic homogeneous systems are Bernoulli. The above list is not complete but it contains the main examples of smooth Bernoulli systems. 

We note that in the last 25 years there has been a significant progress in proving K property 
for partially hyperbolic systems. This study was initiated in \cite{Sin66, BrPes, GPS}. Currently the
strongest result is due to \cite{BW10} and says that a partially hyperbolic center bunched volume 
preserving diffeomorphism with essential accessibility property is K. 
Recall that $f$ is {\em partially hyperbolic} if there is a $Df$ invariant splitting 
$TM=E^u\oplus E^c\oplus E^s$ and positive functions $\nu(x), \hnu(x), \gamma(x),$, $\hgamma(x)$
such that
$$ \nu, \hnu<1, \quad \nu<\gamma<\hgamma^{-1}<\hnu^{-1},$$
and
$$ \|Df(v)\|< \nu\|v\| \text{ if } v\in E^s, \quad
\|Df(v)\|> \hnu^{-1} \|v\| \text{ if } v\in E^u, $$ 
$$ \gamma \|v\|< \|Df(v)\|< \hgamma^{-1} \|v\| \text{ if } v\in E^c. $$ 
A partially hyperbolic system is called {\em center bunched} if the above functions could be chosen 
so that $\nu<\gamma\hgamma$ and $\hnu<\gamma\hgamma.$ A key inspiration for our approach 
comes from the remark that any system with non zero Lyapunov exponents could be regarded as
a non-uniformly partially hyperbolic system (enjoying the center bunching). This allows one to extend
several tools from the theory of partially hyperbolic systems to the non-uniform setting and plays 
an important role in our proof.

Recall that for
a partially hyperbolic system one can define
an accessibility class of a point $x$ as the set of points which can be joined to $x$ by a piecewise smooth
curve such that each piece belongs to either one stable leaf or one unstable leaf. Essential accessibility
means that every measurable set which consists of whole accessibility classes has measure zero or one.
This is weaker than accessibility which means that there is only one accessibility class.
We note that the essential accessibility is insufficient for the Bernoulli property, see \cite{Ka80, Rud, KRHV}.
Thus a natural next step is to understand which additional features of the system are responsible for
Bernoullicity. The recent works \cite{Ka-Bern, DK19, DDKN2} seem to indicate that an important role is played by the competition
between the rate of mixing and the complexity of the system restricted to the subspace with zero exponents. In particular, in the
present paper we show that the exponential mixing implies the Bernoulli property as the growth
in the zero exponents directions is always sub-exponential. However, there are still many open questions
related to the Bernoulli property  for smooth systems. Below we mention a few which seem to play a central
role in the theory. 

\begin{question}
Is exponential mixing assumption in our main theorem optimal? In particular, does there exists 
a diffeomorphism which enjoys a stretched exponential mixing (i.e. at rate $e^{-n^\alpha}$ for $\alpha\in (0,1)$)  but is not Bernoulli?
\end{question}

We note that \cite{DDKN2} constructs non Bernoulli systems with arbitrary fast polynomial mixing rate.
However, in order to get the mixing rate of $n^{-\alpha}$ \cite{DDKN2} considers manifolds of
dimension growing quadratically
 with $\alpha.$ In fact, lowering the dimension of the phase space makes it more
difficult to construct non Bernoulli systems. In \cite{KRHV} the authors construct $K$ non-Bernoulli examples in dimension $4$. On the other hand it follows from the Pesin theory that $K$ implies Bernoulli in dimension $2$. Hence the following classical question is of a central importance:
\begin{question}
Does there exist a $K$ non Bernoulli diffeomorphism preserving a smooth measure  in dimension three?
\end{question} 

The next question is important for the theory of partially hyperbolic systems.

\begin{question}
Is every volume preserving partially hyperbolic system with accessibility property Bernoulli?
\end{question}

\subsection{Outline of the proof.}  
Since our approach requires  rather technical results  from the Pesin theory of $C^{1+\alpha}$ diffeomorphisms we will outline the main steps in the proof for convenience of the reader.

A standard approach for proving the Bernoulli property (developed in \cite{OrnsteinWeiss}) is to verify the {\em very weak Bernoulli (vwB) property} 
which means that for each $S\geq 1$ the itinerary of the orbit during 
the time interval $[0, S]$ is asymptotically independent of the remote past. 
More precisely, if $\cP$ is a finite partition then for large enough $N_2>N_1$,  the distribution of the
itineraries on time $[0, S]$ are almost the same 
for most atoms of $\DS \cP_{N_1, N_2}:=\bigvee_{i=N_1}^{N_2}f^i\cP$ 
(here the closeness of the distributions on itineraries is induced by the topology in which two 
itineraries are close if their Hamming distance is small). 
In this paper we also verify vwB property, but to do so we need to develop 
geometric structure of our system which follows from exponential mixing. We divide our argument into
several steps.

The first main step in the proof (conducted in Section \ref{sec:posent}) is to show that exponential mixing implies that some exponents are non-zero. For the proof of this we only require that $f\in C^{1+\alpha}(M)$ is exponentially mixing for a measure $\mu$ which is not supported on a single point. We first show that if $f$ is exponentially mixing, then there is $\eta_1>0$ and a set $B$ of positive measure
such that for all sufficiently large $n$ 
 the balls $O_n$ centered in $B$ with radius $e^{-\eta_1 n}$ satisfy that the diameter of  $f^n(O_n)$ becomes macroscopic. Next we show  that if exponents were zero then, for each
 $\eps>0$ the images of  balls of size $\sim e^{-\eps n}$ 
 centered at Oseledets typical points
 remain exponentially small after iterating $n$ times.
Taking $\eps<\eta_1$ leads to a contradiction
with the fact that such an image must become of order $1$ (macroscopic) in diameter. \medskip


The existence of non zero exponents makes the Pesin theory applicable to our problem.
In particular, almost every point has an unstable manifold of positive size and the unstable 
lamination is absolutely continuous. We note that all the necessary facts from Pesin theory needed in our paper
can be obtained by standard techniques. In particular our presentation 
relies heavily on Barreira-Pesin book \cite{BP}. On the other hand 
the statements in our paper are less restrictive than in most other references as we only assume
that the system has some non-zero exponents. In particular we extend the theory of fake center stable foliations
developed in the ergodic theory of partially hyperbolic systems to the non-uniform setting. 
One technical novelty in our argument is that we define center foliations that work for finitely many iterates,
and they have
good absolute continuity properties (see the descriptions of 
Sections \ref{sec:fake} and  \ref{ScAC} below).
We would like to emphasize that the results of Sections \ref{sec:u}--\ref{ScAC} (as well as Section \ref{sec:K}
and Appendix \ref{AppPesin})
are valid for any diffeomorphism preserving a smooth measure with some non-zero exponents
and as such are of independent interest.\medskip

Next, a standard backwards contraction argument going back to \cite{AnSin, OrnsteinWeiss, Sin66}
shows that the remote past partition $\cP_{N_1, N_2}$ is almost $u$-saturated,
meaning that if $\xi$ is  sufficiently small then for most points the unstable manifold of size $\xi$ around
$x$ belongs to the same atom as $x.$ We present this argument in Section \ref{sec:u}. 

Given almost u-saturation, the natural idea for verifying the vwB property is to show that
for any large $n$ and any two ``typical'' unstable pieces $\cW_1,\cW_2$ of size $\xi$ there exists an almost measure-preserving map $\theta:(\cW_1,m^u_{\cW_1})\to (\cW_2 ,m^u_{\cW_2})$ 
such that the points $x$ and $\theta x$ remain close for $1-\eps$ proportion of the first $n$ iterates. More precisely, the existence of such maps allows to control $S$--itineraries  for sufficiently large $S$,
while for small $S$ we can use the $K$-property, see  Corollary \ref{cor:VWE}  for details.
For many of the Bernoulli examples
mentioned in \S \ref{SSBroad} this map can be constructed  taking $n'\ll n$, subdividing 
$\DS f^{n'} \cW_s=\bigcup_j \cW_{s, j}$ so that $\cW_{1,j}$ is close to $\cW_{2,j}$ and defining 
$\theta: f^{-n'} \cW_{1,j}\to f^{-n'} \cW_{2,j}$ using the center-stable holonomy. 
In this approach almost measure preservation comes from the absolute continuity of the center stable
foliation, the closeness of $f^j x$ and $f^j (\theta x)$ comes from the fact that the center stable direction
is non-expanding and the possibility of subdividing
$f^{n'} \cW_1$ and $f^{n'} \cW_2$ into the pieces which are close to each
other comes from the minimality of the unstable lamination.
In our case none of the above properties is available. That is, we do not know if the center stable 
distribution is uniquely integrable and if so, it is not clear if the resulting lamination is absolutely continuous.
Moreover the vectors in the subspace corresponding to the zero exponent could grow albeit at a subexponential rate. Finally we do not know if the unstable lamination is minimal.
To overcome these difficulties we establish weakened analogues of the above properties which are
nonetheless sufficient for our purposes.

First of all,  in Section \ref{sec:fake} we introduce the crucial notion of {\em fake center-stable foliation} 
at time $n$. This (locally defined) foliation mimics the behavior of the center-stable foliation for $n$ iterates. 
Fake center-stable foliation was previously used in the study of ergodic properties of partially hyperbolic systems in \cite{BW10} where these foliations were constructed near an arbitrary orbit. In our setting this foliation
is defined only near Lyapunov regular orbits. The main result of Section \ref{sec:fake} is that we can define
this lamination outside of a set of an arbitrary small measure so that there is a unique leaf passing 
through each point. This statement is non trivial even for partially hyperbolic systems in case the
center stable distribution is not  uniquely integrable.

The properties of the fake foliation are studied in  Section \ref{ScAC}. 
For each fixed $n$ the center stable foliation is obtained by pulling back a smooth 
foliation, hence it is absolutely continuous. If the center stable distribution is uniquely integrable
then the fake foliations approach the real center stable foliation as $n$ tend to infinity.
Therefore if the center stable foliation is not absolutely continuous, then we could expect
the fake center stable jacobians to deteriorate on the unit scale.
However, we show in Proposition \ref{lem:abscont}  that if we take two submanifolds which are exponentially
close then the 
jacobian of the fake center stable holonomy is close to 1. Accordingly the fake center stable holonomy
between
nearby  typical unstable leaves is almost measure preserving (Proposition \ref{prop:3.2}). 
Another consequence of the local absolute continuity is local product structure of the measure 
on the small scale established in Corollary \ref{cor:prost'}. 
\medskip


The next key step in the analysis is the  exponential almost equidistribution of unstable leaves
established in Section \ref{sec:main}. The main result of that section is 
the Main Proposition (Proposition \ref{prop:expeq}) which says that given a typical unstable leaf $\cW$
and a partition of the phase space into cubes of size 
$r \geq e^{-\eps n}$ 
we can discard a small proportion of cubes so that the proportion of $f^n \cW$
inside the remaining cubes  is approximately equal to the measure of the cube.
We also point out that the Main Proposition also implies that for {\bf any} cube of size of order $1$, $f^n\cW$ is equdistributed in the cube.

A standard approach to proving  equidistribution of the unstable leaves is the following. Take 
exponentially narrow tube $\cT$ around $\cW$. By exponential mixing the image of $\cT$ is 
equidistributed in the phase space. Next every point $z$ in $\cT$ belongs to the center stable leaf
of a point $z'$ in $\cW$, and since the Lyapunov exponents in the center stable direction are non-positive,
we expect $f^n z$ and $f^n z'$ to be close. Unfortunately, this closeness only holds if
$z'$ is sufficiently regular point and while the contribution of non regular points is small it does not have
to be exponentially small. These necessitates discarding the cubes in our partition which attract an
anomalously large proportion of non regular points.
The Main Proposition is the crucial result in establishing both the $K$ and the Bernoulli properties. 
Namely, the $K$-property follows from the equidistribution of the image of unstable leaves on 
the unit scale, as we explain in Section \ref{sec:K}. 
To verify the very weak Bernoulli we follow the strategy
described above by constructing the coupling between the nearby pieces of 
$f^{n'} \cW_1$ and $f^{n'} \cW_2$ with $n'=\eps n.$  
Here the possibility of the subdivision 
$$f^{n'} W_s=\big(\bigcup_j \cW_{s,j}\big)\cup\{\text{small unmatched part}\}$$
 so that 
$\cW_{1,j}$ is close to $\cW_{2,j}$ comes from the exponential almost equidistribution,
the coupling map is almost measure preserving due to the local absolute continuity
of the fake center stable foliation and the closeness of $x$ and $\theta x$ comes from
the fact that $f^{n'} x$ and $f^{n'} \theta(x)$ are exponentially close at time $n'$ and the
divergence in the center stable direction is subexponential.
This argument is presented in Section \ref{ScVWB}.

\subsection{The choice of parameters.}
We finish our outline by specifying the dependence of parameters that appear in our proofs. Our proofs rely heavily on  the Pesin theory and the very weak Bernoulli property. They involve several scales of smallness (or largeness) of parameters. To make it easier for the reader we summarize the dependences that appear in the paper.

\begin{enumerate}
\item
We start with  $f\in C^{1+\alpha}(M)$ which is exponentially mixing  on $C^\br(M)$
with  exponent $\eta>0$. In the text we introduce the number
 $\heta$ (depending only on $\eta, \br$ and $\dim M$).
 Namely in Lemma \ref{cor:CE} we obtain as a consequence of exponential mixing, that if $\heta$ is small enough then
 the images of balls (centered at typical points) of size $e^{-\heta n}$ become macroscopic after $n$ iterates. Also 
  in Lemma~\ref{lem:expmixB} we show that exponential mixing for smooth functions implies mixing on  parallelograms of size $e^{-\eta_2}n$ provided that $\eta_2$ is small enough. 
  \footnote{According to \eqref{Heta} and \eqref{Eta2-Eta} we 
can take
$\DS \heta=\min\left(\frac{1}{10 D }, \frac{1}{8\br}\right), $
and $\DS
\eta_2=\frac{\eta}{1+2\br} \left(\frac{D}{10}-\frac{1}{15}\right),$  where $D=\dim(M)$, but the precise values of these constants are not important for our argument.}
We  use the notation $\alpha_i$ to denote  H\"older exponents of certain functions that we define. 
 All the $\alpha_i$ depend only  on  $\|f\|_{C^1}$, the Lyapunov exponents of $f$, $\alpha$, and $\dim M$.  In Proposition \ref{lem:abscont} we will also use the number $\beta>0$ which can be taken to be $1/2\min(\alpha,\min_i \alpha_i)$.

\item 
Our proof proceeds by verifying the Ornstein-Weiss criterion (see Corollary~\ref{cor:VWE}). 
Thus given $\eps>0$ we need to construct a map $\theta$ verifying \eqref{MetClose}.
 This $\eps$ defines the next level of smallness. 
We note that if the conditions of Corollary \ref{cor:VWE} are satisfied for some $\eps>0$ then they
also hold for all $\eps'>\eps.$ Therefore in the proof we assume that $\eps$ is sufficiently small
and we will use estimates like $\eps^2<\eps/100$ without additional explanations. 

\item 
Having fixed $\epsilon>0$ we take $\delta=\eps^{100}.$ 
We then apply Lemma  \ref{lyapchart} for this choice of $\delta$ to get the corresponding function 
$\mathfrak{r}(\cdot)$  describing the size of Pesin chart for $(\lambda, \delta)$-regular points.
 Next we pick the parameter $\tau>0$ for the  Pesin sets in  \eqref{eq:hpc} and \eqref{eq:hpk} respectively. For points in these sets the function  $\mathfrak{r}(\cdot)$ is larger than $\tau$. We again want $\tau=\tau(\epsilon)$ to be small enough in terms of $\epsilon$ so that $\mu(P_\tau)\geq 1-\eps^{10^{10}}$ (see \eqref{eq:pc}).
\item 

Having fixed all the above parameters we now choose a sufficiently large  $n_0$ 
(largeness depending on all the previously fixed parameters) and we conduct 
the proof for $n\geq n_0$.
\end{enumerate}

Throughout the paper we use the following abbreviations: 
$$\xi_n=e^{\eps^2 n-\eta_2\epsilon n}, 
\quad \tilde{r}_n=e^{-\eta_2\epsilon n-\eps^2n}. $$

The above parameters are the sizes of the parallelograms $B(\xi_n,\tilde{r}_n)$ that we will consider: the parameter $\xi_n$ is  the size in $u$ direction and $\tilde{r}_n$ is the size in the (fake) $cs$ direction. It is important that both are exponentially small, but the $u$ direction is longer than the $cs$ (this simplifies some arguments using H\"older continuity of the $u$ foliation).\\

\subsection{Notation.}
The following notation is used throughout the paper:

$\bb=10^{10}.$

$B(\xi, r)$--the parallelograms in the phase space (see \eqref{BXiR}). 

$\bar{B}_s$ -- disjoint parallelograms whose union covers most of the space (see family $\mathbb{B}_1$ after Lemma \ref{lem:setsB})

$\mathbb{B}_2$ -- the family of parallelograms defined after equation (\ref{HPrime}).
	
$B_i(r)$ -- parallelograms defined after equation (\ref{BI-R}).

$\cC^{u}_\fb, \cC^{cs}_\fb$
--unstable and center-stable cones of aperture $\fb$ 
(see \eqref{CuCones}, \eqref{CsCones}).

$\cC^{u}_x(y), \cC^{cs}_x(y)$--the cones of aperture $1/2$ of  $x$ shifted to $y$ (see \eqref{ShiftedCones}). 

$D=\dim(M),$ $d^u=\dim(E^u),$ $d^{cs}=d(E^{cs}).$

$E^u(x)$, $E^{cs}(x)$--the Oseledets subspaces at $x$ corresponding to positive and 
non-positive exponents respectively.

$\tilde E^{cs,n,\delta}_x(\bar y)=T_{\bar y}\tilde W^{cs,n,\delta}_{x}(\bar y)$. 

$\tf$--representation of $f$ in Lyapunov charts (see Lemma \ref{lyapchart}).

$\cF_{i,j}$--the leaves of fake center stable foliations (obtained by reindexing of $\cF_{i,j,s}$ from \eqref{eq:fak2},
see end of Section \ref{sec:fake}).

$h_{x, \delta}$--Lyapunov coordinate map (see Lemma \ref{lyapchart}).

$L_{x, \delta}$--the linear map rectifying the angles between $E^u(x)$ and $E^{cs}(x)$
(see Lemma \ref{lyapchart}).

LyapReg--the set of  Lyapunov regular points (see Definition \ref{DefLyapReg}).

$\cL_{n,\tau}=P_\tau\cap f^{-n}P_\tau\cap f^{-\epsilon n}P_\tau$ (see \eqref{eq:lnc}).

 $m^u_x$ -- Lebesgue measure on $W^u_x$.

$m^u_{\cW}$ -- the conditional of $\mu$ on the unstable leaf $\cW$ (see \eqref{MuCond}).
 
$P_\tau$ -- the Pesin set of points with $\mathfrak{r}\geq \tau$ (see \eqref{eq:pc}). 

$Q_x$ -- Lyapunov neighborhood of the point $x$ (see \eqref{QXN}).

$Q_x^n$ -- Bowen ball with respect to the Lyapunov neighborhood $x$ 
(see \eqref{QXN}).

$Q_x(r_u,r_{cs})$ -- images of the parallelograms in the tangent space (see \eqref{QXPar1}).

$r_u(x)$ -- size of unstable manifold of $x$ defined in Lemma \ref{LmLocMeas}.

$\mathfrak{r}_\delta(x)$ -- sizes of Lyapunov charts at $x$
 (see \eqref{PesRad}).

$\mathfrak{R}_\delta(x)$ -- a function measuring the nonuniformity in the Multiplicative
Ergodic Theorem for $x$ (see Definition \ref{DefLyapReg} and Theorem \ref{ThLRFull}).

$\tR_{i,j}(r)$ -- the domains of $\cF_{i,j}$ (see \eqref{eq:tilr}).

$\tilde{r}_n=e^{-\eta_2\epsilon n-\eps^2n}$ (see \eqref{DefRn}).

$\cR_n$ - union of $\cF-$saturations of $\tR_{i,j}(r)$ defined by \eqref{WholeLam}.

$\tW^{cs,n,\delta}_{x}(\bar y)$ -- the image of fake center stable foliation in the Lyapunov charts
(see Definition \ref{DefFakeCS}). 

$\tW^u(x)$ -- the image of (fake) unstable manifold in the exponential coordinates (see Lemma \ref{lem:unstman}).

$W^u_{x,R}$ -- the local stable manifold of $x$ of size $R$ (see \eqref{WuR}).

$W^u_{x}$ -- the local unstable manifold of $x$ (see \eqref{WuLoc}).

$\hcW_i$ -- reference unstable manifolds (see \eqref{HCW}).

$\alpha$ -- H\"older regularity of $Df$.

$\alpha_1$ -- the regularity of the Oseledets spaces on Pesin set (see Lemma \ref{LmIDHold}). 

$\alpha_2=\min(\alpha/2,\alpha_1)$ -- regularity of the rectifying maps $L_x$ (see \eqref{Alpha1}). 

$\alpha_3$ -- the (controlled)  regularity of center stable manifolds (see Lemma \ref{cs-foliation}).

$\alpha_4$ -- the regularity of unstable manifolds (see Lemma \ref{lem:unstman}).

$\alpha_5$ -- cone contraction stability exponent (see Lemma \ref{cones}).

$\alpha_6$ -- the (controlled) regularity of the tangent spaces of fake center stable manifolds
(see Lemma \ref{graphtransfcs}).

$\alpha_7$ -- the (controlled) regularity of the tangent spaces of  admissible manifolds
(see Lemma \ref{admisibleconv}).

$\beta$ -- regularity exponent of admissible manifolds in Definition \ref{apadm}.

$\eta^u_x:\R^u\to\R^{cs}$ -- a function whose graph defines $\tW^u(x)$ (see Lemma \ref{lem:unstman}).

$\tilde\eta^{cs,n,\delta}_{x,\bar y}:\R^{cs}\to\R^u$ -- the function defining the graph of $\tilde W^{cs,n,\delta}_{x}(\bar y)$
(see Lemma \ref{cs-foliation}). 

$\eta_2$ -- the smallness exponent sufficient to guarantee the exponential mixing on parallelograms
(see Lemma \ref{lem:expmixB}). 

$\lambda$ -- the smallest positive exponent (see Lemma \ref{lyapchart}).

$\xi_n=e^{\eps^2 n-\eta_2\epsilon n}$ (see \eqref{DefXiN}).

$\brho(x)$ -- density of $\mu$ in the product coordinates (see \eqref{MesMultiDisc}).

$\tau$ -- the parameter of the Pesin set so that $\mu(P_\tau)\geq 1-\eps^\bb.$

\subsection{Layout of the paper.}
The proof of Theorem \ref{thm:main}  is carried out in
Sections \ref{sec:posent}--\ref{ScVWB} following the outline given above.
Section \ref{ScDef} contains the necessary background. Technical estimates from
the Pesin theory needed in our proof are collected in the appendix.
Section \ref{ScSkew} describes several skew products with Anosov base and homogenous dynamics  
in the fiber
where the Bernoulli property follows from
our main result. These examples seem unaccessible by other methods. 
The results are sharp as the failure of exponential mixing also leads to the failure of the Bernoulli 
property.
\\

{\bf Acknowledgements}: The authors would like to thank Mariusz Lema\'nczyk, Yuri Lima and Yakov Pesin for suggestions improving the readability of the paper.

\section{Preliminaries}
\label{ScDef}

\subsection{The Pesin theory}
Let $f$ be a $C^{1+\alpha}$ diffeomorphism of $(M,\mu)$ with $\dim(M)=D$. We denote the derivative of $f$ by $Df_x:T_xM\to T_{fx}M$ and let $Df_x^{n}:=Df_{f^{n-1}x}\circ\ldots\circ Df_x$.
We assume throughout that $\mu$ is smooth and that $f$ is ergodic with respect to $\mu.$

\begin{definition}
\label{DefLyapReg}
\cite[Thm 2.4 \& Prop 2.6]{BRHZ}\cite[Thm 3.4.10, Thm 3.5.5, Prop 3.5.8.]{BP}
	Let $\lambda>0$. A point $x$ is said to be $(\lambda,\delta)$-Lyapunov regular if there is a splitting 
	$$T_{f^{k}x}M=E^{cs}(f^kx)\oplus E^u(f^kx)$$ for $k\in\Z$ and there are numbers
	$\mathfrak{R}_{\delta}(f^k x)>0$ such that
	\begin{enumerate}
		\item $\mathfrak{R}_{\delta}(f^{k+n}x)\leq e^{\delta|k|}\mathfrak{R}_{\delta}(f^nx)$ for every $k\in\Z$, $n\in\Z$;
		\item $Df^kE^\sigma(x)=E^\sigma(f^kx)$ for every $k\in\Z$, $\sigma=cs,u$;
		\item if $v\in E^{cs}(f^kx)$, $n\geq 0$, then $$\|Df^nv\|\leq \mathfrak{R}_{\delta}(f^{n+k}x)e^{\delta n}\|v\|;$$
		\item  if $v\in E^{u}(f^kx)$, $n\leq 0$, then $$\|Df^nv\|\leq \mathfrak{R}_{\delta}(f^{n+k}x)e^{n\lambda+|n|\delta}\|v\|;$$
		\item $\angle (E^{cs}(f^kx),E^u(f^kx))\geq \mathfrak{R}_{\delta}^{-1}(f^k(x))$.
		
	\end{enumerate}
\end{definition}

 Note that $(\lambda, \delta)$-Lyapunov regular point is 
$(\lambda, \delta')$-Lyapunov regular for any $\delta'>\delta$.

\begin{theorem}\label{ThLRFull}
\cite[Thm 3.4.10, Thm 3.5.5, Prop 3.5.8.]{BP}
	 Let $\lambda$ be the smallest positive Lyapunov exponent of $f$. Then
	for each $\delta>0$ the set of $(\lambda,\delta)$-Lyapunov regular points has full measure. 
	Moreover, 
	the function $x\to \mathfrak{R}_{\delta}(x)$ can be chosen Borel measurable. 
\end{theorem}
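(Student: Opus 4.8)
The statement is the Oseledets Multiplicative Ergodic Theorem (MET) repackaged through a tempering argument, so I will only describe the structure; the details are classical and are precisely \cite[Thm 3.4.10, Thm 3.5.5, Prop 3.5.8]{BP}. Since $f$ is assumed ergodic with respect to $\mu$, the Lyapunov spectrum is $\mu$-a.e.\ constant, so (whenever a positive exponent exists) $\lambda$ is a genuine constant. \emph{Step 1 (the splitting).} Apply the MET to $Df$: for $\mu$-a.e.\ $x$ there is a $Df$-invariant Oseledets decomposition $T_xM=\bigoplus_i E_i(x)$ with distinct exponents $\chi_1>\dots>\chi_p$, and the point is Lyapunov regular in the technical sense (forward and backward filtrations are coherent, regularity coefficient zero). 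Set $E^u(x)=\bigoplus_{\chi_i>0}E_i(x)$ and $E^{cs}(x)=\bigoplus_{\chi_i\le 0}E_i(x)$; then $T_xM=E^{cs}(x)\oplus E^u(x)$ and, since each $E_i$ is $Df$-invariant, $Df^k E^\sigma(x)=E^\sigma(f^kx)$, which is property (2) of Definition~\ref{DefLyapReg}. The Oseledets subspaces are obtained as a.e.\ limits of measurable maps into Grassmannians, hence depend Borel measurably on $x$; so do $E^u,E^{cs}$.

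\emph{Step 2 (estimates with a non-tempered constant).} Fix $\delta>0$. By Lyapunov regularity of $x$, the top exponent on $E^{cs}(x)$ is $\le 0$ and the slowest direction of $E^u(x)$ has exponent exactly $\lambda$, so the finite measurable quantities
$$
C_\delta(x):=\max\Big(\sup_{n\ge0}\|Df^n|_{E^{cs}(x)}\|\,e^{-(\delta/3)n},\ \sup_{n\le0}\|Df^n|_{E^{u}(x)}\|\,e^{-n\lambda-|n|\delta/3},\ \tfrac{1}{\angle(E^{cs}(x),E^u(x))}\Big)
$$
satisfy the anchored versions of (3)--(5): $\|Df^nv\|\le C_\delta(x)e^{(\delta/3)n}\|v\|$ for $v\in E^{cs}(x),\ n\ge0$; $\|Df^nv\|\le C_\delta(x)e^{n\lambda+|n|\delta/3}\|v\|$ for $v\in E^u(x),\ n\le0$; and $\angle(E^{cs}(x),E^u(x))\ge C_\delta(x)^{-1}$. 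Since $M$ is compact and $f\in C^{1+\alpha}$, $\log\|Df^{\pm1}\|$ is bounded, and Lyapunov regularity of $x$ then forces $C_\delta$ to grow subexponentially along the orbit, i.e.\ $\tfrac1n\log C_\delta(f^nx)\to0$ as $|n|\to\infty$ for a.e.\ $x$.

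\emph{Step 3 (tempering) and conclusion.} Apply the standard tempering lemma: if $C>0$ is measurable and $\tfrac1n\log C(f^nx)\to0$ a.e., then
$$
\mathfrak{R}_\delta(x):=\sup_{n\in\Z}C_\delta(f^nx)\,e^{-(\delta/3)|n|}
$$
is a.e.\ finite, Borel measurable (a countable supremum of Borel functions), satisfies $\mathfrak{R}_\delta(x)\ge C_\delta(x)$, and $\mathfrak{R}_\delta(f^{k}x)\le e^{(\delta/3)|k|}\mathfrak{R}_\delta(x)$ for all $k\in\Z$ — this is property (1), with room to spare. Replacing $C_\delta$ by $\mathfrak{R}_\delta$ in the anchored bounds of Step 2 and then composing one-step estimates along the orbit using (1) converts the anchored bounds (constant evaluated at $x$, rate $\delta/3$) into the endpoint bounds (3), (4), (5) of Definition~\ref{DefLyapReg} (constant $\mathfrak{R}_\delta(f^{n+k}x)$, rate $\delta$; note $n\lambda+|n|\delta/3$ combined with the $e^{(\delta/3)|n|}$ tempering loss is still $\le n\lambda+|n|\delta$). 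The set of MET-regular points has full measure independently of $\delta$, and the tempering construction works for every $\delta>0$, so the set of $(\lambda,\delta)$-Lyapunov regular points has full measure for every $\delta>0$, with $x\mapsto\mathfrak{R}_\delta(x)$ Borel measurable. The monotonicity remark (a $(\lambda,\delta)$-regular point is $(\lambda,\delta')$-regular for $\delta'>\delta$) is immediate since $\mathfrak{R}_{\delta}$ also serves as $\mathfrak{R}_{\delta'}$.

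\emph{Main obstacle.} The only non-formal content is the tempering step together with the constant bookkeeping in Steps 2--3: one must verify that Lyapunov regularity (not merely existence of the exponents) yields the subexponential control $\tfrac1n\log C_\delta(f^nx)\to0$ for \emph{all} the relevant quantities — block-wise distortion of $Df^{\pm1}$ and the angle $\angle(E^{cs},E^u)$ — using the $L^1$ (indeed $L^\infty$) bound on $\log\|Df^{\pm1}\|$, and then check that the $\delta/3$ budget absorbs both the tempering loss and the gap between rates $\lambda-\delta/3$ and $\lambda-\delta$. All of this is standard Pesin theory and is exactly what the cited results in \cite{BP} provide.
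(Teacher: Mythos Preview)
Your proposal is correct and follows the standard route through the Oseledets theorem plus a tempering argument; this is precisely what the cited results in \cite{BP} do, and the paper itself does not give an independent proof but simply invokes those references. There is nothing to compare: the paper treats Theorem~\ref{ThLRFull} as a black box from Pesin theory, and your sketch accurately unpacks that black box.
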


Fix $\lambda>0$ from the above theorem and let $\delta$ be a small number. Let
\be\label{eq:lyapreg}
LyapReg(\delta):=\{x\in M:\; x \text{ is } (\lambda, \delta/4)\text{-Lyapunov regular}\}.
 \ee

Notice that since $f$ is $C^{1+\alpha}$, there is a constant $C_1>0$ such that
denoting by $\exp_p$ the exponential map and letting $\hat f_p=\exp_{f_p^{-1}} \circ f\circ \exp_p$ 
we have that $\hat f_p$ is defined on the ball of radius $1/C_1$ in $T_p M$ and
 \begin{equation}\label{eq:fC1}\|(D_{\hat z_1}\hat f_p)^{-1}-(D_{\hat z_2}\hat f_p)^{-1}\|_{T_{fp}M\to T_pM}\leq C_1|\hat z_1-\hat z_2|^{\alpha}_{p}\end{equation}
  if $|\hat z_1|_p,|\hat z_2|_p\leq \frac{1}{C_1}$.
For $p\in LyapReg(\delta)$ let 
	\be	\label{PesRad}
	\mathfrak{r}_{\delta}(p):=\left(\delta^{-1}\frac{(\mathfrak{R}_{\delta/4}(p))^2}{\sqrt{1-e^{-\delta}}}(\sqrt{2})^{1+\alpha}C_1\right)^{-2/\alpha}.\ee 
	 The precise formula for $\mathfrak{r}_\delta(p)$ will not be important in our arguments. 
	We will just use that
	it is uniformly bounded from below on Pesin sets defined in \eqref{eq:hpc} below.
	We may assume that $\mathfrak{r}_{\delta}(p)\leq \frac{1}{C_1}$ (see e.g. the comment below Lemma \ref{lem:kx}).
	 Moreover  by property (1) in Definition \ref{DefLyapReg}, we have that 
	$$e^{-\frac{\delta}{\alpha}}\mathfrak{r}_{\delta}(p)\leq \mathfrak{r}_{\delta}(fp)\leq e^{\frac{\delta}{\alpha}}\mathfrak{r}_{\delta}(p).$$ 

 In what follows (see eg. Lemma \ref{lyapchart})
 we will work with a rescaling of $\mathfrak{r}(\cdot)$, $\mathfrak{r}'_\delta(\cdot)=\mathfrak{r}_{\delta\alpha}(\cdot)$. To simplify notation and since we will work with the rescaled function from now on, we will denote the rescaling also by $\mathfrak{r}_\delta(\cdot)$.

Let us write $\R^D=\R^u\times\R^{cs}.$ Sometimes we will write $\R^D_x$ to emphasize that it correspond to the tangent space of $x$. 


We now define the Lyapunov norm. Let $x\in LyapReg(\delta/4)$. Define for $u\in E^{u}(x)$ $$|u|'^2_{x,\delta}=\sum_{m\leq 0}|D_xf^mu|_{f^mx}^2e^{-2\lambda m-2\delta|m|}$$
	and for $v\in E^{cs}(x)$ $$|v|'^2_{x,\delta}=\sum_{m\geq 0}|D_xf^mv|_{f^mx}^2e^{-2\delta|m|}.$$ 
	These norms define inner products naturally and we extend it to an inner product on $T_xM=E^u(x)\oplus E^{cs}(x)$ by declaring these spaces orthogonal.

Next we define the Pesin sets.
Let 
\be\label{eq:hpc}
\hat{P}_\tau=\hat{P}^{\delta}_\tau:=\{x\in M\;:\; \mathfrak{r}_{\delta}(x)\geq \tau\}.
\ee 
Since $\mathfrak{r}_\delta(\cdot)$ is measurable it follows that 
$\mu(\hat{P}_\tau)\to 1$ when $\tau\to 0.$  (Note also that $\hat{P}_{\tau'}\subset \hat{P}_\tau$ for $\tau<\tau'$).

\begin{lemma}
	\label{LmIDHold}
	There exists $\alpha_1>0$ such that for every $\tau,\delta>0$ ,$\delta<\delta_0$ there is $K=K(\tau,\delta)>0$ such that the maps $x\mapsto E^u(x)$ and $x\mapsto E^{cs}(x)$ are $(K,\alpha_1)$-H\"older continuous on $\hP^\delta_\tau$ . 
\end{lemma}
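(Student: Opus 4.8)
\textbf{Proof strategy for Lemma \ref{LmIDHold}.}

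The plan is to reduce Hölder continuity of the Oseledets splitting on a Pesin set to a standard contraction-mapping argument in the space of ``graphs'' over the expected subspace, uniformized by the Pesin constant. First I would fix $\delta<\delta_0$ and $\tau>0$, and work on the set $\hP^\delta_\tau$; by \eqref{eq:hpc} every $x\in\hP^\delta_\tau$ has $\mathfrak r_\delta(x)\geq\tau$, hence (via the definition \eqref{PesRad} and property (1) of Definition \ref{DefLyapReg}) the nonuniformity function $\mathfrak R_{\delta/4}$ is bounded above on $\hP^\delta_\tau$ by a constant $\hat R=\hat R(\tau,\delta)$. This is the only place the Pesin set enters: it converts the $x$-dependent bounds in Definition \ref{DefLyapReg} into uniform ones, at the cost of the constant $K$ depending on $\tau,\delta$. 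Crucially, the Hölder exponent $\alpha_1$ will \emph{not} depend on $\tau,\delta$: it will come out as an explicit function of $\lambda$, $\delta_0$, $\alpha$ and $\|Df\|_{C^0}$ only, because it is a ratio of a ``spectral gap'' (here, roughly $\lambda$, the smallest positive exponent, separating $E^u$ from $E^{cs}$) to a ``distortion'' exponent; shrinking $\delta$ below $\delta_0$ only improves the gap.

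The main step is the graph-transform estimate. Fix $x,y\in\hP^\delta_\tau$ close to each other. Identify a neighborhood of the orbit of $x$ with balls in $\R^D=\R^u\times\R^{cs}$ via $\hat f$ from \eqref{eq:fC1}, so that along the orbit of $x$ the maps are close to the block-diagonal linear maps $Df^k_x$, which expand $\R^u$ by at least $e^{\lambda-\delta/4}$ in the Lyapunov norm and contract-or-grow $\R^{cs}$ by at most $e^{\delta/4}$ (property (3),(4) of Definition \ref{DefLyapReg}, together with the Lyapunov-norm definitions above). The subspace $E^u(y)$ is $Df$-invariant along the orbit of $y$; writing it as a graph of a linear map $\R^u_x\to\R^{cs}_x$ in the charts at $x$, invariance plus backward iteration shows this graph is the unique fixed point of a contraction whose contraction rate is $e^{-2(\lambda-\delta/2)}+o(1)$ and whose ``constant term'' (the image of the zero graph) is controlled by $|x-y|^\alpha$ through \eqref{eq:fC1}, with the $\alpha$-Hölder constant of $D\hat f$ amplified by $\hat R$. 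Iterating and summing the geometric series yields
\[
\|E^u(x)-E^u(y)\|\le K(\tau,\delta)\,|x-y|^{\alpha_1},\qquad
\alpha_1=\frac{\alpha(\lambda-\delta_0)}{\lambda-\delta_0+\text{(Lip growth of }f)}
\]
or a similar explicit expression; the symmetric argument with $f$ replaced by $f^{-1}$ (using property (4) run forward) gives the same for $E^{cs}$. Combining, the full splitting is $(K,\alpha_1)$-Hölder on $\hP^\delta_\tau$.

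The step I expect to be the main obstacle is bookkeeping the interplay between three norms: the ambient Riemannian norm on $M$ (in which ``$\alpha_1$-Hölder'' is ultimately asserted and in which $\hat f$ lives), the Lyapunov norms $|\cdot|'_{x,\delta}$ and $|\cdot|'_{y,\delta}$ at the two different basepoints (in which the contraction estimates are clean), and the comparison between the Lyapunov norm at $x$ and at $y$ when $x,y$ are merely close. One must check that the comparison constants between these norms are again controlled by $\hat R(\tau,\delta)$ — uniformly along the orbits — so that passing back and forth costs only the $\tau,\delta$-dependent constant $K$ and never degrades the exponent. This is exactly the kind of estimate packaged in \cite{BP} (and is why the lemma is attributed to standard Pesin theory), so the proof can legitimately cite \cite[Ch.~7--8]{BP} for the detailed norm comparisons and present only the graph-transform heart of the argument. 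A secondary subtlety is that Definition \ref{DefLyapReg} only asserts the bounds for $v$ in the \emph{given} splitting; to run the graph transform one needs the a priori knowledge that $E^u(y)$ lies in a cone around $E^u(x)$ of aperture $O(|x-y|^{\alpha'})$ for the iteration to stay in the domain where \eqref{eq:fC1} applies — this follows from continuity of the splitting in measure plus the uniform bounds on $\hP^\delta_\tau$, but should be stated carefully.
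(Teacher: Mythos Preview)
Your proposal is correct and is essentially the same approach as the paper's: the paper simply cites \cite[Theorem~5.3.2]{BP} with the remark that its proof does not require $E^{cs}$ to be contracting, and your sketch is a faithful reconstruction of exactly that graph-transform argument (uniform Pesin bounds on $\hP^\delta_\tau$, cone/graph contraction with rate governed by the gap $\lambda$, Hölder perturbation of $Df$ supplying the constant term, norm comparisons absorbed into $K(\tau,\delta)$). The only point worth tightening is your ``secondary subtlety'': you do not need a priori continuity of the splitting to start the iteration, since the graph transform itself places $E^u(y)$ in the correct cone after finitely many backward iterates once $|x-y|$ is small enough relative to $\tau$---this is handled automatically in the standard BP argument.
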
	
\begin{proof} The proof of this is the same as the proof of  Theorem 5.3.2 in \cite{BP} using the observation that 
the proof of this theorem  does not require that $E^{cs}$ is contracting. 
\end{proof}

\begin{lemma}[Lyapunov charts]\label{lyapchart}
[see 
\cite[Prop 5.1]{BRHZ},\cite[Thm 5.6.1]{BP} and \cite[Sec 11.2]{BRH} for one sided charts]
	There is $\alpha_2,\delta_0>0$ such that  for every  $\delta<\delta_0$ 
	there are linear maps $ L_{x,\delta}:\R^D\to T_xM$,  such that 
	\begin{enumerate}
	\item[$L1$.] $L_{x,\delta}$ is an isometry between the standard metric in $\R^D$ and  the  $|\cdot|'_{x,\delta}$ metric;
 	\item[$L2$.] for every sufficiently small $\tau>0$, the maps $x\mapsto L_{x,\delta}$ are $\alpha_2$- H\"older continuous on $\hat{P}_\tau$.
	\end{enumerate}

Moreover 
defining $$h_{x,\delta}=\exp_x\circ L_{x,\delta},\quad\mathrm{and}\quad\tilde f_{x,\delta}=h^{-1}_{fx,\delta}\circ f\circ h_{x,\delta},$$ 
we have
	\begin{enumerate}
		\item $h_{x,\delta}(0)=x$;
		\item $L_{x,\delta}(\R^u)= E^u(x)$ and $L_{x,\delta}(\R^{cs})=E^{cs}(x)$;
		\item $\max(\|L_{x,\delta}\|,\|L_{x,\delta}^{-1}\|)\leq \mathfrak{r}^{-1}_{\delta}(x)$;
		\item $\text{domain}(\tilde f_{x,\delta})\supset B_{\mathfrak{r}_{\delta}(x)}(0)$ and $\text{domain}(\tilde f^{-1}_{x,\delta})\supset B_{\mathfrak{r}_{\delta}(x)}(0)$;
		\item 
	$\DS e^{\lambda-\delta}|v|\leq |D_0\tilde{f}_{x,\delta}(v)| \text{ for }v\in\R^u,\quad |D_0\tilde f_{x,\delta}(v)|\leq e^{\delta}|v|\text{ for }v\in\R^{cs};$ 
		\item $\text{H\"ol}_{\alpha_2}(D\tilde f_{x,\delta})\leq\delta,$ 
		$\text{Lip}(\tilde f_{x,\delta}-D_0\tilde f_{x,\delta})\leq\delta$ and $\text{Lip}(\tilde f^{-1}_{x,\delta}-D_0\tilde f^{-1}_{x,\delta})\leq\delta;$
		\item $\max\Big(\text{Lip}(h_{x,\delta}), \text{Lip}(h_{x,\delta}^{-1})\Big)\leq \mathfrak{r}^{-1}_{\delta}(x)$.
	\end{enumerate}
\end{lemma}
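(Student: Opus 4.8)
The statement to prove is the Lyapunov charts lemma (Lemma \ref{lyapchart}). Let me sketch a proof plan.

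\medskip

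\noindent\textbf{Proof proposal.}

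The plan is to construct the maps $L_{x,\delta}$ directly from the Lyapunov inner product $|\cdot|'_{x,\delta}$ and then verify that the induced charts $h_{x,\delta}$ conjugate $f$ to a map $\tilde f_{x,\delta}$ with the claimed hyperbolic and regularity estimates. First I would note that, since the Lyapunov norm already makes $E^u(x)$ and $E^{cs}(x)$ orthogonal and rescales the dynamics, one simply takes $L_{x,\delta}\colon\R^D=\R^u\times\R^{cs}\to T_xM$ to be any linear isometry from the standard Euclidean structure to $(T_xM,|\cdot|'_{x,\delta})$ that carries $\R^u$ onto $E^u(x)$ and $\R^{cs}$ onto $E^{cs}(x)$; this gives $L1$, item (2), and $h_{x,\delta}(0)=x$ for free, and (3) follows from the standard comparison between $|\cdot|'_{x,\delta}$ and the background metric, which is controlled by $\mathfrak{R}_{\delta/4}(x)$ hence by $\mathfrak{r}_\delta(x)^{-1}$. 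To fix the choice measurably and to get the H\"older dependence in $L2$, I would build $L_{x,\delta}$ by applying Gram--Schmidt in the $|\cdot|'_{x,\delta}$ inner product to a fixed measurable frame of $E^u\oplus E^{cs}$; the H\"older continuity of $x\mapsto E^\sigma(x)$ on $\hat P_\tau$ (Lemma \ref{LmIDHold}) together with H\"older continuity of $x\mapsto|\cdot|'_{x,\delta}$ on $\hat P_\tau$ — which itself comes from the exponentially convergent defining series and the $C^{1+\alpha}$ bound \eqref{eq:fC1} — yields an exponent $\alpha_2=\min(\alpha/2,\alpha_1)$, matching \eqref{Alpha1}.

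\medskip

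Next I would turn to the conjugated map $\tilde f_{x,\delta}=h_{fx,\delta}^{-1}\circ f\circ h_{x,\delta}$. Its linear part at $0$ is $D_0\tilde f_{x,\delta}=L_{fx,\delta}^{-1}\circ Df_x\circ L_{x,\delta}$, and estimate (5) is exactly the statement that $Df_x$, measured in the Lyapunov norms at $x$ and $fx$, expands $E^u$ by at least $e^{\lambda-\delta}$ and does not expand $E^{cs}$ by more than $e^\delta$; this is a direct computation from the definition of $|\cdot|'_{x,\delta}$ (a telescoping/shift of the defining sums, using Lyapunov regularity items (3),(4) of Definition \ref{DefLyapReg} and the geometric weights). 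Properties (4) and (7) are bookkeeping: the domain statement follows from the definition \eqref{PesRad} of $\mathfrak{r}_\delta$, which is precisely chosen so that $\exp_x$ and $\exp_{fx}$ are defined and bi-Lipschitz with the right constants on balls of radius $\mathfrak{r}_\delta$, and (7) is the comparison of $h_{x,\delta}$ with $L_{x,\delta}$. The heart of the matter is (6): the nonlinear part of $\tilde f_{x,\delta}$ must have $\alpha_2$-H\"older derivative with constant $\le\delta$ and Lipschitz deviation from its linearization $\le\delta$ (and likewise for the inverse). Here I would use that $\tilde f_{x,\delta}$ is $\hat f$ from \eqref{eq:fC1} conjugated by the linear maps $L$, so its second-order behaviour is governed by $C_1$, the $\alpha$-H\"older norm of $D\hat f$, times the operator norms $\|L_{x,\delta}\|\,\|L_{fx,\delta}^{-1}\|$ raised to appropriate powers — all bounded by $\mathfrak{R}_{\delta/4}(x)$-type quantities — while the definition of $\mathfrak{r}_\delta(p)$ in \eqref{PesRad} includes exactly the factor $(\mathfrak{R}_{\delta/4}(p))^2 C_1$ raised to $-2/\alpha$ so that restricting to the ball of radius $\mathfrak{r}_\delta(x)$ forces these products below $\delta$.

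\medskip

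I would organize the write-up as: (i) construct $L_{x,\delta}$ and record $L1$, $L2$, (1), (2), (3); (ii) prove (5) by the shift identity for the Lyapunov norm; (iii) fix $\mathfrak{r}_\delta$ via \eqref{PesRad} and deduce (4), (7) from the elementary geometry of the exponential map; (iv) prove (6) by the $C^{1+\alpha}$ chain-rule estimate, choosing $\delta_0$ small. The main obstacle I anticipate is step (iv) together with the H\"older-in-$x$ claim $L2$: one must carefully track how the nonuniformity function $\mathfrak{R}_{\delta/4}$ enters the higher-order estimates and confirm that the specific exponent $-2/\alpha$ and the $(\mathfrak{R}_{\delta/4})^2$ factor in \eqref{PesRad} are chosen so that \emph{all} the constants in (6) are simultaneously dominated by $\delta$ on the ball of radius $\mathfrak{r}_\delta(x)$; and for $L2$ one must show the Lyapunov inner product depends H\"older-continuously on the base point, which requires summing a geometric series of terms each of which is H\"older with a constant that grows geometrically, so the convergence rate $\delta$ must beat that growth. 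Since all of this is standard Pesin theory, I would defer the routine estimates to the cited references \cite{BP, BRHZ, BRH} and only indicate the points where our weaker hypotheses (only some exponents nonzero, $E^{cs}$ not necessarily contracting) require the obvious modifications — none of which affect the argument, as already noted in the proof of Lemma \ref{LmIDHold}.
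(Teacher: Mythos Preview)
Your proposal is correct and follows essentially the same approach as the paper: the paper obtains Lemma~\ref{lyapchart} by combining Lemma~\ref{holLx} (construction of $L_{x,\delta}$ via Gram--Schmidt on H\"older frames of $E^u\oplus E^{cs}$, giving $L1$ and $L2$), Lemma~\ref{complyap} (the Lyapunov-norm comparison and shift identity yielding items (3), (5), (7)), and Lemma~\ref{basicbound} (the $C^{1+\alpha}$ chain-rule estimate giving item (6)), exactly as you outline. Your treatment of $L2$ is in fact slightly more explicit than the paper's, since you spell out that H\"older dependence of the Lyapunov inner product on $x$ is also needed for Gram--Schmidt to produce a H\"older $L_{x,\delta}$.
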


 While this lemma is standard (see the references above) we recall the proof 
 (the statement of Lemma \ref{lyapchart} is obtained by combining  
Lemmas \ref{complyap}, \ref{basicbound},\ref{holLx} from Appendix \ref{AppPesin})
in Appendix \ref{AppPesin} since
the intermediate steps in its proof are
also important in the derivation of other estimates which will be used in the paper and are described below.

We extend $\tilde f_{x,\delta}$ to all $\R^D$ by making it linear outside of the ball of radius $2\mathfrak{r}_{\delta}(x)$ and with same bounds as in Lemma \ref{lyapchart} (note that this is possible by taking a smaller $\alpha_2$ if necessary, see the definition of $\mathfrak{r}_\delta(\cdot)$).

 \subsection{Hadamard-Perron, center-stable foliation}

\begin{definition}
\label{DefFakeCS}
	Given $0<\delta<\delta_0$, $x\in LyapReg(\delta)$ and $n\geq 0$, we define the foliation 
	 $\tW^{cs,n,\delta}_{x}$ 
on $\R^D_x$ by
		by pulling back the foliation by planes parallel to $\R^{cs}_x$ via $\tilde f_{x,\delta}^{(n)}$, i.e. $$\tW^{cs,n,\delta}_{x}(\bar y)=(\tilde f_{x,\delta}^{(n)})^{-1}\left(\R^{cs}+\tilde f_{x,\delta}^{(n)}(\bar y)\right).$$ 
	Let $\tilde E^{cs,n,\delta}_x(\bar y)=T_{\bar y}\tilde W^{cs,n,\delta}_{x}(\bar y)$. 
\end{definition}
Notice that for $k\in [0,n]$, $$\tilde f_{x,\delta}^{(k)}(\tilde W^{cs,n,\delta}_{x}(\bar y))=\tilde W^{cs,n-k,\delta}_{f^kx}(\tilde f_{x,\delta}^{(k)}(\bar y)).$$
The following lemma is crucial:
\begin{lemma}\label{cs-foliation}
	There exists $\alpha_3,\delta_f>0$ such that the following holds: for every $\delta\in (0,\delta_f)$, any $x\in LyapReg(\delta)$, every $\bar y\in\R^D$, there is $\tilde\eta^{cs,n,\delta}_{x,\bar y}:\R^{cs}\to\R^u$ such that 
	$$\tilde W^{cs,n,\delta}_{x}(\bar y)=\graph\left(\tilde\eta^{cs,n,\delta}_{x,\bar y}\right),$$ and  
	$$\|D\tilde\eta^{cs,n,\delta}_{x,\bar y}\|_{C^0}\leq \frac{3\delta}{1-e^{-\lambda+\sqrt\delta}}$$ and 
	\be \label{TSpHolder}
	 [D\tilde\eta^{cs,n,\delta}_{x,\bar y}]_{C^{\alpha_3}}\leq  
	\frac{12\delta}{1-e^{-\lambda+\sqrt\delta}}\ee 
\end{lemma}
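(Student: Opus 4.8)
The statement is a graph-transform (Hadamard--Perron) result adapted to the ``fake'' center-stable foliation defined by pulling back the horizontal foliation $\{\R^{cs} + c\}$ via finitely many iterates $\tilde f^{(n)}_{x,\delta}$. The plan is to run the standard invariant-cone / graph-transform argument in the Lyapunov charts, keeping track of the fact that we only iterate $n$ times (so there is no need for convergence as $n\to\infty$, only uniform bounds along the finite orbit segment $x, fx, \dots, f^n x$).

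First I would set up the cone field: in each chart $\R^D_{f^k x} = \R^u \times \R^{cs}$, let $\cC^{cs}$ be the cone of ``almost horizontal'' directions $\{(v^u, v^{cs}) : |v^u| \le \beta_0 |v^{cs}|\}$ for a suitable small $\beta_0$ (of order $\delta/(1-e^{-\lambda})$). Using Lemma \ref{lyapchart}, items (5)--(6) — $D_0 \tilde f_{x,\delta}$ expands $\R^u$ by at least $e^{\lambda-\delta}$, is a contraction by at most $e^\delta$ on $\R^{cs}$, and $\tilde f_{x,\delta}$ differs from its linearization by a $\delta$-Lipschitz (and $\alpha_2$-Hölder-derivative) perturbation — one checks that $D\tilde f^{-1}_{x,\delta}$ maps $\cC^{cs}$ strictly inside itself, with the $\R^u$-component contracted relative to the $\R^{cs}$-component by a definite factor $\le e^{-\lambda + \sqrt\delta} < 1$ (the $\sqrt\delta$ absorbing the $\delta$-perturbation terms; this is where one fixes $\delta_f$ small). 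Consequently, for each $\bar y$, the preimage $\tilde W^{cs,n,\delta}_x(\bar y) = (\tilde f^{(n)}_{x,\delta})^{-1}(\R^{cs} + \tilde f^{(n)}_{x,\delta}(\bar y))$ — being the $(\tilde f^{(n)})^{-1}$-image of a horizontal plane, tangent to $\cC^{cs}$ everywhere — is everywhere tangent to the cone field, hence is the graph of a function $\tilde\eta^{cs,n,\delta}_{x,\bar y}: \R^{cs} \to \R^u$ with $\|D\tilde\eta^{cs,n,\delta}_{x,\bar y}\|_{C^0} \le \beta_0$; tracking the geometric series that bounds $\beta_0$ gives exactly $\frac{3\delta}{1-e^{-\lambda+\sqrt\delta}}$.

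Next I would get the Hölder bound \eqref{TSpHolder} on $D\tilde\eta^{cs,n,\delta}_{x,\bar y}$. The tangent space $\tilde E^{cs,k,\delta}$ at a point evolves under the graph transform / projectivized derivative cocycle; writing the transformation of the ``slope'' $D\tilde\eta$ as a graph-transform operator $\Gamma$ and differentiating, one gets that $\Gamma$ is a contraction on slopes (factor $e^{-\lambda+\sqrt\delta}$ as above) while the data feeding it — the blocks of $D_z \tilde f_{x,\delta}$ as $z$ varies — are $\alpha_2$-Hölder with constant $\le \delta$ (Lemma \ref{lyapchart}(6)). A standard fixed-point-with-Hölder-dependence estimate (the slope at a point depends Hölder-continuously on the base point because the cocycle contracts slopes while the coefficients vary Hölder-continuously) then yields a uniform Hölder bound on $z \mapsto \tilde E^{cs,k,\delta}_{f^kx}(z)$, with some exponent $\alpha_3 \le \alpha_2$ (possibly $\alpha_3 = \alpha_2 \cdot \frac{\text{something}}{\text{something}}$ depending on the ratio of contraction rates — this is the usual ``bunching'' loss of regularity) and constant of order $\delta/(1-e^{-\lambda+\sqrt\delta})$; unwinding to $D\tilde\eta = $ (slope as a function on $\R^{cs}$, pulled back through the graph) and using that $\tilde\eta$ itself is Lipschitz gives the bound $\frac{12\delta}{1-e^{-\lambda+\sqrt\delta}}$, the extra factor $4$ coming from composing the Hölder estimate on $E^{cs}$ with the Lipschitz change of coordinates from ``tangent space as subspace'' to ``derivative of the graphing function''.

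The main obstacle is the Hölder estimate, not the $C^0$ cone bound: one must run the graph-transform contraction argument \emph{with Hölder control}, i.e. show that the invariant section (the slope field) inherits a Hölder modulus from the Hölder modulus of the coefficients, and do so with constants uniform in $n$ and along the orbit — this is where the inequality $e^{-(\lambda-\sqrt\delta)} \cdot (\text{expansion of }\alpha_3\text{-Hölder norm under one step}) < 1$ must be arranged, which pins down the admissible range $(0,\alpha_3]$ of Hölder exponents and forces $\delta_f$ small. I expect the cleanest route is to prove a general abstract lemma — ``an $\alpha$-Hölder cocycle of hyperbolic-type linear maps has an $\alpha'$-Hölder invariant cone-section for suitable $\alpha' \le \alpha$'' — and apply it here; this is essentially the content of \cite[Ch.~5--6]{BP}, so I would cite those estimates and only indicate the modifications (finitely many iterates, one-sided charts, $E^{cs}$ not assumed contracting).
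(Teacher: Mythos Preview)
Your proposal is essentially correct and follows the same graph-transform approach as the paper: cone invariance under $D\tilde f^{-1}$ gives the $C^0$ bound on $D\tilde\eta$, and an inductive contraction-of-slopes argument with H\"older data gives \eqref{TSpHolder}. The paper carries this out concretely via Lemmas \ref{basicbound}, \ref{graphtransfcs}, \ref{distalongcs} rather than by quoting an abstract result from \cite{BP}.

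One point deserves sharpening. Your remark about a ``bunching loss of regularity'' determining $\alpha_3$ is slightly off-target. The H\"older seminorm $[D\tilde\eta^{cs,n,\delta}_{x,\bar y}]_{C^{\alpha_3}}$ measures how the tangent space varies \emph{along a single leaf} (as a function on $\R^{cs}$), not transversally. The inductive step compares slopes at two points $\bar z_1,\bar z_2$ on the \emph{same} fake-$cs$ leaf; under forward iteration these separate by at most $e^{3\delta}$ per step (this is Lemma \ref{distalongcs}/Corollary \ref{expcs}, which you do not mention but is the key ingredient). Thus the closing condition for the induction is essentially $(1+e^{-\lambda+\sqrt\delta})e^{3\alpha_3\delta}<2$, which holds for any fixed $\alpha_3\le\alpha_2$ once $\delta$ is small --- no genuine bunching constraint enters. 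Bunching would be relevant for transverse H\"older regularity of the foliation (holonomy), which is a different and harder issue addressed later in the paper (Proposition \ref{lem:abscont}). Making this distinction explicit would tighten your argument and explain why the constants come out as stated.
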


We will prove the above lemma 
in \S \ref{sec:csfol} of the appendix.

We finish this subsection with the following straightforward lemma:
\begin{lemma}\label{jacgras}
	Let $\phi:GL(\R^D)\times Grass^u(\R^D)\to\R$ be $\phi(A,E)=\log|\det(A|E)|$. Given $C_0$ there is a constant $N(C_0)$ such that if $\|A\|,\|B\|, \|A^{-1}\|,\|B^{-1}\|\leq C_0$ 
 and
	$E$ and $F$ are graphs of the maps $L_E,L_F:\R^a\to\R^b$, respectively, where
	$\R^D=\R^a\times\R^b$,
	then $$|\phi(A,E)-\phi(B,F)|\leq N(C_0) (\|A-B\|+\|L_E-L_F\|).$$ 
\end{lemma}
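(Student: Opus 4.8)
\textbf{Proof proposal for Lemma \ref{jacgras}.}

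The plan is to establish the Lipschitz estimate in two independent pieces, controlling the dependence on the linear map and on the subspace separately, and then combining via the triangle inequality: $|\phi(A,E)-\phi(B,F)| \le |\phi(A,E)-\phi(B,E)| + |\phi(B,E)-\phi(B,F)|$. For the first piece, I would fix the subspace $E$ and view $A\mapsto \log|\det(A|_E)|$ as a composition: restrict $A$ to $E$ to obtain a linear map $E\to \R^D$, pass to the Gram matrix (or equivalently use the fact that $|\det(A|_E)|$ equals the product of singular values of $A|_E$), and take the logarithm. The map $A\mapsto A|_E$ is linear, hence $1$-Lipschitz in operator norm; the map $T\mapsto \log|\det T|$ on the open set of invertible operators is smooth with derivative $T^{-1}$, so on the region where $\|T\|,\|T^{-1}\|$ are bounded in terms of $C_0$ it is Lipschitz with a constant depending only on $C_0$. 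The bound $\|A^{-1}\|\le C_0$ ensures $A|_E$ is bounded away from being singular (its smallest singular value is at least $C_0^{-1}$), and $\|A\|\le C_0$ bounds the largest singular value, so $\log|\det(A|_E)|$ stays in a compact interval and the local Lipschitz constant is uniform. This gives $|\phi(A,E)-\phi(B,E)|\le N_1(C_0)\|A-B\|$.

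For the second piece, fix $B$ and let $E,F$ be the graphs of $L_E,L_F:\R^a\to\R^b$. A convenient parametrization of the graph of $L$ is the injection $\iota_L:\R^a\to\R^D$, $v\mapsto (v,L_E v)$; then $\phi(B,E)=\log|\det(B\iota_{L_E})| - \tfrac12\log\det(I+L_E^*L_E)$, since the Jacobian of $B$ restricted to the subspace, measured with respect to the induced Euclidean structure, differs from the determinant of $B\iota_{L_E}$ (computed in the ambient coordinates on $\R^a$) by the volume distortion of the parametrization $\iota_{L_E}$, which is exactly $\sqrt{\det(I+L_E^*L_E)}$. Both terms on the right-hand side are smooth functions of $L_E$: the map $L\mapsto B\iota_L$ is affine and $1$-Lipschitz into the space of $D\times a$ matrices, the relevant determinant is bounded below using $\|B^{-1}\|\le C_0$ (the singular values of $B\iota_L$ on the unit graph directions are controlled from below), and $L\mapsto \log\det(I+L^*L)$ is smooth with a Lipschitz constant that need not even depend on $C_0$ provided we note that $\|L_E\|,\|L_F\|$ can be taken bounded — which they are, because $E=\graph(L_E)$ being a genuine $a$-dimensional subspace on which a map with $\|B\|,\|B^{-1}\|\le C_0$ acts forces no constraint by itself, but the statement is applied only to graphs with uniformly bounded slope; alternatively one restricts attention to the relevant range and absorbs the bound into $N(C_0)$. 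This yields $|\phi(B,E)-\phi(B,F)|\le N_2(C_0)\|L_E-L_F\|$.

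Adding the two estimates and setting $N(C_0)=\max(N_1(C_0),N_2(C_0))$ (or their sum) gives the claim. I do not expect any genuine obstacle here — this is a soft continuity-of-determinant argument — but the one point requiring a little care is the implicit boundedness of the slopes $\|L_E\|$: one should either include this in the hypothesis (it holds in all applications in the paper, where $E,F$ are close to the $\R^a$-plane) or observe that the estimate is vacuous/trivial when the slopes are large by a separate compactness remark. The cleanest writeup simply notes that $\phi$ extends to a $C^1$ function on $\{\|A\|,\|A^{-1}\|\le C_0\}\times \Gr^u(\R^D)$ in the chart given by graphs, and any $C^1$ function on a set with compact closure is Lipschitz there.
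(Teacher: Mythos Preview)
The paper does not prove this lemma---it is stated as ``straightforward'' with no argument---so there is no paper proof to compare against; your approach via the triangle inequality and smoothness of $\log|\det|$ on the relevant region is correct and is the natural one. Your concern about bounded slopes is valid: the cleanest resolution is to note that in every application in the paper (property \textbf{F2} in Lemma~\ref{lem:ff} and the Jacobian computation in Proposition~\ref{lem:abscont}) the subspaces are tangent to unstable or admissible manifolds and satisfy $\|L_E\|,\|L_F\|\le 1$, so one may add that hypothesis at no cost; alternatively one can argue via compactness of the full Grassmannian together with the global Lipschitz continuity of the chart $L\mapsto\graph(L)$ into the Grassmannian metric.
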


In the proof of our main result (see the beginning of Section \ref{sec:fake}) we work with fixed  $\epsilon>0$. We always assume that   $\delta>0$ (from the above results) is small enough in terms of $\epsilon$ but fixed (e.g $\delta= \epsilon^{10}$ would do). Therefore, we will omit it in the notation below.

 Denote by $B^{cs}(z,R)$ the ball of radius $R>0$ centered at $z\in \R^{cs}$, with an analogous notation for $B^u(z,R)$. If $z=0$ we simply denote $B^{cs}(R)=B^{cs}(0,R)$ with an analogous notation for $B^{u}(R)$.

Let
\be
 \label{QXN}
 Q_x:=h_x(B_{\mathfrak{r}_{\delta}(x)}(0))\text{ and }Q^{(n)}_x:=\bigcap_{k=0}^{n-1}f^{-k}Q_{f^kx};
 \ee
 notice that $Q^{(n)}_x=h_x(\bigcap_{k=0}^{n-1}(\tilde f_x^{(k)})^{-1}(B_{\mathfrak{r}_{\delta}(f^kx)}))$. 
We also define  
\be \label{QXPar1}
Q_x(r_u,r_{cs}):=h_x(B^u(r_u)\times B^{cs}(r_{cs})), \ee
\be \label{QXPar2} Q_x(r)=Q_x(r,r),\quad Q_x(A,r)=h_x(A\times B^{cs}(r)). \ee 
Let $d^u=\dim E^u$ and $d^{cs}=\dim E^{cs}$.

In the paper it will be more convenient to work with cubes rather than balls (i.e. balls in the {\em maximum} norm $\|x\|_{\infty}=\max |x_i|$). Let $C^u(R)\subset \R^u$ be a cube centered at $0$ of side length $R$, i.e. a  ball centered at $0$ of radius $R/2$ in the metric $\|\cdot\|_\infty$.

\begin{lemma}\label{lem:unstman}
({Lemma \ref{fakeu-foliation}} and Corollary \ref{CrWuBC})
	 There are constants $C_0, \alpha_4>0$ such that for every $x\in LyapReg$,
$$
\tW^u_x:=\{y\in\R^D:\limsup\frac{1}{n}\log|\tilde{f}_x^{(n)}(y)|<0\}
$$ 
is the graph of a $C^{1+\alpha_4}$ function 
\be\label{eq:etx}
\eta=\eta^u_x:\R^u\to\R^{cs}
\ee with 
\be\label{eq:etau}
\|\eta\|_{C^{1+\alpha_4}}\leq C_0,\;\;\eta(0)=0,\;\; D_0\eta=0.
\ee
Moreover 
	if $\bar z_1,\bar z_2\in\tW_{x}^{u}$ then $$|(\tilde f_{f^{-1}x})^{-1}(\bar z_1)-(\tilde f_{f^{-1}x})^{-1}(\bar z_2)|\leq e^{(-\lambda+2\delta)}|\bar z_1-\bar z_2|;$$ and if
	 we define
\be \label{WuR}
\tW^u_{x,R}:=\graph(\eta,C^u(R)),\quad W^u_{x,R}=h_x(\tW^u_{x,R}) \ee
	then for every $x\in Lyapreg$ and $R>0$, 
	$$(\tilde f_x)^{-1}\tW^u_{fx,R} \subset \tW^u_{x,e^{(-\lambda+3\delta)}R}. $$
	\end{lemma}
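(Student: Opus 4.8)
The statement is essentially a standard Hadamard--Perron graph transform argument, carried out in the Lyapunov charts of Lemma \ref{lyapchart}, and specialized to the ``fake'' unstable manifold. The plan is to build $\tW^u_x$ as the attracting fixed point of a graph transform operator acting on a suitable space of Lipschitz (in fact $C^{1+\alpha_4}$) graphs over $\R^u$, using the uniform hyperbolicity estimates built into $\tilde f_{x,\delta}$: the expansion $e^{\lambda-\delta}$ on $\R^u$, the contraction bound $e^\delta$ on $\R^{cs}$ (item (5) of Lemma \ref{lyapchart}), together with the $C^1$ smallness $\mathrm{Lip}(\tilde f_{x,\delta}-D_0\tilde f_{x,\delta})\le\delta$ and the H\"older bound $\mathrm{H\ddot ol}_{\alpha_2}(D\tilde f_{x,\delta})\le\delta$ (item (6)). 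Since all of this is already packaged as Lemma \ref{fakeu-foliation} and Corollary \ref{CrWuBC} in the appendix, the proof here can be short: I would first invoke those appendix results to produce $\eta=\eta^u_x$ with $\eta(0)=0$, $D_0\eta=0$, and $\|\eta\|_{C^{1+\alpha_4}}\le C_0$, and to identify $\graph(\eta)$ with the set $\tW^u_x$ of points with negative backward Lyapunov exponent.

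\textbf{Key steps.} First, I would recall that the linearized map $D_0\tilde f_{x,\delta}$ preserves the splitting $\R^u\oplus\R^{cs}$ (item (2) of Lemma \ref{lyapchart}), so that the candidate manifold through $0$ is tangent to $\R^u$, forcing $D_0\eta=0$; invariance of the class of admissible graphs under the graph transform is where the estimates of item (6) enter, and the $C^{1+\alpha_4}$ control comes from the standard fixed-point-in-a-H\"older-ball argument (this is exactly Lemma \ref{fakeu-foliation}). Second, for the backward contraction claim: if $\bar z_1,\bar z_2\in\tW^u_x$ then, because $\tW^u_x$ is backward-invariant, $(\tilde f_{f^{-1}x})^{-1}(\bar z_i)\in\tW^u_{f^{-1}x}$, and on an unstable-admissible graph the map $(\tilde f_{f^{-1}x})^{-1}$ is dominated by its action in the $\R^u$ direction; combining the inverse expansion bound from item (5) (which gives $|D_0\tilde f^{-1}(v)|\le e^{-(\lambda-\delta)}|v|$ on the relevant cone) with the $C^1$ error $\le\delta$ from item (6) and the small slope of $\eta$ yields the factor $e^{-\lambda+2\delta}$, after absorbing the slope and nonlinearity terms into the one extra $\delta$. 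Third, for the inclusion $(\tilde f_x)^{-1}\tW^u_{fx,R}\subset\tW^u_{x,e^{(-\lambda+3\delta)}R}$: apply the previous estimate pointwise to the two coordinates of points of $\tW^u_{fx,R}$ relative to $0$ (noting $0\in\tW^u_{x}$ and $0\in\tW^u_{fx}$, and $\tilde f_x(0)=0$), so the $\R^u$-coordinate of any preimage point shrinks by $e^{-\lambda+2\delta}$; the loss of one further $\delta$ and the passage to the cube $C^u(\cdot)$ (as opposed to a Euclidean ball) accounts for the exponent $e^{-\lambda+3\delta}$ in the statement of $\tW^u_{x,R}$ via \eqref{WuR}, and one checks the graph over the smaller cube is still $\graph(\eta,C^u(e^{(-\lambda+3\delta)}R))$ because $\eta$ itself does not change --- only its domain shrinks.

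\textbf{Main obstacle.} The only genuinely delicate point is bookkeeping the several $\delta$'s: one must verify that the slope bound on $\eta$ (of size $O(\delta)$), the nonlinearity $\mathrm{Lip}(\tilde f-D_0\tilde f)\le\delta$, and the cube-versus-ball distortion can all be absorbed so that $e^{-\lambda+2\delta}$ and $e^{-\lambda+3\delta}$ are valid for all small $\delta$, i.e. that the implied constants are genuinely universal and not hiding a dependence on $\mathfrak r_\delta(x)$. This is routine but must be done carefully, because later sections use these precise exponents. I expect no conceptual difficulty: the statement is a clean consequence of the Lyapunov-chart estimates already established, and the heavy lifting (existence, $C^{1+\alpha_4}$ regularity, characterization via negative exponent) is deferred to Lemma \ref{fakeu-foliation} and Corollary \ref{CrWuBC} in the appendix.
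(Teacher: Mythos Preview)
Your proposal is correct and matches the paper's approach: the paper gives no separate proof of Lemma~\ref{lem:unstman} in the main text, instead citing Lemma~\ref{fakeu-foliation} and Corollary~\ref{CrWuBC} in the appendix, which in turn are stated without proof as the unstable-direction analogues of Lemma~\ref{cs-foliation} and Corollary~\ref{expcs} (the graph-transform argument in Lyapunov charts that you outline). Your elaboration of the backward-contraction and cube-inclusion steps, with the bookkeeping of the extra $\delta$'s, is exactly the kind of detail the paper leaves implicit.
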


\medskip
\noindent Set
\be\label{WuLoc}
W^u_x= W^u_{x,\mathfrak{r}_\delta(x)}.
\ee
Notice that $f^{-1}W^u_{x}\subset W^u_{f^{-1}x}$ since by Lemma \ref{lem:unstman} $f^{-1}(W^u_{x,\mathfrak{r}_\delta(x)})\subset W^u_{f^{-1}x,\mathfrak{r}_\delta(f^{-1} x)}(f^{-n} x) $. In some papers $W^u_x$ is called {\em local} unstable manifold of $x$
and is denoted $W^u_{x, loc}$. We do not use the subscript {\em loc} since we will not need
to consider global unstable manifolds of $x$.

By \cite[Theorem 7.1.1]{BP} (see also Corollary \ref{CrWuBC})
 there exists a measurable function $\fK(x)$ such that the size of the unstable
 manifold of $x$ is greater than $1/\fK(x)$ and moreover for each $y\in W^u_{x}$ 
\be\label{BackContrfK} 
  d(f^{-n} x, f^{-n} y)\leq \fK(x) e^{(\delta-\lambda) n} d(x, y) 
\ee  
   i.e. $W^u_{x}$ is exponentially contracted. Let 
 \be\label{eq:hpk}
 \tilde{P}_\tau=\{x: \fK(x)\leq \tau\}.
 \ee

In what follows for a given $\epsilon$ we will pick $\tau$ small enough  so that if we define (see \eqref{eq:hpc})
 \be\label{eq:pc}
 P_\tau:=\hat{P}_\tau\cap \tilde{P}_{\tau^{-1}},
 \ee
then $\mu(P_\tau)\geq 1-\eps^{\bb}$, where $\bb=10^{10}$. 

We start with the following observation:
\begin{lemma}\label{lem:boder} 
There exists a constant $K'$ such that for every $x\in LyapReg$, 
$$
\|D_0\tilde{f}_{x}\|, \|D_0\tilde{f}_{x}^{-1}\|<K'.
$$
\end{lemma}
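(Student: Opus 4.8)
The statement asserts a uniform (over all Lyapunov regular points, not just a Pesin set) bound on $\|D_0\tilde f_x\|$ and $\|D_0\tilde f_x^{-1}\|$. The first thing to notice is that this cannot come from the Lyapunov-norm construction alone, since the Lyapunov charts $h_{x,\delta}$ and hence $\tilde f_{x,\delta}$ carry the nonuniform constant $\mathfrak{R}_\delta$ through $\mathfrak{r}_\delta(x)$, which is unbounded as $x$ ranges over $LyapReg$. The point must be that the operator \emph{norm} of $D_0\tilde f_x$ is nevertheless controlled, because the chart is an isometry for the Lyapunov metric $|\cdot|'_{x,\delta}$ and the action of $D_0\tilde f_x$ on the two blocks $\R^u$ and $\R^{cs}$ is controlled in that metric. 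So the plan is to unwind the definition $\tilde f_x = h_{fx}^{-1}\circ f\circ h_x$, write $D_0\tilde f_x = L_{fx,\delta}^{-1}\circ Df_x\circ L_{x,\delta}$ (using $h_{x,\delta}=\exp_x\circ L_{x,\delta}$ and $D_0\exp_x=\mathrm{id}$), and then estimate the operator norm of this composition using property $L1$ of Lemma \ref{lyapchart}, i.e. that $L_{x,\delta}$ is an isometry from the standard metric to $|\cdot|'_{x,\delta}$.

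\textbf{Key steps.} First I would reduce to estimating $\|Df_x\|'$, the operator norm of $Df_x$ measured with the Lyapunov norms $|\cdot|'_{x,\delta}$ on the domain and $|\cdot|'_{fx,\delta}$ on the target; by $L1$ this equals $\|D_0\tilde f_x\|$, and similarly $\|(D_0\tilde f_x)^{-1}\| = \|(Df_x)^{-1}\|'$. Second, I would bound the action on each Oseledets block separately, exploiting that the Lyapunov inner product makes $E^u(x)$ and $E^{cs}(x)$ orthogonal and that $Df_x$ preserves the splitting. On $E^{cs}(x)$: for $v\in E^{cs}(x)$, $|D_xf v|'^2_{fx,\delta} = \sum_{m\geq 0}|D_{fx}f^m(D_xf v)|^2 e^{-2\delta|m|} = \sum_{m\geq 1}|D_xf^m v|^2 e^{-2\delta(m-1)} \leq e^{2\delta}\sum_{m\geq 1}|D_xf^m v|^2 e^{-2\delta m}\leq e^{2\delta}|v|'^2_{x,\delta}$, so the $cs$-block has Lyapunov norm at most $e^{\delta}$. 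On $E^u(x)$: for $u\in E^u(x)$, $|D_xf u|'^2_{fx,\delta} = \sum_{m\leq 0}|D_{fx}f^m(D_xf u)|^2 e^{-2\lambda m - 2\delta|m|} = \sum_{m\leq 1}|D_xf^m u|^2 e^{-2\lambda(m-1)-2\delta|m-1|}$; reindexing and using that $|D_xf^{m} u|\leq \|Df\|_{C^0}^{?}$ is not needed — one sees this sum is comparable to $e^{2\lambda}|u|'^2_{x,\delta}$ up to a factor bounded in terms of $\lambda$ and $\delta$ only (the extra $m=1$ term is $|D_xf u|^2 \leq \|f\|_{C^1}^2|u|^2 \leq \|f\|_{C^1}^2 |u|'^2_{x,\delta}$). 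Since $E^u,E^{cs}$ are $|\cdot|'$-orthogonal and both preserved by $Df_x$, the full operator norm $\|D_0\tilde f_x\|$ is the max of the two block norms, hence bounded by a constant $K'$ depending only on $\lambda$, $\delta$, and $\|f\|_{C^1}$. Third, for $\|(D_0\tilde f_x)^{-1}\|$ I would run the symmetric estimates: on the $u$-block the inverse contracts (the Lyapunov norm of $(Df_x|_{E^u})^{-1}$ is at most $e^{-\lambda+\delta}$ or so by the telescoping, using properties (4)–(5) of Lemma \ref{lyapchart} in the model form, or directly from the definition of $|\cdot|'$), and on the $cs$-block the inverse is bounded by $\|(Df)^{-1}\|_{C^0}$ times a $\delta$-factor via the analogous one-term-correction argument.

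\textbf{Main obstacle.} The only real subtlety is the $E^u$-direction bound, where the defining series for $|\cdot|'_{x,\delta}$ runs over $m\leq 0$ with weights $e^{-2\lambda m}$ that \emph{grow} as $m\to-\infty$; one must check that shifting the base point by one step (which shifts the summation range and the weights) changes the series only by a bounded multiplicative factor, and that the single new term introduced is controlled by $\|f\|_{C^1}$ (resp. $\|f^{-1}\|_{C^1}$) rather than by a nonuniform constant. This is exactly the same telescoping bookkeeping that underlies properties (4) and (5) in Lemma \ref{lyapchart}, so in fact the cleanest route is to cite those: property (5) gives $|D_0\tilde f_x(v)|\leq e^\delta|v|$ on $\R^{cs}$ and $|D_0\tilde f_x(v)|\geq e^{\lambda-\delta}|v|$ on $\R^u$, and to get the \emph{upper} bound on the $u$-block (and the lower bound on the $cs$-block) one uses the same inequalities applied to $\tilde f_x^{-1}$ together with $\|D_0\tilde f_x\|\leq (1+\delta)\|D_0 f_x\|'$ — but $\|D_0 f_x\|'$ on $\R^u$ is bounded because the Lyapunov norm dominates the Euclidean norm on $E^u$ and $Df$ is $C^0$-bounded. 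I expect the write-up to be short: unwind the definition, invoke $L1$ and the block structure, and absorb everything into a constant depending on $\|f\|_{C^1}$, $\lambda$, $\delta$.
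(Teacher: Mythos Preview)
Your proposal is correct and is essentially the same argument the paper uses. The paper's one-line proof just cites Lemma~\ref{complyap} in the appendix, whose proof is exactly the computation you sketch: pass to the Lyapunov norm via the isometry $L_{x,\delta}$ (property $L1$), use that $Df_x$ is block-diagonal with respect to the $|\cdot|'$-orthogonal splitting $E^u\oplus E^{cs}$, and telescope the defining series on each block; the ``extra'' term coming from the index shift on the $E^u$-block is $|D_xf u|_{fx}^2\leq \|Df\|_{C^0}^2|u|_x^2\leq \|Df\|_{C^0}^2|u|'^2_x$ (since the $m=0$ term already gives $|u|_x\leq |u|'_x$), yielding $\|D_xf\|'_{x\to fx}\leq \sqrt{2}\,\|Df\|_{C^0}$ and symmetrically for the inverse. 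Your ``main obstacle'' paragraph gets a bit tangled at the end, but the clean route you identify---controlling the single new term by $\|Df\|_{C^0}$ and absorbing the rest into the shifted series---is precisely what the paper does.
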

\begin{proof}
This follows from (\ref{eq:compla}) in Lemma \ref{complyap}.
\end{proof}

\subsection{Cones}
Let 
\be \label{CuCones} \cC^{u}_\fb=\{v=v^u+v^{cs}\in\R^D: |v^{cs}|\leq \fb|v^u|\}, \ee
and
\be \label{CsCones} \cC^{cs}_\fb=\{v=v^u+v^{cs}\in\R^D: |v^{u}|\leq \fb|v^{cs}|\}\ee
 be the $\fb-$cones around $\R^u$ and $\R^{cs}$.  For $x\in LyapReg$ and $y\in Q_x$, define the 
 cones
  \be \label{ShiftedCones}
  \cC^u_x(y)=(D_yh_x^{-1})h_x \cC^u_{1/2}, \quad \cC^{cs}_x(y)=(D_yh_x^{-1})h_x \cC^{cs}_{1/2}. \ee 
 
 We have the following:

\begin{lemma}\label{LmCones}
For every $\delta,\tau>0$ there exists $n_{\delta,\tau}$ such that for every $n\geq n_{\delta,\tau}$, every $x\in \hP_\tau^\delta \cap f^{-n}\hP_\tau^\delta$, and every  $y\in Q^{(n)}_x$ we have
	\begin{enumerate}
		\item if $v\in  \cC^u_x(y)$ 
		$\|D_yf^nv\|\geq \tau^2e^{n(\lambda-3\delta)}\|v\|$;
		\item if $v\in  \cC^{cs}_x(y)$  then  for each $0\leq k\leq n$ it holds
		$\|D_yf^k v\|\leq \tau^{-2}e^{2\delta k}\|v\|$;
		\item $Df_x^n(\cC^u_x(y))\subset \cC^u_{f^n(x)}(f^n(y))$;
		\item $Df_x^n(\cC^{cs}_x(y))\supset \cC^{cs}_{f^n(x)}(f^n(y))$;
	\end{enumerate}
\end{lemma}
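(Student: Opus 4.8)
\textbf{Proof plan for Lemma \ref{LmCones}.}
The plan is to work entirely in the Lyapunov charts, where the relevant quantitative hyperbolicity is already encoded in Lemma \ref{lyapchart}. Fix $x\in \hP_\tau^\delta\cap f^{-n}\hP_\tau^\delta$ and $y\in Q^{(n)}_x$. The point is to control the action of $D\tilde f_x^{(n)}$ on cones around $\R^u$ and $\R^{cs}$, and then transfer these estimates back to $M$ via the maps $h_{f^kx}$, using part (7) of Lemma \ref{lyapchart} (together with part (3) there and the lower bound $\mathfrak{r}_\delta\ge\tau$ on $\hP_\tau^\delta$) to absorb the distortion of $h$ into the factors $\tau^{\pm 2}$. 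Thus the whole statement reduces to a cone estimate for $\tilde f_x^{(n)}$ in coordinates.

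First I would recall the standard fact that, because $\tilde f_{z,\delta}$ is a $\delta$-perturbation of its linear part $D_0\tilde f_{z,\delta}$ in the $C^{1}$ sense (part (6) of Lemma \ref{lyapchart}), and the linear part $e^{\lambda-\delta}$-expands $\R^u$ and $e^{\delta}$-contracts/mildly-expands $\R^{cs}$ (part (5)), the cone field $\{\cC^u_{1/2}\}$ is strictly forward-invariant under each $D_y\tilde f_{z,\delta}$ for $y$ in the chart domain, with vectors inside it expanded by at least $e^{\lambda-2\delta}$ per step, while $\{\cC^{cs}_{1/2}\}$ is strictly backward-invariant with vectors inside contracted by at most $e^{2\delta}$ per step (and more precisely, stay inside an even narrower cone after one step, so the $1/2$-aperture is preserved along the orbit segment $0\le k\le n$). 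Iterating along the pseudo-orbit $x, fx,\dots,f^nx$, all of whose first $n$ points lie in the chart domains because $y\in Q^{(n)}_x$ (so $\tilde f_x^{(k)}(y)$ stays in $B_{\mathfrak r_\delta(f^kx)}(0)$), gives: invariance of the $\cC^u$-cone and expansion by $e^{n(\lambda-2\delta)}$ for (1) and (3); and for the $\cC^{cs}$-cone, invariance under $(D\tilde f^{(k)})^{-1}$ and the bound $e^{2\delta k}$ on the norm for $0\le k\le n$, which gives (2) and (4). The slightly sharper exponent $\lambda-3\delta$ in (1) (rather than $\lambda-2\delta$) is exactly what the extra $h$-distortion factors $\tau^{\pm 2}$, which are independent of $n$, cost after one extra step's worth of room — alternatively one absorbs the loss by taking $n$ large, which is why the statement only claims this for $n\ge n_{\delta,\tau}$.

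The transfer back to $M$ is then routine: for (3) and (4) one uses $Df_x^n = D h_{f^nx}\circ D\tilde f_x^{(n)}\circ (Dh_x)^{-1}$ and the definition \eqref{ShiftedCones} of the shifted cones, noting that the $1/2$-aperture in the chart maps to exactly $\cC^u_x(y)$ resp. $\cC^{cs}_x(y)$ by construction; for (1) and (2) one additionally uses $\|Dh_{f^kx}^{\pm1}\|\le\mathfrak r_\delta^{-1}(f^kx)\le\tau^{-1}$ on the Pesin set to pick up the two factors of $\tau^{\pm1}$, and for (2) one must also control the intermediate points $f^kx$ — but these need not lie in $\hP_\tau^\delta$; here one instead runs the estimate purely in charts using that $D_0\tilde f$ and $D_0\tilde f^{-1}$ are uniformly bounded (Lemma \ref{lem:boder}), so the per-step bound $e^{2\delta}$ on $\cC^{cs}$-vectors holds with no $\tau$-dependence, and the single application of $h_x^{-1}$ at the start and $h_{f^kx}$ at the end contributes $\tau^{-2}$. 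The main obstacle, and the only genuinely delicate point, is precisely this bookkeeping for part (2): making sure the estimate along $0\le k\le n$ does not require $f^kx$ to be a good Pesin point for intermediate $k$, which is handled by keeping the norm estimate in charts and only converting at the endpoints. Everything else is the standard cone-invariance argument for a $C^1$-small perturbation of a hyperbolic linear map, carried out uniformly over the chart domains.
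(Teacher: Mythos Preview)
Your approach is correct and essentially identical to the paper's: the paper simply says the lemma is ``an immediate consequence of the definition of $h_x$'' and Lemma~\ref{cones}, where Lemma~\ref{cones} is exactly the per-step in-chart cone estimate you derive from Lemma~\ref{lyapchart}(5)--(6). One minor bookkeeping correction for part (2): at intermediate $k$ you cannot bound $\|Dh_{f^kx}\|$ by $\tau^{-1}$ directly (since $f^kx\notin\hP_\tau^\delta$ in general), but the slow variation $\mathfrak{r}_\delta(f^kx)\ge e^{-\delta k}\tau$ gives $\|Dh_{f^kx}\|\le\tau^{-1}e^{\delta k}$, so the honest exponent is $3\delta k$ rather than $2\delta k$ --- harmless for the applications, and consistent with the slack already present in part~(1).
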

 The proof is an immediate consequence of the definition of $h_x$ and the following Lemma:

\begin{lemma}\label{cones}
There is $\alpha_5>0$ such that for every ${\fb} <1$ there is $C_0>0$  such that for every $\delta\in (0,\delta_0)$ there exists $r_\delta>0$ such that for every $r\in (0,r_{\delta})$ satisfying $C_0r^{\alpha_5}<{\fb}<1$ every $x\in LyapReg$ and $|w|<r$, 
	$$|D_w\tilde f_x(v)|\geq e^{\lambda-2\delta}\text{ for } v\in \cC^u_{\fb},\quad
	|D_w\tilde f_x (v)|\leq e^{2\delta}\text{ for } v\in \cC^{cs}_{\fb},$$ 
	$$D_y\tilde f_x\cC^{u}_{\fb}\subset \cC^u_{e^{-\lambda+4\delta}{\fb}}\,\,\,\,\text{and}\,\,\,\,\cC^{cs}_{\fb}\subset D_y\tilde f_x \cC^{cs}_{e^{\lambda+4\delta}{\fb}}.$$
\end{lemma}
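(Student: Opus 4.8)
\textbf{Proof proposal for Lemma \ref{cones}.}

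The plan is to work entirely in the Lyapunov charts, where by construction $\tilde f_x$ has derivative at the origin close to a diagonal block form: by item (5) of Lemma \ref{lyapchart}, $D_0\tilde f_x$ expands $\R^u$ by at least $e^{\lambda-\delta}$ and contracts-or-nearly-preserves $\R^{cs}$ by at most $e^{\delta}$, and by items (1)--(2) these subspaces are $D_0\tilde f_x$-invariant (since $L_{x,\delta}$ carries $\R^u,\R^{cs}$ to the Oseledets spaces $E^u(x),E^{cs}(x)$ which are genuinely $Df$-invariant). So the statement is a standard invariant-cone computation for a map which at the base point is block-diagonal with a definite expansion/contraction gap, plus an error term coming from the H\"older continuity of $D\tilde f_x$. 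First I would fix the block decomposition $D_0\tilde f_x = A\oplus B$ with $\|A^{-1}\|\le e^{-\lambda+\delta}$ and $\|B\|\le e^{\delta}$, and note that for $|w|<r$ we have $\|D_w\tilde f_x - D_0\tilde f_x\|\le \delta r^{\alpha_2}$ by item (6) (H\"older constant $\le\delta$), so taking $\alpha_5=\alpha_2$ and $r$ small the perturbation is controlled by (a multiple of) $r^{\alpha_2}<\fb$.

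Next I would carry out the three estimates. For the expansion bound: if $v=v^u+v^{cs}\in\cC^u_\fb$, write $D_w\tilde f_x(v) = A v^u + B v^{cs} + E(v)$ where $\|E(v)\|\le \delta r^{\alpha_2}|v|$; the $\R^u$-component of $Av^u$ has norm $\ge e^{\lambda-\delta}|v^u|\ge e^{\lambda-\delta}|v|/(1+\fb)$, while the other contributions to the $\R^u$-component are bounded by $\fb e^{\delta}|v^{cs}|$-type and $\delta r^{\alpha_2}$-type terms, all small once $\fb$ and $r$ are small; hence $|D_w\tilde f_x(v)|\ge e^{\lambda-2\delta}|v|$ after absorbing the errors into the one free $\delta$ in the exponent. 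The contraction bound $|D_w\tilde f_x(v)|\le e^{2\delta}|v|$ for $v\in\cC^{cs}_\fb$ is the dual computation, using $\|B\|\le e^\delta$ and that the $\R^u$-component is small. For the cone inclusions: given $v\in\cC^u_\fb$, I estimate the ratio $|(\,D_w\tilde f_x v\,)^{cs}|/|(\,D_w\tilde f_x v\,)^u|$; the numerator is at most $e^{\delta}|v^{cs}| + \delta r^{\alpha_2}|v| \le (e^{\delta}\fb + C\delta r^{\alpha_2})|v^u|$ and the denominator is at least $(e^{\lambda-\delta} - \fb e^{\delta} - C\delta r^{\alpha_2})|v^u|$, so the ratio is at most $e^{-\lambda+4\delta}\fb$ provided $\fb$ and $r$ are small enough (this is where the constraint $C_0 r^{\alpha_5}<\fb<1$ is used: it guarantees the $r^{\alpha_2}$ error is dominated by $\fb$, so it only costs us an extra factor absorbed in the $e^{4\delta}$). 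The last inclusion $\cC^{cs}_\fb\subset D_w\tilde f_x\,\cC^{cs}_{e^{\lambda+4\delta}\fb}$ follows by applying the cone-contraction statement to $\tilde f_x^{-1}$ (whose chart bounds are symmetric by item (6), which also bounds $\mathrm{Lip}(\tilde f^{-1}_x - D_0\tilde f^{-1}_x)$), or equivalently by a direct surjectivity argument: $D_w\tilde f_x$ is invertible with controlled inverse, and one checks that the preimage of $\cC^{cs}_\fb$ lies in $\cC^{cs}_{e^{\lambda+4\delta}\fb}$.

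The only mild subtlety — and the step I would be most careful about — is bookkeeping the error terms so that all of them genuinely get absorbed into the fixed extra multiples of $\delta$ in the exponents ($e^{\lambda-2\delta}$, $e^{2\delta}$, $e^{\pm\lambda+4\delta}$) uniformly over $x\in LyapReg$. The point is that the chart bounds in Lemma \ref{lyapchart} are uniform in $x$ (the H\"older constant of $D\tilde f_x$ is $\le\delta$ with no $x$-dependent prefactor, because that is exactly what the Lyapunov rescaling buys us), so the constants $C_0$ and $r_\delta$ can indeed be chosen depending only on $\fb$, $\delta$, and the ambient dimension $D$; the linear-extension of $\tilde f_x$ outside the ball of radius $2\mathfrak r_\delta(x)$ made earlier means we never need to worry about the domain. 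With $\alpha_5=\alpha_2$ fixed first, then $C_0=C_0(\fb)$, then $r_\delta$, and finally the admissible $r<r_\delta$ with $C_0 r^{\alpha_5}<\fb$, the four displayed inequalities follow from the elementary estimates above. \qed
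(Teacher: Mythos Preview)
Your proposal is correct and follows precisely the approach the paper indicates: its own proof is the single sentence ``This is a straightforward consequence of $f$ being $C^{1+\alpha}$ and Lemma \ref{lyapchart}.'' Your argument fills in exactly the standard cone computation that sentence points to, using items (5) and (6) of Lemma \ref{lyapchart} (block-diagonal expansion/contraction of $D_0\tilde f_x$ plus the $\delta$-small H\"older perturbation of $D\tilde f_x$) with $\alpha_5=\alpha_2$.
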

\begin{proof}
This is a straightforward consequence of $f$ being $C^{1+\alpha}$ and Lemma \ref{lyapchart}.
\end{proof}






\subsection{Conditional measure along unstables}
For $x\in LyapReg$, we let $m^u_x$ to be Lebesgue measure on 
 $W^u_x$. 

\begin{lemma}\cite[Thm 8.6.8 and its proof and Thm 8.6.13 and Thm 9.3.4]{BP}
\label{lem:abscont00}
	For $x\in LyapReg$ and $z\in W^u_x$, we define  $\rho(z,\cdot):W^u_z\to\R$, $$\rho(z,y)=\prod_{j\geq 1}\frac{\det\left(Df_{f^{-n}(z)}|E^u(f^{-n}(z))\right)}{\det\left(Df_{f^{-n}(y)}|E^u(f^{-n}(y))\right)},$$ it is H\"older continuous. Moreover, defining $\rho_x:=\rho(x,\cdot)$ we have that for every $\tau$, $\hP_\tau\ni x\to \rho_x$ is H\"older continuous and $\rho(x,y)\rho(y,z)=\rho(x,z)$.
\end{lemma}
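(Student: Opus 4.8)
This is essentially \cite[Thm 8.6.8 and its proof, Thm 8.6.13, Thm 9.3.4]{BP}, so the plan is to assemble the ingredients already available to us. Write the $j$-th factor of the product as $a_j(z)/a_j(y)$, where $a_j(p):=\det\big(Df_{f^{-j}(p)}\,\big|\,E^u(f^{-j}(p))\big)$, so that formally $\log\rho(z,y)=\sum_{j\ge1}\big(\log a_j(z)-\log a_j(y)\big)$; everything reduces to controlling these summands.

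First I would establish that $p\mapsto\log\det(Df_p|E^u(p))$ is H\"older continuous along each unstable leaf: $f$ is $C^{1+\alpha}$, and along $W^u_x$ the spaces $E^u$ are the tangent planes of the $C^{1+\alpha_4}$ graph of Lemma \ref{lem:unstman}, hence vary in an $\alpha_4$-H\"older way. This yields some $\alpha'>0$ (a power of $\min(\alpha,\alpha_4)$) and a constant, uniform on each $\hP_\tau$, with $|\log a_j(z)-\log a_j(y)|\le C\,d(f^{-j}z,f^{-j}y)^{\alpha'}$. Combining with the exponential backward contraction \eqref{BackContrfK}, $d(f^{-j}z,f^{-j}y)\le\fK(x)e^{(\delta-\lambda)j}d(z,y)$, gives $|\log a_j(z)-\log a_j(y)|\le C\fK(x)^{\alpha'}e^{\alpha'(\delta-\lambda)j}d(z,y)^{\alpha'}$, which is summable in $j$ since $\delta<\lambda$. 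Hence $\rho(z,\cdot)$ is a well-defined positive function, and since each partial sum is a finite composition of H\"older maps while the tails are uniformly $O(d(z,y)^{\alpha'})$, the limit $\log\rho(z,\cdot)$ is $\alpha'$-H\"older. The cocycle relation is then immediate: for $x,y,z$ on a common local unstable leaf all three series converge absolutely, so $\log\rho(x,y)+\log\rho(y,z)=\sum_j(\log a_j(x)-\log a_j(z))=\log\rho(x,z)$.

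For the dependence on the basepoint, I would read $\rho_x$ as a function on a fixed ball via the chart $h_x$, thereby identifying the leaves $W^u_x$ for $x$ in a Pesin set. On $P_\tau=\hP_\tau\cap\tilde P_{\tau^{-1}}$ the splitting $x\mapsto E^u(x)$ is $\alpha_1$-H\"older (Lemma \ref{LmIDHold}), the graphs $W^u_x$ depend H\"older-continuously on $x$ (Lemma \ref{lem:unstman}), and $\fK(x)\le\tau^{-1}$; so the constants above are uniform over $x\in P_\tau$, the dependence of each $a_j$ on $x$ is H\"older, and the series converges uniformly. This gives H\"older continuity of $x\mapsto\rho_x$ with a $\tau$-dependent constant.

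The only genuinely delicate step is this last one: it rests on the H\"older continuity of the Oseledets subspaces on Pesin sets (Lemma \ref{LmIDHold}) together with uniform backward-contraction bounds on $\tilde P_{\tau^{-1}}$, and on trivializing the family $\{W^u_x\}$ carefully enough that the statement ``$x\mapsto\rho_x$ is H\"older'' is unambiguous. The convergence of the product, its H\"older dependence on $y$, and the cocycle identity are routine consequences of $C^{1+\alpha}$ regularity and exponential contraction along unstable leaves.
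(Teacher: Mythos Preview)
The paper does not give its own proof of this lemma: it is stated with a citation to \cite[Thm 8.6.8 and its proof, Thm 8.6.13, Thm 9.3.4]{BP} and no argument follows. Your sketch is the standard argument one finds in that reference and is correct. One minor point: the statement asserts H\"older continuity of $x\mapsto\rho_x$ on $\hP_\tau$ rather than on $P_\tau=\hP_\tau\cap\tilde P_{\tau^{-1}}$, so you do not actually need the separate bound $\fK(x)\le\tau^{-1}$; the uniform backward contraction already follows from $\mathfrak{r}_\delta(x)\ge\tau$ via Corollary~\ref{CrWuBC} in Lyapunov charts together with the norm comparison in Lemma~\ref{lyapchart}.
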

	 
For $\cW\subset W^u_x$, we define 
\be \label{MuCond} 
m^u_{\cW}(A)=\frac{\int_A\rho(x,y)dm^u_x(y)}{\int_{\cW}\rho(x,y)dm^u_x(y)}. \ee
Note that  if  $z\in W^u_x$ and we use $\rho_z$ to define $\mu_{\cW}$ we will get the same formula.


For $x\in \hP_\tau$, we take a transversal $T_x=h_x(\graph(\psi))$, where $\psi:\R^{cs}_x\to\R^u_x$ is  $C^1$ function with $\|\eta\|_{C^1}<C_1$,  and we define $\tilde T^\tau_x=\bigcup_{y\in \hP_\tau\cap Q_x}T_x\cap W^u_{y,\tau}$  and $\tilde Q^\tau_x=\bigcup_{y\in \hP_\tau\cap Q_x}W^u_{y,\tau}$. We have the following lemma:

\begin{lemma}\label{lem:abscont0} (\cite[\S 3.3]{Pes76})
	Given $\tau>0$ and $x\in \hP_\tau$, if $A\subset Q_x$ then $$\mu(A\cap \tilde Q_x^\tau)=\int_{\tilde T_x^\tau}d\nu^\tau_x(z)\int_{W^u_{z,\tau}\cap A}\rho(z,y)dm^u_z(y)$$ 
	where $\nu_x^\tau$ is absolutely continuous w.r.t. Lebesgue on $\tilde T_x^\tau\subset T_x$. Also we have that $$\frac{m^{cs}_{T_x}(T_x\setminus \tilde T^\tau_x)}{m^{cs}_{T_x}(T_x)}\to 0$$ as $\tau\to 0$ where $m^{cs}_{T_x}$ is Lebesgue measure on the transversal $T_x$.
\end{lemma}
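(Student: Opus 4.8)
The plan is to derive this as a standard consequence of the absolute continuity of the unstable lamination, which is exactly the content of the cited results from Barreira--Pesin (\cite[Thm 8.6.8 etc.]{BP}) and Pesin's original work (\cite[\S 3.3]{Pes76}). So in spirit one should think of this lemma as a convenient repackaging of well-known Pesin theory rather than something requiring a fresh argument. First I would fix $\tau>0$ and $x\in\hP_\tau$, and recall the local construction in the Lyapunov chart: the neighborhood $Q_x=h_x(B_{\mathfrak r_\delta(x)}(0))$ is foliated by the local unstable pieces $W^u_{y,\tau}$ for $y$ ranging over $\hP_\tau\cap Q_x$, and $T_x=h_x(\graph(\psi))$ is a $C^1$ transversal meeting each such piece in one point (possibly after shrinking $Q_x$, using that the unstable leaves are uniformly transverse to $\R^{cs}_x$ by Lemma \ref{lem:unstman} and Lemma \ref{LmIDHold}). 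This realizes $\tilde Q^\tau_x$ as a measurable lamination with transversal $\tilde T^\tau_x$, so that $\mu$ restricted to $\tilde Q^\tau_x$ disintegrates over $\tilde T^\tau_x$: $\mu(A\cap\tilde Q^\tau_x)=\int_{\tilde T^\tau_x}\left(\int_{W^u_{z,\tau}\cap A}d\mu^u_z\right)d\bar\nu^\tau_x(z)$ for some transverse measure $\bar\nu^\tau_x$ and conditional measures $\mu^u_z$ on the leaves.

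The two substantive points are then: (i) identify the conditional measures $\mu^u_z$ with $\rho(z,y)\,dm^u_z(y)$, and (ii) show the transverse measure $\nu^\tau_x$ is absolutely continuous with respect to Lebesgue measure $m^{cs}_{T_x}$ on $T_x$. For (i), since $\mu$ is a smooth measure and $f$ preserves it, the conditional densities along unstables are $f$-equivariant up to the unstable Jacobian; writing out how $\prod_{j\ge1}\frac{\det(Df_{f^{-j}z}|E^u)}{\det(Df_{f^{-j}y}|E^u)}$ transforms under $f$ and comparing with how $m^u$ transforms, one sees $\rho(z,\cdot)\,dm^u_z$ is (up to normalization) the unique family of leafwise measures compatible with $f$-invariance of $\mu$; this is precisely \cite[Thm 8.6.8, Thm 8.6.13]{BP}, and the cocycle identity $\rho(x,y)\rho(y,z)=\rho(x,z)$ from Lemma \ref{lem:abscont00} guarantees consistency of the normalization. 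For (ii), this is the absolute continuity theorem for the unstable holonomy: the holonomy map along unstable leaves between two transversals is absolutely continuous with bounded Jacobian (\cite[Thm 8.6.13]{BP}), from which absolute continuity of the transverse measure with respect to Lebesgue on $T_x$ follows by the standard argument of pushing the disintegration of $\mu$ onto $T_x$.

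For the last assertion, $\frac{m^{cs}_{T_x}(T_x\setminus\tilde T^\tau_x)}{m^{cs}_{T_x}(T_x)}\to0$ as $\tau\to0$: note $T_x\setminus\tilde T^\tau_x$ consists of the intersections of $T_x$ with unstable leaves of points $y\notin\hP_\tau$. Since $\mu(\hP_\tau)\to1$ as $\tau\to0$ and, by Fubini in the local product picture just established, the $\mu$-measure of $\tilde Q^\tau_x\setminus\tilde Q^{\tau'}_x$ controls the transverse Lebesgue measure $m^{cs}_{T_x}(\tilde T^\tau_x\setminus\tilde T^{\tau'}_x)$ up to the bounded densities, one gets $m^{cs}_{T_x}(T_x\setminus\tilde T^\tau_x)\to0$; alternatively this is immediate from the absolute continuity of $\nu^\tau_x\ll m^{cs}_{T_x}$ together with $\bar\nu^\tau_x(\tilde T^\tau_x)=\mu(\tilde Q^\tau_x)\to\mu(Q_x)$. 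I expect the main obstacle to be purely bookkeeping: making the identification of $\tilde Q^\tau_x$ as a bona fide measurable lamination with a transversal precise (in particular that each leaf $W^u_{y,\tau}$ for $y\in\hP_\tau\cap Q_x$ meets $T_x$ in exactly one point, which needs $Q_x$ small enough relative to the Hölder constants of $y\mapsto E^u(y)$ on $\hP_\tau$), and then simply quoting the Barreira--Pesin absolute continuity machinery in the correct form — there is no new idea to supply, only care in transcription.
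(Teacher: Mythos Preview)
Your proposal is appropriate: the paper does not supply a proof of this lemma at all but simply cites it as a known fact from Pesin's original work \cite[\S 3.3]{Pes76}, so there is nothing to compare against. Your outline correctly identifies the statement as a repackaging of the absolute continuity of the unstable lamination together with the standard identification of conditional measures, and your sketch via disintegration plus \cite[Thm 8.6.8, 8.6.13]{BP} is exactly the intended route; if anything you have written more than the paper does.
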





\subsection{Measure theory}
\begin{definition}
A  map $\theta:(X_1,\nu_1)\to (X_2,\nu_2)$ between two measure spaces
is called {\em $\epsilon$-measure preserving} if there exists a set $E_1\subset  X_1$, 
$\nu_1(E_1)<\epsilon$ and such that for every $A\in X_1\setminus E_1$, we have
$$
\left|\frac{\nu_2(\theta(A))}{\nu_1(A)}-1\right|<\epsilon.
$$
\end{definition}

 The following fact will be useful in constructing $\epsilon$-measure preserving maps. By {\em Lebesgue
space} we mean a probability measure defined on a Borel $\sigma$-algebra in a Polish space.

\begin{lemma}\label{LmUniqueLeb}
(\cite{Rokhlin})
Any two atomless Lebesgue spaces are isomorphic.
\end{lemma}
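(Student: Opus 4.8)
\textbf{Proof proposal for Lemma \ref{LmUniqueLeb}.}

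The plan is to prove the classical Rokhlin isomorphism theorem for atomless Lebesgue spaces, so in fact this is a statement whose proof is entirely standard; I would simply recall the argument. The key point is that any atomless Lebesgue space $(X,\nu)$ is measure-theoretically isomorphic (mod $0$) to the unit interval $[0,1]$ with Lebesgue measure, and then transitivity of isomorphism finishes the claim. So the real content is constructing an isomorphism with $[0,1]$.

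First I would fix a countable generating sequence of measurable sets $\{A_n\}_{n\geq 1}$ separating points mod $0$ (this is available from the definition of a Lebesgue space, i.e. a Borel probability measure on a Polish space, after discarding a null set). Using this sequence, define the map $\Phi:X\to\{0,1\}^{\N}$ by $\Phi(x)=(\mathbf{1}_{A_n}(x))_{n\geq 1}$. This map is measurable, and because the $A_n$ separate points mod $0$ it is injective off a null set; pushing $\nu$ forward gives a Borel probability measure on the Cantor space $\{0,1\}^{\N}$. Next I would identify $\{0,1\}^{\N}$ with $[0,1]$ via binary expansion (a measurable bijection mod $0$ once one removes the countable set of dyadic ambiguities), so that $\nu$ corresponds to a Borel probability measure $m$ on $[0,1]$. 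Since $\nu$ is atomless, $m$ is atomless as well, hence its cumulative distribution function $F(t)=m([0,t])$ is continuous and nondecreasing with $F(0)=0$, $F(1)=1$. The monotone map $F:[0,1]\to[0,1]$ then pushes $m$ forward to Lebesgue measure, and by standard arguments $F$ can be corrected on a null set to a measure-preserving bijection mod $0$ between $([0,1],m)$ and $([0,1],\Leb)$. Composing these identifications yields an isomorphism mod $0$ between $(X,\nu)$ and $([0,1],\Leb)$.

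Finally, given two atomless Lebesgue spaces $(X_1,\nu_1)$ and $(X_2,\nu_2)$, apply the above to each to get isomorphisms $\phi_i:(X_i,\nu_i)\to([0,1],\Leb)$; then $\phi_2^{-1}\circ\phi_1$ is the desired isomorphism. The only mild subtlety — really a bookkeeping matter rather than an obstacle — is keeping careful track of the various null sets that must be removed (dyadic rationals, points where injectivity fails, plateaus of $F$) so that the composed map is genuinely a bijection on full-measure subsets; this is routine. Since the result is entirely classical, I would in practice just cite \cite{Rokhlin} rather than reproduce this, which is presumably why the paper states it without proof.
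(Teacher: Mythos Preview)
Your proposal is correct and matches the paper's treatment: the paper gives no proof at all, simply citing \cite{Rokhlin} for this classical fact, exactly as you anticipate in your final paragraph. The sketch you provide is the standard Rokhlin argument underlying that citation, so there is nothing to add.
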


\subsection{$K$ and Bernoulli properties}\label{sec:VWB}
Let $(X,\cB,\mu)$ be a standard probability Borel space and let $\cP=(P_1,\ldots P_k)$ and $\mathcal{Q}=(Q_1,\ldots, Q_k)$ be two finite measurable partitions of $X$. Let $\cP\vee\mathcal{Q}$ be the partition into sets of the form $P_i\cap Q_j$, $i,j\in \{1,\ldots, k\}$. Let $T$ be an automorphism of $(X,\cB,\mu)$. We say that $\cP$ is {\em generating} if $\bigvee_{-\infty }^{+\infty}T^i\cP=\cB$.

We say that a property holds for $\epsilon$ a.e.\ atom of a partition $\mathcal{Q}$ if the union of all atoms for which the property does not hold has measure $\leq \epsilon$.

We recall the definition of $K$-property:
\begin{definition}\label{def:K2} Let $T$ be an automorphism of $(X,\cB,\mu)$ and let $\cP$ be a finite partition of $X$. We say that $\cP$ is a $K$-partition if for every $D\in\bigvee_{-\infty}^{+\infty}T^i\cP$ and every $\epsilon>0$ there exists $N_0=N_0(\epsilon,D)$ such that for every $N'\geq N\geq N_0$, $\epsilon$ a.e.\ atom $A\in \bigvee_N^{N'}T^i\cP$ satisfies
$$
\left|\frac{\mu(A\cap D)}{\mu(A)}-\mu(D)\right|<\epsilon.
$$
We say that $T$ has the {\em Kolmogorov property} ($K$ property) if there exists a generating $K$-partition. It then follows that {\em every} partition is a $K$-partition.
\end{definition}
We will need the following simple modification of the original definition of the $K$-property:
\begin{lemma}\label{def:K}
$T$ has the   $K$ property if there exists a {\em generating partition} $\cP$ such that for every $D\in \cB$ and every $\epsilon>0$ there exists $n\in \N$ and $N_0=N_0(\epsilon,D)$ such that for every $N'\geq N\geq N_0$, $\epsilon$ a.e.\ atom $A\in \bigvee_N^{N'}T^i\cP$ satisfies
\begin{equation}\label{eq:spi}
\left|\frac{\mu(A\cap f^{-n}D)}{\mu(A)}-\mu(D)\right|<\epsilon.
\end{equation}
\end{lemma}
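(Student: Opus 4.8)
The plan is to verify that any generating partition $\cP$ satisfying the hypothesis of the lemma is a $K$-partition in the sense of Definition~\ref{def:K2}; once this is done, $T$ possesses a generating $K$-partition and hence has the $K$-property by definition. Throughout we use $f=T$. The whole argument rests on one elementary observation: since $T$ is an automorphism of $(X,\cB,\mu)$ it is invertible and measure preserving, so for any $n\ge 0$ the map $A\mapsto T^nA$ is a measure-preserving bijection from the atoms of $\bigvee_{i=N}^{N'}T^i\cP$ onto the atoms of $\bigvee_{i=N+n}^{N'+n}T^i\cP$ — indeed an atom $A=\bigcap_{i=N}^{N'}T^iP_{j_i}$ is carried to $T^nA=\bigcap_{k=N+n}^{N'+n}T^kP_{j_{k-n}}$ — and moreover $\mu(A\cap T^{-n}D)=\mu(T^nA\cap D)$ for every $D\in\cB$.

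First I would fix $D\in\cB$ and $\eps>0$; since $\cP$ is generating, $\bigvee_{-\infty}^{+\infty}T^i\cP=\cB$ mod $\mu$, so this $D$ covers all the sets occurring in Definition~\ref{def:K2}. Apply the hypothesis of the lemma to this $D$ and $\eps$ to obtain $n\in\N$ and $N_0=N_0(\eps,D)$ such that for all $N'\ge N\ge N_0$, $\eps$ a.e.\ atom $A\in\bigvee_N^{N'}T^i\cP$ satisfies $|\mu(A\cap T^{-n}D)/\mu(A)-\mu(D)|<\eps$. I then claim that Definition~\ref{def:K2} holds for $\cP$ and this $D,\eps$ with $N_0$ there taken to be $\tilde N_0:=N_0+n$. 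Indeed, given $\tilde N'\ge\tilde N\ge\tilde N_0$, set $N:=\tilde N-n\ge N_0$ and $N':=\tilde N'-n\ge N$, apply the above estimate for these $N,N'$, and translate it by $T^n$ using the observation: since $\mu(A\cap T^{-n}D)=\mu(T^nA\cap D)$ and $\mu(A)=\mu(T^nA)$, the union of those atoms $B\in\bigvee_{\tilde N}^{\tilde N'}T^i\cP$ with $|\mu(B\cap D)/\mu(B)-\mu(D)|\ge\eps$ equals, under the bijection $B=T^nA$, the union of the corresponding bad atoms $A$, which has measure $<\eps$. Hence $\eps$ a.e.\ atom of $\bigvee_{\tilde N}^{\tilde N'}T^i\cP$ obeys the inequality of Definition~\ref{def:K2}, so $\cP$ is a $K$-partition and $T$ is $K$.

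The argument is essentially a change of variables, so no genuine obstacle arises; the only points needing attention are the bookkeeping in the reindexing $A\mapsto T^nA$ with its index shift $N\mapsto N+n$, and the observation that the shift $n$ — hence the passage from $N_0$ to $N_0+n$ — may depend on $D$ and $\eps$, which is harmless because Definition~\ref{def:K2} already permits $N_0$ to depend on $D$ and $\eps$. One should also note in passing that atoms of measure zero contribute nothing to any ``$\eps$ a.e.'' statement and may be discarded throughout.
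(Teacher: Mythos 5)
Your proof is correct and follows essentially the same route as the paper: the paper's argument is precisely the observation that an atom $A\in\bigvee_{N}^{N'}T^i\cP$ can be written as $T^n A'$ with $A'\in\bigvee_{N-n}^{N'-n}T^i\cP$ and $\mu(A\cap D)=\mu(A'\cap T^{-n}D)$, which is your change-of-variables. Your write-up is in fact slightly more careful than the paper's one-line proof, in that you make explicit the shift $N_0\mapsto N_0+n$ and the role of the generating hypothesis in passing from $D\in\bigvee_{-\infty}^{+\infty}T^i\cP$ to arbitrary $D\in\cB$.
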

\begin{proof} The proof is an immediate consequence of the following observation: if $A\in\bigvee_N^{N'}T^i\cP$, then $A=f^n(A')$ where $A'\in \bigvee_{N-n'}^{N'-n}T^i\cP$. Since $\mu(A\cap D)=\mu(A'\cap f^{-n}D)$ it easily follows that the statement of the lemma is equivalent to the original definition of the $K$ property.
\end{proof}

By \cite{RS61} 
the $K$ property is equivalent to {\em completely positive entropy}: every {\em factor} of $T\in Aut(X,\cB,\mu)$ has positive entropy.

\begin{definition}\label{def:Bernoulli} $T\in Aut(X,\cB,\mu)$ is Bernoulli if it is measure theoretically isomorphic to the {\em Bernoulli shift}, i.e.\ the shift map on the space $(\{1,\ldots,\ell\}^\Z, {\bf p}^\Z)$, where ${\bf p}=(p_1,\ldots,p_\ell)$ is a probability vector.
\end{definition}

For $A\subset X$, $\cP_{|A}$ denotes the induced partition of  the space $(A, \mu_{|A})$, 
i.e. 
$$\cP_{|A}:=(P_1\cap A,\ldots, P_k\cap A) \text{ and }
\mu_{|A}(B)=\frac{\mu(A\cap B)}{\mu(A)}.$$
We introduce the following distance on the space of partitions of $(X,\mu)$:
$$
\bar{d}(\cP,\mathcal{Q}):=\sum_{i=1}^{k}\mu(P_i\triangle Q_i).
$$
Now let $\cP^{s}=(\cP^{s}_1,\ldots,\cP^{s}_k)$, $s=1,\ldots, S$ be a sequence of finite partitions of $(X,\mu)$ and $\mathcal{Q}^s=(\mathcal{Q}^s_1,\ldots,\mathcal{Q}^s_k)$, $s=1,\ldots, S$ be a sequence of finite partitions of $(Y,\nu)$.  If  additionally $(X,\mu)=(Y,\nu)$, then
$$
\bar{d}\left((\cP^s)_{s=1}^S,(\mathcal{Q}^s)_{s=1}^S\right):=\frac{1}{S}\sum_{s=1}^S\bar{d}(\cP^s,\mathcal{Q}^s).
$$
More generally, if $(\cP^s)_{s=1}^{S}$ and $(\mathcal{Q}^s)_{s=1}^S$ are partitions of different spaces,  we say that $\cP^s\sim \mathcal{Q}^s$ for $s=1,\ldots, S$ if
$\mu(P_i^s)=\nu(Q^s_i)$ for $i=1,\ldots,k$ and $s=1,\ldots, S$. We can then compare the distance between $(\cP^s)_{s=1}^S$ and $(\mathcal{Q}^s)_{s=1}^S$ by setting
$$
\bar{d}\left((\cP^s)_{s=1}^S,(\mathcal{Q}^s)_{s=1}^S\right)=\inf_{\mathcal{\bar{Q}}^s\sim \mathcal{Q}^s,\; s=1,\ldots,S}\bar{d}\left((\cP^s)_{s=1}^S,(\mathcal{\bar{Q}}^s)_{s=1}^S\right),
$$
where the infimum is taken over sequences of partitions $\mathcal{\bar{Q}}^s$ of $(X,\mu)$.
We denote by $T^n\cP$ the partition given by $(T^nP_1,\ldots, T^nP_k)$.
\begin{definition}[Very weak Bernoulli,vwB]\label{def:VWB} Let $T\in Aut(X,\cB,\mu)$ and let $\cP$ be a finite partition of $X$. Then $\cP$ is a very weak Bernoulli partition (vwB partition) if for every $\epsilon>0$ there exists $N_0\in \N$ such that  for every $N'\geq N\geq N_0$ every $S\geq 0$ and $\epsilon$ a.e.\ atom $A$
  of $\bigvee_{N}^{N'}T^{i}(\cP)$, we have
$$
\bar{d}\left(\{T^{-i}\cP\}_{s=0}^S,\{T^{-i}\cP_{|A}\}_{s=0}^S\right)<\epsilon.
$$
\end{definition}
The following classical theorem is a crucial tool in establishing Bernoullicity of a system (see e.g.\ \cite{OrnsteinWeiss}). Recal that a sequence of partitions $(\cP_k)_{k=1}^{+\infty}$ of $(X,\cB,\mu)$ {\em converges to partition into points} if the smallest $\sigma$-algebra with respect to which all $\cP_k$  are measurable, is $\cB$.
\begin{theorem}\label{conga} If $(\cP_k)_{k=1}^{+\infty}$ is a sequence of partitions of $(X,\cB,\mu)$ converging to partition into points and, for every $k\geq 1$, $\cP_k$ is VWB partition for $T\in Aut(X,\cB,\mu)$, then $T$ is a Bernoulli system.
\end{theorem}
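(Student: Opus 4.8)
\textbf{Proof proposal for Theorem \ref{conga}.}

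The plan is to argue by contradiction, supposing $T$ is not Bernoulli, and extract a nontrivial zero-entropy factor that cannot be destroyed by the approximations provided by the vwB partitions. Since the sequence $(\cP_k)$ converges to the partition into points, the partitions $\cP_k$ generate $\cB$ under $T$, and so if each $\cP_k$ were known to generate a Bernoulli factor $\cB_k$, then $\cB = \bigvee_k \cB_k$ would exhibit $T$ as an inverse limit of Bernoulli systems. The Ornstein isomorphism theory (specifically, the fact that an inverse limit of Bernoulli systems is Bernoulli, proved by Ornstein) then gives the conclusion. So the real content is the step: \emph{if $\cP$ is a vwB partition for $T$, then the factor $\bigvee_{i\in\Z}T^i\cP$ is a Bernoulli system.}

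For that step I would follow the classical route of Ornstein–Weiss \cite{OrnsteinWeiss}. First I would recall that the vwB condition as stated (closeness of the process $\{T^{-i}\cP\}_{i=0}^S$ conditioned on a typical atom of the far-future $\sigma$-algebra $\bigvee_N^{N'}T^i\cP$ to the unconditioned process, in the $\bar d$-metric, uniformly in $S$ and in $N'\ge N$) is exactly the hypothesis needed to run the Ornstein ``finitely determined $\Rightarrow$ Bernoulli'' machine. Concretely one shows vwB implies the process $(\cP,T)$ is finitely determined: given $\eps$, the vwB estimate lets one build, for any process with the same distribution on long enough blocks and with entropy close to $h(\cP,T)$, a joining with the $(\cP,T)$-process that is $\eps$-close in $\bar d$ per coordinate, by coupling atom-by-atom along the future using the vwB matching and invoking Lemma \ref{LmUniqueLeb} to realize the $\eps$-measure-preserving couplings as honest measure isomorphisms between atoms. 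Finitely determined processes generate Bernoulli factors by Ornstein's theorem, which finishes this step.

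The main obstacle — or rather the main bookkeeping burden — is the passage from the ``conditioned on a single far-future atom'' formulation of vwB to a statement about joinings of the full two-sided process, and then threading Ornstein's copying lemma through it; this is where the $\bar d$-distance between sequences of partitions on \emph{different} spaces (defined via the infimum over relabelings $\bar{\mathcal Q}^s\sim\mathcal Q^s$ just above) does its work, since it lets us compare the far-future-conditioned processes across the atoms without tracking explicit labels. A second, more minor point is verifying that the inverse-limit step is legitimate: one must check that $\bigvee_k \bigvee_{i\in\Z}T^i\cP_k = \cB$ (immediate from convergence to points), that each factor is $T$-invariant (clear), and that the inverse limit of an increasing sequence of Bernoulli factors with a common ambient system is Bernoulli — again a theorem from Ornstein's toolbox. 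Since the theorem is explicitly labelled ``classical'' and cited to \cite{OrnsteinWeiss}, I expect the paper to simply invoke these results rather than reprove them, and I would do the same, spelling out only the two reductions (vwB $\Rightarrow$ each $\cP_k$ generates a Bernoulli factor; generation $+$ inverse limit $\Rightarrow$ $T$ Bernoulli).
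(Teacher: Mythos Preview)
Your anticipation is exactly right: the paper does not prove Theorem~\ref{conga} at all. It is stated as a classical result and attributed to Ornstein--Weiss \cite{OrnsteinWeiss}, with no argument given. Your sketch of the standard route (vwB $\Rightarrow$ finitely determined $\Rightarrow$ the factor generated by $\cP_k$ is Bernoulli, then use that $\cB=\bigvee_k\bigvee_{i\in\Z}T^i\cP_k$ together with Ornstein's theorem that an increasing union/inverse limit of Bernoulli factors is Bernoulli) is the correct outline of the classical proof, so there is nothing to compare against here beyond noting that you have supplied more than the paper does.
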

We will now recall the main method of establishing VWB property, \cite{OrnsteinWeiss}. For a partition $\cP=(P_1,\ldots P_k)$ of $(X,\mu)$, an integer $S\geq 1$ and $x\in X$ the $S,\cP$-name of $x$ is a sequence $(x^\cP_i)_{i=0}^S\in \{1,\ldots,k\}^{S+1}$  given by the condition $T^{i}(x)\in P_{x^\cP_i}$. Let $e:\Z\to \Z$ be given by $e(0)=0$ and $e(n)=1$ for $n\neq 0$.

We have the following lemma
\begin{lemma}[Lemma 1.3. in \cite{OrnsteinWeiss}]
\label{lem:VWE} 
Let $T\in Aut(X,\cB,\mu)$ and $\cP$ be a finite partition of $X$. If for every $\epsilon>0$ there exists $N\in \N$ such that for every $N'\geq N$, $\epsilon$ a.e.\ atom $A\in \bigvee_{N}^{N'}T^i\cP$ and every $S\geq 1$ there exists an $\epsilon$-measure preserving map $\theta=\theta(N,S,A):(A,\mu_{|A})\to (X,\mu)$ such that
\be\label{eq:as123}
\bar{d}_S(x,\theta(x)):=\frac{1}{S}\sum_{i=0}^{S-1}e\left(x_i^\cP-(\theta(x))^{\cP}_i\right) <\epsilon.
\ee
then $\cP$ is a VWB partition.
\end{lemma}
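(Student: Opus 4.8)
\medskip\noindent\textbf{Proof proposal.} The plan is to unwind the definition of the very weak Bernoulli property (Definition~\ref{def:VWB}) and to use the coupling maps $\theta$ furnished by the hypothesis as the engine for producing the auxiliary partitions that the definition requires. Fix $\epsilon>0$; since vwB is a ``for all $\epsilon$'' statement it suffices to establish the vwB inequality with $\epsilon$ replaced by a fixed constant multiple of a small parameter $\epsilon'$, so I would apply the hypothesis with $\epsilon'$ in place of $\epsilon$ and take the resulting $N$. For $N'\ge N$ the hypothesis supplies a family $\mathcal G$ of atoms of $\bigvee_N^{N'}T^i\cP$ carrying all but $<\epsilon'$ of the measure, such that every $A\in\mathcal G$ admits, for each horizon, an $\epsilon'$-measure preserving map $\theta\colon(A,\mu_{|A})\to(X,\mu)$ with small Hamming distance to the identity. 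Given $A\in\mathcal G$ and $S\ge 1$ I would invoke the hypothesis with horizon $S+1$ to get $\theta=\theta_{A,S+1}$ satisfying $\frac1{S+1}\sum_{s=0}^{S}e\big(x^\cP_s-(\theta x)^\cP_s\big)<\epsilon'$ for a.e.\ $x\in A$; the task then reduces to producing partitions $\bar{\mathcal Q}^0,\dots,\bar{\mathcal Q}^S$ of $(X,\mu)$ with $\mu(\bar Q^s_i)=\mu_{|A}(T^{-s}P_i\cap A)$ and with $\frac1{S+1}\sum_{s}\bar d(T^{-s}\cP,\bar{\mathcal Q}^s)$ small. (We may assume $(X,\mu)$ is standard and atomless, the general case being degenerate.)

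The crux is to promote the $\epsilon'$-measure preserving $\theta$ to a genuine measure isomorphism. I would first establish an elementary repair lemma: \emph{any $\epsilon'$-measure preserving measurable map $\theta\colon(A,\mu_{|A})\to(X,\mu)$ agrees, outside a set $F\subset A$ with $\mu_{|A}(F)\le C_0\epsilon'$ for an absolute constant $C_0$, with a measure-preserving bijection mod null $\tilde\theta\colon(A,\mu_{|A})\to(X,\mu)$.} The mechanism: smallness of $\epsilon'$ forces $\theta$ to be injective off its exceptional set up to a further $O(\epsilon')$-set (a positive-measure ``extra sheet'' would produce a set whose $\theta$-image has measure far outside $(1\pm\epsilon')$ times its own), and on the complement $\theta$ distorts $\mu_{|A}$ only by a factor in $(1\pm\epsilon')$ onto a subset of $X$ of measure $>1-O(\epsilon')$; correcting $\theta$ on an $O(\epsilon')$-set of ``overweight fibres'' to equidistribute it, and then on the exceptional set to absorb the residual $O(\epsilon')$ deficit, yields $\tilde\theta$, the needed measurable selections being provided by Lemma~\ref{LmUniqueLeb}. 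With $\tilde\theta$ at hand I would set $\bar Q^s_i:=\tilde\theta(T^{-s}P_i\cap A)$; since $\tilde\theta$ is a measure isomorphism these form a partition of $X$ mod null with the prescribed marginals, so $\bar{\mathcal Q}^s\sim (T^{-s}\cP)_{|A}$. I expect this repair step, together with the mild measurability caveats implicit in the definition of an $\epsilon$-measure preserving map, to be the only genuinely fiddly part of the argument.

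The closing estimate should be routine. Because $\tilde\theta$ is measure preserving, pulling back along it gives $\bar d(T^{-s}\cP,\bar{\mathcal Q}^s)=\bar d\big(\tilde\theta^{-1}(T^{-s}\cP),(T^{-s}\cP)_{|A}\big)$ as partitions of $(A,\mu_{|A})$; replacing $\tilde\theta$ by $\theta$ changes this by at most $2\mu_{|A}(F)\le 2C_0\epsilon'$, since the two pullback partitions differ only inside $F$; and a point $x\in A$ lies in $(T^{-s}P_i\cap A)\triangle\theta^{-1}(T^{-s}P_i)$ for exactly two indices $i$ when $T^sx$ and $T^s(\theta x)$ lie in different atoms of $\cP$ and for none otherwise, so $\bar d\big(\theta^{-1}(T^{-s}\cP),(T^{-s}\cP)_{|A}\big)=2\int_A e\big(x^\cP_s-(\theta x)^\cP_s\big)\,d\mu_{|A}(x)$. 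Averaging over $s=0,\dots,S$ and using the triangle inequality for $\bar d$ on partitions,
\[
\tfrac1{S+1}\textstyle\sum_{s=0}^{S}\bar d(T^{-s}\cP,\bar{\mathcal Q}^s)\ \le\ 2C_0\epsilon'+2\!\int_A\Big(\tfrac1{S+1}\textstyle\sum_{s=0}^{S}e\big(x^\cP_s-(\theta x)^\cP_s\big)\Big)d\mu_{|A}(x)\ <\ (2C_0+2)\epsilon'.
\]
Choosing $\epsilon'$ with $(2C_0+2)\epsilon'<\epsilon$ (and absorbing the harmless difference between the $\tfrac1S$ and $\tfrac1{S+1}$ normalisations of $\bar d$ on partition sequences) yields the vwB inequality for every $A\in\mathcal G$ and every $S$; as the atoms outside $\mathcal G$ carry measure $<\epsilon$, this proves that $\cP$ is a vwB partition.
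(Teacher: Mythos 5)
The paper offers no proof of this lemma—it is quoted verbatim as Lemma 1.3 of \cite{OrnsteinWeiss}—so your argument can only be judged on its own terms. Your overall architecture (use $\theta$ to manufacture the matching partitions $\bar{\mathcal Q}^s$ required by Definition \ref{def:VWB}, then convert the Hamming bound \eqref{eq:as123} into a $\bar d$ bound via the ``two indices per mismatched time'' count) is the standard one, and the closing estimate is correct. But the step you yourself single out as the crux, the ``repair lemma'', is false as stated: an $\epsilon'$-measure preserving map need not agree with any measure-preserving bijection outside a set of measure $O(\epsilon')$. Take $A=X=[0,1]$ with Lebesgue measure and $\theta(x)=(1-\epsilon'/2)x$. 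Then $\mu(\theta(B))=(1-\epsilon'/2)\mu(B)$ for every measurable $B$, so $\theta$ is $\epsilon'$-measure preserving with empty exceptional set $E_1$; yet if $\tilde\theta$ is a measure-preserving bijection mod null and $G=\{\tilde\theta=\theta\}$, then $\mu(G)=\mu(\tilde\theta(G))=\mu(\theta(G))=(1-\epsilon'/2)\mu(G)$, forcing $\mu(G)=0$. The obstruction is a uniform density distortion, which cannot be removed by altering the map on a small set, so the object $\tilde\theta$ on which your construction of $\bar Q^s_i=\tilde\theta(T^{-s}P_i\cap A)$ rests need not exist.

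Fortunately the proof does not need a pointwise repair of the map, only a repair of the image partitions, and that survives. Work directly with $C^s_i:=\theta\big((T^{-s}P_i\cap A)\setminus E_1\big)$: your injectivity argument (which is correct) shows these are pairwise disjoint mod null; the $\epsilon'$-measure preserving property gives $\mu(C^s_i)\in(1\pm\epsilon')\,\mu_{|A}\big((T^{-s}P_i\cap A)\setminus E_1\big)$, so each $C^s_i$ misses its target marginal $\mu_{|A}(T^{-s}P_i\cap A)$ by an amount summing to $O(\epsilon')$ over $i$, and the leftover set $X\setminus\bigcup_iC^s_i$ has measure $O(\epsilon')$; moreover $\sum_i\mu\big(C^s_i\setminus T^{-s}P_i\big)\le(1+\epsilon')\sum_i\mu_{|A}\{x\in T^{-s}P_i\setminus E_1:\ \theta x\notin T^{-s}P_i\}$, which after averaging over $s$ is exactly what \eqref{eq:as123} controls. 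Trimming or augmenting each $C^s_i$ by sets of total measure $O(\epsilon')$ (using Lemma \ref{LmUniqueLeb} only to move mass, not to build a bijection) produces genuine partitions $\bar{\mathcal Q}^s\sim(T^{-s}\cP)_{|A}$ with $\frac1{S+1}\sum_s\bar d(T^{-s}\cP,\bar{\mathcal Q}^s)=O(\epsilon')$, and the rest of your argument goes through unchanged. So the gap lies in the auxiliary lemma, not in the architecture: replace the pointwise repair of $\theta$ by this repair of the image partitions.
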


A finite partition $\cP$ of $(X,\cB,\mu,d)$ is called {\em regular} if for every $\epsilon>0$ there exists $\delta>0$ such that 
$$
\mu(V_\delta(\partial \cP))<\epsilon,
$$
where for $A\subset X$, $V_\delta(A)$ denotes the $\delta$ neighborhood of $A$ (in the metric $d$). For existence results of regular partitions we refer the reader to e.g. \cite[Lemma 4.1]{OW2}. In what follows we will always consider only regular partitions.

We shall use the following form of Lemma \ref{lem:VWE}.

\begin{corollary}
\label{cor:VWE} 
Let $f:(X,\cB,\mu,d)\to (X,\cB,\mu,d)$ satisfy the $K$-property
and $\cP$ be a regular partition of $X$. If for every $\epsilon>0$ there exists $N,\tilde{N}\in \N$ such that for every $N'\geq N$, $\epsilon$ a.e.\ atom $A\in \bigvee_{N}^{N'}T^i\cP$ and every $S\geq \tilde{N}$ there exists an $\epsilon$-measure preserving map $\theta=\theta(N,S,A):(A,\mu_{|A})\to (X,\mu)$ such that for everu $x\in A$,
\be\label{MetClose}
\frac{1}{S}\Card\left(\{i\in \{0,\ldots, S-1\}\;:\; d(f^ix,f^i(\theta x))<\epsilon\}\right)>1-\epsilon,
\ee
then $\cP$ is a VWB partition.
\end{corollary}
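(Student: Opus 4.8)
The goal is to deduce Corollary \ref{cor:VWE} from Lemma \ref{lem:VWE}, so the plan is to show that the hypotheses of the corollary imply those of the lemma. The only gap between the two statements is that Lemma \ref{lem:VWE} requires control of the $\bar d_S$-distance between the $S,\cP$-names of $x$ and $\theta x$, whereas the corollary only gives metric closeness $d(f^ix,f^i(\theta x))<\epsilon$ on a $(1-\epsilon)$-proportion of the iterates $i\in\{0,\dots,S-1\}$, plus the fact that $f$ is $K$ (hence, by Lemma \ref{def:K} and the remark after Definition \ref{def:K2}, every partition is a $K$-partition). Metric closeness does not by itself force the names to agree, because $f^ix$ and $f^i(\theta x)$ could lie on opposite sides of $\partial\cP$; this is exactly where regularity of $\cP$ and the $K$-property enter.

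\medskip

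\noindent\textbf{Plan of proof.} First I would fix $\epsilon>0$ and, using regularity of $\cP$, choose $\rho>0$ so small that $\mu\big(V_\rho(\partial\cP)\big)<\epsilon^{3}$ (any sufficiently high power of $\epsilon$ will do; I only need it much smaller than $\epsilon$). Then I would apply the hypothesis of the corollary with the parameter $\rho$ in place of $\epsilon$, obtaining $N,\tilde N$ such that for $N'\ge N$, for $\epsilon$-a.e.\ atom $A\in\bigvee_N^{N'}T^i\cP$, and for $S\ge\tilde N$, there is a $\rho$-measure preserving map $\theta:(A,\mu_{|A})\to(X,\mu)$ with $\frac1S\Card\{i: d(f^ix,f^i\theta x)<\rho\}>1-\rho$ for all $x\in A$. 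Note a $\rho$-measure preserving map is automatically $\epsilon$-measure preserving since $\rho<\epsilon$, so the measure-preservation requirement of Lemma \ref{lem:VWE} is met.

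\medskip

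\noindent The heart of the argument is the passage from metric closeness to name-agreement. For $x\in A$ and $0\le i\le S-1$, call $i$ \emph{good for $x$} if $d(f^ix,f^i\theta x)<\rho$ and $f^ix\notin V_\rho(\partial\cP)$; when $i$ is good, $f^ix$ and $f^i\theta x$ lie in the same atom of $\cP$, so $x_i^\cP=(\theta x)_i^\cP$ and the corresponding term $e(x_i^\cP-(\theta x)_i^\cP)$ vanishes. By the metric hypothesis, the indices failing the first condition number at most $\rho S$. To bound the indices $i$ with $f^ix\in V_\rho(\partial\cP)$, I would integrate over $x$: $\int_A \frac1S\Card\{i: f^ix\in V_\rho(\partial\cP)\}\,d\mu_{|A}(x) = \frac1S\sum_{i=0}^{S-1}\mu_{|A}(f^{-i}V_\rho(\partial\cP))$. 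This is where $A$ being an atom of the remote-past partition and the $K$-property are used: since $f$ is $K$ and $\cP$ is generating, $\cP$ is a $K$-partition, so for an atom $A\in\bigvee_N^{N'}T^i\cP$ with $N$ large, $\mu_{|A}(f^{-i}V_\rho(\partial\cP))$ is close to $\mu(V_\rho(\partial\cP))<\epsilon^3$ for all $i\ge 0$ (after discarding an $\epsilon^3$-fraction of bad atoms $A$, and possibly enlarging $N$ depending on $\rho$). Hence for $1-O(\epsilon)$ of atoms $A$, $\frac1S\sum_i\mu_{|A}(f^{-i}V_\rho(\partial\cP))< 2\epsilon^3$, say, so by Markov's inequality the set of $x\in A$ for which $\Card\{i: f^ix\in V_\rho(\partial\cP)\}>\epsilon S$ has $\mu_{|A}$-measure $<2\epsilon^2<\epsilon$. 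For the remaining $x\in A$ we get $\bar d_S(x,\theta x)\le \rho + \epsilon < 2\epsilon$, and after shrinking the initial choice of $\epsilon$ by a constant factor (as the paper allows, since the conditions for $\epsilon$ imply those for $\epsilon'>\epsilon$) this verifies the hypothesis of Lemma \ref{lem:VWE}, whence $\cP$ is vwB.

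\medskip

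\noindent\textbf{Main obstacle.} The delicate point is the bookkeeping of the three separate small sets of exceptional indices/atoms/points — the $\rho S$ indices where metric closeness fails, the atoms $A$ on which the $K$-estimate for $f^{-i}V_\rho(\partial\cP)$ is poor, and the points $x\in A$ with too many visits to $V_\rho(\partial\cP)$ — and making sure each is controlled by an appropriate power of $\epsilon$ so that the union, after passing through Markov's inequality, still has measure $<\epsilon$. One must also be careful that the $K$-property gives the equidistribution estimate $\mu_{|A}(f^{-i}V_\rho(\partial\cP))\approx\mu(V_\rho(\partial\cP))$ \emph{uniformly in $i\ge 0$}; this follows from Definition \ref{def:K2} applied with the (fixed, once $\rho$ is chosen) set $D=V_\rho(\partial\cP)$, using that $f^{-i}V_\rho(\partial\cP)$ has the same measure for all $i$ and that $\bigvee_N^{N'}T^i\cP$ refines as $N\to\infty$, together with the observation following Lemma \ref{def:K} that the estimate for an atom of $\bigvee_N^{N'}T^i\cP$ can be rephrased via shifting as an estimate for $f^{-i}D$ against an atom of a shifted remote-past partition. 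Apart from this combinatorial care, the argument is routine.
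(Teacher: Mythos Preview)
Your general strategy is right—use regularity of $\cP$ plus the $K$-property to convert metric closeness into name-agreement and then invoke Lemma~\ref{lem:VWE}—but there are two genuine gaps.

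\textbf{The case $S<\tilde N$.} Lemma~\ref{lem:VWE} requires an $\epsilon$-measure preserving $\theta$ for \emph{every} $S\ge 1$, whereas the hypothesis of the corollary only supplies one for $S\ge\tilde N$. You never address $1\le S<\tilde N$, and this is precisely where the $K$-property (as opposed to mere ergodicity) is essential. The paper handles this range by a separate argument: the atoms $\{D_j\}$ of $\bigvee_{-\tilde N-1}^{0}f^i\cP$ determine the entire $S$-name for $S\le\tilde N$, and since $f$ is $K$ one can choose $N$ large enough that most atoms $A\in\bigvee_N^{N'}T^i\cP$ satisfy $\mu_{|A}(D_j)\approx\mu(D_j)$ for every $j$. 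One then defines $\theta$ on each $A\cap D_j$ to land in $D_j$, giving $\bar d_S(x,\theta x)=0$ exactly. Without this step your map $\theta$ simply does not exist for small $S$.

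\textbf{Uniformity in $i$.} Your claim that $\mu_{|A}(f^{-i}V_\rho(\partial\cP))$ is close to $\mu(V_\rho(\partial\cP))$ for \emph{all} $i\ge 0$, for a single $\epsilon$-good collection of atoms $A$, is not justified. The shifting argument you sketch is correct for each fixed $i$: indeed $\mu_{|A}(f^{-i}D)=\mu_{|f^iA}(D)$ and $f^iA$ is an atom of $\bigvee_{N+i}^{N'+i}T^j\cP$, so for each $i$ separately an $\epsilon^3$-fraction of atoms is bad. But the bad set of atoms depends on $i$, and you cannot intersect over all $i$ (nor average over $i$, since the resulting bad set would then depend on $S$, violating the order of quantifiers in Lemma~\ref{lem:VWE}). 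The paper sidesteps this by using the ergodic theorem once: it fixes a single set $E=\{x:\forall n\ge m,\ |\{i<n:f^ix\in V_\delta(\partial\cP)\}|<\epsilon^2 n\}$ of measure $\ge 1-\epsilon^4$, and then only needs $\mu_{|A}(E)\ge 1-\epsilon^2$ for most atoms $A$, which is a single Markov estimate independent of both $i$ and $S$. Your Markov step on $\int_A\frac1S\Card\{i:f^ix\in V_\rho(\partial\cP)\}\,d\mu_{|A}$ becomes clean once you restrict to $x\in A\cap E$; as written it does not close.
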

\begin{proof}
Fix $\epsilon>0$.  We shall assume that \eqref{MetClose} holds with $\eps'=\eps^{10}$ instead
of $\eps$ and verify that the assumptions of Lemma \ref{lem:VWE} hold.
Since $\cP$ is regular, there exists  $\delta>0$ such that $\mu(V_\delta(\cP))\leq \epsilon^8$. By ergodicity of $f$ (recall that $f$ is $K$), there exists $m\in \N$ and a set $E\subset X$, $\mu(E)\geq 1-\epsilon^4$ such that for every $x\in E$ and every $n\geq m$,
\be\label{eq:vx}
\Big|\{0\leq i\leq n-1\;:\; f^i(x)\notin V_\delta(\cP)\}\Big|\geq (1-\epsilon^2)n.
\ee
Let $N$ and $\tilde{N}$ come from the assumptions of the statement of the corollary for $\epsilon'=\min(\delta,\epsilon^{10})$. We assume WLOG that $\tilde{N}\geq m$. We will first show that for $S\leq \tilde{N}$, the assumptions of Lemma \ref{lem:VWE} are satisfied by the $K$-property. Let $\{D_i\}_{j=1}^{\tilde{J}}$ be all the atoms of
 $$\bigvee_{-\tilde{N}-1}^{0}f^i\cP.$$
 Notice that by definition, if $x,y\in D_i$, then $f^jx$ and $f^jy$ are in the same atom of $\cP$ for ever $0\leq j\leq \tilde{N}$. By the $K$-property (see Definition \ref{def:K2}) for $D=D_i\in \bigvee_{-\infty}^{+\infty}f^i\cP$, $i=1,\ldots, \tilde{J}$ (simultaneously), it follows that there exists $N_0$, such that for every $N'\geq N_0$, $\epsilon^{2}$ a.e. atom $A\in \bigvee_{N_0}^{N'}f^i\cP$ satisfies: for every $i\leq \tilde{J}$, we have
 \be\label{eq:as12}
 \left|\frac{\mu(A\cap D_i)}{\mu(A)}-\mu(D_i)\right|\leq \epsilon^2.
 \ee
 For $S\leq \tilde{N}$ and $i\leq \tilde{J}$, let $\theta_i=\theta_i(N,S,A):(A\cap D_i, \mu_{|A})\to (D_i,\mu_{|D_i})$ be any $\epsilon$-measure preserving map (such map exists by \eqref{eq:as12} and Lemma \ref{LmUniqueLeb}). We then naturally define for $x\in A$, $\theta(x)=\theta_i(x)$, where $x\in A\cap D_i$. By definition, $\theta$ is $\epsilon$ measure preserving and moreover for $x\in A\cap D_i$, $\theta(x)\in D_i$. This however implies that $\bar{d}_S(x,\theta(x))=0$, by the definition of $D_i$ and since $S\leq \tilde{N}$. This implies that Lemma \ref{lem:VWE} holds for $S\leq \tilde{N}$.

 Consider now the case $S>\tilde{N}$. Let $N\geq N'$. We say that an atom $A\in \bigvee_{N}^{N'}f^i\cP$ is {\em good} if it satisfies  \eqref{MetClose}
 and moreover
 $$
 \mu(A\cap E)\geq (1-\epsilon^2)\mu(A).
 $$
 Since $\mu(E)\geq 1-\epsilon^4$ 
 it follows that $\epsilon$ a.e. atom $A\in  \bigvee_{N}^{N'}f^i\cP$ is good.
 For a good $A$, let $\bar{\theta}=\theta(N,S,A):(A,\mu_{|A})\to (X,\mu)$ be the $\epsilon'$- measure preserving map from the assertion of Corollary \ref{cor:VWE}.
 We define $\theta(x):=\bar{\theta}(x)$ if $x\in A\cap E$ and $\theta(x)=x$ otherwise.
 Since $\mu(A\cap E)\geq (1-\epsilon^2)\mu(A)$ it follows that $\theta:(A,\mu_{|A})\to (X,\mu)$ is $\epsilon$ measure preserving.
 Now, if $x\notin A\cap E$, then $\theta(x)=x$ and so \eqref{eq:as123} trivially holds. If $x\in A\cap E$ then  \eqref{MetClose} holds for $x$ and $\theta(x)$ and $\epsilon=\epsilon'$.
 But since $x\in E$, we get that \eqref{eq:vx} holds for $x$ and $S\geq \tilde{N}\geq m$. Therefore, if $i\leq S-1$ is a time satisfying \eqref{MetClose} and \eqref{eq:vx},
 then $f^ix$ and $f^i(\theta(x))$ are in the same atom of $\cP$ (since $\epsilon'\leq \delta$).
 It remains to notice that the total cardinality of such $i\leq S$ is at least $(1-\epsilon)S$. This shows \eqref{eq:as123} and hence finishes the proof of Corollary \ref{cor:VWE}.
\end{proof}

\section{Exponential mixing implies non-zero exponents}\label{sec:posent}
{ In this section we will prove Proposition \ref{PrLyapNZ}. Let $f:(M,\mu)\to (M,\mu)$ be exponentially mixing. We will show that $f$ has at least one non-zero Lyapunov exponent. In this section we don't need to assume that the measure $\mu$ is smooth; it is enough that  $\supp(\mu)$ is not a single point.



For $z\in M$ and for $r>0$, let $O_{r}(z)$ be the ball of radius $r$ centered at $z$. We have the following lemma:
\begin{lemma}\label{cor:CE} If $f$ is exponentially mixing for some non-atomic measure 
then there exist $c,{ \heta}>0$ such that for every $B\subset M$ with $\mu(B)\geq 1-c$, 

\begin{equation}\label{eq:stre}
\inf_{n\in \N}\max_{z\in B}\diam\Big(f^n\Big(O_{e^{-\heta n}}(z)\Big)\Big)\geq c.
\end{equation}
\end{lemma}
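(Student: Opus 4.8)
The plan is to argue by contradiction, exploiting exponential mixing through a single, carefully chosen test function. Suppose the conclusion fails for a pair $(c,\heta)$ that we are free to choose. We will take $\heta>0$ small (in terms of $\eta$, $\br$ and $D$) and $c>0$ small, fixed last; in particular we take $c$ below $\min_{n< n_0}\min_{z\in M}\diam f^n(O_{e^{-\heta n}}(z))$, which is positive (for bounded $n$ a ball of radius $e^{-\heta n}$ on the connected manifold $M$, $\diam M>0$, is macroscopic), so that only $n\ge n_0$ can occur. Thus for some $n\ge n_0$, writing $r_n:=e^{-\heta n}$, there is a Borel set $B$ with $\mu(B)\ge 1-c$ and $\diam f^n(O_{r_n}(z))<c$ for every $z\in B$; we must derive a contradiction. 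The idea is that this contraction forces $\psi\circ f^n$ to be almost constant at scale $r_n$ for a suitable fixed $\psi$, which exponential mixing shows is impossible.

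Since $\mu$ is non-atomic, $\supp(\mu)$ contains two points $x_0\neq y_0$ at distance $d_0:=\diam\supp(\mu)>0$. Fix, once and for all (independently of $c$ and $n$), a smooth $\psi$ with $0\le\psi\le 1$, $\psi\equiv 1$ on $O_{d_0/10}(y_0)$ and $\psi\equiv 0$ off $O_{d_0/5}(y_0)$; then $\Lip(\psi)\lesssim d_0^{-1}$, $\|\psi\|_{C^\br}$ is a fixed constant, $x_0$ lies in the region $\{\psi=0\}$, and
$$V_\psi:=\int\psi^2\,d\mu-\Big(\int\psi\,d\mu\Big)^2\ \ge\ \mu\big(O_{d_0/10}(x_0)\big)\,\mu\big(O_{d_0/10}(y_0)\big)\ >\ 0$$
is a \emph{fixed} positive constant. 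Because $\mu$ is $f$-invariant, $\int(\psi\circ f^n)^2\,d\mu=\int\psi^2\,d\mu$ and $\int\psi\circ f^n\,d\mu=\int\psi\,d\mu$, so $\mathrm{Var}_\mu(\psi\circ f^n)=V_\psi$ for every $n$. On the other hand, the contraction hypothesis says $\psi\circ f^n$ oscillates by at most $\Lip(\psi)\,c$ on each ball $O_{r_n}(z)$, $z\in B$. Mollifying $\psi\circ f^n$ at spatial scale $\asymp r_n$ (in a fixed finite atlas) produces $\tilde\phi_n\in C^\br(M)$ with
$$\|\tilde\phi_n\|_{C^\br}\le C\,r_n^{-\br}=C\,e^{\br\heta n},\qquad \|\tilde\phi_n-\psi\circ f^n\|_{L^1(\mu)}\le \Lip(\psi)\,c+\mu(M\setminus B)\le \omega(c),$$
where $\omega(c)\to 0$ as $c\to 0$: for $x\in B$ the mollified value at $x$ is an average of $\psi\circ f^n$ over $O_{r_n}(x)$, hence within $\Lip(\psi)\,c$ of $\psi(f^nx)$, while off $B$ one only uses $|\tilde\phi_n-\psi\circ f^n|\le 1$.

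Now apply exponential mixing \eqref{EMr} with $\phi=\tilde\phi_n$:
$$\left|\int\tilde\phi_n\,(\psi\circ f^n)\,d\mu-\Big(\int\tilde\phi_n\,d\mu\Big)\Big(\int\psi\,d\mu\Big)\right|\le C\|\psi\|_\br\,e^{(\br\heta-\eta)n},$$
which tends to $0$ as $n\to\infty$ provided $\heta<\eta/\br$. Combining this with $\int\psi^2\,d\mu=\int(\psi\circ f^n)^2\,d\mu=\int\tilde\phi_n(\psi\circ f^n)\,d\mu+O(\omega(c))$ and $\int\tilde\phi_n\,d\mu=\int\psi\,d\mu+O(\omega(c))$ gives $\int\psi^2\,d\mu=\big(\int\psi\,d\mu\big)^2+O(\omega(c))+o(1)$, that is $V_\psi=O(\omega(c))+o(1)$. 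Fix $c$ so small that the $O(\omega(c))$ term is $<V_\psi/2$; then for $n$ large the remaining $o(1)$ is $<V_\psi/4$, so $V_\psi<\tfrac34 V_\psi$, a contradiction. This proves \eqref{eq:stre} for the chosen $c$ and $\heta$.

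The main obstacle, and the reason $\heta$ must be small, is the scale competition hidden in the mixing step: $\tilde\phi_n$ is in essence $\psi\circ f^n$ coarse-grained at scale $e^{-\heta n}$, so its $C^\br$-norm inevitably grows like $e^{\br\heta n}$, and this growth must be beaten by the decorrelation factor $e^{-\eta n}$; a careful account of the mollification on a $D$-dimensional manifold also introduces $D$, which is why $\heta$ is bounded in terms of $\eta$, $\br$ and $D$. A secondary point, which dictates the choice of $\psi$, is that $\mu$ is only assumed non-atomic, so one has no a priori control on $\mu$-measures of thin sets and cannot take $\psi$ at the scale $c$ (its variance would then degenerate as $c\to 0$ and could not be compared to the error terms); instead $\psi$ is anchored at the fixed geometric scale $\diam\supp(\mu)$, and only afterwards is $c$ sent to $0$.
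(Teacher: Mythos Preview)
Your proof is correct and takes a genuinely different route from the paper's. The paper argues \emph{constructively}: it uses Besicovitch's covering theorem to locate, inside $B$ and inside each of two well-separated balls $A_1,A_2\subset\supp\mu$, points $z^i_r$ whose $\mu$-balls of radius $r=e^{-\heta n}$ have mass comparable to volume; it then applies exponential mixing to two bump functions at scale $r$ centered at $z^1_r,z^2_r$ to force $f^n(O_{2r}(z^1_r))$ to meet both $A_1$ and $A_2$, producing an explicit $z\in B$ with macroscopic image-diameter. Your argument, by contrast, is a global variance computation: you fix a single macroscopic test function $\psi$ with $V_\psi>0$, observe that the contraction hypothesis makes $\psi\circ f^n$ nearly constant at scale $r_n$ on $B$ and hence $L^1(\mu)$-close to its scale-$r_n$ mollification $\tilde\phi_n$, and then pair $\tilde\phi_n$ with $\psi$ in the mixing inequality to conclude $V_\psi=O(\omega(c))+O(e^{(\br\heta-\eta)n})$, a contradiction. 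What your approach buys is that it avoids Besicovitch entirely and never needs a lower bound on $\mu(O_r(z))$ in terms of volume---a real simplification since $\mu$ is only assumed non-atomic; the price is that it is non-constructive and does not single out a witness $z$. Both approaches lead to essentially the same constraint $\heta\lesssim\eta/\br$ (your remark that $D$ enters is not really needed for your argument: the $C^\br$-norm of the mollified function scales like $r_n^{-\br}$ with only $D$-dependent constants, not exponents).
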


Note that \eqref{eq:stre} is equivalent to saying that for each $n$
$$ \mu\left(x: \diam\left(f^n\left(O_{e^{-\hat{\eta} n}}(z)\right)\right)\geq c\right)\geq c. $$
In other words, the lemma says that the image of balls of radius $e^{-\heta n}$ become macroscopic
for a set of centers of a sizable measure.
We will prove Lemma \ref{cor:CE} in Appendix~\ref{AppEM}.

}

\begin{proof}[Proof of Proposition \ref{PrLyapNZ}:]
{Assume by contradiction that all exponents of $f$ are non positive and let $\zeta=\heta/1000$. 
Let $\zeta'>0$ be such that $[\sup \|Df_x\|]^{\zeta'}<e^{\zeta}$ and $\zeta'<c/2$  where $c$ is from Lemma \ref{cor:CE}.

By Oseledets theorem 
there exists $C>0$, a set $A\subset M$, $\mu(A)\geq 1-\zeta'/2$ and $n_1\in \N$ such that for every $x\in A$ and every $n\geq n_1$,
$$
\|Df_x^n\|<Ce^{\zeta n}
$$
By enlarging $C$ if necessary, we may assume that the above also holds for $n\leq n_1$. Using ergodic theorem for the function $\chi_A$ it follows that there exists $n_2$ and a set $B$, $\mu(B)\geq 1-c$ such that for every $x\in B$ and every $n\geq n_2$,
$$
\Big|\{0\leq m\leq n\;:\; f^mx\in A\}\Big|\geq (1-\zeta')n.
$$
Then it follows that for  every $x\in B$, $n\geq n_2$, and every $\ell \leq n$, 
\begin{equation}\label{eq:erg}
\|Df_{f^\ell x}^{n-\ell}\|\leq Ce^{2\zeta n}.
\end{equation}
Indeed, for $\ell \leq n$, let $m\geq \ell$ be the smallest  number
such that $f^mx\in A$. Since $x\in B$ it follows that $|m-\ell|\leq \zeta' n$. Using that $f^mx\in A$ and the definition of $\zeta'$, we get 
$$
\|Df_{f^\ell x}^{n-\ell}\|\leq \|Df_{f^{\ell}x}^{m-\ell}\|\cdot \|Df_{f^mx}^{n-m}\|\leq [\sup \|Df_x\|]^{\zeta' n}\cdot Ce^{\zeta n-m}\leq 
Ce^{2\zeta n}.
$$
Moreover, by enlarging $C$, we can assume that \eqref{eq:erg} holds also for $n\leq n_2$.

We will show that for every $n$ sufficiently large, \eqref{eq:stre} does not hold. For $x\in B$ let $k=k(x,n)$ be the smallest number such that 
$$
\diam\Big(f^k\Big(O_{e^{-\heta n}}(x)\Big)\Big)\geq e^{-\frac{1}{2}\heta n}.
$$
 By Lemma \ref{eq:stre} there is $x\in B$ such that $k(x,n)\leq n.$
Then by the definition of $k$ it follows that for every $y\in O_{e^{-\heta}n}(x)$ and every $0\leq i< k\leq n$,
\begin{equation}\label{eq:dera1}
d(f^ix,f^iy)\leq e^{-\frac{1}{2}\heta n}.
\end{equation}
We will show that  there is a constant $C'$ such that for every $z\in O_{e^{-\heta}n}(x)$ and every $0\leq i\leq k\leq n$,
\begin{equation}\label{eq:sd1}
\|Df_z^i\|\leq C'e^{3\zeta k}.
\end{equation}
Before we prove \eqref{eq:sd1}, let us show how it implies the proposition.}

Notice that if $k\leq n$ then using \eqref{eq:sd1} and the mean value theorem, for every $i\leq k$ and every $z\in O_{e^{-\heta}n}(x)$
$$
d(f^kx,f^kz)\leq C^*e^{3\zeta k}d(x,z)\leq  C^* e^{3\zeta k-\heta n}
\leq e^{-\frac{2}{3}\heta n}<e^{-\frac{1}{2}\heta n},
$$
which contradicts the definition of $k$. 
 This contradiction shows that $f$ in fact has non-zero exponent proving the proposition.

It remains to establish \eqref{eq:sd1}. We will proceed by induction on $i$. The case $i=1$ just follows by taking $C'\geq \sup_{x\in M} \|Df_x\|$. For $j\in \N$, let $A_j:=Df_{f^jx}$ and $B_j=B_j(z)=Df_{f^jz}$ and assume that \eqref{eq:sd1} holds for all  $\ell\leq i-1$. This implies that for every $\ell \leq i-1$, 
$$
\left\|\prod_{j=0}^{\ell-1}B_j\right\|\leq C'e^{3\zeta k}.
$$
Then 
\begin{eqnarray*}
\|Df_x^i-Df_z^i\|=\left\|\prod_{j=0}^{i-1}A_j-\prod_{j=0}^{i-1}B_j\right\|
&=& \left\|\sum_{\ell=0}^{i-1}\left(\prod_{j=\ell+1}^{i-1}A_j\right)(A_\ell-B_\ell)\prod_{j=0}^{\ell-1}B_j\right\|\\
&\leq&\sum_{\ell=0}^{i-1}\left\|\prod_{j=\ell+1}^{i-1}A_j\right\|\|A_\ell-B_\ell\|\left\|\prod_{j=0}^{\ell-1}B_j\right\|.
\end{eqnarray*}
Note that since $\ell\leq i-1\leq k-1$, it follows by \eqref{eq:dera1} that
$$
\|A_\ell-B_\ell\|\leq C''e^{-\frac{1}{2}\heta n}.
$$
Moreover, by \eqref{eq:erg} (using that $i\leq k$),
$$
\left\|\prod_{j=\ell+1}^{i-1}A_j\right\|\leq C\cdot e^{2\zeta k}
$$
Using the inductive assumption and the above bound (and $i\leq k$) we get that 
$$
\|Df_x^i-Df_z^i\|\leq CC'C''i\cdot e^{2\zeta k- \frac{1}{2}\heta n+ 3\zeta k}\leq C' e^{3\zeta k},  
$$
since $i\leq k\leq n$ and $\zeta=\heta/1000$.
This proves \eqref{eq:sd1} and finishes the proof of the proposition.
\end{proof}

 Combining Proposition \ref{PrLyapNZ}, with Pesin entropy formula we see that
if $f$ is exponentially mixing for a smooth measure $\mu$ then it has positive entropy.
Our main result, Theorem \ref{thm:main}  gives a much stronger conclusion, namely $(f, \mu)$ is Bernoulli.
In particular, the exponentially mixing diffeos with the same entropy are 
isomorphic.

\section{Almost u-saturation}\label{sec:u}
 In Sections \ref{sec:u}--\ref{ScAC} we assume that 
$f$ is $C^{1+\alpha}$ diffeomorphism
of a compact manifold $M$ preserving a smooth measure $\mu$, such that at least one
Lyapunov exponent of $f$ is non-zero. In particular, the assumption that $f$ is exponentially 
mixing will not be used until Section \ref{sec:main} . 

\begin{definition}\label{def:sat}
A set $A\subset M$ is called $(\epsilon, \xi)$ u-saturated 
if there exists a set $E\subset A$ 
 with $\mu|_A(E)\geq 1-\epsilon$ such that if $x\in E$ and $\cW$ is a unstable
 box of size $\xi$ containing $x$ then $\cW\subset A.$

A partition $\Pi$  is called $(\epsilon, \xi)$ u-saturated 
if there exists a set $E\subset M$ 
 with $\mu(E)\geq 1-\epsilon$ and such that if $x\in E$ and $\cW$ is a unstable
 box of size $\xi$ containing $x$ then $\cW\subset \Pi(x).$
\end{definition}

We note that if $\Pi$ is $(\xi, \epsilon^2)$ u-saturated then $\epsilon$ almost every atom of $\Pi$ is
$(\xi, \eps)$ u-saturated. We will need the following  lemma which uses exponential contraction of the unstable foliation. 
Let $\cP$ be any  partition of $M$ with smooth boundaries.

\begin{lemma}\label{lem:satu}
For every  $\epsilon>0$ there exists $\xi=\xi(\epsilon)>0$ 
  there exists  $\brN=N_{\xi,\epsilon}\in \N$
  such that for every
  $N_1\geq N_2\geq \brN,$
the partition  
$\DS \fP=\bigvee_{i=N_1}^{N_2}f^i(\cP)$ 
is $(\xi, \epsilon)$ u-saturated.
\end{lemma}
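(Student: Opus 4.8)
The plan is to run the standard backward--contraction argument (as in \cite{AnSin, Sin66, OrnsteinWeiss}), using \eqref{BackContrfK} to control the backward orbits of an unstable box and the smoothness of $\mu$ and of $\partial\cP$ to bound how often such a backward orbit can straddle $\partial\cP$.

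First I would fix $\tau=\tau(\epsilon)>0$ so small that $\mu(P_\tau)\ge 1-\epsilon/2$; this is possible because $\mu(\hat P_\tau)\to 1$ and $\mu(\tilde P_{\tau^{-1}})\to 1$ as $\tau\to 0$. On $P_\tau$ one has $\fK(x)\le\tau^{-1}$ and the local unstable manifold $W^u_x$ has size $\ge\tau$, so by \eqref{BackContrfK}, for every $y\in W^u_x$ and every $n\ge 0$,
$$ d(f^{-n}x,f^{-n}y)\le \tau^{-1}e^{(\delta-\lambda)n}\,d(x,y). $$
Since $\lambda$ is the smallest positive exponent and $\delta$ is small, $\lambda':=\lambda-\delta>0$. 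Because $\cP$ has smooth boundary and $\mu$ is smooth, there are $L,\rho_\ast>0$ with $\mu\big(V_\rho(\partial\cP)\big)\le L\rho$ for all $0<\rho\le\rho_\ast$. I would then pick $\xi=\xi(\epsilon)>0$ small enough that: (i) for every $x\in P_\tau$ each unstable box of size $\xi$ through $x$ lies inside $W^u_x$; (ii) $\tau^{-1}\xi\le\rho_\ast$; and (iii) $L\tau^{-1}\xi/(1-e^{-\lambda'})<\epsilon/2$.

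Next I would define, for $N_1\ge N_2$ and with $\rho_i:=\tau^{-1}e^{-\lambda' i}\xi$, the exceptional set
$$ \cB:=\big\{x\in P_\tau:\ \exists\, i\in[N_2,N_1]\cap\Z \text{ with } f^{-i}x\in V_{\rho_i}(\partial\cP)\big\}. $$
By $f$-invariance of $\mu$ and (ii), $\mu(\cB)\le\sum_{i\ge N_2}L\rho_i\le L\tau^{-1}\xi/(1-e^{-\lambda'})<\epsilon/2$; this bound holds for every $N_2$, so one may take $\bar N=1$ (or any fixed value). Put $E:=P_\tau\setminus\cB$, so $\mu(E)\ge 1-\epsilon$. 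To conclude, I would check that $E$ witnesses $(\xi,\epsilon)$ u-saturation of $\fP$: if $x\in E$ and $\cW$ is an unstable box of size $\xi$ through $x$, then $\cW\subset W^u_x$ by (i), hence $f^{-i}\cW\subset B(f^{-i}x,\rho_i)$ for each $i\in[N_2,N_1]$ by the contraction estimate; since $x\notin\cB$ this ball is disjoint from $\partial\cP$, so the connected set $f^{-i}\cW$ lies in the single atom $\cP(f^{-i}x)$. Thus $\cW\subset f^i\cP(f^{-i}x)$ for all $i\in[N_2,N_1]$, i.e. $\cW\subset\bigcap_{i=N_2}^{N_1}f^i\cP(f^{-i}x)=\fP(x)$.

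The argument is short; the only points requiring care are technical: that an unstable box of size $\xi$ through $x\in P_\tau$ is really contained in the local unstable manifold $W^u_x$ (which forces $\xi$ small relative to $\tau$, absorbing the bounded distortion of the Lyapunov charts), and the linear bound $\mu(V_\rho(\partial\cP))\le L\rho$, which is where the smoothness of $\mu$ and of $\partial\cP$ enters. Neither is a genuine obstacle; the substance of the proof is merely that exponential backward contraction makes $\sum_i\rho_i$ a convergent geometric series, so a single small $\xi$ works simultaneously for the whole window $[N_2,N_1]$.
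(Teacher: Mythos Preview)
Your proof is correct and follows the same backward-contraction argument as the paper. The only cosmetic difference is that the paper fixes $\xi=\tau$ and then takes $\bar N$ large so that the geometric tail $\sum_{k\ge\bar N}\mu\big(V_{e^{-\lambda' k}}(\partial\cP)\big)$ is small, whereas you instead shrink $\xi$ so that the entire series is already below $\epsilon/2$ and may take $\bar N=1$; both rest on exactly the same estimate.
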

\begin{proof} The proof is similar to the proof of Lemma 2.1. in \cite{OrnsteinWeiss}. 
 
Let $\tilde{P}_{\tau^{-1}}$ be the set from \eqref{eq:hpk} where $\tau$ such that $\mu(\tilde{P}_{\tau^{-1}})\geq 1-\eps^4$ and let $\xi=\tau.$

 Notice that if for some $k\geq N_1$ and some atom $A\in f^k(\cP)$ we have $x\in \tilde{P}_\tau\cap A$ but $W^{u}_{x,\xi}\notin A$, then by exponential contraction,  
 $d(f^{-k}x,\partial \cP)\leq c^{-k}$ where $c=e^{\delta-\lambda}<1$. 
Since $\cP$ is piecewise smooth,
 the measure of points
 satisfying the last condition for some $k\geq N_1$ is at most
$\DS C \sum_{k=N_1}^{+\infty}c^{-k}$ for some $C>0.$ 
The last expression can be made smaller than $\eps^4$  by taking $N_1$ large enough.
In summary, the set of points $x$ such that $\fP(x)\not\supset W^u_{x, \xi}$ has measure which is 
smaller than $2\eps^4.$ By Markov inequality, $\fP$ is  $(\xi, \eps)$ u-saturated.
\end{proof}

\begin{lemma}
\label{LmUDeco}
  If $A$ is $(\xi, \eps)$ u-saturated and 
\be\label{eq:atomL}\mu_{|A}(P_\tau)\geq 1-\epsilon^2,
\ee
 then we can decompose
$ \mu|_A=\mu_g+\mu_r$ so that $\mu_r(M)\leq \epsilon$ and 
there is a family $\{\cW_t\}_{t\in \fT}$ of  unstable boxes of size $\xi$ 
and a measure $\nu$ of $\fT$
such that
\be\label{UDeco} \mu_g(B)=\int_{\fT} m^u_{\cW_t}(B) d\nu(t) \ee
\end{lemma}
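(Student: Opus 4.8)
The plan is to build the measure decomposition directly from the absolute continuity of the unstable lamination, using the $u$-saturation hypothesis to guarantee that the relevant unstable boxes actually lie inside $A$. First I would let $E\subset A$ be the set of full $\mu|_A$-measure-minus-$\eps$ witnessing that $A$ is $(\xi,\eps)$ u-saturated, so that for $x\in E$ the unstable box $\cW$ of size $\xi$ through $x$ is contained in $A$; then I would intersect with the Pesin set, setting $E'=E\cap P_\tau$, which by \eqref{eq:atomL} still has $\mu|_A(E')\geq 1-\eps-\eps^2$. The point of working inside $P_\tau$ is that for $x\in P_\tau$ Lemma \ref{lyapchart} and Lemma \ref{lem:abscont0} give us Lyapunov charts of uniform size $\geq\tau$ and, crucially, the local product structure / absolute continuity statement: on the Lyapunov neighbourhood $Q_x$ the measure $\mu$ disintegrates as $\mu(\cdot\cap \tilde Q_x^\tau)=\int_{\tilde T_x^\tau} d\nu_x^\tau(z)\int_{W^u_{z,\tau}\cap\,\cdot}\rho(z,y)\,dm^u_z(y)$, with $\nu_x^\tau$ absolutely continuous on the transversal and $\tilde T_x^\tau$ exhausting the transversal as $\tau\to 0$.

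Next I would cover $P_\tau$ by countably many such Lyapunov neighbourhoods $Q_{x_i}$ (using a Besicovitch/Vitali-type selection so the multiplicity is bounded, or simply a measurable exhaustion), and on each piece use Lemma \ref{lem:abscont0} to write $\mu$ restricted to $P_\tau\cap A\cap Q_{x_i}$ (up to the negligible part $Q_{x_i}\setminus\tilde Q_{x_i}^\tau$, which we absorb into $\mu_r$) as an integral of the conditional measures $m^u_{W^u_{z,\tau}}$ — here I use that $\xi=\tau$ was chosen small enough that these boxes have size $\xi$, and that by $u$-saturation $W^u_{z,\tau}\subset A$ for $z\in E'$, so each fibre box genuinely sits inside $A$. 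Assembling these local disintegrations over the countable cover, and collecting into the index set $\fT$ the union of all the transversal parameters with the combined measure $\nu$ built from the $\nu_{x_i}^\tau$ (weighted to avoid double counting), yields a family $\{\cW_t\}_{t\in\fT}$ of unstable boxes of size $\xi$ and a measure $\nu$ with $\mu_g(B):=\int_\fT m^u_{\cW_t}(B)\,d\nu(t)$, where the normalization \eqref{MuCond} of $m^u_{\cW_t}$ matches the $\rho(z,\cdot)$ weights appearing in Lemma \ref{lem:abscont0}. Everything not captured this way — the part of $A$ outside $P_\tau$ (measure $\leq\eps^2$ by \eqref{eq:atomL}), the non-$u$-saturated part (measure $\leq\eps\,\mu(A)$), and the transversal-tails $\tilde Q^\tau\setminus Q$ (small by the last clause of Lemma \ref{lem:abscont0}, after possibly shrinking $\tau$) — goes into $\mu_r$, and the total of these is $\leq\eps$ provided $\tau=\xi$ was chosen small enough at the outset.

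The main obstacle I anticipate is bookkeeping rather than a genuine conceptual difficulty: one must patch the \emph{local} disintegrations of Lemma \ref{lem:abscont0}, which are stated on individual Lyapunov neighbourhoods $Q_x$ with their own transversals, into a single \emph{global} disintegration over a common index space $\fT$ without double-counting overlaps and while keeping the fibre measures consistently normalized — this is exactly where the cocycle identity $\rho(x,y)\rho(y,z)=\rho(x,z)$ from Lemma \ref{lem:abscont00} and the remark after \eqref{MuCond} (that the choice of base point on a given unstable leaf does not affect $m^u_\cW$) are used, so that the fibre measures glue unambiguously. A clean way to organize this is to fix a single countable measurable partition of $P_\tau$ subordinate to the cover $\{Q_{x_i}\}$, so that each point lands in exactly one $Q_{x_i}$, apply Lemma \ref{lem:abscont0} on each piece, and take $\fT$ to be the disjoint union of the resulting transversal pieces with $\nu$ the direct sum of the (absolutely continuous) transversal measures; $u$-saturation then ensures each fibre box of size $\xi$ lies in $A$, and Markov's inequality on the bad set finishes the estimate $\mu_r(M)\leq\eps$.
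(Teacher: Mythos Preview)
Your proposal is correct and follows essentially the same approach as the paper: both discard the part of $A$ outside $E\cap P_\tau$, cover the remainder by countably many Lyapunov charts $Q_{x_i}$, apply the absolute continuity statement Lemma~\ref{lem:abscont0} on each chart to disintegrate $\mu$ along unstable boxes parameterized by the transversals $T_{x_i}\cap P_\tau$, and then take $\fT$ to be the (disjoint) union of these transversal pieces with $\nu=\sum_i\nu_i$. Your extra attention to avoiding double-counting via a measurable partition subordinate to the cover, and to the role of the cocycle identity $\rho(x,y)\rho(y,z)=\rho(x,z)$ in making the fibre measures $m^u_{\cW_t}$ independent of base point, fills in bookkeeping that the paper's proof leaves implicit.
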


\begin{proof}
Let $\bar{E}_1=A\setminus P_\tau$ and $\bar{E}_2=E^c$, and $\bar{E}= \bar{E}_1\cup \bar{E}_2$
where $E$ is from Definition \ref{def:sat} for $A$ and $E^c$ is the complement of $E.$ 
Since $A$ is $(\xi,\epsilon)$ saturated and \eqref{eq:atomL} holds for $A$, we have 
that $\mu_{|A}(\bar E)\leq \epsilon$.
It then follows that $A\setminus \bar{E}=\bigcup_{x_i}\bigcup_{z\in T_{x_i}\cap P_\tau}W^u_{z,\xi}\cap P_\tau$.


Now absolute continuity of unstable foliation on $P_\tau\subset \hP_\tau$ (see Lemma \ref{lem:abscont0}) gives
$$ \mu(B)=
\mu|_A\left(B\cap \bar E \right)+
\sum_i
\int_{T_{x_i}\cap P_\tau} d\nu_i(z)m^u(W^{u}_{z,\xi}\cap A\cap P_\tau) 
\mu_{\cW}(B). $$
By the definition of $\bar E$, the first term is smaller than $\eps$. This gives \eqref{UDeco} with 
\\
$\DS \fT=\bigcup_i \bigcup_{z\in T_{x_i}\cap P_\tau} z\quad\text{and}\quad
d\nu=\sum_i d\nu_i.
$
\end{proof}

\begin{lemma}
\label{LmLocMeas}
Let $\fB$ be as set with $\mu(\fB)\leq \hat{\eps}^4.$ Then there is $\xi_0$ such that for $\xi\leq \xi_0$
the set 
$$\cK=\{x\in M\;:\; r_u(x)\geq \xi\text{ and } m^u_{W^u_{x,\xi}} (\fB)\leq \hat{\eps})\}$$ 
has measure greater than $1-4\hat{\eps}$ where $r_u(x)$ is the size of the unstable manifold of $x.$
\end{lemma}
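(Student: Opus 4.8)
The plan is to estimate $\mu(M\setminus\cK)$ by writing
\[
M\setminus\cK\ \subset\ \{x:\ r_u(x)<\xi\}\ \cup\ \{x:\ r_u(x)\geq\xi,\ m^u_{W^u_{x,\xi}}(\fB)>\hat\eps\}.
\]
The first set is under control: $r_u>0$ $\mu$-a.e.\ (this is part of Pesin theory; cf.\ the discussion around \eqref{eq:hpk}, where $\fK<\infty$ a.e.\ and the unstable manifold of $x$ has size at least $1/\fK(x)$), so $\mu(\{r_u<\xi\})\to 0$ as $\xi\to 0$; hence once $\xi_0$ is small this set has measure $<\hat\eps$ for every $\xi\leq\xi_0$. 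Since the second set is contained in $\fB\cup\{x\notin\fB:\ r_u(x)\geq\xi,\ m^u_{W^u_{x,\xi}}(\fB)>\hat\eps\}$ and $\mu(\fB)\leq\hat\eps^4$, everything reduces to bounding the measure of the set of points $x\notin\fB$ whose size-$\xi$ unstable box still carries more than an $\hat\eps$-fraction of $\fB$.

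The crux is a Lebesgue density statement along unstable leaves: for $\mu$-a.e.\ $x$ one has $m^u_{W^u_{x,\xi}}(\fB)\to\chi_\fB(x)$ as $\xi\to 0^+$. To prove it I would fix a rational $\tau>0$, cover $\hat P_\tau$ by countably many Lyapunov neighbourhoods $Q_{x_i}$ with $x_i\in\hat P_\tau$ (possible since $Q_x$ contains a ball whose radius is bounded below on $\hat P_\tau$), and apply the absolute-continuity formula of Lemma \ref{lem:abscont0} inside each $Q_{x_i}$: it exhibits $\mu$ restricted to $\tilde Q^\tau_{x_i}$ as a disintegration over the partition into local unstable plaques $W^u_{z,\tau}$, $z\in\tilde T^\tau_{x_i}$, the conditional on the plaque through $z$ being $m^u_{W^u_{z,\tau}}$ as in \eqref{MuCond} (the cocycle identity for $\rho$ from Lemma \ref{lem:abscont00} makes this independent of the chosen base point on the plaque). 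On each plaque $\rho$ is continuous and positive, so the classical Lebesgue differentiation theorem applies to the measure $\rho\,dm^u$ and to the shrinking boxes $W^u_{w,\xi}$ — which are bi-Lipschitz images of cubes centred at $w$, with distortion bounded on Pesin blocks — giving $m^u_{W^u_{w,\xi}}(\fB)\to\chi_\fB(w)$ for $m^u$-a.e.\ $w$ on the plaque. Combining this leafwise statement with Fubini over the transversal, and taking a countable union over the $x_i$ and over rational $\tau$ (whose neighbourhoods exhaust a set of full measure), proves the claim; here one uses that $(x,\xi)\mapsto m^u_{W^u_{x,\xi}}(\fB)$ is measurable in $x$ and continuous in $\xi$, so that all $\limsup$/$\liminf$ in $\xi$ are measurable in $x$.

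Granting the claim, set $h_\delta(x):=\sup_{0<\xi\leq\delta}m^u_{W^u_{x,\xi}}(\fB)$; this is measurable (the supremum over $\xi$ equals that over rational $\xi$ by continuity in $\xi$) and decreases, as $\delta\downarrow 0$, to $\chi_\fB$ $\mu$-a.e. In particular $h_\delta\downarrow 0$ a.e.\ on $\{\chi_\fB=0\}$, so by continuity from above of the finite measure $\mu$ one can choose $\xi_0$ with $\mu(\{x\notin\fB:\ h_{\xi_0}(x)>\hat\eps\})<\hat\eps$, and after shrinking $\xi_0$ we may also assume $\mu(\{r_u<\xi_0\})<\hat\eps$. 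Then for every $\xi\leq\xi_0$,
\[
M\setminus\cK\ \subset\ \{r_u<\xi\}\ \cup\ \fB\ \cup\ \{x\notin\fB:\ h_{\xi_0}(x)>\hat\eps\},
\]
since $r_u(x)\geq\xi$ and $\xi\leq\xi_0$ force $m^u_{W^u_{x,\xi}}(\fB)\leq h_{\xi_0}(x)$; hence $\mu(M\setminus\cK)<\hat\eps+\hat\eps^4+\hat\eps<4\hat\eps$, i.e.\ $\mu(\cK)>1-4\hat\eps$. The one genuinely delicate ingredient is the density statement — specifically, reading off the conditionals of $\mu$ along unstable plaques from Lemma \ref{lem:abscont0} and checking that the boxes $W^u_{w,\xi}$ shrink regularly enough (with distortion controlled on each Pesin block) for the Lebesgue differentiation theorem to apply — while the remainder is bookkeeping with countable covers, Markov's inequality and continuity from above.
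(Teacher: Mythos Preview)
Your argument is correct for the lemma as literally stated, but it follows a genuinely different route from the paper, and the difference has a real cost downstream. You establish a pointwise Lebesgue--density statement along unstable plaques --- invoking Lemma~\ref{lem:abscont0} to identify the leafwise conditionals of $\mu$ and then the classical differentiation theorem on each plaque --- and extract $\xi_0$ by a soft limiting argument (continuity from above of $\mu$ applied to $\{h_{\xi_0}>\hat\eps\}$); consequently your $\xi_0$ depends on the particular set $\fB$. The paper instead argues quantitatively: it applies Lemma~\ref{LmUDeco} (with $A=M$) to write $\mu$, up to a remainder of mass $\leq\hat\eps$, as an integral of leaf measures $m^u_{\cW_t}$ over unstable boxes of a fixed auxiliary size $\brxi$; discards by Markov's inequality the few $t$ with $m^u_{\cW_t}(\fB)>\hat\eps^3$; and on each remaining box invokes the covering estimate Lemma~\ref{LmUniCover} to bound the set of centres $x$ with $m^u_{W^u_{x,\xi}}(\fB)>\hat\eps$. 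This produces a $\xi_0$ depending only on $\hat\eps$ (through $\brxi$ and the boundary condition $m^u_{\cW_t}(\partial_{\xi_0}\cW_t)\leq\hat\eps$), \emph{not} on $\fB$. That uniformity is what the paper actually exploits later --- in Lemma~\ref{lem:setsB} and in the proof of property~{\bf A} of the Main Proposition the lemma is applied with $\fB$ varying with $n$ while $\xi$ has to be chosen beforehand --- so although your approach proves the stated lemma, it would not plug directly into those applications.
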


\begin{proof}
Take a small $\brxi$. Then Lemma \ref{LmUDeco} gives a decomposition
$\mu=\mu_g+\mu_r$ where $\mu_r(M)\leq \hat{\eps}$ and 
$$ \mu_g(A)=\int_{\fT} m^u_{\cW_t}(A) d\nu(t) $$
where $\cW_t$ are unstable boxes of size $\brxi.$ By Markov inequality we may assume, 
possibly increasing $\mu_r(M)$ by $\hat{\eps}$,
that
for each $t\in \fT$ we have $m^u_{\cW_t}(\fB^c)>1-\hat{\eps}^3.$
Next take $\xi_0$ so small that
$m_{\cW_t}^u(\partial_{\xi_0} \cW_t)\leq \hat{\eps}. $ 
Now for each $t\in \fT$ we apply Lemma \ref{LmUniCover} with 
$$\nu=m^u_{\cW_t}, \quad
D=\{x\in \cW_t:\;\; d(x, \partial \cW_t)\geq \xi\}, \quad 
 B=\fB $$
and conclude that for each $t$ we have 
$$\mu^u_{\cW_t}(\cK^c)\leq \mu^u_{\cW_t}(\cK^c\cap D)+\mu^u_{\cW_t}(D^c)\leq 2\hat{\eps}.$$
Integrating over $t\in \fT$ and remembering that $\mu_r(M)<2\hat{\eps}$ we obtain the result. 
\end{proof}

\section{Construction of fake center-stable foliation.}\label{sec:fake}
 The results of this section are obtained under the assumption that
$f$ is $C^{1+\alpha}$ diffeomorphism
of a compact manifold $M$ preserving a smooth measure $\mu$
with some non-zero exponents. The reader may suppose in the arguments below
that $\eta_2$ is a small constant and $\eps\ll \eta_2.$ Later we apply the
results of this section in the case where $\eta_2$ satisfies the conditions of Lemma 
\ref{lem:expmixB}. Before we go to the details of the construction we provide an outline.\\

{\bf Outline of the construction of the fake cs-foliations:} The constructed foliation depends on the time parameter $n$. The idea is to choose a regular reference point $x$
and to bring back  the foliation  $\exp_x( \R^{cs})$
from time $n$ to time $0$. The problem with this approach is that 
the leaves constructed using two different reference points $x'$ and $x''$
may intersect. To overcome this problem we will create small buffers between the regions
where we take different foliations. This will resolve the problem of intersections at the 
price that our foliation will be defined not on all of $M$ but on a set whose measure could be
made arbitrary close to 1.

More precisely, we fix a family $\{\bar{B}_s\}=\{B_s(\xi_n,r)\}$ (see \eqref{BXiR})  of disjoint parallelograms centered at points in $\cL_{n,\tau}$ (see \eqref{eq:lnc})  and with sizes $\bar{\xi}_n=e^{-\eps^3n}$  in the $\R^u$ direction and $r_n=\bar{\xi}_n^{1+\omega}$ (where $\omega$ is sufficiently small)
in the $\R^{cs}$ direction. The exact choice of the size of $\bar{\xi}_n$ is not so important as long as it is exponentially small with $n$ (with the exponent much smaller than the exponential mixing exponent). We want the $r_n$ to be smaller than $\xi_n$ to be able to use absolute continuity of the unstable foliation. We also want that $\{\bar{B}_s\}$ cover most of the space (i.e.\footnote{ Recall that $\bb=10^{10}.$}
 $1-30\epsilon^{\bb/4}$). Existence of such a family is established in Lemma \ref{lem:setsB}. We now take $\xi_n:= e^{\eps^2n-\eta_2 \eps n}$ and $\eps'=\eps^{100\bb}$. We say that an unstable manifold $\cW$ of size $\xi_n$ is {\em good} if 
 $$
 m^u_{\cW}\Big(\bigcup_{s}f^{-n}\Big((1-\eps')\cdot \bar{B}_s\Big)\cap \cL_{n,\tau}\Big)\geq 1-100\eps^{\bb/16}.
 $$ Using again Lemma \ref{lem:setsB} we show that there exists a family $\{\hcW_i=W_i(\xi_n)\}$ of good unstable manifolds such that the corresponding parallelograms $\{B_i=B_i(\hcW_i,r)\}$ with $r\leq e^{-\eta_2\eps n}$ cover most of the space.  In fact in the applications in Sections \ref{ScAC} and \ref{sec:main} we will only work with two sizes of $r=e^{-\eta_2\eps n}$ and $\tilde{r}=e^{-\eta_2\eps n-\eps^2n}$. The extra $-\eps^2n$ term in $\tilde{r}$ is responsible for the fact that if we start with two points on the same fake center-stable leaf which are at distance $\tilde{r}$ then after  $\eps \cdot n$ iterates their distance will be less than $r$. This will be used in Proposition \ref{prop:3.5}.
Next, for each $i$ and $s$ we look at the maximal connected components $\{R_{i,s,j}\}$ of the set $\hcW_i\cap f^{-n}((1-\eps')\bar{B}_s)$. 
 For each such connected component $R_{i,s,j}$ we construct a fake cs-foliation by pulling the $\R^{cs}$ foliation for the set $f^{n}(\hcW_i)\cap \bar{B}_s$ (see \eqref{eq:fak1}, \eqref{eq:fak2}). 
Here the fact that we  hit $\bar{B}_s$ not too close to the boundary will create aforementioned buffers, while the fact that $\eps'$ is small, will ensure that our foliations are defined on
 a set of a measure close to 1. Also we want $\bar{B}_s$ to be much larger than $B_i$ to ensure
 that the unstable leaves we construct will fully cross $B_i$.
  The next step is to glue the foliations for different $s,j$ to get a foliation of (most of) the sets $B_i(\hcW_i,r)$. The crucial properties of the foliation $\{\cF_{i,s,j}\}$ are established
  in Lemma \ref{lem:ff}. Namely, we show that indeed one can glue the foliations over different $s,j$ as they don't intersect (property {\bf F5.}). We also show that the growth on the leaves of the foliation $\{\cF_{i,s,j}\}$ is sub-exponential (property {\bf F4.}). In property {\bf F3} we show that most of the set $\hcW_i$ 
intersects a leaf of $\cF_{i,s, j}$.
The crucial property of the constructed foliation is that it is locally absolutely continuous (on exponentially small scale), see Proposition \ref{lem:abscont}.  The bounded distortion property  (property {\bf F2} of Lemma \ref{lem:ff})
plays a key role in proving Proposition~\ref{lem:abscont}. \\

We now move to the details of the construction.


 Recall that for $x\in \R^D$, $\DS \|x\|_{\infty}=\max_{i}|x_i|$.   For $\xi<\tau$, $x\in P_\tau$ let $\cW(\xi)\subset W^u_{z,\tau}$ be an  unstable cube of $z$ of size $\xi$. Let
\begin{equation}
\label{BXiR}
B(\xi,r)=B(\cW(\xi),r):=h_z\left( 
\{(a, b)\in \R^u\times \R^{cs}: \|a\|_{\infty}\leq \xi, \|b-\eta_z(a)\|_{\infty}\leq r\}\right)
\end{equation}
 where the unstable manifold of $z$ is given by
the graph of $\eta_z$ in the exponential coordinates (see \eqref{eq:etx}).
 By Lemma \ref{lyapchart} 
 there exists a measurable function $\brho:LyapReg\to \R$ and $\alpha_{\brho}>0$ such that for every $\tau>0$ the restriction of $\brho$ to $P_\tau$ is H\"older continuous with exponent $\alpha_{\brho}$ (independent of $\tau$) and, by Fubini theorem
for each $\heps$ and $\tau>0$ and any $\xi$ and $r$ small enough (in terms of $\heps$ and $\tau$) so that
$$ \mu(B(\xi,r))\in (1-\heps, 1+\heps)\brho(x)\cdot\int_{\|a\|_{\infty}\leq \xi} \mes(B^{cs} (\eta_z(a), r)) da
$$
\be\label{MesMultiDisc}
=(1-\heps, 1+\heps)\brho(x)\cdot m^u(\cW(\xi)) \mes(B^{cs} (0, r)),
\ee
for arbitrary $x\in B(\xi,r)\cap P_\tau$. 
In fact it is enough to take $\brho(x):=\det(L_x)$ (see Lemma~\ref{lyapchart} and Lemma \ref{LmIDHold}).


Fix $\epsilon$ and let $\tau$ be as in \eqref{eq:pc}. For $n\in \N$ we define 
\be\label{eq:lnc}
\cL_{n,\tau}=P_\tau\cap f^{-n}P_\tau\cap f^{-\epsilon n}P_\tau.
\ee
Note that by \eqref{eq:pc}, $\mu(\cL_{n,\tau})\geq 1-3\eps^\bb$. 
\begin{lemma}
\label{lem:setsB}
There exists $\omega>0$ and  $n_{\epsilon}\in \N$ such that for every  $n\geq n_{\epsilon}$  and for any
$\xi\in (0,e^{-\eps^3n})$,
$r\in [\xi^{1+\omega},\xi]$
 and for any set $H\subset M$ there exists a finite family of pairwise disjoint sets $\mathbb{B}=\{B_i(\xi,r)\}_{i\in J_r}=\{B(\cW_i(\xi),r)\}$ such that for every $i\in J_r$, 
$\cW_i(\xi)$ satisfies 
\be\label{eq:largeL}
m^u_{\cW_i(\xi)}(H)\leq \mu(H)^{1/4}.
\ee
and 
\be \label{BigFamily}
\mu\left(\bigcup_{i\in J_r}B_i(\xi,r)\right)\geq 1-10\mu(H)^{1/4}.
\ee
\end{lemma}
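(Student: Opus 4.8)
The proof proceeds by a Vitali-type covering argument on unstable boxes, combined with the absolute continuity of the unstable foliation (Lemma~\ref{lem:abscont0}) and the local product structure of $\mu$ in Pesin charts (equation~\eqref{MesMultiDisc}). The idea is as follows. First I would fix $\tau$ as in \eqref{eq:pc} so that $\mu(P_\tau) \geq 1 - \eps^\bb$, and work on the Pesin set $P_\tau$ where the unstable manifolds have uniform size $\geq \tau$ and the conditional densities $\rho_x$ and the charts $h_x$ vary H\"older-continuously. The parameter $\omega > 0$ will be chosen small enough that $r \geq \xi^{1+\omega}$ still forces $r$ to be "not too degenerate" relative to $\xi$ — concretely, small enough that the H\"older-continuity moduli of $\brho$ and of the various chart data do not spoil the two-sided density bound in \eqref{MesMultiDisc} when evaluated at scales $\xi, r \leq e^{-\eps^3 n}$; and also small enough that the boundary-shell estimate below works.

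The core step is to produce \emph{one} good box and then iterate. Starting from an arbitrary measurable $H$, apply Lemma~\ref{lem:abscont0} (the absolute-continuity/disintegration lemma) to disintegrate $\mu$ restricted to a small Pesin neighborhood $Q_x$ of a density point $x \in P_\tau$ into conditionals $m^u$ on unstable plaques times a transverse measure $\nu_x^\tau$ absolutely continuous w.r.t. Lebesgue. By Markov's inequality applied to the function $z \mapsto m^u_{W^u_{z,\tau}}(H)$, the set of plaque-centers $z$ with $m^u_{W^u_{z,\tau}}(H) > \mu(H)^{1/4}$ carries $\mu$-mass at most (roughly) $\mu(H)^{3/4}$ times a local constant; hence a positive-measure set of centers $z$ satisfies the smallness condition \eqref{eq:largeL} for their unstable cube $\cW(\xi) \subset W^u_{z,\tau}$. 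Pick such a $\cW(\xi)$, form the box $B(\cW(\xi), r)$ via \eqref{BXiR}, and note its measure is comparable (up to the $(1-\heps,1+\heps)$ factor) to $m^u(\cW(\xi)) \cdot \mes(B^{cs}(0,r))$ by \eqref{MesMultiDisc}. So each box we select carries a definite, computable fraction of the local measure of the $\xi$-scale unstable slab around it.

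Next I would run a greedy/exhaustion argument. Having selected disjoint boxes $B_1, \dots, B_{i-1}$, look at the complement $M \setminus \bigcup_{j<i} B_j$; if its measure exceeds $10\,\mu(H)^{1/4}$ then in particular it meets $P_\tau \cap f^{-n}P_\tau \cap f^{-\eps n} P_\tau$ and the "bad for $H$" set in substantial measure, so by the previous paragraph (applied to the restricted measure on the uncovered region) one can find a new box $B_i$, disjoint from the previous ones, with $\cW_i(\xi)$ satisfying \eqref{eq:largeL}. The disjointness is arranged by shrinking the chosen cube slightly or by only accepting a new box if it lies in the uncovered region — here is where the boundary shells of the already-chosen boxes must be controlled: the set of points within distance $r$ (in the $\R^{cs}$ direction) or $\xi$ (in the $\R^u$ direction) of $\partial B_j$ has measure $O(\heps)$ times $\mu(B_j)$ by Fubini, so they can be absorbed into the error budget. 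Since each accepted box has measure bounded below by a fixed multiple of $m^u(\cW_i(\xi))\mes(B^{cs}(0,r)) \gtrsim \xi^{d^u} r^{d^{cs}} > 0$, the process terminates after finitely many steps, and at termination the uncovered measure is $\leq 10\,\mu(H)^{1/4}$, which is \eqref{BigFamily}.

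\textbf{Main obstacle.} The delicate point is bookkeeping the error terms uniformly in $n$: one must check that the H\"older moduli controlling the density $\brho$ and the transverse measures $\nu_x^\tau$, evaluated at the exponentially small scales $\xi, r \in (0, e^{-\eps^3 n})$, degrade only polynomially (so the two-sided bound \eqref{MesMultiDisc} with a \emph{fixed} $\heps$ survives, say $\heps$ independent of $n$), and that the constraint $r \in [\xi^{1+\omega}, \xi]$ is exactly what lets the $\R^{cs}$-direction size be handled by absolute continuity of the unstable foliation (one needs $r$ not much smaller than $\xi$ so the transversal estimate in Lemma~\ref{lem:abscont0} applies with controlled distortion). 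Choosing $\omega$ and $n_\eps$ correctly — i.e. $n_\eps$ large so all the "$n$ large enough in terms of $\tau, \eps, \omega$" clauses from the cited Pesin lemmas hold simultaneously — is the part requiring care, but it is routine once the structure above is in place. The covering/exhaustion itself is standard.
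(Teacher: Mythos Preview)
Your approach has a genuine gap in the covering step. A greedy (Vitali-type) selection of disjoint boxes cannot produce a family covering measure $\geq 1-10\,\mu(H)^{1/4}$; at best it covers a fixed fraction depending on the dimension. Concretely, your claim that ``the set of points within distance $r$ (in the $\R^{cs}$ direction) or $\xi$ (in the $\R^u$ direction) of $\partial B_j$ has measure $O(\heps)\,\mu(B_j)$'' is false: since $B_j$ itself has $cs$-width $r$ and $u$-width $\xi$, a shell of those same widths around $\partial B_j$ has measure comparable to $\mu(B_j)$, not $\heps\,\mu(B_j)$. Consequently, when you stop because no new disjoint good box exists, all you can conclude is that $\cK$ is contained in the $(2\xi,2r)$-dilations of the chosen boxes, giving $\mu(\cK)\leq 2^{D}\sum_i \mu(B_i)$ --- a constant-fraction bound, not $1-O(\mu(H)^{1/4})$.

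The paper avoids this entirely by a \emph{lattice} argument rather than a greedy one. It first invokes Lemma~\ref{LmLocMeas} to get the good set $\cK=\{x\in P_\tau:\ m^u_{\cW_{x,\xi}}(H)\leq\mu(H)^{1/4}\}$ with $\mu(\cK)>1-4\mu(H)^{1/4}$. Then it partitions $M$ into large cubes $Q_j$ of size $\xi^{1-10\omega}$, picks a reference $z_j\in Q_j\cap\cL_{n,\tau}$, and subdivides each $Q_j$ into \emph{bricks} $Z_{ij}$ of size $(1+\heps^3)\xi$ in the $E^u(z_j)$ direction and $(1+2\heps)r$ in the $E^{cs}(z_j)$ direction. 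By a Fubini/averaging argument one can shift this lattice so that the proportion of brick centers $z_{ij}$ lying outside $\cK$ is at most $2\mu(\cK^c\cap Q_j)/\mu(Q_j)$. For each $z_{ij}\in\cK$ one takes $B_{ij}=B(\cW_{z_{ij}}(\xi),r)$. The role of $\omega$ is now purely geometric: since the unstable graph satisfies $\|\eta_{z_{ij}}(a)\|\leq C\xi^{1+\alpha_4}$, choosing $\omega<\alpha_4$ guarantees $r\geq\xi^{1+\omega}\gg\xi^{1+\alpha_4}$, hence $B_{ij}\subset Z_{ij}$ and $\mu(B_{ij})>(1-\heps^2)\mu(Z_{ij})$. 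Disjointness is then automatic (the bricks tile), and the coverage bound follows because the bricks with good centers already cover $\geq 1-8\mu(H)^{1/4}$.
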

\begin{proof}
 Let $\cK=\{x\in P_\tau: m^u_{\cW_{x, \xi}}(H)\leq \mu(H)^{1/4}\}.$
By Lemma \ref{LmLocMeas}, $\mu(\cK)>1-4\mu(H)^{1/4}.$
Divide $M$ into cubes $Q_j$ of size $R=\xi^{1-10\omega}$
so that each cube belongs to a single coordinate chart.
In each cube which intersects $\cL_{n,\tau}$, we choose a point $z_j\in \cL_{n,\tau}.$ 
Then divide each cube into bricks $Z_{ij}$ which are products of cubes of size $(1+\heps^3) \xi$
in $E^u(z_j)$ direction and cubes of size $(1+2\heps) r$ in $E^{cs}(z_j)$ direction. 
The centers $z_{ij}$ of $Z_{ij}$ lie on a lattice. Shifting that lattice if necessary we may assume that
the proportion of $z_{ij}$ which is not in $\cK$ is at most 
$\DS 2\frac{\mu(\cK^c\cap Q_i)}{\mu(Q_i)}.$ Now for each $z_{ij}\in \cK$ we add 
$B_{ij}=B(W_{z_{ij}}(\xi), r)$
into our collection $\mathbb{B}.$ 
 By \eqref{eq:etau} it follows  that for $\|a\|\leq \xi$ we have
$\|\eta_z(a)\|\leq C \xi^{1+\alpha_4}.$ Hence choosing $\omega<\alpha_4$ ensures that
$B_{ij}\subset Z_{ij}$ and $\mu(B_{ij})>(1-\heps^2) \mu(Z_{ij}).$

Since by construction 
$$\mu\left(\bigcup_{ij} Z_{ij}\right)\geq 1-2\mu(\cK^c)\geq 1-8\mu(H)^{1/4}$$
we obtain \eqref{BigFamily} concluding the proof of the lemma.
\end{proof}

We apply the above lemma with $\bar{\xi}_n:=e^{-\eps^3n}$, $\bar{r}:=\bar{\xi}_n^{1+\omega}$ and 
$H=\cL_{n,\tau}^c$, to get  a family $\mathbb{B}_1=\{\bar{B}_s\}=\{B_s(\bar{\xi}_n,\bar{r})\}_{s\in J_r}$ as in the above lemma.   Since $\mu(\cL_{n,\tau})\geq 1-3\eps^\bb$, 
\eqref{BigFamily} implies that
$$
\mu(\bigcup_{s}\bar{B}_s)\geq 1-10(3\eps^\bb)^{1/4}\geq 1-30\eps^{\bb/4}.
$$
We apply Lemma \ref{lem:setsB} again with
\be \label{DefXiN}
\xi_n=e^{\eps^2 n-\eta_2\epsilon n},\;\; r\leq e^{-\eta_2\eps n}, \ee 
and 
\be \label{HPrime} H'=\Big(\cL_{n,\tau}\cap \bigcup_{s}f^{-n}((1-\eps')\cdot \bar{B}_s)\Big)^c, 
\ee
where $\eps'=\epsilon^{100\bb}$ to get a family  $\mathbb{B}_2(r)=\{B_i(\xi_n,r)\}_{i\in J_r}=\{B(\cW_i(\xi_n),r)\}$. Notice that $\mu(H')\leq 40\eps^{\bb/4}$ and so by the above lemma,
\be\label{eq:mesb2}
\mu\Big(\bigcup_{i\in J_r} B_i(\xi_n,r)\Big)\geq 1-10\cdot (40\eps^{\bb/4})^{1/4}\geq 1-100\eps^{\bb/16}.
\ee
Moreover, by \eqref{eq:largeL} it follows that for every $i\in J_r$, 
\be\label{eq:lint}
m^u_{W_i(\xi_n)}(\cL_{n,\tau}\cap \bigcup_{s}f^{-n}\Big((1-\eps')\bar{B}_s\Big)\geq 1-\mu(H')^{1/4}\geq 1-100\eps^{\bb/16}.
\ee

We will now construct the fake center stable partitions of elements in $\mathbb{B}_2(r)$ by pulling back the $\R^{cs}$ partition from element in $\mathbb{B}_1$ (with time $n$). Since $\xi_n=e^{\eps^2n-\eta_2\eps n }$ is now fixed throughout the paper we denote 
\be \label{HCW}
\hcW_i=\cW_i(\xi_n)
\ee
\be\label{BI-R}
B_i(r)=B_i(\xi_n,r)
\ee 
where $r\leq e^{-\eta_2\eps n}$. 
For any  $s$ consider the maximal connected components $\{R_{i,s,j}\}_{j}$ of the set $\hcW_i\cap f^{-n}\Big((1-\eps')\bar{B}_s\Big)$. Notice that by \eqref{eq:lint} the union of $R_{i,s,j}$ (over $s,j$) for which $R_{i,s,j}\cap \cL_{n,\tau}=\emptyset$ has measure $\leq 100\eps^{\bb/16}$ and so from now on we will restrict our attention to those $R_{i,s,j}$ for which $R_{i,s,j}\cap \cL_{n,\tau}\neq \emptyset$.  Let $z=z_{i,s,j}\in R_{i,s,j}\cap \cL_{n,\tau}$  
and let for $y\in R_{i,s,j}$
\be\label{eq:fak1}
\tilde W^{cs,n}_{i,s,j}(y)=(\tilde f_{z}^{(n)})^{-1}(\R^{cs}+\tilde f_{z}^{(n)}(h^{-1}_{z}y))\ee
where $\R^{cs}+y$ is the parallel to $\R^{cs}$ through $y$ and  let
\be\label{eq:fak2}
\cF_{i,s,j}(y):=W^{cs,n}_{i,s,j}(y)=h_{z}(\tilde W^{cs,n}_{i,s,j}(y)).\ee

The following  lemma summarizes the properties of the foliations $\{\cF_{i,s,j}\}$ that will be important 
for our purposes.

\begin{lemma}\label{lem:ff}
For any $\hat{\eps}>0$ there exists $n_{\hat{\eps}}$ such that for $n\geq n_{\hat{\eps}}$,  we have the following:
\begin{enumerate}
\item[{\bf F1.}] for any $i,s,j$ and any $k\leq n$, 
$$
f^k(R_{i,s,j})\subset h_{f^kz_{i,s,j}}(B^u( e^{\lambda(-n+k)} e^{-\eps^5 n}));
$$
\item[{\bf F2.}] for any $i,s,j$ and for any $z,z'\in R_{i,s,j}$, we have 
\be\label{eq:jacsm}
\left|\frac{\det (Df^{n}_z|E^u(z))}{\det( Df^{n}_{z'}|E^{u}(z'))}\right|\in (1-\hat{\epsilon},1+\hat{\epsilon});
\ee
\item[{\bf F3.}] for any $i$, $m^u_{\hcW_i}(\bigcup_{s,j}R_{i,s,j}\cap \cL_{n,\tau})\geq 1-100\eps^{\bb/16}$;
\item[{\bf F4.}] for any $i,s,j$, any $y' \in \cF_{i,s,j}(y)\cap B_i(r)$ and any $0\leq k \leq n$
$$
d(f^k y,f^k y')\leq e^{\epsilon^{10} n} \cdot r.
$$
\item[{\bf F5.}] for any $i$, if 
$$
B_i(r)\cap \cF_{i,s,j}(y)\cap \cF_{i,s',j'}(y')\neq \emptyset,
$$
for some $y\in R_{i,s,j}, y'\in R_{i,s',j'}$, then $s=s'$, $j=j'$ and $y=y'$.
\end{enumerate}
\end{lemma}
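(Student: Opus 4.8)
The five properties should be established in roughly the order in which they are stated, since each relies mostly on the Pesin-theoretic machinery already set up in Section \ref{ScDef} together with the definitions \eqref{eq:fak1}--\eqref{eq:fak2} and the choices of scales $\bar\xi_n, \bar r, \xi_n, r$. The first item \textbf{F1} is a direct consequence of the backward contraction of the (fake) unstable manifolds. Indeed $R_{i,s,j}$ lies in a single unstable cube $\hcW_i=\cW_i(\xi_n)$ of size $\xi_n=e^{\eps^2n-\eta_2\eps n}$, and by definition $f^n(R_{i,s,j})$ lands inside $(1-\eps')\bar B_s$, whose $\R^u$-size is $\bar\xi_n=e^{-\eps^3 n}$. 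Pulling back via $\tilde f^{(n)}$ and using Lemma \ref{lem:unstman} (contraction $e^{-\lambda+O(\delta)}$ on $\tW^u$ leaves at each step), the $\R^u$-size at time $k$ is bounded by $e^{\lambda(-n+k)}$ times the final size, up to the small $\delta$-loss, which is absorbed into the $e^{-\eps^5 n}$ factor once $n$ is large (here one uses $\delta=\eps^{10}\ll\eps^5$ and $\eta_2\eps, \eps^2 \ll \eps^3 + $ the gain is exponential). So \textbf{F1} is just bookkeeping with the constants.

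\textbf{F2} (bounded distortion of the unstable Jacobian along $R_{i,s,j}$) follows from the standard argument that $\log\det(Df^n|E^u)$ has summable variation under backward iteration: writing $\det(Df^n_z|E^u(z))=\prod_{k=0}^{n-1}\det(Df_{f^k z}|E^u(f^k z))$ and comparing with the same product at $z'$, each factor differs by $\le C\,d(f^k z, f^k z')^{\alpha'}$ where $\alpha'=\min(\alpha,\alpha_1)$ (using $C^{1+\alpha}$-regularity of $f$ via \eqref{eq:fC1}, Hölder continuity of $x\mapsto E^u(x)$ on $P_\tau$ from Lemma \ref{LmIDHold}, and Lemma \ref{jacgras}). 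By \textbf{F1}, $d(f^k z, f^k z')\le e^{\lambda(-n+k)}e^{-\eps^5 n}$, so the sum over $k\le n$ of the $\alpha'$-powers is a convergent geometric series in $e^{-\lambda\alpha'}$ times $e^{-\alpha'\eps^5 n}$, which tends to $0$; hence the ratio of Jacobians is within $(1-\hat\eps, 1+\hat\eps)$ for $n$ large. \textbf{F3} is immediate from \eqref{eq:lint}: the $R_{i,s,j}$ with $R_{i,s,j}\cap\cL_{n,\tau}\ne\emptyset$ are exactly the connected components of $\hcW_i\cap f^{-n}\big(\bigcup_s (1-\eps')\bar B_s\big)$ meeting $\cL_{n,\tau}$, and the set so covered has $m^u_{\hcW_i}$-measure $\ge 1-100\eps^{\bb/16}$ by \eqref{eq:lint}.

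\textbf{F4} is the cone/non-expansion estimate in the center-stable direction. If $y'\in\cF_{i,s,j}(y)\cap B_i(r)$ then $h_z^{-1}y'$ and $h_z^{-1}y$ lie on the same leaf $\tW^{cs,n}_{i,s,j}$, i.e. $\tilde f_z^{(n)}(h_z^{-1}y')-\tilde f_z^{(n)}(h_z^{-1}y)\in\R^{cs}$; moreover since $y'\in B_i(r)$ the initial separation is $\le C r$ in the Lyapunov chart. By Lemma \ref{cs-foliation} the leaf is a graph over $\R^{cs}$ with small derivative, so the vector $h_z^{-1}y'-h_z^{-1}y$ lies in a narrow $cs$-cone, and Lemma \ref{LmCones}(2) (applicable on $Q_x^{(n)}$ since $z\in\cL_{n,\tau}$) gives $\|D_{h_z^{-1}y}f^k(\cdot)\|\le\tau^{-2}e^{2\delta k}$ along that cone direction, hence $d(f^k y, f^k y')\le \tau^{-2}e^{2\delta n}\cdot Cr$. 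Since $\delta=\eps^{10}$ and $\tau^{-2}$ is subexponential (just a fixed power of $\eps$), this is $\le e^{\eps^{10}n}r$ for $n$ large, giving \textbf{F4}. I should be slightly careful that the whole backward/forward orbit segment stays inside the domains of the Lyapunov charts; this is guaranteed because $z\in\cL_{n,\tau}\subset f^{-n}P_\tau$, so $\mathfrak r_\delta(f^k z)\ge e^{-\delta k}\tau$ along the orbit and the images under $f^k$ of $B_i(r)$ are still exponentially small compared to $\mathfrak r_\delta(f^k z)$.

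\textbf{F5}, non-intersection of leaves attached to different pieces, is the main obstacle and the one that genuinely uses the ``buffer'' construction. The point is: the leaf $\cF_{i,s,j}(y)$ is obtained by pulling back, via $\tilde f_{z_{i,s,j}}^{(n)}$, the affine $\R^{cs}$-foliation through $\tilde f^{(n)}(h^{-1}y)$, where the latter sits in $(1-\eps')\bar B_s$. If two leaves $\cF_{i,s,j}(y)$ and $\cF_{i,s',j'}(y')$ met inside $B_i(r)$, then by \textbf{F1}--\textbf{F4} their $f^n$-images would be two $cs$-plaques that both pass through (exponentially near) a common point of $f^n(B_i(r))$, which by \textbf{F1} has $\R^u$-diameter $\le e^{-\eps^5 n}\ll\eps'\bar\xi_n=\eps'e^{-\eps^3 n}$ (here $\eps'=\eps^{100\bb}$ is polynomially small and $\eps^5\gg\eps^3$ is false — wait: $e^{-\eps^5 n}$ vs $e^{-\eps^3 n}$, since $\eps<1$ we have $\eps^5<\eps^3$ so $e^{-\eps^5 n}> e^{-\eps^3 n}$; the correct comparison uses that $f^n(B_i(r))$ has $\R^{cs}$-size $\le e^{\eps^{10}n}r\le e^{\eps^{10}n}e^{-\eta_2\eps n}$, which is $\ll\eps'\bar r=\eps'\bar\xi_n^{1+\omega}$ times the buffer width $\eps'\bar\xi_n$ in the $u$-direction, as $\eta_2\eps$ beats $\eps^{10}+\eps^3(1+\omega)$ for $\eps$ small). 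Thus $f^n(B_i(r))$ is much smaller than the buffer of width $\sim\eps'\bar\xi_n$ that separates $(1-\eps')\bar B_s$ from $(1-\eps')\bar B_{s'}$ inside the disjoint parallelograms $\bar B_s, \bar B_{s'}$; hence $f^n(B_i(r))$ can meet at most one $(1-\eps')\bar B_s$, forcing $s=s'$. Once $s=s'$, the pieces $R_{i,s,j}$ and $R_{i,s,j'}$ are distinct connected components of $\hcW_i\cap f^{-n}((1-\eps')\bar B_s)$; if leaves from both met in $B_i(r)$ then, transporting by $f^n$, the corresponding $cs$-leaves of the \emph{single} affine foliation $\R^{cs}+\tilde f_z^{(n)}(\cdot)$ in the chart of the common reference point $z=z_{i,s,j}$ (note $z_{i,s,j}$ and $z_{i,s,j'}$ both lie in $(1-\eps')\bar B_s$, so one may use a common chart after a controlled change of coordinates, picking up only Hölder-small errors by Lemma \ref{lyapchart} L2) would have to coincide, so $y$ and $y'$ lie on the same affine $cs$-plane, i.e. $\tilde f^{(n)}(h^{-1}y)-\tilde f^{(n)}(h^{-1}y')\in\R^{cs}$; but then $h^{-1}y$ and $h^{-1}y'$ lie on the same fake-$cs$ leaf, which is a graph over $\R^{cs}$ (Lemma \ref{cs-foliation}), so they have the same $\R^u$-coordinate up to the small graph distortion — and since $h^{-1}y\in R_{i,s,j}$, $h^{-1}y'\in R_{i,s,j'}$ lie in the unstable cube $\hcW_i$ (another graph over $\R^u$ with small derivative, Lemma \ref{lem:unstman}), matching $\R^{cs}$-coordinates forces $y=y'$ by a contraction-mapping/transversality argument, whence the two points are in the same connected component, i.e. $j=j'$. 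The delicate point throughout \textbf{F5} is keeping all the ``Hölder-small'' chart-change and graph-distortion errors genuinely smaller than the buffer width $\eps'\bar\xi_n$; this is exactly why $\bar B_s$ is taken much larger than $B_i(r)$ (so that $f^n(B_i(r))$ is a tiny speck inside $\bar B_s$) and why $\eps'$, though small, is fixed independent of $n$ while all the error terms decay exponentially in $n$.
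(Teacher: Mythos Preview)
Your arguments for \textbf{F1}--\textbf{F4} follow the paper's approach and are essentially correct. One genuine gap appears in \textbf{F2}: you invoke Lemma~\ref{LmIDHold} (H\"older continuity of $x\mapsto E^u(x)$ on $P_\tau$) to control $d(E^u(f^kz),E^u(f^kz'))$, but for $0<k<n$ the intermediate points $f^kz,f^kz'$ need not lie in $P_\tau$ (only membership at times $0,\eps n,n$ is guaranteed by $z\in\cL_{n,\tau}$), so that lemma is not directly applicable. The paper instead uses Lemma~\ref{admisibleconv}, which bounds the distance between forward-iterated tangent subspaces dynamically, without requiring the intermediate points to sit in any Pesin set. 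Alternatively one may note that $f^kz,f^kz'$ lie on the single $C^{1+\alpha_4}$ unstable leaf $f^k\hcW_i$ and use Lemma~\ref{lem:unstman}; either way the fix is routine, and your geometric-series bound is then actually sharper than the paper's crude $C_2 n e^{-\eps^5 n}$.

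Your treatment of \textbf{F5} reaches the right conclusions but is considerably more tangled than necessary, and the mid-argument self-correction (``wait: $e^{-\eps^5 n}$ vs $e^{-\eps^3 n}$\dots'') reflects a genuine confusion: you begin by bounding the $\R^u$-diameter of $f^n(B_i(r))$ via \textbf{F1}, but \textbf{F1} concerns $f^k(R_{i,s,j})$, not $f^n(B_i(r))$; the latter has enormous $\R^u$-extent since $f^n$ expands the unstable direction. The paper's argument for $s\ne s'$ is the single clean claim $B_i(r)\cap\cF_{i,s,j}(y)\subset f^{-n}(\bar B_s)$: for any $w\in\cF_{i,s,j}(y)\cap B_i(r)$, \textbf{F4} gives $d(f^ny,f^nw)\le e^{\eps^{10}n}r$, while $f^ny\in(1-\eps')\bar B_s$ sits at distance $\ge\eps'\,e^{-\eps^3 n(1+\omega)}$ from $\partial\bar B_s$; since $e^{\eps^{10}n-\eta_2\eps n}\ll \eps'\,e^{-\eps^3 n(1+\omega)}$ for large $n$, $f^nw\in\bar B_s$, and disjointness of the $\bar B_s$ finishes it. For $s=s'$ the paper argues directly by transversality in a single chart: any leaf is a $cs$-graph, $\hcW_i$ is a $u$-graph, the segment between two points on a $cs$-graph lies in $\cC^{cs}$ and the segment between two points on $\hcW_i$ lies in $\cC^u$, so each leaf meets $\hcW_i$ in exactly one point; this gives $y=y'$, whence $y\in R_{i,s,j}\cap R_{i,s,j'}$ forces $j=j'$. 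Your detour through common charts and ``H\"older-small'' coordinate-change errors is not the route the paper takes and obscures this simple geometric picture.
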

\begin{proof} We start by showing {\bf F1}. By definition, $f^{n}(R_{i,s,j})\subset (1-\hat{\eps})\bar{B}_s$ and it is a connected component  of $f^n(\hcW_i)\cap (1-\hat{\eps})\bar{B}_s$. Moreover $z_n:=f^n(z_{i,s,j})\in f^n(R_{i,s,j})\cap f^n(\L_{n,\tau})$. In particular, $z_n\in P_\tau\subset  \tP_{\tau^{-1}}$. Therefore, $f^n(R_{i,s,j})\subset W^u_{z_n,\tau}$ and in particular $f^n(R_{i,s,j})$ 
 its backward iterations 
satisfy \eqref{BackContrfK}.
 Since $f^n(R_{i,s,j})\subset (1-\hat{\eps})\bar{B}_s$ and $f^n(R_{i,s,j})\cap P_\tau\neq\emptyset$ by construction,
the 
H\"older continuity of $z\mapsto \eta_z$ on $P_\tau$ (see Lemma \ref{fakeu-foliation})
implies that that the unstable size of $f^n(R_{i,s,j})$ is at most 
 $$ 2 (1+\hat{\eps})\bar{\xi}_n=2 (1+\hat{\eps})e^{-\eps^3n}.$$  
 This gives {\bf F1.} by using $f^{k}(R_{i,s,j})=f^{k-n}(f^n(R_{i,s,j}))$  
and applying \eqref{BackContrfK}.
\\

For {\bf F2.} notice that 
$$
 |\log \det (Df^{n}_z|E^u(z))-\log \det (Df^{n}_{z'}|E^u(z'))|\leq 
$$
$$C_1 \sum_{k=0}^{n}\left[d(f^{k}z,f^{k}z')+d(E^u(f^{k}z), E^u(f^{k}z'))\right]
\leq C_2 n e^{-\eps^5 n},$$
 where in the last inequality we use 
 {\bf F1} to estimate the first term and Lemma \ref{admisibleconv} with $L_1=E^u(z), L_2=E^u(z')$
 to estimate the second term.\\

 {\bf F3} is immediate from \eqref{eq:lint}. \\

We now show {\bf F4.} In the coordinates $(a, b)$ introduced in \eqref{BXiR}, $\cF_{i,s,j}(y)$ is given by the graph
$a=A(b).$ Let $y=(A(b_0), b_0),$ $y'=(A(b_1), b_1)$. Consider a curve 
$\gamma(t)=\{h_z(A(b(t), b(t))\}$ where $b(t)=b_0(1-t)+b_1(t).$  
Recall that each component $R_{i, j, s}$ contains a point $z=z_{i,j,s}\in P_\tau.$
Hence combining Lemma \ref{cs-foliation} with already established property {\bf F1.} we conclude that\footnote
{Lemma \ref{cs-foliation} tells us that the fake center stable leaves have tangent spaces in $\cC^{cs}_z$,
while {\bf F1.} ensures that $\cF(y)$ belongs to some fake center-stable leaf.}
$\dot\gamma(t)\in \cC_z^s$ for all $t.$ 
Hence Lemma \ref{LmCones}(2) tells us that $f^k$ increases distances
by at most the factor $\tau^{-2} e^{2k\delta}$ proving {\bf F4.} (since $\delta<\eps^{100}$).
\\

It remains to show {\bf F5}. Assume first that $s=s'$. 
Then $y=y'$ since otherwise $\cF(y$) intersects $\hcW$
in two points, which is impossible by transversality. Namely
if two points $x_1, x_2$ belong to $\cF(y)$ then the segment (in $(a,b)$ coordinates
introduced in \eqref{BXiR}) joining $x_1$ and $x_2$ belongs to  $\cC^{cs}_y$. If the two points belong
$\hcW$ then the segment joining them belongs to $\cC^u.$ Since  $\cC^u_y\cap \cC^{cs}_y=\{0\}$,
$\cF(y)\cap \hcW$ is a single point.
Since $R_{i, s,j}\cap R_{i, s, j'}$
contains $y$ we must have $j=j'$ completing the proof in the case $s=s'.$


Assume now that $s\neq s'$. We claim that for any $s$,
\be\label{eq:subb}
B_i(r)\cap \cF_{i,s,j}(y)\subset f^{-n} (\bar{B}_s)
\ee
The above claim immediately gives {\bf F5.} as the sets $\{\bar{B}_s\}$ are pairwise disjoint. So it remains to show \eqref{eq:subb}. By definition we have that $f^{n}(R_{i,s,j})\subset (1-\hat{\eps})\bar{B}_s$. In particular, $d(f^ny,\partial \bar{B}_s)\geq \hat{\eps}\min(\bar{\xi}_n,\bar{r})= \hat{\eps}\cdot e^{-\eps^3n(1+\omega)}$. On the other hand by  {\bf F4} it follows that for every $y'\in B_i(r)\cap \cF_{i,s,j}(y)$, $d(f^ny,f^ny')\leq e^{\eps^{100 n}}r\leq e^{\eps^{100}n-\eta_2\eps n}$. Since for $n$ large enough, $e^{\eps^{100}n-\eta_2\eps n}\ll \hat{\eps}\cdot e^{-\eps^3n(1+\omega)}$, $f^ny'\in \bar{B}_s$. This finishes the proof.
\end{proof}

By enumerating (replacing the indices $(s,j)$ by $j$), 
we will denote the sets $\{R_{i,s,j}\}$ and the foliations $\cF_{i,s,j}$ simply by $\{R_{i,j}\}$ and $\{\cF_{i,j}\}.$
Define 
\be\label{eq:tilr}
\tilde{R}_{i,j}(r):=\bigcup_{y\in R_{i,j}}\cF_{i,j}(y)\cap B_i(r).
\ee

\section{Absolute continuity of fake center foliation.}
\label{ScAC}
 In this section we continue assuming that
$f$ is $C^{1+\alpha}$ diffeomorphism
of a compact manifold $M$ preserving a smooth measure $\mu$
with some non-zero exponents. 
The goal of the section is to establish 
an absolute continuity at exponential scale of the foliations $\{\cF_{i,j}\}$
 which plays a crucial role in the proof of the main theorem.
  Let $\pi_{i,j}$ denote the holonomy along the foliation $\cF_{i,j}$. Recall that $\hcW_i$ is the reference unstable manifold in $B_i(r)$ and $R_{i,j}\subset \hcW_i$.

\begin{definition}\label{def:compl}
We say that an unstable manifold  $\cW$ of a point $z\in \cL_{n,\tau}$, {\em crosses 
$\tilde R_{i,j}$ completely} if
$\DS \;\cW\cap \pi_{i,j}(y)\neq \emptyset \text{ for  every }y\in R_{i,j}.$
$\cW$ {\em crosses
$B_i(r)$ completely} if  for every $j$ it crosses $\tilde R_{i,j}$ completely.
\end{definition}

\begin{definition}\label{apadm}
	 Given $x$ a Lyapunov regular point, we say that $\cL$ is an $(\beta,A)$-admissible manifold if 
	\begin{enumerate}
		\item $\cL\subset Q_x$;
		\item $h_x^{-1}\cL \subset \graph(\eta_{\cL})$, where $\eta_{\cL}:\R^u\to\R^{cs}$ satisfies 
		$\DS 
		 \|\eta_{\cL}\|_{C^{1+\beta}}\leq A.
		 $
			\end{enumerate}
\end{definition}

\begin{remark}
	Note that $(\beta,A)$-admissibility depends on the reference point $x$. However, 
	as long as  the reference point is in $P_\tau$ we have a uniform control on the parameter $A$ when we change of charts. That is, there is a constant $K(\tau)$ so that if $x,y\in P_\tau$, $y\in Q_x$ and $\cL$ is $(\beta,A)$-admissible for $x$ then it is $(\beta, K(\tau)\cdot A)-$admissible for $y$.
\end{remark}

 Let $\beta>0$ be a number that is small enough, one can take 
 $\DS \beta:=\frac{1}{2}\Big[\min\Big(\alpha, \min_{i\leq 7}\alpha_i\Big)\Big]$, see Lemmas \ref{lyapchart},\ref{cs-foliation},\ref{lem:unstman},\ref{LmIDHold}, \ref{cones},\ref{admisibleconv} for the definitions of the $\alpha_i$.

\begin{proposition}\label{lem:abscont} 
	For every 
	$\hat{\epsilon}>0$ there exists $n_{\hat\epsilon}$ such that for every $n\geq n_{\hat \epsilon}$, every $r\leq e^{-\eta_2\epsilon n}$ the following holds: let $x\in \cL_{n,\tau}$ and let $\cL$ be an 
	$(\beta,1)$-admissible manifold.
	Assume further that
	\begin{enumerate}
		\item $\cL$ crosses $\tilde R_{ij}(r)$ completely;
		\item there are points $x\in\hat\cW_i$ and $x'\in\cL$ such that if $\bar x'=h_x^{-1}(x')$, 
		then $|\bar x'|\leq e^{-\eta_2\epsilon n}$ and $\|D_0\eta_{\cL}\|\leq e^{-\beta\eta_2 n}$.  
	\end{enumerate} 
	Then the jacobian $J(\pi_{i,j})$ satisfies
	\begin{equation}
		\label{EqJac1}
		J(\pi_{i,j})(z)\in (1-\hat\epsilon,1+\hat\epsilon)\;\mbox{for each}\; z\in R_{i,j}.
	\end{equation}
	In particular,
	\begin{equation}
	\label{EqArea1}
		m^u_{\cW}(\pi_{i,j}(R_{i,j}))\in (1-\hat\epsilon,1+\hat\epsilon)m^u_{\hat\cW_i}(R_{i,j}).
	\end{equation}
\end{proposition}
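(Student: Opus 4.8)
The plan is to exploit the very definition of $\cF_{i,j}$: in the Lyapunov chart at the reference point $z=z_{i,j}\in R_{i,j}\cap\cL_{n,\tau}$ the leaf $\cF_{i,j}(y)$ is the $\tilde f_{z}^{(n)}$-preimage of the affine plane $\R^{cs}+\tilde f_{z}^{(n)}(h_{z}^{-1}y)$, so the holonomy factors exactly as $\pi_{i,j}=f^{-n}\circ\Pi_0\circ f^n$, where $\Pi_0$ is the \emph{trivial} holonomy at time $n$ sending a point of $f^n\hcW_i$ to the point of $f^n\cL$ having the same $\R^u$-coordinate in the chart at $f^nz$. Writing $J_u(g,N,p)=|\det(Dg_p|_{T_pN})|$ for the Jacobian of $g$ restricted to a submanifold $N$, the chain rule together with $J_u(f^{-n},f^n\cL,f^ny')=J_u(f^n,\cL,y')^{-1}$ gives, for $y\in R_{i,j}$ and $y'=\pi_{i,j}(y)$,
\[
\log J(\pi_{i,j})(y)=\big[\log J_u(f^n,\hcW_i,y)-\log J_u(f^n,\cL,y')\big]+\log J(\Pi_0)(f^ny).
\]
I would show that each of the two terms on the right is exponentially small in $n$; this yields \eqref{EqJac1}, and \eqref{EqArea1} then follows by integrating $J(\pi_{i,j})$ over $R_{i,j}$ (if the conditional densities $\rho$ of \eqref{MuCond} enter, they contribute only a further factor $1\pm\hat\epsilon$, by the H\"older continuity of $\rho$ on $P_\tau$ from Lemma \ref{lem:abscont00} and the exponential closeness of corresponding backward orbits via \eqref{BackContrfK}).

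First I would record the geometric input. By property \textbf{F1} of Lemma \ref{lem:ff} and the backward contraction \eqref{BackContrfK}, the sets $f^kR_{i,j}$, $0\le k\le n$, are exponentially small, and by the cocycle bound $\mathfrak r(f^kz)\ge e^{-\delta k/\alpha}\mathfrak r(z)\ge e^{-\delta n/\alpha}\tau$ they lie well inside the Lyapunov chart at $f^kz$; by property \textbf{F4} the same holds for $f^k\pi_{i,j}(R_{i,j})$, with $d(f^ky,f^ky')\le e^{\epsilon^{10}n}r$ for all $k\le n$. Second, I would check that in the chart at $f^kz$ the tangent spaces $T_{f^ky}(f^k\hcW_i)$ and $T_{f^ky'}(f^k\cL)$ both make an exponentially small angle with $\R^u$, uniformly in $k$: for $\hcW_i$ this follows since $f^k\hcW_i\subset W^u_{f^kz}$ together with Lemma \ref{lem:unstman} ($D_0\eta^u=0$, $\|\eta^u\|_{C^{1+\alpha_4}}\le C_0$) and the exponential smallness of $f^kR_{i,j}$; for $\cL$ it follows from $(\beta,1)$-admissibility together with the hypotheses $\|D_0\eta_\cL\|\le e^{-\beta\eta_2 n}$ and $|\bar x'|\le e^{-\eta_2\epsilon n}$ (which force the slope of $\cL$ over the relevant $\R^u$-domain to be at most $e^{-\beta\eta_2 n}+(\tau^{-1}e^{-\eta_2\epsilon n})^\beta$), and Lemma \ref{admisibleconv}, which guarantees that the graph transform does not increase the slope and keeps the tangent spaces controlled-H\"older along the orbit; the harmless switch between the charts at $f^kx$ and $f^kz$ (both points on $\hcW_i$, with exponentially small separation of the active pieces) is absorbed by the $\alpha_2$-H\"older continuity of $x\mapsto L_x$ on $P_\tau$. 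In particular the two tangent spaces are exponentially close to one another at every time $k$.

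For the first bracketed term I would use a telescoping distortion estimate: write it as $\sum_{k=0}^{n-1}\big[\log J_u(f,f^k\hcW_i,f^ky)-\log J_u(f,f^k\cL,f^ky')\big]$ and pass to the chart at $f^kz$, where $f$ is represented by $\tilde f_{f^kz}$ with $\|D\tilde f^{\pm1}\|\le K'$ (Lemma \ref{lem:boder}) and $D\tilde f$ being $\alpha_2$-H\"older with constant $\le\delta$ (Lemma \ref{lyapchart}). Lemma \ref{jacgras} then bounds each summand by $N(K')\big(\|D_{p_k}\tilde f-D_{p_k'}\tilde f\|+\|E_k-E_k'\|\big)$, where $p_k,p_k'$ are the chart coordinates of $f^ky,f^ky'$ and $E_k,E_k'$ their tangent spaces. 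By the localization step $|p_k-p_k'|\le\mathfrak r^{-1}(f^kz)\,d(f^ky,f^ky')$ is exponentially small, hence so is $\|D_{p_k}\tilde f-D_{p_k'}\tilde f\|\le\delta|p_k-p_k'|^{\alpha_2}$, and $\|E_k-E_k'\|$ is exponentially small by the flatness step; summing the $n$ terms still gives $o(1)$, hence $<\hat\epsilon$ for $n$ large (this is the mechanism behind property \textbf{F2} of Lemma \ref{lem:ff}, now upgraded to a comparison of two different manifolds). For the middle term, $f^nz\in P_\tau$, and by the flatness step at $k=n$ (resp.\ Lemma \ref{lem:unstman}) both $f^n\hcW_i$ and $f^n\cL$ are graphs $a\mapsto(a,\phi_s(a))$ over $\R^u$ in the chart at $f^nz$ with $\|D\phi_s\|$ exponentially small, so the $\R^{cs}$-plane holonomy $\Pi_0$ between them has Jacobian $\sqrt{\det(I+D\phi_2^{\top}D\phi_2)}/\sqrt{\det(I+D\phi_1^{\top}D\phi_1)}\in(1-\hat\epsilon,1+\hat\epsilon)$. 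Combining the two estimates proves \eqref{EqJac1} (after renaming $\hat\epsilon$), and \eqref{EqArea1} follows as indicated.

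I expect the main obstacle to be the flatness step for $f^k\cL$: one must track an admissible (not genuinely unstable) manifold through all intermediate times $0\le k\le n$ in the moving Lyapunov charts, reconcile the chart at $f^kx$ (where admissibility of $\cL$ is naturally phrased) with the chart at $f^kz$ (where the fake leaves live), and verify that the ``active'' pieces never leave the relevant charts. This is the only place where the quantitative hypotheses $\|D_0\eta_\cL\|\le e^{-\beta\eta_2 n}$, $|\bar x'|\le e^{-\eta_2\epsilon n}$ and the convergence-of-admissible-manifolds Lemma \ref{admisibleconv} are used in an essential way; everything else is bookkeeping with the already-established properties \textbf{F1}, \textbf{F2}, \textbf{F4} and the standard Pesin-chart estimates.
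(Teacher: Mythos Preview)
Your proposal is correct and follows essentially the same route as the paper: factor the holonomy as $(\tilde f^{(n)})^{-1}\circ(\text{trivial }\R^{cs}\text{-holonomy})\circ\tilde f^{(n)}$, telescope the Jacobian ratio using Lemma~\ref{jacgras}, control the base-point separation via subexponential growth along fake center-stable leaves (the paper uses Corollary~\ref{expcs} directly rather than going through \textbf{F4}, but these are equivalent), and control the tangent-space separation via Lemma~\ref{admisibleconv}; the time-$n$ holonomy is then close to the identity because both images are nearly flat graphs over $\R^u$. The paper carries this out entirely in the chart at $x$ (implicitly identifying it with the reference point $z_{i,j}$ used to define $\cF_{i,j}$), so your chart-switching concern, while reasonable, does not arise in their presentation; your extra remarks on the density $\rho$ for \eqref{EqArea1} are also more than the paper writes down.
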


\begin{proof}
We will drop the index $i$ in the proof. Let $\bar\cL=h_x^{-1}(\cL)$, $\overline{\hat\cW}=h_x^{-1}(\hat\cW)$. 
We let $\bar\pi_j:\overline{\hat\cW}\to \bar\cL$ be $\bar\pi_j=h_x^{-1}\circ\pi_j\circ h_x$. 
For $\bar z\in \overline{\hat\cW}$ let $\bar z'=\bar\pi_{j}(\bar z)\in \bar\cL$.
Set $\bar\pi^{(n)}_j:\tilde f_x^{(n)}\left(\overline{\hat\cW}\right)\to \tilde f_x^{(n)}(\bar\cL)$, the holonomy 
 along
the $\R^{cs}$ spaces.  Then $\bar\pi_{j}=(\tilde f_x^{(n)})^{-1}\circ\bar\pi^{(n)}_j\circ \tilde f_x^{(n)}$. 
 For a linear map $T:E\to F$ where
$ E, F\subset \R^D$, $\dim E=\dim F$, we denote $\bar J(T)=|\det(T)|$ where determinant is taken with respect to natural volume coming from euclidean distance in $\R^D$. We denote as well $\bar J(\bar\pi_j)(\bar z)=\bar J(D_{\bar z}\bar\pi_j)$. 

So we have that $$\bar J(\bar\pi_j)(\bar z)=\frac{\bar J(D_{\bar z}\tilde f_x^{(n)}|{T_{\bar z}\overline{\hat\cW}}) }{\bar J(D_{\bar z'}\tilde f_x^{(n)}|{T_{\bar z'}{\bar\cL}}) }\bar J(\bar\pi_j^{(n)})(f^{(n)}_x(\bar z)).$$ 

Thus the proposition is a consequence of the following two estimates:

\begin{equation}\label{jac}
	\forall z,z'\in \tilde R_{i,j} \;\;\frac{\bar J(D_{\bar z}\tilde f_x^{(n)}|{T_{\bar z}\overline{\hat\cW}}) }{\bar J(D_{\bar z'}\tilde f_x^{(n)}|{T_{\bar z'}{\bar\cL}}) }\in (1-\hat\epsilon^4,1+\hat\epsilon^4);
\end{equation}

\begin{equation}\label{jaccons}
	\forall z^*\in f^n(R_{i,j})\;\;\;\bar J(\bar\pi_j^{(n)})(\bar z^*)\in (1-\hat\epsilon^4,1+\hat\epsilon^4)
\end{equation}

We have 
$$\frac{\bar J(D_{\bar z}\tilde f_x^{(n)}|{T_{\bar z}\overline{\hat\cW}}) }{\bar J(D_{\bar z'}\tilde f_x^{(n)}|{T_{\bar z'}{\bar\cL}}) }=\prod_{k=0}^{n-1}\frac{\bar J\left(D_{\tilde f_x^{(k)}(\bar z)}\tilde f_{f^kx}|T_{\tilde f_x^{(k)}(\bar z)}\tilde f_x^{(k)}(\overline{\hat\cW})\right)}{\bar J\left(D_{\tilde f_x^{(k)}(\bar z')}\tilde f_{f^kx}|T_{\tilde f_x^{(k)}(\bar z')}\tilde f_x^{(k)}(\bar\cL)\right)}.$$

Taking logarithms we have to estimate: $$\sum_{k=0}^{n-1}\left|\log\bar J\left(D_{\tilde f_x^{(k)}(\bar z)}\tilde f_{f^kx}|T_{\tilde f_x^{(k)}(\bar z)}\tilde f_x^{(k)}(\overline{\hat\cW})\right)-\log\bar J\left(D_{\tilde f_x^{(k)}(\bar z')}\tilde f_{f^kx}|T_{\tilde f_x^{(k)}(\bar z')}\tilde f_x^{(k)}(\bar\cL)\right)\right|.$$

By Lemma \ref{lyapchart} we have that that for every $x\in LyapReg$,
$Hol_{\alpha_2}(D\tilde f_x)\leq\delta$ and by Lemma \ref{lem:boder}, $ \|D\tilde f_x\|,\| D\tilde{f}^{-1}_x\|\leq K'$.  We can hence bound  the above sum using Lemma~\ref{jacgras} by
  $$N(K') \sum_{k=0}^{n-1}\delta|\tilde f_x^{(k)}(\bar z)-\tilde f_x^{(k)}(\bar z')|^{\alpha_2}+d\left(T_{\tilde f_x^{(k)}(\bar z)}\tilde f_x^{(k)}(\overline{\hat\cW}), T_{\tilde f_x^{(k)}(\bar z')}\tilde f_x^{(k)}(\bar\cL)\right)
 $$

By assumption (2) in the proposition, the fact that $\cL$ is $(\beta,1)$ admissible and 
Lemma~\ref{fakeu-foliation}, we get 
 using the transversality of $\cF_{ij}$ to $\cL$ and $\hcW$
that 
$$|\bar z-\bar z'|\leq Ce^{-\eta_2\epsilon n}\quad{and}\quad d\left(T_{\bar z}\overline{\hat\cW}, T_{\bar z'}\bar\cL\right)\leq e^{-\beta\eta_2 n}.$$ 
Since $\bar z'\in W^{cs,n}_x(\bar z)$, we get by Corollary \ref{expcs} that $|\tilde f_x^{{(k)}}(\bar z)-\tilde f_x^{{(k)}}(\bar z')|\leq e^{3k\delta}|\bar z-\bar z'|$ for every $k\in [0,n]$ and hence $|\tilde f_x^{{(k)}}(\bar z)-\tilde f_x^{{(k)}}(\bar z')|\leq Ce^{-\eta_2\epsilon n}e^{3k\delta}$. 
 We now apply Lemma~\ref{admisibleconv}  with
$L_1= T\cL$ and $L_2= T\hat{\cW}.$
 Using Lemma \ref{fakeu-foliation}, 
the  admissibility 
of $\cL$ and  the assumption $\|D_0\eta_\cL\|\leq e^{-\beta \eta_2 n}$ 
 we conclude from \eqref{eq:adml} that

\begin{eqnarray*}
	d\left(T_{\tilde f_x^{(k)}(\bar z)}\tilde f_x^{(k)}(\overline{\hat\cW}),
	 T_{\tilde f_x^{(k)}(\bar z')}\tilde f_x^{(k)}(\bar\cL)\right)&\leq& e^{k(-\lambda+\sqrt{\delta})}e^{-\beta \eta_2 n}
	+6\delta Ce^{-\alpha_7\eta_2\epsilon n}e^{3k\alpha_7\delta}
	\end{eqnarray*}
 proving \eqref{jac}  if $n$ is sufficiently large in terms of $C,\delta$ and $\hat{\eps}$.

To prove
\eqref{jaccons} we notice that $\bar\pi_j^{(n)}$ is just holonomy along planes parallel to $\R^{cs}$ so there is a constant $C_3$ only depending on dimension so that 
$$
|\bar{J}(\bar\pi_j^{(n)})(\bar z^*)|\leq C_3d\left(T_{\bar z^*}\tilde f_x^{(n)}(\overline{\hat\cW}),
T_{\bar\pi^{(n)}_j(\bar z^*)}\tilde f_x^{(n)}(\hat\cL)\right)\leq C_3\left( e^{n(-\lambda+4\delta-\beta\eta_2)}
+6\delta Ce^{n(\alpha_7\delta-\alpha_7\eta_2\epsilon)}\right).
$$
Since $\delta=\eps^{100}$, we get the result.
\end{proof}

\begin{corollary}\label{cor:uns}
If for some $x\in \cL_{n,\tau}$ and $r\leq e^{-\eta_2\epsilon n}$, $\cW=\cW_{x, \xi}$ crosses $\tilde{R}_{i,j}(r)$ completely and $d(\cW, \hcW_i)\leq e^{-\eta_2 \eps n}$
then \eqref{EqJac1} and \eqref{EqArea1}  are satisfied.
\end{corollary}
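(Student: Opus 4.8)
The statement is the specialization of Proposition \ref{lem:abscont} to the case where the $(\beta,1)$-admissible manifold $\cL$ is itself an unstable leaf, so the plan is simply to verify the hypotheses of Proposition \ref{lem:abscont} with $\cL=\cW=\cW_{x,\xi}$. Hypothesis (1) there is literally the assumption that $\cW$ crosses $\tilde R_{i,j}(r)$ completely, so all the work goes into producing a good reference point and checking (2): that $\cW$ is $(\beta,1)$-admissible in the chart at that point, that there is $x'\in\cW$ with $|h_{x^*}^{-1}(x')|\le e^{-\eta_2\eps n}$, and that the tangency bound $\|D_0\eta_\cW\|\le e^{-\beta\eta_2 n}$ holds.

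For the reference point I would take $x^*=z_{i,j}\in R_{i,j}\cap\cL_{n,\tau}$, the point used to define $\cF_{i,j}$ in \eqref{eq:fak2}; it lies on $\hcW_i$ since $R_{i,j}\subset\hcW_i$. Because $\cW$ crosses $\tilde R_{i,j}(r)$ completely it meets $\cF_{i,j}(z_{i,j})\cap B_i(r)$, and property \textbf{F4} of Lemma \ref{lem:ff} shows this intersection point lies at distance $\le e^{\eps^{10}n}r$ from $x^*$; thus $\cW$ is contained in an exponentially small neighbourhood of $x^*\in P_\tau$, so $\cW\subset Q_{x^*}$ and $h_{x^*}^{-1}(\cW)$ sits in a ball of exponentially small radius. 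For admissibility I would invoke Lemma \ref{lem:unstman}: in its own Lyapunov chart $\cW$ is the graph of $\eta^u_x$ with $\|\eta^u_x\|_{C^{1+\alpha_4}}\le C_0$, $\eta^u_x(0)=0$, $D_0\eta^u_x=0$; restricting to the (exponentially small) domain of radius $\xi$ and using $\beta<\alpha_4$, the vanishing $0$-jet forces the $C^0$- and $C^1$-sizes of the graph to be exponentially small, while $[D\eta^u_x]_{C^\beta}\le C_0(2\xi)^{\alpha_4-\beta}$ is exponentially small as well; passing to the chart at $x^*$ via the change-of-charts estimate recorded after Definition \ref{apadm} (legitimate since $x,x^*\in P_\tau$ and $\cW\subset Q_{x^*}$) only multiplies the constant by $K(\tau)$, so $\cW$ is $(\beta,1)$-admissible for $x^*$ once $n$ is large. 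The same computation, with $x'$ the point of $\cW$ over $x^*$ in the chart at $x^*$, yields $|h_{x^*}^{-1}(x')|\le e^{-\eta_2\eps n}$: the bound $d(\cW,\hcW_i)\le e^{-\eta_2\eps n}$ together with the uniform $C^1$-bound on unstable leaves shows that $\cW$ and $\hcW_i$, written as graphs over a common domain in this chart, have $\R^{cs}$-components differing by $\lesssim e^{-\eta_2\eps n}$ throughout, and $\hcW_i$ passes through $x^*$.

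The crux is the tangency bound $\|D_0\eta_\cW\|\le e^{-\beta\eta_2 n}$. Here I would combine: along $\cW$ the tangent space stays within an exponentially small angle of $E^u$ at the basepoint of $\cW$ (Lemma \ref{lem:unstman}); the maps $x\mapsto E^u(x)$ and $x\mapsto L_x$ are H\"older on $P_\tau$ (Lemmas \ref{LmIDHold} and \ref{lyapchart}), so $E^u$ at nearby $P_\tau$-points is exponentially close and, transported to the chart at $x^*$, exponentially close to $\R^u$; and Lemma \ref{admisibleconv} to propagate these tangent-space estimates. Choosing the reference point as close to $\cW$ as the construction permits, these combine to bound $\|D_0\eta_\cW\|$ by a power of $e^{-\eta_2\eps n}$, which must be checked to beat $e^{-\beta\eta_2 n}$ for $n$ large — the bookkeeping of the precise exponential rates in this step is the only genuinely delicate point. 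Once (1) and (2) are verified, Proposition \ref{lem:abscont} gives \eqref{EqJac1}, and \eqref{EqArea1} follows by integrating the Jacobian of $\pi_{i,j}$ over $R_{i,j}$.
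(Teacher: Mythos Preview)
Your proposal is correct and follows exactly the paper's strategy: verify the hypotheses of Proposition~\ref{lem:abscont} with $\cL=\cW$, invoking the regularity of unstable manifolds (Lemma~\ref{lem:unstman}, equivalently Lemma~\ref{fakeu-foliation}) for $(\beta,1)$-admissibility and the H\"older continuity of $E^u$ on $P_\tau$ (Lemma~\ref{LmIDHold}) for the tangency bound $\|D_0\eta_\cW\|$. The paper's proof is a three-line sketch citing only these two lemmas; your additional appeals to Lemma~\ref{admisibleconv} and the change-of-chart remark are unnecessary, and the exponential-rate bookkeeping you flag as delicate is dispatched there simply by noting that Lemma~\ref{LmIDHold} at scale $e^{-\eta_2\eps n}$ already beats the required $e^{-\beta\eta_2 n}$.
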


\begin{proof}
 We need to verify the conditions of Proposition \ref{lem:abscont} with $\cL=\cW$. Note that (1) and (2) follow from the assumptions of the corollary and the fact that $\|D_0\eta_\cL\|<e^{-\alpha_1 \eta_2 n}$ follows from  Lemma~\ref{LmIDHold}. Moreover,  $\cL=\cW$ satisfies the assumptions of Definition  \ref{apadm}
 by Lemma~\ref{fakeu-foliation}
 (since $\beta<\alpha_4$).
\end{proof}

Proposition \ref{lem:abscont}  
allows to establish the local product structure on the sets $\tilde{R}_{i,j}(r)$. 
Let $\brho$ be the function defined in \eqref{MesMultiDisc}.

\begin{corollary}\label{cor:prost'}
For every $\hat{\epsilon},\tau>0$ there exists $n_{\hat{\epsilon},\tau}\in \N$ such that for every $n\geq n_{\hat{\epsilon},\tau}$ every $r\leq e^{-\eta_2 \epsilon n}$, we have 
$$
\mu(\tilde{R}_{i,j}(r)))\in (1-\hat{\epsilon},1+\hat{\epsilon}) \brho(x)  m^u(R_{i,j})\cdot \mes\left(B^{cs} (0, r)\right)
$$
where $x$ is an arbitrary point in $\tilde R_{i,j}(r) \cap P_\tau.$ 
\end{corollary}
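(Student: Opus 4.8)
The plan is to deduce Corollary~\ref{cor:prost'} from the fibered description of the measure $\mu$ on $B_i(r)$ together with the absolute continuity of the fake center-stable holonomy established in Proposition~\ref{lem:abscont}. First I would recall that $\tilde R_{i,j}(r) = \bigcup_{y\in R_{i,j}} \cF_{i,j}(y)\cap B_i(r)$ is, by construction, the $\cF_{i,j}$-saturation inside $B_i(r)$ of the piece $R_{i,j}\subset \hcW_i$ of the reference unstable manifold. So I want to integrate over the transversal $\hcW_i$ the conditional measure of $\mu$ on each fake center-stable leaf $\cF_{i,j}(y)$.

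The key steps, in order: (i) Fix a reference point $x\in \tilde R_{i,j}(r)\cap P_\tau$ and work in the Lyapunov chart $h_x$; by Lemma~\ref{lyapchart} and the definition of $\brho(x)=\det(L_x)$ the measure $\mu$ pushed back to $\R^D$ has density close to $\brho(x)$ (up to a factor $(1\pm\hat\eps)$) on the exponentially small set $h_x^{-1}(B_i(r))$, exactly as in \eqref{MesMultiDisc}. (ii) Disintegrate Lebesgue measure on $h_x^{-1}(B_i(r))$ along the fake center-stable foliation, which in the chart is $C^{1+\alpha_3}$-close to the foliation by planes parallel to $\R^{cs}$ (Lemma~\ref{cs-foliation}): the transversal is (a graph over) $\R^u$, namely $\overline{\hcW_i}$, and each leaf is a graph over $\R^{cs}$ of size $r$. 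The coarea/Fubini formula then writes the measure of $\tilde R_{i,j}(r)$ as $\int_{R_{i,j}} \left(\text{leafwise } cs\text{-measure}\right) d(\text{transverse measure})$. (iii) Control the leafwise measure: each leaf $\cF_{i,j}(y)$ has nearly unit Jacobian relative to the flat $\R^{cs}$-plane because its tangent spaces lie in the cone $\cC^{cs}_{1/2}$ with Hölder-controlled slope (Lemma~\ref{cs-foliation}, $\|D\tilde\eta^{cs,n}\|_{C^0}\leq 3\delta/(1-e^{-\lambda+\sqrt\delta})$), so the leafwise measure is $(1\pm\hat\eps)\mes(B^{cs}(0,r))$ uniformly in $y$. (iv) Control the transverse measure: the transverse factor of Lebesgue, transported via the holonomy $\pi_{i,j}$ back to $\hcW_i$, has Jacobian in $(1-\hat\eps,1+\hat\eps)$ by Proposition~\ref{lem:abscont} (whose hypotheses hold here by Corollary~\ref{cor:uns}, or directly since $x\in\hcW_i\subset\cL_{n,\tau}$ and $\hcW_i$ is $(\beta,1)$-admissible with the required derivative bounds); hence the transverse measure of the base equals $(1\pm\hat\eps)\, m^u(R_{i,j})$ after accounting for the density $\rho$ on $\hcW_i$, which is itself $(1\pm\hat\eps)$-constant on $P_\tau$ by Lemma~\ref{lem:abscont00}. (v) Multiply the three $(1\pm\hat\eps)$ factors and the factor $\brho(x)$, replacing $\hat\eps$ by $\hat\eps/10$ at the start so the product lies in $(1-\hat\eps,1+\hat\eps)$.

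The main obstacle I expect is step (ii)--(iv): making the disintegration argument rigorous while keeping all error terms genuinely of size $\hat\eps$ rather than merely $o(1)$. Concretely one must check that the identification of the transversal factor of Lebesgue with $m^u$ on $\hcW_i$ is the holonomy map $\pi_{i,j}$ to which Proposition~\ref{lem:abscont} applies, and that the "completely crosses" hypothesis of Definition~\ref{def:compl} is what guarantees $\pi_{i,j}(R_{i,j})$ is all of the relevant transversal; the leaves must be long enough (size $r\le e^{-\eta_2\eps n}$, while $\hcW_i$ has size $\xi_n=e^{\eps^2 n-\eta_2\eps n}\gg r$) so that the holonomy is well defined on all of $R_{i,j}$. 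Once the measure is written as the iterated integral, everything reduces to the uniform Jacobian bounds already in hand, so the remainder is bookkeeping. I would also note that the statement is essentially the local product structure asserted in the outline, so it should be presented as a short corollary: set up the chart, invoke Fubini for the fake foliation, cite Proposition~\ref{lem:abscont} for the transverse Jacobian and Lemma~\ref{cs-foliation} for the leafwise one, and collect constants.
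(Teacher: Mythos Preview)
Your plan is correct in spirit and would work, but it takes the ``dual'' slicing to the one the paper uses, and the paper's choice is cleaner. You propose to disintegrate Lebesgue along the fake center-stable foliation $\cF_{i,j}$, with transversal $\hcW_i$, and then control the leafwise factor (via the slope bound of Lemma~\ref{cs-foliation}) and the transverse factor (via Proposition~\ref{lem:abscont}). The paper instead slices $\tilde R_{i,j}(r)$ by the family $\fW^{\fb}=\{b=\eta_z(a)+\fb\}$, $\fb\in B^{cs}(0,r)$, of translates of $\hcW_i$ in the chart coordinates $(a,b)$. This makes the Fubini step literally $d\mu\approx\brho(x)\,da\,db$, and for each $\fb$ the intersection $\tilde R_{i,j}(r)\cap\fW^{\fb}$ is exactly $\pi_{i,j}(R_{i,j})$ inside the admissible manifold $\cL=\fW^{\fb}$, so Proposition~\ref{lem:abscont} applies directly (the hypotheses are immediate since $\fW^{\fb}$ is a translate of $\hcW_i$). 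Integrating $\mes(\fR^{\fb})\in(1\pm\hat\eps^2)m^u(R_{i,j})$ over $\fb$ gives the result in two lines.

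Your step (iv) is the one place where your route is genuinely more work: Proposition~\ref{lem:abscont} controls the Jacobian of the holonomy $\pi_{i,j}:R_{i,j}\to\cL$ between two $d^u$-dimensional admissible manifolds, not the ``transverse factor'' of a disintegration per se. To turn your coarea computation into an honest estimate you would end up parametrizing $\tilde R_{i,j}(r)$ by $(y,\fb)\in R_{i,j}\times B^{cs}(0,r)$ via $y\mapsto$ (point of $\cF_{i,j}(y)$ at height $\fb$), and the $y$-block of that Jacobian at fixed $\fb$ \emph{is} precisely the holonomy Jacobian onto $\fW^{\fb}$. So once unpacked, your argument collapses to the paper's; the paper just skips the detour by choosing from the start the slicing for which Fubini is trivial and Proposition~\ref{lem:abscont} applies verbatim to each slice.
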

\begin{proof}
Recall that 
$$\tilde{R}_{i,j}(r)=\bigcup_{y\in R_{i,j}}\left(\cF_{i,j}(y)\cap B_i(r)\right).$$ 
In particular, we have coordinates $(a,b)$ on $B_i(r)$ given by \eqref{BXiR}. 
Let 
$$ \fW^\fb=\{x\in B_i(r): b(x)=\eta_z(a(x))+\fb\}.$$ 
 Using uniform smoothness of $\hcW_i$ on $P_\tau$,
the fact that $T\hcW_i(0)=\R^u\times \{0\}$ and
 the H\"older continuity of  $\brho(\cdot)$ 
on  $P_\tau$,
we conclude that the ratio of $\mu|_{B_i}$ to the the measure 
$\hmu$ given in our coordinates by $d\hmu=\brho(x) 
da db$ is between $1-\hat{\eps}^{2}$ and $1+\hat{\eps}^{2}.$
Therefore 
$$ \mu(\tilde{R}_{i,j}(r)))\in  \brho(x)  \left(1-\hat{\eps}^{2}, 1+\hat{\eps}^{2}\right) 
\int_{B_r^{cs}(0)} \mes(\fR^\fb) d\fb $$
where $\fR^\fb$ is the image of $R_{i,j}$ under the $\cF_{i,j}$--holonomy inside
$\fW^\fb.$ Since
$R_{i,j}\cap \cL_{n,\tau}\neq\emptyset$ (because we only consider the rectangles satisfying this condition),
Proposition \ref{lem:abscont}  
applied to $\cL=\fW^b$ (the assumptions of the proposition are verified 
since $\fW^b$ are just translations of
$\hcW$ in the appropriate coordiantes)
tells us that  $$\mes(\fR^\fb)\in \left(1-\hat{\eps}^{2}, 1+\hat{\eps}^{2}\right) m^u(R_{i,j})$$ for $n$ large enough
and the result follows.
\end{proof}

The above corollary implies also the following:
\begin{lemma}\label{lem:new1} 
There exists $n_{\epsilon}\in \N$ such that for every $n\geq n_{\epsilon}$, for every $i$ we have for $r\leq e^{-\eta_2\epsilon n}$
\be\label{eq:rij}
\mu\left(\bigcup_{j}\tilde{R}_{i,j}(r)\right)\geq (1- 200\epsilon^{\bb/16})\mu(B_i(r)).
\ee
In particular,
$$
\mu\left(\bigcup_{i,j}\tilde{R}_{i,j}(r)\right)\geq 1- 300\epsilon^{\bb/16}.
$$
\end{lemma}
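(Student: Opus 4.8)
The plan is to deduce Lemma~\ref{lem:new1} by combining the volume estimate \eqref{eq:mesb2} for the family $\mathbb{B}_2(r)$, the product-structure estimate of Corollary~\ref{cor:prost'}, and the fact (property {\bf F3} of Lemma~\ref{lem:ff}, equivalently \eqref{eq:lint}) that each reference leaf $\hcW_i$ is mostly covered by the $R_{i,j}$'s that meet $\cL_{n,\tau}$. First I would fix $i$ and compare the two measures on $B_i(r)$: on one hand, by \eqref{MesMultiDisc} (applied with the reference point of $B_i$, and with error $\hat\eps$), $\mu(B_i(r))$ is within $(1-\hat\eps,1+\hat\eps)$ of $\brho(x)\, m^u(\hcW_i)\,\mes(B^{cs}(0,r))$; on the other hand, by Corollary~\ref{cor:prost'}, for each admissible $j$ (i.e.\ those $R_{i,j}$ with $R_{i,j}\cap\cL_{n,\tau}\neq\emptyset$) we have $\mu(\tilde R_{i,j}(r))\in(1-\hat\eps,1+\hat\eps)\,\brho(x)\,m^u(R_{i,j})\,\mes(B^{cs}(0,r))$, where the density $\brho(x)$ is essentially the same for all these pieces because it is H\"older on $P_\tau$ and the whole of $B_i(r)$ lies in a single exponentially small chart (so $\brho$ varies by a factor $1\pm\hat\eps$). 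Summing over the admissible $j$ and using that the $\tilde R_{i,j}(r)$ are pairwise disjoint inside $B_i(r)$ (this disjointness is property {\bf F5} of Lemma~\ref{lem:ff}), we get
\[
\mu\Big(\bigcup_j \tilde R_{i,j}(r)\Big)\ \geq\ (1-\hat\eps)\,\brho(x)\,\mes(B^{cs}(0,r))\sum_{j\ \mathrm{adm}} m^u(R_{i,j})
\ =\ (1-\hat\eps)\,\brho(x)\,\mes(B^{cs}(0,r))\, m^u_{\hcW_i}\Big(\bigcup_{j\ \mathrm{adm}} R_{i,j}\Big)\cdot m^u(\hcW_i)\,/\,\text{(normalization)}.
\]
By {\bf F3}, $m^u_{\hcW_i}\big(\bigcup_{j} R_{i,j}\cap\cL_{n,\tau}\big)\geq 1-100\eps^{\bb/16}$, so the sum of $m^u$-masses of the admissible $R_{i,j}$ is at least $(1-100\eps^{\bb/16})\,m^u(\hcW_i)$. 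Plugging this back and using the two-sided comparison for $\mu(B_i(r))$ yields $\mu(\bigcup_j\tilde R_{i,j}(r))\geq (1-100\eps^{\bb/16})(1-O(\hat\eps))\,\mu(B_i(r))$, which for $n$ large (so that $\hat\eps$ can be taken $\ll \eps^{\bb/16}$) gives \eqref{eq:rij} with the constant $200\eps^{\bb/16}$.

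For the ``in particular'' statement I would sum \eqref{eq:rij} over $i\in J_r$. The sets $B_i(r)$ are pairwise disjoint (they come from Lemma~\ref{lem:setsB}), and by \eqref{eq:mesb2} their union has measure at least $1-100\eps^{\bb/16}$. Hence
\[
\mu\Big(\bigcup_{i,j}\tilde R_{i,j}(r)\Big)\ =\ \sum_{i\in J_r}\mu\Big(\bigcup_j\tilde R_{i,j}(r)\Big)\ \geq\ (1-200\eps^{\bb/16})\sum_{i\in J_r}\mu(B_i(r))\ \geq\ (1-200\eps^{\bb/16})(1-100\eps^{\bb/16}),
\]
and a crude bound on the product gives $\geq 1-300\eps^{\bb/16}$, as claimed.

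The main obstacle — and the only place requiring care rather than bookkeeping — is making sure the density factor $\brho(x)$ appearing in \eqref{MesMultiDisc} and in Corollary~\ref{cor:prost'} can legitimately be treated as the \emph{same} constant (up to $1\pm\hat\eps$) across all the pieces $R_{i,j}$ inside a fixed $B_i(r)$, and also that $m^u(\cdot)$ there refers consistently to unnormalized Lebesgue on the unstable leaves (so that additivity $\sum_j m^u(R_{i,j}) = m^u(\bigcup_j R_{i,j})$ holds without a normalization discrepancy). Both points are fine: $B_i(r)$ sits in one Lyapunov chart of exponentially small size, $\brho$ is $\alpha_\brho$-H\"older on $P_\tau$, and the reference point of $B_i$ may be taken in $\cL_{n,\tau}\subset P_\tau$, so $\brho$ is nearly constant there; and $m^u$ is genuine Lebesgue measure (the conditional $m^u_{\cW}$ only rescales it by the bounded density $\rho$, but \eqref{MesMultiDisc} is stated for $m^u$ directly). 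One should also confirm that \eqref{MesMultiDisc} applies to $B_i(r)$ itself with the stated error — this is exactly its scope, since $\xi_n$ and $r\le e^{-\eta_2\eps n}$ are small enough in terms of $\hat\eps$ and $\tau$ once $n\ge n_\eps$. With these identifications in place the argument is a short sum.
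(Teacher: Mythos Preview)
Your proposal is correct and follows essentially the same approach as the paper's own proof, which simply says the first part follows from Corollary~\ref{cor:prost'} and \eqref{MesMultiDisc} (with $\hat\eps=\eps^{100\bb}$) by summing over $j$ and using {\bf F3}, and the second part from the first together with \eqref{eq:mesb2}. You have supplied the details the paper leaves implicit --- the role of {\bf F5} in justifying the disjoint sum, and the near-constancy of $\brho$ on the exponentially small chart --- which is exactly what is needed to make the short argument rigorous.
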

\begin{proof} The first part is an immediate consequence of  Corollary \ref{cor:prost'} and \eqref{MesMultiDisc}
(with $\hat{\eps}=\eps^{100b}$) by summing over $j$ and using {\bf F3}.
The second part follows from the first and \eqref{eq:mesb2}.
\end{proof}

Recall that for each $i$ we have the reference manifold $\hcW_i\subset B_i(r)$ of size 
$\DS  \xi_n\!=\!e^{\epsilon^2 n-\eta_2\epsilon n} .$

\begin{proposition} \label{prop:3.2}
 For every $\hat{\epsilon}>0$ there exists $n_{\hat{\epsilon},\epsilon}\in \N$ such that for every $n\geq n_{\hat{\epsilon},\epsilon}$ the following holds. 
 Let $z\in \cL_{n,\tau}$. Suppose that $W^u_{z,\tau}$ crosses $B_i(r)$ completely and let 
$\cW_z\subset B_i(r)$, $\cW_z\subset W^u_{z,\tau}$ be a piece of the unstable manifold containing $z$ of size $\xi\geq (1-\hat{\epsilon}^2)\xi_n$.
Let $\pi_{i,z}: \hcW_i\to \cW_z$ be given by $\pi_z(y):=\pi_{i,j}(y)$ if $y\in R_{i,j}$. Then  $\pi_z$ is $\hat{\epsilon}$--measure preserving. In particular,
$$
m^u_{\cW_{z}}\Big(\pi_z(\bigcup_{j} R_{i,j})\Big)\in (1-\hat{\epsilon},1+\hat{\epsilon}) m^u_{\hcW_i}(\bigcup_{j}R_{i,j})
$$
 \end{proposition}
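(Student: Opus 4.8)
The plan is to reduce the assertion to the fibrewise absolute continuity already available in Corollary~\ref{cor:uns} (itself a consequence of Proposition~\ref{lem:abscont}), applied separately to each piece $R_{i,j}\subset\hcW_i$, and then to check that the indices $j$ for which Corollary~\ref{cor:uns} does not directly apply carry a negligible amount of $m^u_{\hcW_i}$-mass. Fix the target $\hat\epsilon>0$; as usual we may assume $\hat\epsilon$ is small (the statement for one value implies it for all larger ones), set $\hat\epsilon_1=\hat\epsilon/10$, and take $n$ large. Among the $j$ with $R_{i,j}\cap\cL_{n,\tau}\neq\emptyset$ (the only ones occurring in the construction), call $j$ \emph{good} if $\cW_z$ crosses $\tilde R_{i,j}(r)$ completely in the sense of Definition~\ref{def:compl}, and \emph{bad} otherwise; recall $\pi_z=\pi_{i,j}$ on $R_{i,j}$.

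First I would localise the bad indices near the $u$-boundary of $\hcW_i$. Work in the coordinates $(a,b)\in\R^u\times\R^{cs}$ of \eqref{BXiR} attached to $\hcW_i$, in which $\hcW_i$ has $u$-footprint a cube of side $\xi_n$ and $B_i(r)$ has $\R^{cs}$-width $\le 2r$. Since $W^u_{z,\tau}$ crosses $B_i(r)$ completely, every leaf $\cF_{i,j}(y)\cap B_i(r)$ ($y\in R_{i,j}$) meets $W^u_{z,\tau}\cap B_i(r)$; and since $\cW_z\subset B_i(r)$ has size $\ge(1-\hat\epsilon^2)\xi_n$, its $u$-footprint is a sub-cube of that of $\hcW_i$ omitting only a collar of $u$-width $\le\hat\epsilon^2\xi_n$. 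By Lemma~\ref{cs-foliation} the leaves $\cF_{i,j}(\cdot)$ are graphs over $\R^{cs}$ of slope $O(\delta)$, so the holonomy $\pi_{i,j}$ shifts the $u$-coordinate by at most $O(\delta r)$, and by {\bf F1} each $R_{i,j}$ has $u$-diameter $\le e^{-(\lambda-\epsilon^5)n}$. Because $r/\xi_n\le e^{-\epsilon^2 n}$ and $\lambda\gg\epsilon\eta_2$, both of these are $\ll\hat\epsilon^2\xi_n$ for $n$ large, so $j$ can be bad only if the $u$-footprint of $R_{i,j}$ meets a slightly enlarged collar of $u$-width $\le 2\hat\epsilon^2\xi_n$. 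Since by Lemma~\ref{lem:abscont00} the density of $m^u_{\hcW_i}$ on $\hcW_i$ is bounded above and below by a constant depending only on $\tau$, this gives $\sum_{j\ \mathrm{bad}}m^u_{\hcW_i}(R_{i,j})\le C\hat\epsilon^2<\hat\epsilon_1$.

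Next, for each good $j$ I would apply Corollary~\ref{cor:uns} with $\cW=\cW_z$ (which is admissible in the sense of Definition~\ref{apadm} with uniform constants, cf.\ the proof of that corollary): $\cW_z$ crosses $\tilde R_{i,j}(r)$ completely by definition of good, and $d(\cW_z,\hcW_i)\le e^{-\eta_2\epsilon n}$ since both lie in $B_i(r)$ with $r\le e^{-\eta_2\epsilon n}$. This yields $J(\pi_{i,j})(y)\in(1-\hat\epsilon_1,1+\hat\epsilon_1)$ for every $y\in R_{i,j}$, hence $m^u_{\cW_z}(\pi_{i,j}(R_{i,j}))\in(1-\hat\epsilon_1,1+\hat\epsilon_1)\,m^u_{\hcW_i}(R_{i,j})$. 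Taking the exceptional set $E_1:=\bigcup_{j\ \mathrm{bad}}R_{i,j}$, which has $m^u_{\hcW_i}(E_1)<\hat\epsilon$, any $A\subset\bigl(\bigcup_j R_{i,j}\bigr)\setminus E_1$ splits as $\bigsqcup_{j\ \mathrm{good}}(A\cap R_{i,j})$, and since $\pi_z=\pi_{i,j}$ on $R_{i,j}$ the pointwise Jacobian bound makes $m^u_{\cW_z}(\pi_z A)$ a weighted average of ratios in $(1-\hat\epsilon_1,1+\hat\epsilon_1)$ times $m^u_{\hcW_i}(A)$; this is exactly $\hat\epsilon$-measure preservation of $\pi_z$. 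The displayed estimate then follows by summing $m^u_{\cW_z}(\pi_{i,j}(R_{i,j}))\in(1\pm\hat\epsilon_1)m^u_{\hcW_i}(R_{i,j})$ over good $j$, bounding the bad contribution on both sides by $C\hat\epsilon^2$ (the images of bad pieces again lie in a $u$-collar of $m^u_{\cW_z}$-mass $\le C\hat\epsilon^2$), and using $m^u_{\hcW_i}(\bigcup_j R_{i,j})\ge 1-100\epsilon^{\bb/16}\ge\tfrac12$ from {\bf F3}; one then renames $\hat\epsilon_1\mapsto\hat\epsilon$.

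The step I expect to be the main obstacle is the localisation of the bad indices: one has to verify that the $u$-displacement produced by the fake center-stable holonomy $\pi_{i,j}$ and the $u$-diameters of the pieces $R_{i,j}$ are both negligible compared with the collar width $\hat\epsilon^2\xi_n$. This is precisely where the parameter choices enter — $\xi_n$ exponentially larger than $r$ (the extra factor $e^{\epsilon^2 n}$), leaf slopes $O(\delta)$ with $\delta=\epsilon^{100}$, and the contraction rate $\lambda$ dominating $\epsilon\eta_2$ — together with the bounded distortion of $m^u_{\hcW_i}$. Everything else is bookkeeping built on Corollary~\ref{cor:uns}.
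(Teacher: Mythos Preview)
Your proof is correct and follows the same strategy as the paper: reduce to Corollary~\ref{cor:uns} on each $R_{i,j}$ and sum. The paper's proof is a single line that applies Corollary~\ref{cor:uns} and sums over $j$, with the parenthetical ``(since $\xi\geq(1-\hat\epsilon^2)\xi_n$)'' standing in for exactly the boundary analysis you carry out; you have simply made explicit the localisation of the indices $j$ for which $\cW_z$ fails to cross $\tilde R_{i,j}(r)$, and checked that their total $m^u_{\hcW_i}$-mass is $O(\hat\epsilon^2)$.
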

\begin{proof} Note that by Corollary \ref{cor:uns}
we have (since $\xi\geq (1-\hat{\eps}^2)\xi_n$)

$\DS
m^u_{\cW_z}\Big(\pi_z(\bigcup_{j} R_{i,j})\Big)=\sum_{j}m^u_{\cW_z}(\pi_{i,j}(R_{i,j}))\in (1-\hat{\eps}, 1+\hat{\eps}) \sum_{j}m^u_{\hcW}(R_{i,j}).
$
\end{proof}

\section{Equidistribution of unstable leaves.}
\label{sec:main} 
\subsection{Equidistribution criterion.}
\label{SSEquiCriterion}
Let $O_z(r)$ denote a ball of radius $r>0$ centered at $z\in M$. 
 Denote \be \label{DefRn} \tilde{r}_n=e^{-\eta_2\epsilon n-\eps^2n}. \ee
 Recall Definition \ref{def:compl} and \eqref{BI-R}.

\begin{proposition}\label{prop:3.5} For every $\epsilon>0$ there exists $n_{\epsilon}\in \N$ such that for every $n\geq n_{\epsilon}$ the following holds. Fix any
$$B\in \Big\{B_i(e^{-\eta_2 \epsilon n})\Big\}\cup \Big\{(1-\epsilon^{4000})B_i(e^{-\eta_2\epsilon n})\Big\}\cup\Big\{O_z(r)\Big\}_{z\in M},$$
with $1>r\geq e^{-\eta_2\epsilon n}$ and let   $B_s\in \{B_i(\tilde{r}_n)\}$. 
Assume that  $\cW=\cW^u_{x,\xi}$, with $e^{-\epsilon^3 n}\leq \xi\leq \tau$   and $x\in \cL_{n,\tau}$ 
crosses $B_s$ completely. Then 
$$
m^u_\cW\left(\left(\bigcup_{j}\tilde{R}_{s,j}(\tilde{r}_n)\right)\cap f^{-\epsilon n}(B)\right)\leq(1+\epsilon^{20})\mu(B)m^u_{\cW}\left(\bigcup_{j}\tilde{R}_{s,j}(\tilde{r}_n)\right).
$$

If moreover, for some $\zeta>0$
\be\label{eq:drp3}
\mu\left(\left(\bigcup_{j}\tilde{R}_{s,j}(\tilde{r}_n)\right)\cap  f^{-\epsilon n}((1- \zeta)B))\right)\geq 
(1- 2\epsilon^{40})\mu(B_s)\mu((1-\zeta)B),
\ee
then we have
 $$
m^u_\cW\left(\left(\bigcup_{j}\tilde{R}_{s,j}(\tilde{r}_n)\right)\cap f^{-\epsilon n}(B)\right)\geq 
(1-\epsilon^{20})\mu((1-\zeta)B)m^u_{\cW}\left(\bigcup_{j}\tilde{R}_{s,j}(\tilde{r}_n)\right).
$$
\end{proposition}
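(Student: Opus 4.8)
The plan is to transfer the equidistribution of the thin tube $\tilde R_{s,j}(\tilde r_n)$ under $f^{\eps n}$ (which comes, via exponential mixing on parallelograms, from Lemma \ref{lem:expmixB}) back onto the unstable leaf $\cW$ using the local absolute continuity of the fake center-stable foliation. First I would note that since $\cW$ crosses $B_s$ completely and $x\in\cL_{n,\tau}$, the reference manifold $\hcW_s\subset B_s$ and $\cW$ are both typical unstable leaves that are exponentially close (distance $\le \tilde r_n\le e^{-\eta_2\eps n}$), so Corollary \ref{cor:uns} applies: the holonomy $\pi_{s,j}\colon R_{s,j}\to \cW$ along $\cF_{s,j}$ has jacobian in $(1-\hat\eps,1+\hat\eps)$ for $\hat\eps$ as small as we like (taking $n$ large). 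Consequently, for any measurable $A\subset \bigcup_j\tilde R_{s,j}(\tilde r_n)$, the $m^u_\cW$-mass of $A\cap\cW$ is comparable, up to $(1\pm\hat\eps)$, to $\frac{m^u(R_{s,j})}{m^u(\cW(\tilde r_n\text{-slices}))}$-weighted slice masses, and by Corollary \ref{cor:prost'} the total $\mu$-mass of $\tilde R_{s,j}(\tilde r_n)$ itself decomposes as $\brho(x)\,m^u(R_{s,j})\,\mes(B^{cs}(0,\tilde r_n))$ up to $(1\pm\hat\eps)$. Combining these two facts gives a dictionary: for $A=\big(\bigcup_j\tilde R_{s,j}(\tilde r_n)\big)\cap f^{-\eps n}(B)$,
\[
m^u_\cW(A\cap\cW)\ \in\ (1\pm\hat\eps)\,\frac{\mu(A)}{\brho(x)\,\mes(B^{cs}(0,\tilde r_n))\,m^u(R_{s,j})}\cdot m^u_\cW\Big(\bigcup_j\tilde R_{s,j}(\tilde r_n)\cap\cW\Big)\cdot\frac{\brho(x)\,\mes(B^{cs}(0,\tilde r_n))\,m^u(\textstyle\sum_j R_{s,j})}{\mu\big(\bigcup_j\tilde R_{s,j}(\tilde r_n)\big)} ,
\]
which after cancelation reduces the statement to the comparison $\mu(A)\lessgtr(1\pm\eps^{20})\mu(B)\,\mu\big(\bigcup_j\tilde R_{s,j}(\tilde r_n)\big)$ (respectively with $\mu((1-\zeta)B)$ for the lower bound). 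So the problem is pushed entirely onto the phase space.

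Next I would estimate $\mu(A)=\mu\big(\bigcup_j\tilde R_{s,j}(\tilde r_n)\cap f^{-\eps n}(B)\big)$. For the upper bound: $\bigcup_j\tilde R_{s,j}(\tilde r_n)\subset B_s$ is (most of) a parallelogram of exponentially small size $(\xi_n,\tilde r_n)$, and $f^{\eps n}$ of such a parallelogram is, by Lemma \ref{lem:expmixB} (exponential mixing on parallelograms — here is where exponential mixing enters and where we need $\eta_2$ to satisfy its hypotheses, and $\eps n$ iterates against an $e^{-\eta_2\eps n}$-scale object leaves room because the mixing rate beats the scale), almost equidistributed: its normalized image measure tested against $\chi_B$ (or a smooth approximation, handled by a regularization of $\partial B$ since $B$ is a ball or one of the $B_i$, which have controlled boundaries) is within $\eps^{\text{large}}$ of $\mu(B)$. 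Hence $\mu\big(f^{\eps n}(\bigcup_j\tilde R_{s,j}(\tilde r_n))\cap B\big)\le (1+\eps^{30})\mu(B)\,\mu\big(\bigcup_j\tilde R_{s,j}(\tilde r_n)\big)$; applying $f^{-\eps n}$ (which preserves $\mu$) gives the desired upper bound, with $\eps^{30}$ absorbed into $\eps^{20}$ after also paying the $\hat\eps=\eps^{100b}$ from the holonomy dictionary. The only subtlety is that $\tilde R_{s,j}(\tilde r_n)$ is the $\cF$-saturation of $R_{s,j}$, not literally a product parallelogram, but property {\bf F4} of Lemma \ref{lem:ff} says the $\cF_{s,j}$-leaves stay within $e^{\eps^{10}n}\tilde r_n\ll e^{-\eta_2\eps n/2}$ of a genuine $\R^{cs}$-slice through $\eps n$ iterates, so $f^{\eps n}(\tilde R_{s,j}(\tilde r_n))$ is contained in (and contains most of) an actual parallelogram of comparable exponential size, and Lemma \ref{lem:expmixB} applies to that.

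For the lower bound, the naive estimate $\mu(f^{\eps n}(\text{tube})\cap B)\ge(1-\eps^{30})\mu(B)\mu(\text{tube})$ would fail because the tube might lose mass off the sides of $B$, or because the non-regular points in the tube (those not in $\cL_{n,\tau}$) can be pushed anomalously; this is exactly why the hypothesis \eqref{eq:drp3} is imposed. Given \eqref{eq:drp3}, we already know $\mu\big(\bigcup_j\tilde R_{s,j}(\tilde r_n)\cap f^{-\eps n}((1-\zeta)B)\big)\ge(1-2\eps^{40})\mu(B_s)\mu((1-\zeta)B)$; I would combine this with $\mu\big(\bigcup_j\tilde R_{s,j}(\tilde r_n)\big)\le(1+\eps^{100b})\mu(B_s)$ (from Corollary \ref{cor:prost'} and \eqref{MesMultiDisc}, or from Lemma \ref{lem:new1}) to get $\mu\big(\bigcup_j\tilde R_{s,j}(\tilde r_n)\cap f^{-\eps n}((1-\zeta)B)\big)\ge(1-3\eps^{40})\mu((1-\zeta)B)\,\mu\big(\bigcup_j\tilde R_{s,j}(\tilde r_n)\big)$, and since $f^{-\eps n}((1-\zeta)B)\subset f^{-\eps n}(B)$ this is a fortiori a lower bound for $\mu(A)$. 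Feeding this through the holonomy dictionary above (paying another $(1-\hat\eps)$) and absorbing $3\eps^{40}+\hat\eps$ into $\eps^{20}$ yields the claimed lower bound. The main obstacle, and the step requiring care, is the first one: verifying that the $\cF$-saturated set $\tilde R_{s,j}(\tilde r_n)$ — which is not a product set and whose image is only approximately a parallelogram — can legitimately be fed into Lemma \ref{lem:expmixB}, i.e. controlling the discrepancy between $f^{\eps n}(\tilde R_{s,j}(\tilde r_n))$ and a genuine parallelogram using {\bf F4}, {\bf F1}, and the subexponential divergence in the center-stable direction, and checking that all the error exponents ($\eps^{10}$ from {\bf F4}, $\eps^{100b}$ from absolute continuity, the mixing error from Lemma \ref{lem:expmixB}) are comfortably smaller than $\eps^{20}$ once $\eps$ is small and $n$ large.
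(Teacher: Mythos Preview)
Your high–level plan (relate $m^u_\cW$ to $\mu$ via the fake center–stable foliation, then control $\mu$ via mixing or via \eqref{eq:drp3}) is the right shape, but the ``dictionary'' you write down is not valid as stated, and this is the real difficulty in the argument. You claim that for
$A=\big(\bigcup_j\tilde R_{s,j}(\tilde r_n)\big)\cap f^{-\eps n}(B)$ one has
$m^u_\cW(A\cap\cW)\in(1\pm\hat\eps)\,\mu(A)/[\text{normalization}]$.
But the product structure of Corollary \ref{cor:prost'} and the holonomy control of Corollary \ref{cor:uns} only compare $m^u_\cW$ and $\mu$ for sets that are \emph{$\cF$-saturated} (unions of full fake cs-leaves inside $\tilde R_{s,j}$). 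The set $A$ is not $\cF$-saturated: $f^{-\eps n}(B)$ can cut through the leaves of $\cF_{s,j}$, so a point of $\cW\cap\tilde R_{s,j}$ may lie in $f^{-\eps n}(B)$ while the corresponding point on a nearby slice does not. Without saturation your dictionary simply fails.

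The paper's proof addresses exactly this gap, and the device it uses is the one you have not introduced: it partitions the indices $j$ into \emph{nice} ($C_j\cap f^{-\eps n}(B)\neq\emptyset$, where $C_j=\tilde R_{s,j}\cap\cW$) and \emph{good} ($C_j\subset f^{-\eps n}(B)$), and sandwiches $A\cap\cW$ between $A^-=\bigcup_{j\text{ good}}C_j$ and $A^+=\bigcup_{j\text{ nice}}C_j$. The crucial claim is that if $j$ is nice then the \emph{entire} tube $\tilde C_j=\tilde R_{s,j}(\tilde r_n)$ is contained in $f^{-\eps n}((1+\kappa)B)$, and symmetrically for the good case with $(1-\zeta)B$. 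This is where {\bf F1} and {\bf F4} of Lemma \ref{lem:ff} enter: any two points of $\tilde C_j$ are mapped by $f^{\eps n}$ to points within $2e^{\eps^3 n}\tilde r_n\ll \kappa\cdot e^{-\eta_2\eps n}$ of each other, which is negligible compared to the size of $B$. You correctly flagged {\bf F4} as relevant, but used it for a different and unnecessary purpose (feeding the tube into Lemma \ref{lem:expmixB}); the paper instead applies mixing to the genuine parallelogram $B_s\supset\bigcup_j\tilde C_j$ against $(1+\kappa)B$, which is straightforward. Once the nice/good tubes are identified, the sets $\bigcup_{j\text{ nice}}\tilde C_j$ and $\bigcup_{j\text{ good}}\tilde C_j$ \emph{are} $\cF$-saturated, so Corollary \ref{cor:prost'} legitimately converts their $\mu$-mass into $m^u$-mass on $\cW$. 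Your proposal, once patched with this nice/good sandwich, becomes essentially the paper's proof.
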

\begin{proof} Denote  $\tilde{B}=B_s$ and let $C_j=\tilde{R}_{s,j}(\tilde{r}_n)\cap \cW$, 
$\tilde{C}_j=\tilde{R}_{s,j}(\tilde{r}_n)$, $ \DS \cC=\bigcup_j \tC_j.$

We will first prove the upper bound. Call an index $j$ {\em nice} if $C_j\cap  f^{-\epsilon n}(B)\neq \emptyset$. Let 
$\DS A^+=\bigcup_{j \text{ nice }}C_j$. Since
$$
m^u_\cW\left(\cC\cap f^{-\epsilon n} B\right)\leq m^u_\cW(A^+)
$$
It is enough to show that
\be\label{eq:cla3'} 
m^u(A^+) \leq (1+\eps^{1000})m^u\left(\cW\cap \cC \right)\mu(B).
\ee
 Let  $\kappa=\kappa(\epsilon)$ be small enough  so that, we have 
\be \label{MesSq} 
\mu((1+\kappa)\cdot B)\leq (1+\eps^{2000})\mu(B)\;\;\text{ and }\;\;
\mu((1-\kappa)\cdot  B)\geq (1-\eps^{2000})\mu(B).
\ee
We claim that 
\be\label{eq:cla2'}
 \cC \cap f^{-\epsilon n}((1+\kappa)B)\supset \bigcup_{j \text{ nice }}\tilde{C}_{j}.
\ee
Indeed, let $z\in\tilde{C}_{j}$ with $j$ nice  and let $z'=\pi_{s,j}(z)\in C_j$ and let $z''\in C_j\cap f^{-\eps n}(B)$. 
Then  by {\bf F4} and {\bf F1} in Lemma \ref{lem:ff} it follows that 
$$
d(f^{\epsilon n}z'',f^{\epsilon n}z)< d(f^{\epsilon n}z'',f^{\epsilon n}z')+d(f^{\epsilon n}z',f^{\epsilon n}z)\leq  2 
e^{\epsilon^3 n}\cdot e^{-\eta_2\epsilon n-\eps^2n}.
$$
By definition  $f^{\epsilon n}z''\in B$. We claim that $f^{\epsilon n}z\in (1+\eps^{1000})B$ if $n$ is large enough. Indeed, if $B=O_z(r)$ with $r\geq e^{-\eta_2\eps n}$ is a ball, then the claim follows by triangle inequality as $2e^{-\eta_2\epsilon n-\eps^2n +\eps^3n}+r\leq (1+\kappa)r$. If 
$B=B_i(e^{-\eta_2 \eps n})$ (or $B=(1-\eps^{4000})\cdot B_i(e^{-\eta_2\eps n})),$ 
then the claim again follows by triangle inequality, the definition of $B_i(\cdot, \cdot)$ (see \eqref{BXiR})  and by  again using that $2 
e^{\epsilon^3 n}\cdot e^{-\eta_2\epsilon n-\eps^2n}$ is much less than  $\kappa e^{-\eta_2 \eps n}$ for $n$ large enough.

 Since $z\in \tilde{C}_j$ is arbitrary, \eqref{eq:cla2'} follows. 
 
By exponential mixing and  \eqref{MesSq}, \eqref{eq:cla2'}
\begin{equation}
\label{510-1}
\mu\left(\bigcup_{j\text{ nice }}\tilde{C}_{j}\right)\leq 
\mu\left(\cC \cap  f^{-\epsilon n}((1+\kappa)B)\right)\leq
\end{equation}
$$ \mu(\tilde{B}\cap f^{-\epsilon n}((1+\kappa)B)) \leq (1+\eps^{2000})\mu(\tilde{B})\mu((1+\kappa)B)\leq 
(1+3\eps^{2000})\mu(\tilde{B})\mu(B).
$$
By Corollary \ref{cor:prost'}  it follows  that if $\brho_{\tilde{B}}=\brho(x)$, for an arbitrary $x\in \tilde{B}\cap P_\tau$, then 
\begin{equation}
\label{510-2}
 (1-\epsilon^{2000})\brho_{\tilde{B}}\cdot m^u(A^+)\mes\Big(B^{cs}(0,e^{-\eta_2\epsilon n-\eps^2n})\Big) \leq 
 \mu\left(\bigcup_{j \text{ nice}}\tilde{C}_{j}\right).
\end{equation}
Also by  Lemma \ref{lem:new1} and Corollary \ref{cor:prost'}
\begin{equation}
\label{510-3}
\mu(\tilde{B})\leq (1+\eps^{2000})
\mu(\cC)
\leq (1+4\eps^{2000})\brho_{\tilde{B}}\cdot 
m^u\left(\cW\cap\cC \right)\cdot 
\mes\Big(B^{cs}(0,e^{-\eta_2\epsilon n-\eps^2n})\Big).
\end{equation}
Combining  \eqref{510-1}--\eqref{510-3} gives \eqref{eq:cla3'} finishing the proof of the upper bound. 
\\

We will now show the lower bound assuming additionally that \eqref{eq:drp3} holds.

An index $j$ is called {\em good} if $C_j\subset f^{-\epsilon n}(B)$. Let $\DS A^-=\bigcup_{j \text{ good }}C_j$. Then by definition, 
$$
m^u_\cW\left(f^{-\epsilon n}B\cap \cC \right)\geq m^u_\cW(A^-).
$$
It is therefore enough to show that 
\be\label{eq:cla3} 
m^u(A^-) \geq (1-\eps^{20})m^u\left(\cW\cap \cC \right)\mu((1-\zeta)B).
\ee

We claim that 
\be\label{eq:cla2}
\cC \cap f^{-\epsilon n}((1-\zeta)B))\subset \bigcup_{j \text{ good }}\tilde{C}_{j}
\ee
The proof of the above claim is analogous to the proof of \eqref{eq:cla2'}. By \eqref{eq:cla2} and
 \eqref{eq:drp3},
$$
\mu\Big(\bigcup_{j\text{ good }}\tilde{C}_{j}\Big)\geq 
\mu\left(\cC \cap f^{-\epsilon n}((1-\zeta)B\right)\geq (1-2{\eps^{40}})\mu(\tilde{B})\mu((1-\zeta)B)\geq 
$$
\be\label{510-4}
(1-3\epsilon^{40})\mu(\tilde{B})\mu((1-\zeta)B).
\ee
 By Corollary \ref{cor:prost'}  it follows that 
\be\label{510-5}
 (1+\epsilon^{2000})\brho_{\tilde{B}}\cdot m^u(A^-) \mes\Big(B^{cs}(0,e^{-\eta_2\epsilon n-\eps^2n})\Big) \geq 
 \mu\left(\bigcup_{j\;\; good} \tC_j\right)
\ee
Moreover by Lemma \ref{lem:new1} and Corollary \ref{cor:prost'}, 
\be\label{510-6}
\mu(\tilde{B})\geq (1-\epsilon^{2000})\mu(\cC)\geq (1-\epsilon^{2000})\brho_{\tilde{B}}\cdot 
m^u(\cW\cap \cC)\mes\Big(B^{cs}(0,e^{-\eta_2\epsilon n-\eps^2n})\Big) 
\ee
Combining  \eqref{510-4}--\eqref{510-6} we get
$$
m^u_{\cW}(A^-)\geq (1-4\eps^{40})m^u_{\cW}(\cC)\mu((1-\zeta)B)
$$
finishing the proof of the lower bound.
\end{proof}


For $r>0$ let $\{B_{i}(r)\}_{i\in J_r}=\{B_i\left(r\right)\}_{i\in J_r}$ be the family of parallelograms constructed in Lemma \ref{lem:setsB}. The next result, proven in \S \ref{sub:hf}
will help us to verify the fully crossing assumption 
 in Proposition \ref{prop:3.5}.

\begin{lemma}\label{lem:compl}
For every $\hat{\epsilon}>0$ there exists $n_{\hat{\epsilon},\epsilon}\in \N$ such that for every 
$n\geq n_{\hat{\epsilon},\epsilon}$  any  $\cW\subset W^u_{\brz,\tau}$, for some $\brz\in \cL_{n,\tau}$  any $i\in J_r$, 
for $r=e^{-\eta_2\eps n}$ 
if $ \partial{\cW}\bigcap B_i(r)=\emptyset$ 
and $\cW\cap (1-\hat{\eps})\cdot B_i(r)\neq \emptyset$, then $\cW$ crosses $B_i(r)$ completely.
\end{lemma}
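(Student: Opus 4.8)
The plan is to carry out the whole argument in the Lyapunov chart $h_{z_i}$ of the centre $z_i$ of $\hcW_i$, with coordinates $(a,b)\in\R^u\times\R^{cs}$, and to observe that in these coordinates the three relevant objects are, respectively, ``almost horizontal'', ``thin'' and ``almost vertical''. Indeed $\hcW_i$ is the graph of $\eta^u_{z_i}$ over the cube $\{\|a\|_\infty\le\xi_n\}$, which by \eqref{eq:etau} has slope at most $C_0\xi_n^{\alpha_4}$, exponentially small; $B_i(r)$ is the tube $\{(a,\eta^u_{z_i}(a)+c):\|a\|_\infty\le\xi_n,\ \|c\|_\infty\le r\}$ of $b$-radius $r=e^{-\eta_2\eps n}$ about it, which is exponentially much wider ($\xi_n$, in the $u$-direction) than it is tall ($r=\xi_n e^{-\eps^2 n}$); and for $y\in R_{i,j}$ the leaf $\cF_{i,j}(y)$, expressed in this chart, is — by Lemma~\ref{cs-foliation} together with the change of chart between $z_i$ and the base point $z_{i,j}\in R_{i,j}\cap\cL_{n,\tau}$, which is exponentially near an isometry since $d(z_i,z_{i,j})\le 2\mathfrak{r}_\delta(z_i)^{-1}\xi_n$ and $x\mapsto L_{x,\delta}$ is H\"older on $P_\tau$ (Lemma~\ref{lyapchart}(L2)) — the graph of a function $a=\widehat A_{i,j,y}(c)$ over a $cs$-cube, with $\widehat A_{i,j,y}(0)=a_y$ and slope at most $C\delta$, where $a_y$ is the first coordinate of $h_{z_i}^{-1}(y)$ and $\|a_y\|_\infty\le\xi_n$.

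The crux, and the step I expect to be the real obstacle, is to show that in the same chart the part of $\cW$ lying near $B_i(r)$ is an almost-horizontal graph $b=\eta^u_{z_i}(a)+\psi(a)$ over all of $\{\|a\|_\infty\le\xi_n\}$, with $\|\psi\|_\infty<r$ (so it lies inside the tube and misses its two $cs$-faces $\|c\|_\infty=r$) and with slope at most $C(\tau)\xi_n^{\alpha'}$ for some positive exponent $\alpha'$ depending on $\alpha_1,\alpha_4$ — in particular slope $\ll r/\xi_n$ once $\eps$ is small enough in terms of $\eta_2$ and the H\"older exponents. To get this I would pick $p\in\cW\cap(1-\hat\eps)B_i(r)$; then $h_{z_i}^{-1}(p)$ is exponentially close to $0$, so $p\in Q_{z_i}$, and since $\cW\subset W^u_{\bar z,\tau}$ with $\bar z\in\cL_{n,\tau}\subset P_\tau$ the tangent spaces of $\cW$ lie in the unstable cone field, which near the Pesin point $z_i$ is, in the chart $h_{z_i}$, exponentially close to $E^u(z_i)=\R^u$ (by the H\"older continuity of $E^u$ on $P_\tau$, Lemma~\ref{LmIDHold}; the backward-contraction and cone estimates, Corollary~\ref{CrWuBC}, \eqref{ShiftedCones}, Lemma~\ref{admisibleconv}; and the a priori $C^{1+\alpha_4}$ bound of Lemma~\ref{lem:unstman} transported through the change of charts, uniform on $P_\tau$ by the remark after Definition~\ref{apadm}). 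The slope bound follows; the bound $\|\psi(a_p)\|_\infty\le(1-\hat\eps)r$ coming from $p\in(1-\hat\eps)B_i(r)$ propagates through it to $\|\psi\|_\infty<r$ on the whole cube; and $\psi$ is defined on the whole cube because otherwise, $\cW$ being wider than the box ($\xi\ge e^{-\eps^3 n}>\xi_n$) and almost horizontal at height $<r$, a boundary point of $\cW$ would be forced to lie inside $B_i(r)$, contradicting the hypothesis $\partial\cW\cap B_i(r)=\emptyset$. (This is the only point where the two hypotheses of the lemma enter.) Write $\cW_0$ for the resulting full-width almost-horizontal slice; then $\cW_0\subset\cW\cap B_i(r)$.

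The crossing is then soft. Fix $j$ and $y\in R_{i,j}$. In the coordinates $(a,c)$ with $c=b-\eta^u_{z_i}(a)$, $\cW_0$ is the graph $c=\psi(a)$ over $\{\|a\|_\infty\le\xi_n\}$ with $\|\psi\|_\infty<r$, while the piece of $\cF_{i,j}(y)$ inside $B_i(r)$ is the graph $a=\widehat A_{i,j,y}(c)$ over $\{\|c\|_\infty\le r\}$ (the slope being at most $C\delta$, the $a$-coordinate of the leaf stays within $C\delta r\ll\xi_n$ of $a_y$, so this really is a full graph over $\{\|c\|_\infty\le r\}$; the exponentially small shell of $y$'s with $a_y$ near $\partial\hcW_i$ is negligible for every application of the lemma and may be ignored). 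An intersection of $\cW_0$ and $\cF_{i,j}(y)$ is a solution of $a=\widehat A_{i,j,y}(\psi(a))$, and the map $a\mapsto\widehat A_{i,j,y}(\psi(a))$ is continuous with image within $C\delta r$ of $a_y$, hence sends $C^u(\xi_n)$ into itself and so has a fixed point by Brouwer's theorem. The corresponding point lies on both $\cW$ and $\cF_{i,j}(y)$, inside $B_i(r)$; since $j$ and $y$ were arbitrary, this says that $\cW$ crosses $\tilde R_{i,j}(r)$ completely for every $j$, i.e. $\cW$ crosses $B_i(r)$ completely, as claimed. Everything invoked beyond this last paragraph is standard Pesin-theoretic control already assembled in Appendix~\ref{AppPesin}; the one delicate point is the exponentially small slope bound for $\cW$ in the chart of $z_i$, because the base point $\bar z$ of $\cW$ need not be close to $z_i$ and the a priori control of $W^u_{\bar z}$ lives in $\bar z$'s own chart.
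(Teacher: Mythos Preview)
Your proposal is correct and follows essentially the same route as the paper's proof: work in the chart $h_{z_i}$ with shifted coordinates $c=b-\eta^u_{z_i}(a)$, show that $\cW$ is a full-width graph $c=\psi(a)$ with $\|\psi\|_\infty<r$ (via the H\"older control of the unstable direction on $P_\tau$, together with the hypotheses $\partial\cW\cap B_i(r)=\emptyset$ and $\cW\cap(1-\hat\eps)B_i(r)\neq\emptyset$), show that each leaf $\cF_{i,j}(y)$ is a full-height graph $a=\tilde\phi(c)$ with small slope (via Lemma~\ref{cs-foliation}), and then obtain the intersection by a fixed-point argument. The paper applies Brouwer to the product map $(a,c)\mapsto(\tilde\phi(c),\psi(a))$ while you apply it to the composition $a\mapsto\tilde\phi(\psi(a))$, which is equivalent; and you are more explicit than the paper about the chart transfer from $z_{i,j}$ to $z_i$ and about the boundary shell of $y$'s, both of which the paper handles somewhat tersely.
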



\subsection{The Main Proposition.}
Recall that $\tilde{r}_n=e^{-\eps^2 n-\eps \eta_2 n}$ and let 
\be
\label{WholeLam}
\mathcal{R}_n:=\bigcup_{i,j}\tilde{R}_{i,j}( \tilde{r}_n)
\ee
be the lamination defined by \eqref{eq:tilr}.


Let $\{B_i\left(e^{-\eta_2 \epsilon n}\right)\}_{i\in J}$
be the family given by Lemma~\ref{lem:setsB}.

\begin{proposition}[{\bf Main Proposition}]\label{prop:expeq} 
Assume that $f$ is exponentially mixing. For every $\epsilon>0$ 
 there exists $\xi_0<\tau$ and such that for each $\xi<\xi_0$ there is 
 $\bar{n}=n(\epsilon,\xi)\in \N$, such that for every $n\geq \bar{n}$,
there exists a set $K_{n}\subset \cL_{n,\tau}$, $\mu(K_{n})\geq 1-\epsilon^{10}$ satisfying the following: let $\{B_i\}_{i\in J}$ be either $\{B_i(e^{-\eta_2\epsilon n})\}_{i\in J_{e^{-\eta_2\epsilon n}}}$ OR a family of disjoint balls $\{O_{i}(r)\}_{i\in P}$ with $r\in [e^{-\eta_2\epsilon n},1)$ and  $\mu(\bigcup_{i\in P}O_i(r))\geq 1-\epsilon^{10^5}$. Then for every $x\in K_{n},$
 every unstable box $\cW$ of size $\xi$ containing $x$ 
there is a subset $J'(x)\subset J$ such that $\mu(\bigcup_{i\in J'} B_i)>1-\epsilon^{10}$ and we have 
\be\label{eq:LLn}
m^u_\cW(f^{-\eps n}(\cL_{n,\tau}))\geq 1-\epsilon^{10},
\ee
and for every $i\in J'(x)$,
  \be
\label{ExpEqui}
m^u_\cW\left(\mathcal{R}_n\cap f^{-\epsilon n}(B_i)\right)\in (1-\epsilon^{10},1+\epsilon^{10}) \mu(B_i),
\ee
and for $\tilde{\eps}=\eps^{4000}$,
  \be
\label{ExpEqui2}
m^u_\cW\left(\mathcal{R}_n\cap f^{-\epsilon n}((1-\tilde{\epsilon})B_i)\right)\in (1-\epsilon^{10},1+\epsilon^{10}) \mu((1-\tilde{\epsilon})B_i).
\ee

\end{proposition}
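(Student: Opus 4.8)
The strategy is to deduce the Main Proposition from the two (much more local) estimates in Proposition \ref{prop:3.5} together with the covering lemmas \ref{lem:setsB}, \ref{lem:compl} and the absolute-continuity/product-structure results of Section \ref{ScAC}. First I would fix $\eps>0$ and, having fixed all the Pesin parameters (\S on the choice of parameters), choose the good set $K_n\subset\cL_{n,\tau}$. The set $K_n$ should consist of points $x$ whose unstable box $\cW=\cW_{x,\xi}$ of size $\xi$ is ``good'' in the sense needed below: namely that $m^u_\cW(f^{-\eps n}\cL_{n,\tau})\geq 1-\eps^{10}$ (this gives \eqref{eq:LLn} and follows from Lemma \ref{LmLocMeas} applied to $\fB=(f^{-\eps n}\cL_{n,\tau})^c$, whose measure is $\leq 3\eps^\bb$ by $f$-invariance of $\mu$ and \eqref{eq:pc}), and that $\cW$ meets $\bigcup_{i,j}\tilde R_{i,j}(\tilde r_n)$ in almost full $m^u_\cW$-measure. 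Since $\mu(\cR_n)\geq 1-300\eps^{\bb/16}$ by Lemma \ref{lem:new1}, another application of Lemma \ref{LmLocMeas} (or Lemma \ref{lem:satu} plus Lemma \ref{LmLocMeas}) shows that the set of $x$ for which this fails has measure $\leq\eps^{10}$ provided $\xi<\xi_0$ is small enough; this defines $K_n$.

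Next I would carry out the crossing bookkeeping. For a given target family $\{B_i\}_{i\in J}$ (either the $B_i(e^{-\eta_2\eps n})$ or the balls $O_i(r)$), I need to produce the subfamily $J'(x)$. The point is that for $x\in K_n$ and $\cW=\cW_{x,\xi}$, apply Proposition \ref{prop:3.5} with $B$ running over $\{B_i\}$ and with $B_s$ running over the parallelograms $\{B_i(\tilde r_n)\}$; but the hypothesis ``$\cW$ crosses $B_s$ completely'' must be checked. Here Lemma \ref{lem:compl} does the job: $\cW$ crosses $B_s(\tilde r_n)$ completely whenever $\partial\cW\cap B_s=\emptyset$ and $\cW$ meets $(1-\hat\eps)B_s$; since $\xi\geq e^{-\eps^3 n}\gg \tilde r_n$ and $\xi\ll\tau$, all but a proportion $\leq\eps^{10}$ of the $B_s$ intersecting $\cW$ are fully crossed. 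Summing the upper bound of Proposition \ref{prop:3.5} over all $s$ with $\tilde R_{s,j}(\tilde r_n)$ fully crossed and over all $j$, and using the product-structure Corollary \ref{cor:prost'} to convert $\mu$-estimates on the $\tilde R_{s,j}$ into $m^u_\cW$-estimates on $C_j=\tilde R_{s,j}\cap\cW$, gives the upper bound in \eqref{ExpEqui}. For the matching lower bound one invokes the second (conditional) half of Proposition \ref{prop:3.5}: the hypothesis \eqref{eq:drp3} is itself a global equidistribution statement which I would verify directly from exponential mixing applied to the narrow tube $\cR_n\cap\bigcup_s B_s(\tilde r_n)$ against the target $(1-\zeta)B_i$ — this is where the exponential-mixing assumption enters in an essential way. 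The subfamily $J'(x)$ is then defined by discarding those $B_i$ for which either the crossing fails, or \eqref{eq:drp3} fails; a Markov/counting argument (each bad $B_s(\tilde r_n)$ is hit by $\cW$ near its boundary, a set of small $m^u_\cW$-measure by the local product structure, and the $\mu$-measure of bad target cubes is small by exponential mixing) shows $\mu(\bigcup_{i\in J'}B_i)>1-\eps^{10}$.

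The estimate \eqref{ExpEqui2} with $(1-\tilde\eps)B_i$, $\tilde\eps=\eps^{4000}$, is obtained the same way: one runs Proposition \ref{prop:3.5} with $B=(1-\eps^{4000})B_i(e^{-\eta_2\eps n})$ (this is exactly one of the three allowed shapes of $B$ in Proposition \ref{prop:3.5}), so no new argument is needed; for the ball case one simply shrinks the radius, which is again covered. Finally, for the case where $\{B_i\}$ is a disjoint family of balls $O_i(r)$ of macroscopic-to-exponential size with $\mu(\bigcup O_i)\geq 1-\eps^{10^5}$, the argument is identical — Proposition \ref{prop:3.5} explicitly allows $B=O_z(r)$ with $1>r\geq e^{-\eta_2\eps n}$ — and the only extra point is that we may absorb the $\eps^{10^5}$ uncovered measure into $J'(x)$, which is harmless since $\eps^{10^5}\ll\eps^{10}$.

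The main obstacle, and the step I expect to require the most care, is the lower bound: verifying hypothesis \eqref{eq:drp3} uniformly over the target cubes $B_i$ and over the ``local'' cubes $B_s(\tilde r_n)$, and then managing the error terms so that the set of discarded indices has measure $<\eps^{10}$ rather than merely ``small''. This needs exponential mixing applied at the scale $\tilde r_n=e^{-\eps^2 n-\eta_2\eps n}$ (hence the role of $\eta_2$ from Lemma \ref{lem:expmixB}): the mixing error $Ce^{-\eta n}$ must beat the polynomial-in-$n$ and $e^{O(\eps^2 n)}$ losses coming from the number of cubes, the distortion in Corollary \ref{cor:prost'}, and the propagation estimate {\bf F4} over $\eps n$ iterates. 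Since all these losses are sub-exponential with exponent $\ll\eta$, they are indeed absorbed, but the arithmetic of exponents is the delicate part and must be tracked honestly against the abbreviations $\xi_n,\tilde r_n$ fixed in the introduction.
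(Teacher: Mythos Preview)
Your overall strategy matches the paper's: define $K_n$ via Lemma \ref{LmLocMeas}, check full crossing of the small boxes $\tilde B_s=B_s(\tilde r_n)$ via Lemma \ref{lem:compl}, sum the upper bound of Proposition \ref{prop:3.5} over $s$, and for the lower bound verify \eqref{eq:drp3} and invoke the second half of Proposition \ref{prop:3.5}. The construction of $K_n$, property \eqref{eq:LLn}, the upper bound, and the reduction of \eqref{ExpEqui2} to the same argument are all essentially as in the paper.

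There is, however, a genuine gap in your lower-bound mechanism. You propose to verify \eqref{eq:drp3} ``directly from exponential mixing applied to the narrow tube $\cR_n\cap\bigcup_s B_s(\tilde r_n)$''. This does not work: Lemma \ref{lem:expmixB} gives mixing only for parallelograms, and $\cR_n$ is a complicated laminated set, so there is no direct estimate on $\mu(\cR_n\cap\tilde B_s\cap f^{-\eps n}B)$. The paper instead runs a \emph{two-level} Markov pruning. Exponential mixing is applied only to the full parallelograms $\tilde B_s$ versus $(1-\bar\eps)B_i$; this is combined with the global bound $\mu(\cR_n\cap\tilde B_\cW)\geq(1-\hat\eps)\mu(\tilde B_\cW)$ from Lemma \ref{lem:new1} to conclude that $\cR_n$ captures almost all of $\tilde B_\cW\cap f^{-\eps n}\bigl(\bigcup_i(1-\bar\eps)B_i\bigr)$. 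A first Markov argument over the target index $i$ produces $J'$ (those $i$ for which $\cR_n$ captures a $(1-\eps^{100})$-fraction of $\tilde B_\cW\cap f^{-\eps n}((1-\bar\eps)B_i)$). Then, for each fixed $i\in J'$, a \emph{second} Markov argument over the source index $s$ produces $S'(i)\subset S$ (those $s$ for which $\cR_n$ captures a $(1-\eps^{40})$-fraction of $\tilde B_s\cap f^{-\eps n}((1-\bar\eps)B_i)$); only for $s\in S'(i)$ does \eqref{eq:drp3} follow. One then sums Proposition \ref{prop:3.5} over $s\in S'(i)$ and uses Proposition \ref{lem:abscont} to show all $m^u_\cW(\cR_n\cap\tilde B_s)$ are mutually comparable, so that the cardinality bound on $S'$ transfers to an $m^u_\cW$-bound. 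Your outline conflates the early boundary/crossing pruning of $s$ with this second, measure-theoretic pruning of $s$ inside each $i\in J'$; it is the latter that actually delivers \eqref{eq:drp3}, and it is not present in your sketch.
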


\subsection{Proof of Main Proposition.}
Fix $\epsilon,\xi>0$. Notice that it is enough to prove \eqref{ExpEqui} for $\{B_i\}$ and then repeat the proof for $\{(1-\tilde{\eps})B_i\}$ and take intersections of the corresponding sets $K_n$. 
Let
 $ \hat{\epsilon}=\epsilon^{10^4}$ and let $\bar{n}\geq n_{\hat{\eps},\eps}$, where $n_{\hat{\eps},\eps}$ is the maximum of the corresponding $n_{\hat{\eps},\eps}$ (or $n_{\eps}$) coming from  Proposition \ref{prop:3.5}, Lemma \ref{lem:new1}, Corollary \ref{cor:prost'}, Proposition \ref{lem:abscont},  Lemma \ref{lem:expmixB}, Lemma \ref{lem:setsB} and Lemma \ref{lem:compl} are satisfied.

 Let   $\bar{\epsilon}=\bar{\epsilon}(\hat{\epsilon})<\epsilon^{1000}$ 
 be small enough  so that for each $i\in J$, we have 
\be
\label{eq:absc1'}
\mu((1+\bar{\epsilon})\cdot B_i)\leq (1+\hat{\epsilon}^2)\mu(B_i)\;\;\text{ and }\;\;\mu((1-\bar{\epsilon})\cdot B_i)\geq (1-\hat{\epsilon}^2)\mu(B_i).
\ee

We will now define the set $K_n$. Let $\{B_i(\tilde{r}_n)\}_{i\in J_{\tilde{r}_n}}$ be the family $\mathbb{B}_2(r)$ for $r=\tilde{r}_n$. To make the notation simpler
we will denote the family $\{B_i(\tilde{r}_n)\}_{i\in J_{\tilde{r}_n}}$ by
\footnote{
The family $\{\tB_s\}_{s\in \brJ}$ should not be confused with 
$\{B_i(e^{-\eta_2\eps n})\}_{i\in J_{e^{-\eta_2\eps n}}}$ 
which is one of the possible targets in Proposition \ref{prop:expeq}.} 
$\{\tilde{B}_s\}_{s\in \bar{J}}$.

Let $\mathcal{B}_n(\hat{\eps}):=\Big(\bigcup_{s\in \bar{J}}(1-\hat{\eps})\cdot \tilde{B}_s\Big)$ and 
$$
K^\flat:=\left\{x\in M\;:\; m^u_{W^u_{x,2\xi}}
\left(\mathcal{B}_n(\hat{\eps}) \cap f^{-\epsilon n}(\cL_{n,\tau})\cap \cL_{n,\tau}\right)
\geq 1-\epsilon^{400}\right\}.
$$
Define
$$
K_n:=K^\flat\cap \cL_{n,\tau}.
$$
Below we summarize the properties of $K_n$ needed in the proof. 
Let $x\in K_n$ and let $\cW$ be an unstable box of size $\xi$ containing $x$, then
\begin{enumerate}
\item[{\bf A}.] $\mu(K_n)>1-\epsilon^{10}$;
\item[{\bf B}.] 
$m^u_{\cW}\Big(f^{-\epsilon n}(\cL_{n,\tau})\cap \cL_{n,\tau}\cap \mathcal{B}_n(\hat{\epsilon})\Big)
\geq 1-\epsilon^{300}$;

\item[{\bf C}.] There exists a set $J'\subset J$ such that
$\DS
\mu\left(\bigcup_{i\in J\setminus J'}B_i\right)<\epsilon^{10}
$
and for every $i\in J'$
\be\label{eq:newts}
m^u_\cW\left(\mathcal{R}_n\cap f^{-\epsilon n}(B_i)\right)\in (1-\epsilon^{10},1+\epsilon^{10}) \mu(B_i).
\ee

\end{enumerate}
Notice that {\bf A}-- {\bf C} immediately imply the Main Proposition.
Therefore, it remains to prove properties {\bf A}-- {\bf C}.\\

{\it Proof of {\bf A}:} Notice that {\bf A} follows by showing that  
$\mu(K^\flat)\geq 1-\epsilon^{20}$.  
But that estimate follows by applying Lemma \ref{LmLocMeas} with $\fB=\cL_{n,\tau}^c\cup f^{-\eps n} \cL_{n,\tau}^c \cup \cB_n(\heps)^c $ and 
 $\eps^{400}$  instead of $\eps$ (note that $\mu(\fB)\leq \eps^{4\times 400}$ by the definition of $\heps$ and \eqref{eq:mesb2}).
\\

{\it Proof of {\bf B}:}
Since $\cW\subset W^u_{x,2\xi}$ and  $x\in K_1$
$$m^u\left(\cW\setminus (f^{-\epsilon n}\cL_{n,\tau}\cap \cL_{n,\tau})\cap \mathcal{B}_n(\hat{\epsilon})\right)\leq $$$$
m^u\left(W^u_{x, 2\xi}\setminus (f^{-\epsilon n}L_{n,\tau}\cap \cL_{n,\tau})\cap \mathcal{B}_n(\hat{\epsilon})\right)
\leq \eps^{400} m^u(W^u_{x, 2\xi})\leq \eps^{300} m^u(\cW). $$
This finishes the proof of {\bf B}. \\

{\it Proof of {\bf C}:} By {\bf B} it follows that $m^u_{\cW}(\mathcal{B}_n(\hat{\eps}))\geq 1-\eps^{300}$. Let $S\subset \bar{J}$ be such that for $s\in S$, $\cW\cap (1-\eps^{40})\cdot\tilde{B}_s \neq \emptyset$ and $\partial \cW\cap \tilde{B}_s=\emptyset$. Note that by Lemma \ref{lem:compl}, $\cW$ crosses $\tilde{B}_s$ completely for every $s\in S$. Denote $\tilde{B}_\cW=\bigcup_{s\in S} \tilde{B}_s$.

 We claim that 
\be\label{eq:bme}
m^u_\cW(\tilde{B}_\cW)\geq 1-2\epsilon^{300}.
\ee
Indeed,  if for some $s'\in \bar{J}$, $\partial\cW\cap \tilde{B}_{s'}\neq \emptyset$, then $\tilde{B}_{s'}\subset 
V_{n^{-2}}(\partial \cW)$
where $V_{n^{-2}}(\partial \cW)$ denotes
$n^{-2}$ neighborhood of $\partial \cW$. Therefore using {\bf B} we get
$$
m^u_\cW(\tilde{B}_\cW)\geq m^u_{\cW}(\bigcup_{s\in \bar{J}}\tilde{B}_s)-m^u_{\cW}(V_{n^{-2}}(\partial \cW))\geq m^u_{\cW}(\mathcal{B}_n(\eps))-C(\epsilon,\xi)\cdot n^{-2}\geq 1-2\epsilon^{300},
$$
if $n$ is large enough. 

Let $J_0\subset J$ be such that for $i\in J_0$,
\be\label{eq:J0}
m^u_\cW(\mathcal{R}_n\cap \tilde{B}^c_\cW\cap f^{-\eps n}B_i)\geq \epsilon^{150}\mu(B_i).
\ee
Then 
$$
2\epsilon^{300}>m^u_\cW(\tilde{B}^c_\cW)\geq \sum_{i\in J_0}m^u_\cW(\mathcal{R}_n\cap \tilde{B}^c_\cW\cap f^{-\eps n}B_i)\geq \epsilon^{150}\mu(\bigcup_{i\in J_0}B_i),
$$
and hence 
$$
\mu(\bigcup_{i\in J\setminus J_0}B_i)\geq 1-3\epsilon^{150}.
$$
We will therefore discard the set $J_0$ and, with a slight abuse of notation, still denote $J=J\setminus J_0$.

 By Lemma \ref{lem:expmixB} applied to each $\tilde{B}_s$ and $(1-\bar{\epsilon})B_i$, $i \in J$, we get 
\be\label{mixx'}
\mu(\tilde{B}_\cW\cap f^{-n}((1-\bar{\epsilon})B_i)\in (1-\hat{\epsilon},1+\hat{\epsilon})\mu(\tilde{B}_\cW)\mu((1-\bar{\epsilon})B_i).
\ee
By the right inequality in \eqref{eq:absc1'} and Lemma \ref{lem:setsB}

$$
 \mu(\bigcup_{i\in J}(1-\bar{\epsilon})B_i))\geq (1-\hat{\eps}^2)\mu(\bigcup_{i\in J}B_i)\geq (1-\hat{\eps}^2)(1-3\eps^{150})\geq 1-4\eps^{150}.
$$
Therefore, 
$$
\mu\Big(\tilde{B}_\cW \cap \Big(\bigcup_{i\in J}f^{-n}(1-\bar{\epsilon})B_i)\Big)\Big)
\geq (1-\hat{\eps})(1-4\eps^{150})\mu(\tilde{B}_\cW)\geq (1-5\eps^{150})\mu(\tilde{B}_\cW)
$$
Note that  by Lemma \ref{lem:new1} applied to each $s\in S$, 
$$
\mu(\mathcal{R}_n\cap \tilde{B}_\cW)=\sum_{s\in S}\mu\Big(\bigcup_{j}\tilde{R}_{s,j}(e^{-\eta_2\epsilon n-\eps^2n})\Big)\geq (1-\hat{\eps})\sum_{s\in S}\mu(\tilde{B}_s)=(1-\hat{\eps})\mu(\tilde{B}_\cW).
$$
The two inequalities above give
\be\label{eq:lily'}
\mu\Big(\mathcal{R}_n\cap \tilde{B}_\cW\cap  \Big(\bigcup_{i\in J}f^{-n}(1-\bar{\epsilon})B_i)\Big)\Big)\geq (1-6\eps^{150})\mu(\tilde{B}_\cW).
\ee
Let $J'\subset J$ be such that for every 
 $i\in J'$,
\be\label{eq:la22}
\mu\Big(\mathcal{R}_n\cap \tilde{B}_\cW\cap f^{-n}\Big((1-\bar{\epsilon})B_i\Big)\Big)\geq (1-\epsilon^{100})\mu\Big(\tilde{B}_\cW\cap f^{-n}((1-\bar{\epsilon})B_i\Big)\Big).
\ee
By \eqref{eq:lily'}, the definition of $J'$, and \eqref{mixx'},
$$
(1-6\eps^{150})\mu(\tilde{B}_\cW)\leq \mu\Big(\mathcal{R}_n\cap \tilde{B}_\cW\cap  \Big(\bigcup_{i\in J}f^{-n}((1-\bar{\epsilon})B_i)\Big)\Big)=
$$$$
\sum_{i\in J}\mu\Big(\mathcal{R}_n\cap \tilde{B}_\cW \cap f^{-n}((1-\bar{\epsilon})B_i)\Big)\leq $$$$
\sum_{i\in J'}\mu(\tilde{B}_\cW\cap f^{-n}((1-\bar{\epsilon})B_i))+ (1-\epsilon^{100})\sum_{i\in J\setminus J'}\mu(\tilde{B}_\cW\cap f^{-n}((1-\bar{\epsilon})B_i))\leq
$$$$
(1+\hat{\eps})\mu(\tilde{B}_\cW)\mu(\bigcup_{i\in J'}(1-\bar{\eps})B_i)+ (1+\hat{\eps})(1-\epsilon^{100})\mu(\tilde{B}_\cW)\mu(\bigcup_{i\in J\setminus J'}(1-\bar{\eps})B_i)\leq $$$$
(1+\hat{\eps})^2\mu(\tilde{B}_\cW)-
\epsilon^{100}(1+\hat{\eps})\mu(\tilde{B}_\cW)\mu(\bigcup_{i\in J\setminus J'}(1-\bar{\eps})B_i).
$$

From the above it follows that
$$
\mu(\bigcup_{i\in J\setminus J'}B_i)\leq 2 \mu(\bigcup_{i\in J\setminus J'}(1-\bar{\eps})B_i)\leq \epsilon^{10}.
$$
It remains to show that \eqref{eq:newts} holds for $i\in J'$. Fix $i\in J'$. We will first show the upper bound, which is slightly easier. Notice that since $\cW$ crosses $\tilde{B}_s$ completely, we have by the upper bound in Proposition \ref{prop:3.5} and \eqref{eq:J0},
 $$
m^u_\cW(\mathcal{R}_n\cap f^{-\eps n}B_i)\leq \epsilon^{150}\mu(B_i)+m^u_\cW\left(\mathcal{R}_n\cap \tilde{B}_\cW\cap f^{-\epsilon n}(B_i)\right)=$$$$\epsilon^{150}\mu(B_i)+\sum_{s\in S}m^u_\cW\left(\mathcal{R}_n\cap \tilde{B}_s\cap f^{-\epsilon n}(B_i)\right)\leq$$$$\epsilon^{ 150}\mu(B_i)+ (1+\eps^{ 20})
\mu(B_i)\sum_{s\in S}m^u_\cW(\mathcal{R}_n\cap \tilde{B}_s)\leq (1+\eps^{10})\mu(B_i).
$$
 This finishes the proof of the upper bound.

We now proceed to show the lower bound in \eqref{eq:newts}. 
Let $S'=S'(i)\subset S$ be such that for $s\in S'$, we have 
\be\label{eq:s111}
\mu(\mathcal{R}_n\cap \tilde{B}_s\cap f^{-n}(1-\bar{\epsilon})B_i)\geq (1-\epsilon^{40})\mu(\tilde{B}_s\cap f^{-n}(1-\bar{\epsilon})B_i).
\ee
Then by \eqref{eq:la22} and exponential mixing, 
$$
(1-\epsilon^{100})\mu(\tilde{B}_\cW)\mu(B_i)\leq \sum_{s\in S}\mu\Big(\mathcal{R}_n\cap \tilde{B}_s \cap f^{-n}((1-\bar{\epsilon})B_i)\Big)\leq $$$$(1+2\hat{\epsilon})\sum_{s\in S'}\mu(\tilde{B}_s)\mu(B_i)+(1+2\hat{\eps})(1-\epsilon^{40})\sum_{s\in S\setminus S'}\mu(\tilde{B}_s)\mu(B_i)\leq $$$$(1+2\hat{\eps})\mu(B_i) \mu(\tilde{B}_\cW) -\epsilon^{40} \mu(B_i)\sum_{s\in S\setminus S'}\mu(\tilde{B}_s).
$$
From this it follows that 
\be\label{eq:cda}
\sum_{s\in S'}\mu(\tilde{B}_s)\geq (1-{ 2\epsilon^{60}})\mu(\tilde{B}_\cW).
\ee
 Take $s\in S'$. Then \eqref{eq:s111} holds. Now exponential mixing implies that \eqref{eq:drp3} holds 
 with $\zeta=\bar{\eps}$.
 Since $\cW$ crosses $\tilde{B}_s$ completely it follows from Proposition \ref{prop:3.5} that
\be\label{eq:fineq}
m^u_{\cW}(\mathcal{R}_n\cap \tilde{B}_s\cap f^{-\epsilon n}B_i)\geq (1-2\epsilon^{20})\mu(B_i)m^u_{\cW}(\mathcal{R}_n\cap \tilde{B}_s).
\ee
Summing the above over $s\in S'$, we get that 
\be\label{eq:sums'}
m^u_{\cW}(\mathcal{R}_n\cap \bigcup_{s\in S'}\tilde{B}_s\cap f^{-\epsilon n}B_i)\geq (1-2\epsilon^{20})\mu(B_i)m^u_{\cW}(\mathcal{R}_n\cap \bigcup_{s\in S'}\tilde{B}_s).
\ee

Let $s\in S$. Since $\cW$ crosses $\tilde{B}_s$ completely, we have by Proposition \ref{lem:abscont}
$$
m^u_{\cW}(\mathcal{R}_n\cap \tilde{B}_s)\in (1-\hat{\eps},1+\hat{\eps}) m^u_{\hcW_s}(\mathcal{R}_n\cap \tilde{B}_s)
$$
and 
$$
m^u_{\hcW_s}(\mathcal{R}_n\cap \tilde{B}_s)=m^u_{\hcW_s}(\bigcup_{j}R_{s,j})\geq 1-\hat{\eps}.
$$
From this it follows that for every $s,s'\in S$, 
$$
\frac{m^u_{\cW}(\mathcal{R}_n\cap \tilde{B}_s)}{m^u_{\cW}(\mathcal{R}_n\cap \tilde{B}_{s'})}\in (1-4\hat{\epsilon},1+4\hat{\epsilon}).
$$
Combining this with \eqref{eq:bme} and Lemma \ref{lem:new1} we get that
$$
m^u_{\cW}(\mathcal{R}_n\cap \bigcup_{s\in S'}\tilde{B}_s)\geq (1-2\epsilon^{20})\frac{|S'|}{|S|}m^u_{\cW}(\tilde{B}_\cW\cap \mathcal{R}_n)\geq (1-4\epsilon^{20})m^u_{\cW}(\tilde{B}_\cW)\geq 1-8\epsilon^{20}.
$$
Using \eqref{eq:sums'}, we get $m^u_\cW(\mathcal{R}_n\cap f^{-\eps n}(B_i))\geq (1-\epsilon^{10})\mu(B_i)$. This finishes the proof of the lower bound and hence also the proof of {\bf C}.
The proof of Main Proposition is thus finished. \qed

\section{K-property}\label{sec:K}
In this section we will use Proposition \ref{prop:expeq} to show the $K$-property.  In fact, the full strength of Proposition \ref{prop:expeq} is not needed for the $K$-property, i.e. we don't need a 
quantitative  equidistribution. Since this result
is of independent interest, we will present the proof in the more general case. For this we introduce the notion of equidistributed leaves.
\begin{definition}\label{def:unsm}Let $f\in C^{1+\alpha}(M,\mu)$. We say that
{\em the images of most unstable leaves become equidistributed
under $f$} 
if 
for each ball $\mathbf{B}\subset M$ and $\eps>0$ there is $\xi>0$ such that for each sufficiently
large $n$ there is
a set $K=K_n$ with $\mu(K)>1-\eps$
such that for each $x\in K$ the size of unstable manifold of $x$ is greater than $\xi$ and 
moreover  
\be\label{FNEqui1} 
m^u_{W^u_{x, \xi}} \left( f^{- n}(\mathbf{B})\right)\in (1-\eps,1+\eps) \mu(\mathbf{B}) .
\ee
\end{definition}

We note that this definition is equivalent to a stronger property, namely that \eqref{FNEqui1} 
holds for each unstable box containing $x$ (rather than centered at $x$):

\begin{lemma}
\label{LmFNEqui}
If the images of most unstable leaves become equidistributed
under $f$ then for each ball $\mathbf{B}\subset M$ and $\eps>0$ there is $\xi>0$ such that for each sufficiently
large $n$ there is
a set $\hK=\hK_n$ with $\mu(\hK)>1-\eps$
such that for each $x\in \hK$ for each $y$ such that $x\in W^u_{y,\xi}$ we have
\be\label{FNEqui2} 
m^u_{W^u_{y, \xi}} \left( f^{- n}(\mathbf{B})\right)\in (1-\eps,1+\eps) \mu(\mathbf{B}) .
\ee
\end{lemma}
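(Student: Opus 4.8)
The plan is to derive \eqref{FNEqui2} from \eqref{FNEqui1} by a Fubini-type / covering argument that controls the unstable boxes $W^u_{y,\xi}$ containing a ``good'' point $x$ by a box centered at a good point, exploiting the absolute continuity of the unstable lamination and the uniform bounds on the conditional densities $\rho$.

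\begin{proof}
Fix a ball $\mathbf{B}$ and $\eps>0$. Apply the hypothesis with $\eps'=\eps^{100}$ and with the ball $\mathbf{B}$ to get $\xi'>0$ and, for all large $n$, a set $K_n'$ with $\mu(K_n')>1-\eps'$ such that every $x\in K_n'$ has an unstable manifold of size $>2\xi'$ and \eqref{FNEqui1} holds with $\xi'$ in place of $\xi$ and with $\eps'$ in place of $\eps$; moreover we may also require (applying Lemma \ref{LmLocMeas} with $\fB=M\setminus K_n'$) that $K_n'$ be enlarged so that still $\mu(K_n')>1-\eps'$ and in addition $m^u_{W^u_{x,2\xi'}}(M\setminus K_n')\leq \eps^{10}$ for every $x\in K_n'$. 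Set $\xi=\xi'$ and
$$\hK_n=\{x\in K_n': m^u_{W^u_{x,2\xi}}(M\setminus K_n')\leq \eps^{10}\text{ and } r_u(x)\geq 2\xi\},$$
so $\mu(\hK_n)>1-\eps$ by the above and Markov's inequality (after possibly shrinking $\eps'$).

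Now take $x\in\hK_n$ and $y$ with $x\in W^u_{y,\xi}$. Since $x\in K_n'$ its unstable manifold has size $>2\xi$, so $W^u_{y,\xi}\subset W^u_{x,2\xi}$ (both are pieces of the same unstable leaf through $x$), and by the choice of $\hK_n$ we have $m^u_{W^u_{y,\xi}}(M\setminus K_n')\leq C\eps^{10}$, where $C$ depends only on the uniform bounds on $\rho$ coming from Lemma \ref{lem:abscont00} (changing reference point from $x$ to $y$ inside $W^u_{x,2\xi}$ costs only a bounded factor, since the density $\rho(x,\cdot)$ is uniformly bounded above and below on a fixed unstable leaf). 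Hence $m^u_{W^u_{y,\xi}}(K_n')\geq 1-C\eps^{10}$. Pick any reference point $x_0\in W^u_{y,\xi}\cap K_n'$; then $W^u_{y,\xi}\subset W^u_{x_0,2\xi}$, and \eqref{FNEqui1} applied at $x_0$ (with $2\xi$, which is still $\leq$ size of unstable manifold of $x_0$ — if not, redo the first paragraph with $3\xi'$) gives
$$m^u_{W^u_{x_0,2\xi}}(f^{-n}\mathbf{B})\in(1-\eps',1+\eps')\mu(\mathbf{B}).$$
Since $W^u_{y,\xi}$ is a sub-box of $W^u_{x_0,2\xi}$ of definite proportion and the density $\rho$ relating $m^u_{W^u_{y,\xi}}$ and $m^u_{W^u_{x_0,2\xi}}|_{W^u_{y,\xi}}$ is uniformly close to $1$ at the relevant scales — more precisely, $m^u_{W^u_{y,\xi}}$ is the normalized restriction of $m^u_{W^u_{x_0,2\xi}}$ — we obtain
$$m^u_{W^u_{y,\xi}}(f^{-n}\mathbf{B})=\frac{m^u_{W^u_{x_0,2\xi}}(f^{-n}\mathbf{B}\cap W^u_{y,\xi})}{m^u_{W^u_{x_0,2\xi}}(W^u_{y,\xi})}.$$

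To finish, one needs a lower bound on $m^u_{W^u_{x_0,2\xi}}(W^u_{y,\xi})$ and an upper bound on the ``overspill'' $m^u_{W^u_{x_0,2\xi}}(f^{-n}\mathbf{B}\setminus W^u_{y,\xi})$; the latter is bounded by applying \eqref{FNEqui1} to $x_0$ on each of a bounded number of $\xi$-boxes tiling $W^u_{x_0,2\xi}$ (most of which, again by Lemma \ref{LmLocMeas}-type reasoning, are centered at points of $K_n'$ up to a set of small $m^u$-measure). Combining these with the two-sided estimate for $W^u_{x_0,2\xi}$ gives \eqref{FNEqui2} with error $O(\eps)$, as desired. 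The only genuine subtlety — and the step I expect to be the main obstacle — is the bookkeeping that shows the $\xi$-boxes covering $W^u_{x_0,2\xi}$ can be chosen (after a negligible correction) to be centered at points where \eqref{FNEqui1} applies; this is where one invokes the absolute continuity of the unstable foliation and the uniform density bounds to transfer the measure estimate from centered boxes to arbitrary sub-boxes.
\end{proof}
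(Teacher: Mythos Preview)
Your approach has a genuine gap. The hypothesis (Definition~\ref{def:unsm}) hands you a \emph{single} size $\xi$ and guarantees \eqref{FNEqui1} only for boxes $W^u_{x,\xi}$ centered at $x\in K_n'$; it does not give you equidistribution at size $2\xi$, so the line ``\eqref{FNEqui1} applied at $x_0$ (with $2\xi$)'' is not justified. More seriously, even if you had equidistribution on the larger box $W^u_{x_0,2\xi}$, that alone tells you nothing about how $f^{-n}\mathbf{B}$ is distributed inside the \emph{sub}-box $W^u_{y,\xi}$: the ratio
\[
\frac{m^u_{W^u_{x_0,2\xi}}(f^{-n}\mathbf{B}\cap W^u_{y,\xi})}{m^u_{W^u_{x_0,2\xi}}(W^u_{y,\xi})}
\]
is exactly the quantity you are trying to estimate, so you are running in a circle. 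Your final tiling sketch gestures toward the right idea --- that most $\xi$-boxes inside $W^u_{x_0,2\xi}$ are centered at good points --- but does not explain why this helps for the \emph{particular} box $W^u_{y,\xi}$, which need not coincide with any tile.

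The missing idea, and what the paper does, is to bypass the larger box entirely. Once you know (via Lemma~\ref{LmLocMeas}) that $m^u_{W^u_{x,3\xi}}(\tK_n)>1-\heps^{1/4}$, a simple volume argument produces a point $z\in \tK_n\cap W^u_{x,3\xi}$ with $d(y,z)$ of order $\heps^{1/(4D)}\xi$. Then $W^u_{y,\xi}$ and $W^u_{z,\xi}$ are two $\xi$-boxes whose centers are very close, so their symmetric difference has small normalized measure; since $z\in\tK_n$ you have $m^u_{W^u_{z,\xi}}(f^{-n}\mathbf{B})\approx\mu(\mathbf{B})$, and the overlap transfers this directly to $W^u_{y,\xi}$. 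No ratio of boxes of different sizes, no tiling, no circularity --- just compare $W^u_{y,\xi}$ to a \emph{nearby} box of the \emph{same} size centered at a good point.
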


\begin{proof}
Take $\heps\ll \eps$. Let $n$ be large and let $\tK_n$ be the set of points satisfying
$$ m^u_{W^u_{x, \xi}} \left( f^{- n}(\mathbf{B})\right)\in (1-\eps,1+\eps) \mu(\mathbf{B}) . $$
Since the images of most unstable leaves become equidistributed
under $f$, we have that $K_n\subset \tK_n$ and  so $\mu(\tK_n)>1-\heps$ if $n$ is large enough.
Let 
$$ \hK=\hK_n=\{x: r_u(x)\geq 3\xi\text{ and } m_{W^u_{x,3\xi}}(\tK)>1-\heps^{1/4}\}. $$
By Lemma \ref{LmLocMeas}, $\mu(\hK)\geq 1-4\heps^{1/4}. $
Next, take $x\in \hK$ and $y\in M$ s.t. $x\in W^u_{y,\xi}.$ Since $x\in \hK_n$ we can find
$z\in W^u_{x,3\xi}\cap \tK_n$ s.t. $d(y, z)\leq 3 \heps^{(1/4 D)} \xi.$ 
Then 
$$m^u_{W^u_{x,3\xi}}(W^u_{y,\xi}\cap W^u_{z,\xi})\geq 
(1-4d  \heps^{ (1/4 D)})m^u_{W^u_{x,3\xi}}(W^u_{z,\xi}).$$
Hence $y$ satisfies \eqref{FNEqui2} provided that 
$4d  \heps^{(1/4 D)}+\heps<\eps.$
\end{proof}

\begin{proposition}
\label{PrWuDense}  
Let $f$ be a $C^{1+\alpha}$ diffeomorphism of a compact manifold $M$ preserving a smooth
measure $\mu.$ If  the images of most unstable leaves become equidistributed
under $f$, then $f$ has K property.
\end{proposition}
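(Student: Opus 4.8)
\textbf{Proof plan for Proposition \ref{PrWuDense}.}
The plan is to verify the $K$-property using the modified criterion in Lemma \ref{def:K}: it suffices to exhibit a generating partition $\cP$ such that for every measurable $D$ and every $\eps>0$ there is $n\in\N$ and $N_0$ so that \eqref{eq:spi} holds for $\eps$-a.e.\ atom of $\bigvee_N^{N'}f^i\cP$ whenever $N'\ge N\ge N_0$. First I would fix a \emph{regular} generating partition $\cP$ (its existence by \cite{OW2}), so that boundaries of atoms have small measure and one may approximate any $D\in\cB$ in measure by a finite union $\mathbf{B}$ of small balls (or by a set which is well-approximated from inside and outside by such unions); since \eqref{eq:spi} is a robust inequality, reducing to the case $D=\mathbf{B}$, a ball, costs only a controlled error. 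Then the target of the argument is: for a ball $\mathbf{B}$, most atoms $A$ of the ``remote past'' partition $\fP=\bigvee_N^{N'}f^i\cP$ satisfy $\mu_{|A}(f^{-n}\mathbf{B})\approx\mu(\mathbf{B})$ for suitable $n$.

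The two ingredients are \emph{almost $u$-saturation} of $\fP$ and \emph{equidistribution of unstable leaves}. By Lemma \ref{lem:satu}, for the given $\eps$ there is $\xi=\xi(\eps)$ and $\brN$ such that for all $N_1\ge N_2\ge\brN$ the partition $\fP=\bigvee_{N_2}^{N_1}f^i\cP$ is $(\xi,\eps^4)$ $u$-saturated; hence $\eps^2$-a.e.\ atom $A$ is itself $(\xi,\eps^2)$ $u$-saturated. For such an atom, provided also $\mu_{|A}(P_\tau)\ge 1-\eps^4$ (true for $\eps^2$-a.e.\ atom after choosing $\tau$ small), Lemma \ref{LmUDeco} gives a decomposition $\mu_{|A}=\mu_g+\mu_r$ with $\mu_r(M)\le\eps$ and $\mu_g=\int_\fT m^u_{\cW_t}\,d\nu(t)$ over a family of unstable boxes $\cW_t$ of size $\xi$. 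Now I would apply Definition \ref{def:unsm} / Lemma \ref{LmFNEqui}: since the images of most unstable leaves become equidistributed under $f$, for this $\mathbf{B}$ and a small $\heps$ there is $\xi$ (I take the same $\xi$, shrinking if necessary — note the order of quantifiers: $\xi$ is chosen after $\eps$ and $\mathbf{B}$, which is compatible with the saturation lemma because $\xi(\eps)$ can only be taken smaller) and, for all large $n$, a set $\hK_n$ with $\mu(\hK_n)>1-\heps$ such that every unstable box of size $\xi$ through a point of $\hK_n$ satisfies $m^u_{\cW}(f^{-n}\mathbf{B})\in(1-\heps,1+\heps)\mu(\mathbf{B})$. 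Discarding the boxes $\cW_t$ that meet $\hK_n^c$ in more than a $\heps^{1/2}$-fraction (a $\heps^{1/2}$-small set of $t$'s, by Fubini/Markov applied to $\nu$ and the local measures, as in Lemma \ref{LmLocMeas}), we get $m^u_{\cW_t}(f^{-n}\mathbf{B})\in(1-2\heps,1+2\heps)\mu(\mathbf{B})$ for all remaining $t$. Integrating over $t\in\fT$ against $\nu$ and adding the errors from $\mu_r$ and from the discarded $t$'s yields
\[
\left|\frac{\mu(A\cap f^{-n}\mathbf{B})}{\mu(A)}-\mu(\mathbf{B})\right|<C\eps
\]
for $\eps$-a.e.\ atom $A$ of $\fP$, with $C$ absolute; relabelling $\eps$ finishes the verification of \eqref{eq:spi}.

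The one genuine subtlety, and the step I would treat most carefully, is the \emph{matching of the quantifiers on $\xi$}. Lemma \ref{def:K} (equivalently Definition \ref{def:K2}) demands: given $D$ and $\eps$, produce $n$ and then $N_0$; the number $n$ is allowed to depend on $D$ and $\eps$, which is exactly what equidistribution provides. The saturation scale $\xi$ is forced by $\eps$ via Lemma \ref{lem:satu}, while equidistribution is willing to work with \emph{any} sufficiently small $\xi$; so one first fixes $\eps$, extracts $\xi_1=\xi(\eps)$ from Lemma \ref{lem:satu}, then feeds $\xi_1$ (or any smaller $\xi$) and $\mathbf{B}$ into Definition \ref{def:unsm} to obtain $n$ and $\hK_n$, and only then chooses $N_0\ge\max(\brN, $ the threshold implicit in ``$n$ large'' $)$. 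Since $\fP=\bigvee_N^{N'}f^i\cP$ with $N'\ge N\ge N_0$ is a \emph{refinement} of $\bigvee_{N}^{N+\brN}f^i\cP$-type pieces, $u$-saturation is inherited, and the whole argument goes through uniformly in $N'$. The remaining work — that $\cP$ can be chosen regular and generating, that balls are measure-continuous so the $\bar\eps$-fattening estimates hold, and that $f$ is ergodic (it is, being assumed smooth with nonzero exponents, or one restricts to an ergodic component) — is routine. This establishes that $\cP$ is a $K$-partition, hence $f$ has the $K$-property.
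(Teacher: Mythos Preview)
Your proposal is correct and follows essentially the same route as the paper: approximate $D$ by balls, use Lemma \ref{lem:satu} to get almost $u$-saturation of the remote-past partition, invoke Lemma \ref{LmUDeco} to disintegrate $\mu_{|A}$ along unstable boxes, and then apply the equidistribution hypothesis (via Lemma \ref{LmFNEqui}) to each box. One small simplification worth noting: you discard the boxes $\cW_t$ that meet $\hK_n^c$ in more than a $\heps^{1/2}$-fraction, but since Lemma \ref{LmFNEqui} gives equidistribution for \emph{every} unstable box of size $\xi$ through a point of $\hK_n$, it suffices (as the paper does) to discard only those $\cW_t$ with $\cW_t\cap\hK_n=\emptyset$, and the measure of such $t$'s is controlled directly by $\mu_{|A}(\hK_n^c)$ without a further Markov step.
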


\begin{proof}  
We will use Lemma \ref{def:K}.
Fix $\cD\in\cB$, $\epsilon>$ and $\tau>\xi>0$. Let $\{\mathbf{B}_i\}_{i\in J}$,  be a family of balls with $r=r(\epsilon,\xi)$ such that $\mu(\cD\triangle \bigcup_{i\in J}\mathbf{B}_i)<\epsilon^2$.  It is enough show that \eqref{eq:spi} holds
for $\mathbf{B}_i$ (and arbitrary small $\epsilon$). 
  Take $n$ sufficiently large and let $\hK=\hK_n$ be from  Lemma \ref{LmFNEqui}.
 By Lemma \ref{lem:satu} and Markov inequality,
$\eps$ almost every atom $A$ of $\DS  \bigvee_{i=N_1}^{N_2}f^i(\cP)$ 
is $\sqrt{\eps}$ saturated and
has the property that
\be \label{KDense} \mu|_A(\hK)\geq 1-\sqrt{\eps}. \ee
We claim that every atom with these two properties satisfies \eqref{eq:spi}. Indeed by Lemma \ref{LmUDeco} we have
$$ \mu_A(f^{- n} B)=\int_{\fT} \mu_{\cW_t}^u(f^{- n} B) d\nu_A(t)\pm \sqrt\eps.
$$
Next, by Definition \ref{def:unsm}, if $\cW_t\cap \hK\neq \emptyset$ then
$m^u_{\cW_t}(f^{- n} B)=\mu(B)\pm \eps$ while by our assumption
\eqref{KDense}, $\nu_A(t: \; \cW_t\cap K=\emptyset)\leq \sqrt \eps.$
Combining the above estimates we obtain \eqref{eq:spi} proving  the proposition.
\end{proof}

As a consequence of the above result and the Main Proposition, we deduce:
\begin{corollary}
\label{prop:K} 
If $f$ has exponential decay of correlation then $f$ is a $K$-automorphism.
\end{corollary}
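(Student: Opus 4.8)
The plan is to deduce Corollary \ref{prop:K} by combining Proposition \ref{PrWuDense} with the Main Proposition (Proposition \ref{prop:expeq}). The logical skeleton is short: Proposition \ref{PrWuDense} says that if the images of most unstable leaves become equidistributed under $f$ (in the sense of Definition \ref{def:unsm}), then $f$ has the $K$ property; so it suffices to verify that hypothesis from exponential mixing. First I would recall that Proposition \ref{PrLyapNZ} (proven in Section \ref{sec:posent}) guarantees that an exponentially mixing $C^{1+\alpha}$ diffeomorphism preserving a smooth (hence non-atomic) measure has a non-zero Lyapunov exponent, so all the Pesin-theoretic machinery of Sections \ref{sec:u}--\ref{sec:main}, in particular the Main Proposition, applies.

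The core of the argument is then to extract the equidistribution statement of Definition \ref{def:unsm} from the Main Proposition. Fix a ball $\mathbf{B}\subset M$ and $\eps>0$. We are not required to get a quantitative (exponential) equidistribution here — only that $m^u_{W^u_{x,\xi}}(f^{-n}\mathbf{B})\in(1-\eps,1+\eps)\mu(\mathbf{B})$ for most $x$ — so we have room to be generous with parameters. Choose $\eps_0$ small with respect to $\eps$ and run the Main Proposition with $\eps_0$ in place of $\eps$; it produces $\xi_0<\tau$, and for each $\xi<\xi_0$ a threshold $\bar n$ and sets $K_n\subset\cL_{n,\tau}$ with $\mu(K_n)\ge 1-\eps_0^{10}$. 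Two points need care. First, the Main Proposition's target families are either the parallelograms $\{B_i(e^{-\eta_2\eps n})\}$ or a family of disjoint balls of radius $r\in[e^{-\eta_2\eps n},1)$ covering all but $\eps^{10^5}$ of the space, and its conclusion \eqref{ExpEqui} controls $m^u_\cW(\mathcal R_n\cap f^{-\eps n}(B_i))$ rather than $m^u_\cW(f^{-\eps n}\mathbf{B})$ directly. To handle a single prescribed ball $\mathbf{B}$ I would cover $M$ (up to measure $\eps_0^{10^5}$) by finitely many small disjoint balls, a bounded subfamily of which approximates $\mathbf{B}$ to within $\eps_0$ in measure; applying the Main Proposition to that family and summing \eqref{ExpEqui} over the indices in $J'(x)$ meeting the approximation, together with \eqref{eq:LLn} ($m^u_\cW(f^{-\eps n}\cL_{n,\tau})\ge 1-\eps_0^{10}$) and the fact that $\mathcal R_n$ has full $m^u_\cW$-measure inside $f^{-\eps n}(\cL_{n,\tau})$ on the good set (via Lemma \ref{lem:new1} and property {\bf B} in the proof of the Main Proposition), gives $m^u_\cW(f^{-\eps n}\mathbf{B})\in(1-\eps,1+\eps)\mu(\mathbf{B})$. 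Second, the Main Proposition iterates for time $\eps n$ rather than time $n$; since $\eps$ is a fixed (small) constant, replacing $n$ by $n/\eps$ (and renaming) converts ``for all large $n$'' statements about $f^{\eps n}$ into ``for all large $m$'' statements about $f^m$, which is exactly the form Definition \ref{def:unsm} asks for. Thus for every large $m$ we obtain a set $K=K_m'$ of measure $>1-\eps$ on which $r_u(x)\ge\xi$ and \eqref{FNEqui1} holds, verifying Definition \ref{def:unsm}.

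With the equidistribution hypothesis in hand, Proposition \ref{PrWuDense} applies verbatim and yields the $K$ property, completing the proof of Corollary \ref{prop:K}. The main obstacle, I expect, is the bookkeeping in the first half of the second step: matching the Main Proposition's output — stated for the auxiliary lamination $\mathcal R_n$, for specific admissible target families, and at time $\eps n$ — to the clean, single-ball, time-$n$ formulation required by Definition \ref{def:unsm}. All the needed inputs are already available (Lemma \ref{lem:new1}, Corollary \ref{cor:prost'}, property {\bf B}, \eqref{eq:LLn}, the disjoint-ball option in the Main Proposition), so this is a matter of assembling parameter inequalities rather than proving anything genuinely new; the softness of the target ($\eps$-equidistribution, not exponential) is what makes the reduction go through cleanly. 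One should also note explicitly at the outset that $f$ being exponentially mixing for a smooth measure forces $\mu$ non-atomic and $f$ ergodic, so that Proposition \ref{PrLyapNZ} and hence the whole Section \ref{sec:main} framework is in force.
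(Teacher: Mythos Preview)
Your proposal is correct and follows the same logical skeleton as the paper: verify the hypothesis of Proposition~\ref{PrWuDense} using the ball-family option of the Main Proposition, then conclude $K$. The paper's implementation is slightly slicker at the one step where you do extra work. Rather than approximating $\mathbf{B}$ by a subfamily of small balls and summing, the paper simply includes $\mathbf{B}$ itself as one member of a finite disjoint family $\{O_i\}$ of balls of its own radius, and then applies the Main Proposition with parameter $\breps=\min\bigl(\eps^{100},\,\min_i \mu(O_i)/100\bigr)$. The point of this choice is that the exceptional set $J\setminus J'(x)$ in the Main Proposition has total measure below $\breps^{10}$, which is strictly less than the measure of any single $O_i$; hence $J'(x)=J$ for every $x\in K_n$, and \eqref{ExpEqui} applies directly to $\mathbf{B}$ with no approximation or summation needed. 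Your handling of the $\mathcal{R}_n$-versus-full-preimage discrepancy and the $\eps n$-versus-$n$ time rescaling is the same as the paper's (the latter is dispatched in the paper by the single remark that $\breps n$ is large when $n$ is).
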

\begin{proof} It is enough to show that Main Proposition implies that he images of most unstable leaves become equidistributed
under $f$. For given $\epsilon$, and $\mathbf{B}$, let  $\{O_i\}$
be any family of balls containing $\mathbf{B}$.
Let $\DS \breps=\min\left(\epsilon^{100}, \min_{i\in \{1, \dots, m\}} \frac{\mu(O_i)}{100}\right).$ Let  $n(\breps, \xi)$ come from the Main Proposition. Take $n\geq n(\breps,\xi)$ and let $K=K_n$ be as in
the Main Proposition (for the family $\{O_i\}_{i\in J}$). Note that due to our choice of
$\eps,$ $J'(x)=J$ for every $x\in K$ since removing even
one ball from $J$ will decrease the measure by at least $100 \breps.$ In particular, the assertion of Main Proposition 
 applies to the ball $\mathbf{B}$. Note that $\breps n $ is large if  $n$ is large.
\end{proof}


\section{VWB-property}
\label{ScVWB}
\subsection{The main reduction.}
The following lemma is the main step towards the proof of VWB--property.

\begin{proposition}\label{lem:umatch} 
For every $\epsilon$ and $\xi>0$ there exists $n_0=n_0(\epsilon,\xi)$ such that for every $n\geq n_0$, there exists a set $K=K_n\subset M$, $\mu(K)\geq 1-\epsilon$ so that for every $x,x'\in K$,  and every unstable boxes $\cW_1$, $\cW_2$  of size $\xi$  containing $x$ and $x'$ respectively 
there exists an $\epsilon$ measure preserving map $\theta:(\cW_1,m^u_{\cW_1})\to (\cW_2,m^u_{\cW_2})$ such that 
\be\label{eq:ham}
\frac{1}{n}\Card\left(\{i\in \{0,\ldots, n-1\}\;:\; d(f^ix,f^i(\theta x))<\epsilon\}\right)>1-\epsilon.
\ee
\end{proposition}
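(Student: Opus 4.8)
The goal is to construct, for typical base points $x,x'$ and unstable boxes $\cW_1,\cW_2$ of size $\xi$ through them, an $\eps$-measure preserving coupling $\theta$ whose orbits stay $\eps$-close for a $(1-\eps)$-fraction of the first $n$ iterates. The key idea is to work at the intermediate time $n'=\eps n$: first push $\cW_1$ and $\cW_2$ forward by $f^{n'}$, then couple the resulting long unstable pieces using the fake center-stable holonomy, and finally pull the coupling back by $f^{-n'}$. The reason $n'=\eps n$ is the right scale is that, by the Main Proposition (Proposition \ref{prop:expeq}), $f^{n'}\cW_1$ and $f^{n'}\cW_2$ each equidistribute (relative to the lamination $\cR_n$) among the parallelograms $\{B_i(\tilde r_n)\}$ up to error $\eps^{10}$, so we may subdivide $f^{n'}\cW_s=\big(\bigcup_j \cW_{s,j}\big)\cup\{\text{small unmatched part}\}$ where $\cW_{1,j}$ and $\cW_{2,j}$ both cross the same parallelogram $B_{i(j)}(\tilde r_n)$ completely and have comparable $m^u$-mass.

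\textbf{Step 1 (the good set and the subdivision).} Apply the Main Proposition with $\hat\eps$ a large power of $\eps$ to get a set $K_n\subset\cL_{n,\tau}$ of measure $>1-\eps^{10}$; intersect it with the analogous sets coming from Lemma \ref{lem:satu} (almost $u$-saturation), Lemma \ref{LmLocMeas}, and the estimates on $\cL_{n,\tau}$, and define $K=K_n$ to be this intersection. For $x,x'\in K$ and $\cW_1\ni x$, $\cW_2\ni x'$ of size $\xi$, use \eqref{ExpEqui} applied to the family $\{B_i(\tilde r_n)\}$ (together with the fully-crossing criterion Lemma \ref{lem:compl} for those parallelograms that $\cW_s$ meets away from its boundary) to write, for $s=1,2$,
\[
m^u_{\cW_s}\Big(\cR_n\cap \bigcup_{i\in J'} f^{-\eps n}(B_i(\tilde r_n))\Big)\geq 1-\eps^{9},
\]
with the proportion inside each $f^{-\eps n}B_i$ equal to $\mu(B_i)$ up to $\eps^{10}$. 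For each common index $i\in J'$, the piece $f^{\eps n}\big(\cW_s\cap\cR_n\cap f^{-\eps n}B_i\big)$ is a union of unstable pieces crossing $B_i(\tilde r_n)$ completely; by the absolute continuity Proposition \ref{lem:abscont}/Proposition \ref{prop:3.2} their $m^u$-masses (relative to the reference leaf $\hcW_i$) agree for $s=1$ and $s=2$ up to a factor $1+\hat\eps$. This yields the matched decomposition, with the unmatched part of each $\cW_s$ of relative measure $<2\eps^{9}$.

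\textbf{Step 2 (couple within each box, then pull back).} On a matched pair $\cW_{1,j},\cW_{2,j}$ inside $B_{i(j)}(\tilde r_n)$, define the coupling by composing: the fake center-stable holonomy $\pi_{i,j}$ from $\cW_{1,j}$ onto $\cW_{2,j}$ (Proposition \ref{prop:3.2}), which is $\hat\eps$-measure preserving by Proposition \ref{lem:abscont}. By property {\bf F4} of Lemma \ref{lem:ff}, two points on the same leaf of $\cF_{i,j}$ inside $B_{i(j)}(\tilde r_n)$ stay within $e^{\eps^{10}n}\tilde r_n = e^{\eps^{10}n-\eta_2\eps n-\eps^2 n}$ for all $0\le k\le n$; since $\tilde r_n$ carries the extra $e^{-\eps^2 n}$ factor, after $n'=\eps n$ more iterates (controlled by the subexponential center-stable growth, Lemma \ref{LmCones}(2), i.e. a factor $\tau^{-2}e^{2\eps n\delta}$) the distance is still exponentially small, hence $\ll\eps$. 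Now set $\theta=f^{-n'}\circ(\text{holonomy})\circ f^{n'}$ on the matched part of $\cW_1$, and extend $\theta$ arbitrarily (measure-preservingly, using Lemma \ref{LmUniqueLeb}) on the small unmatched part. Since $f^{n'}$ is bi-Lipschitz with Lipschitz constant growing at most like $e^{Cn'}=e^{C\eps n}$ — no, more carefully: one uses the bounded-distortion property {\bf F2} and absolute continuity to see $f^{\pm n'}$ distorts $m^u$ by a bounded factor on the relevant pieces, since the pieces $\cW_{s,j}$ and their $f^{-n'}$-images live in Pesin charts where the $u$-Jacobian has bounded distortion — so $\theta$ is $\eps$-measure preserving overall.

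\textbf{Step 3 (the Hamming estimate).} For $x$ in the matched part, split $\{0,\dots,n-1\}$ into $[0,n')$ and $[n',n)$. On $[n',n)$: $f^{n'}x$ and $f^{n'}\theta x$ lie on the same fake center-stable leaf inside $B_{i(j)}(\tilde r_n)$, so by {\bf F4} they stay $e^{\eps^{10}n}\tilde r_n$-close, which is $<\eps$ for all those $n-n'$ times once $n$ is large. On $[0,n')$: this is a set of only $n'=\eps n$ indices, so even with no control it contributes at most $\eps n$ bad times. Hence the total number of good times is at least $(n-n') - (\text{negligible}) \ge (1-\eps)n$, giving \eqref{eq:ham}. \textbf{The main obstacle} is Step 1: matching $f^{\eps n}\cW_1$ with $f^{\eps n}\cW_2$ piece-by-piece with comparable masses while discarding only an $\eps$-fraction — this is exactly where the quantitative equidistribution of the Main Proposition (as opposed to mere density) is indispensable, and where one must carefully combine \eqref{ExpEqui}, the fully-crossing lemma, Corollary \ref{cor:prost'}, and the absolute continuity of $\{\cF_{i,j}\}$ to control all the error terms simultaneously.
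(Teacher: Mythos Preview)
Your overall strategy matches the paper's: push forward by $f^{n'}$ with $n'=\eps n$, use the Main Proposition to equidistribute $f^{n'}\cW_1$ and $f^{n'}\cW_2$ among the target parallelograms, couple via fake center-stable holonomy, and observe that the first $n'$ iterates contribute at most $\eps n$ bad times while ${\bf F4}$ controls the remaining $n-n'$. Step~3 is exactly right.

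There is, however, a genuine gap in Step~2. You write ``On a matched pair $\cW_{1,j},\cW_{2,j}$ inside $B_{i(j)}(\tilde r_n)$, define the coupling by composing the fake center-stable holonomy $\pi_{i,j}$ from $\cW_{1,j}$ onto $\cW_{2,j}$.'' But inside a fixed target box $B_i$, the sets $f^{n'}\cW_1\cap B_i$ and $f^{n'}\cW_2\cap B_i$ each consist of \emph{many} Markov components (the forward image $f^{n'}\cW_s$ has unstable length of order $e^{(\lambda-\delta)n'}$ and revisits each $B_i$ many times), and there is no natural bijection between these components, nor do their individual masses match. What the Main Proposition gives is only that the \emph{total} $m^u_{\cW_s}$-mass over all components hitting $B_i$ is $(1\pm\eps^{10})\mu(B_i)$ for both $s=1,2$. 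So a direct leaf-to-leaf holonomy is not defined.

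The paper resolves this as follows. For each $s$ and each Markov component $W_{s,\ell}\subset f^{n'}\cW_s\cap B_i$, let $\tau_{s,\ell}=\pi_{s,\ell}^{-1}\circ f^{n'}$ project $f^{-n'}W_{s,\ell}\subset\cW_s$ onto the single reference leaf $\hcW_i$. Bounded distortion ({\bf F2} plus \eqref{SRBProd}) shows each $\tau_{s,\ell}$ has nearly constant Jacobian. Now subdivide $\hcW_i$ into tiny cubes $Q_k$ of size $\lambda^{-2n}$ (with $\lambda=\|f\|_{C^1}$); by bounded distortion and equidistribution, for each $k$ the two unions $\bigcup_\ell\tau_{1,\ell}^{-1}(Q_k)$ and $\bigcup_\ell\tau_{2,\ell}^{-1}(Q_k)$ have $m^u$-masses within a factor $1\pm 10\eps^{3/2}$ of each other (see \eqref{QkRatio}). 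Then $\theta$ is defined to be \emph{any} $10\eps^{3/2}$-measure preserving map sending the first union to the second, cube by cube (Lemma~\ref{LmUniqueLeb}). The tiny cube size $\lambda^{-2n}$ guarantees that if $\tau_{1,\ell}(x)$ and $\tau_{2,\ell'}(\theta x)$ lie in the same $Q_k$, then $f^{n'}x$ and the holonomy image of $f^{n'}\theta x$ differ by at most $\lambda^{-2n}$, which is swallowed by ${\bf F4}$ over the remaining $n-n'$ iterates. This cube-decomposition step is the missing ingredient in your argument.

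A minor point: the target family in the Main Proposition is $\{B_i(e^{-\eta_2\eps n})\}$, not $\{B_i(\tilde r_n)\}$; the smaller scale $\tilde r_n$ appears only as the width of the \emph{source} parallelograms $\tilde B_s$ used inside the proof of Proposition~\ref{prop:3.5}.
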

Before proving the proposition let us
show how it implies our main result.

\begin{proof}[Proof of Theorem \ref{thm:main} .]
 Let $\cP$ be a sufficiently fine  partition with piecewise smooth boundary.
We will verify the conditions of Corollary \ref{cor:VWE}
 with $\tilde{N}:=n_0(\epsilon,\xi)$ where $n_0$ comes from Proposition \ref{lem:umatch}. 
 Since $f$ enjoys the $K$-property by Corollary \ref{prop:K}, 
we just need to verify \eqref{MetClose}. 
Moreover the partition $\cP$ is regular since it has piecewise smooth boundary. 
Fix an arbitrary $\eps>0.$ 
 By Lemma \ref{lem:satu}
we can take a small $\xi$ and choose $N$ so large that for
any $N'\geq N$, $\eps^2$ almost every atom of the partition $\DS \bigvee_{N}^{N'}T^i\cP$
is $(\xi, \eps^2)$ u-saturated. 

Let $A$ be such an atom satisfying additionally 
$\mu_{|A}( K_n)\geq 1-\eps^2$ (where $K_n$ is from Proposition \ref{lem:umatch}). Since $M$ is also $(\xi, \eps^2)$ u-saturated Lemma \ref{LmUDeco} gives
$$ \mu{|A}=\mu_{A,r}+\int_{\fT_A} \mu_{\cW_t} d\nu_A(t), \quad
\mu=\mu_{M,r}+\int_{\fT_M} \mu_{\cW_t} d\nu_M(t). 
$$
By possibly  removing from $\fT_A$ and $\fT_M$ sets of measure
less than $\eps^2$ we may assume that for each $t\in \fT_A\cup \fT_M,$
$\cW_t\cap K_n\neq\emptyset.$ 
By Lemma \ref{LmUniqueLeb} there is a measure preserving map
$$\Theta: \left(\fT_A, \frac{\nu_A}{\nu_A(\fT_A)}\right)
\mapsto \left(\fT_M, \frac{\nu_M}{\nu_M(\fT_M)}\right). $$
Since both $\nu_A(\fT_A)$ and $\nu_M(\fT_M)$ are between $1-\eps$ and $1$, 
$\Theta$ is also an $\eps$ measure preserving map 
$(\fT_A, \nu_A)\mapsto (\fT_M, \nu_M).$ By Proposition \ref{lem:umatch}  for each $t\in \fT_A$ 
there exists a $\eps$ measure preserving map $\theta_{t, S}:\; \cW_t\mapsto \cW_{\Theta(t)}$
satisfying \eqref{eq:ham} with $n=S$ (notice that $S\geq \tilde{N}=n_0$).
Defining 
$ \theta(x)=\theta_{t, S}(x)$ if $x\in \cW_t$ for some $t\in \fT_A$ and defining it in an arbitrary way
if $\DS x\not\in \bigcup_{t\in \fT_A} \cW_t$ gives an $3\eps$ preserving map between 
$(A, \mu|_A)$ and $(M, \mu)$ satisfying \eqref{MetClose}. 
Since $\eps$ is arbitrary, $f$ is Bernoulli.
\end{proof}

\subsection{Plan of the proof of Proposition \ref{lem:umatch}.}

 For $\epsilon$ and $\xi$ let $$n_0(\epsilon,\xi):=\epsilon^{-2}\max(n(\epsilon^2,\xi),n_{\epsilon^2})$$ 
 where $n(\epsilon,\xi)$ and $n_\epsilon$ come from the Main Proposition and Proposition \ref{prop:3.2} respectively. 
Let $n\geq n_0$, then $n':=\epsilon n\geq n(\epsilon^2,\xi)$. 
Let $\{B_i\}_{i\in J}$ be the collection of parallelograms  and $K_n$ be the set from the Main Proposition. For $x,x'\in K_n$ let $J''=J'(x)\cap J'(x')$ where $J'(x), J'(x')\subset J$ are the subcollections given by the 
Main Proposition. Then 
$\DS \mu\left(\bigcup_{i\in J''} B_i\right)\geq 1-2 \epsilon^{10},$ so  
it follows  by \eqref{ExpEqui} that for $\cW\in \{\cW_1,\cW_2\}$
$$
m^u_{\cW}\left(\mathcal{R}_n\setminus \bigcup_{i\in J''} f^{-n'} B_i\right)\leq  3\epsilon^{10}
$$
 From this and \eqref{eq:LLn} it follows that 
 $$
m^u_{\cW}\left(\left[\mathcal{R}_n\cap f^{-n'}(\cL_{n,\tau})\right]\setminus \bigcup_{i\in J''} f^{-n'} B_i\right)\leq  4\epsilon^{10}.
$$

 The proof of Proposition \ref{lem:umatch} consists of three steps. 
On the first step we remove a small proportion of parallelograms from 
$J''$ so that for the remaining parallelograms most of the intersections of $f^{n'} \cW_s \cap B_i$
happen inside the regular set and not too close to the boundary of $B_i.$ 
On the second step we show that for good parallelograms constructed on the first step 
most of the components of $f^{n'} \cW_s\cap B_i$ are Markov, i.e. they are fullly
crossing $B_i.$ On the third step we construct the coupling between the Markov components using fake
center stable holonomy. Steps 1--3 described above are curried out in \S\S \ref{SSPrune}--\ref{SSCouple} 
respectively.

\subsection{Prunning.}
\label{SSPrune} 
Let $J'''\subset J''$ be the set of $i$ such that for $s=1,2$, we have 
\be\label{eq:lclarge}
(1+\epsilon^{4})m^u_{\cW_s}(\mathcal{R}_n\cap f^{-n'}(\cL_{n,\tau})\cap f^{-n'}((1-\tilde{\eps})B_i))\geq m^u_{\cW_s}(\mathcal{R}_n\cap f^{-n'}((1-\tilde{\eps})B_i)).
\ee
\begin{lemma}\label{JJ-JJJ}
$\DS
m^u_{\cW_s}\Big(\bigcup_{i\in J''\setminus J'''}f^{-n'}B_i\Big)\leq \epsilon^2.
$
\end{lemma}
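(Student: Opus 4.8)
The plan is to show that the set of parallelograms in $J'' \setminus J'''$ is small by applying the Main Proposition (Proposition \ref{prop:expeq}), in particular estimates \eqref{ExpEqui}, \eqref{ExpEqui2} and \eqref{eq:LLn}, together with a Markov-type pigeonhole argument. Recall that $i \in J''$ fails to be in $J'''$ precisely when, for at least one of $s=1,2$, inequality \eqref{eq:lclarge} is violated, i.e.
\[
(1+\epsilon^4)\, m^u_{\cW_s}\big(\mathcal{R}_n \cap f^{-n'}(\cL_{n,\tau}) \cap f^{-n'}((1-\tilde\epsilon)B_i)\big)
< m^u_{\cW_s}\big(\mathcal{R}_n \cap f^{-n'}((1-\tilde\epsilon)B_i)\big).
\]
For such an $i$, the ``missing'' part of $\mathcal{R}_n \cap f^{-n'}((1-\tilde\epsilon)B_i)$, namely the portion not in $f^{-n'}(\cL_{n,\tau})$, occupies more than an $\epsilon^4/(1+\epsilon^4) \geq \epsilon^4/2$ fraction of $m^u_{\cW_s}(\mathcal{R}_n \cap f^{-n'}((1-\tilde\epsilon)B_i))$. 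So one of $s=1,2$ has
\[
m^u_{\cW_s}\Big(\big(\mathcal{R}_n \setminus f^{-n'}(\cL_{n,\tau})\big) \cap f^{-n'}((1-\tilde\epsilon)B_i)\Big) \geq \tfrac{\epsilon^4}{2}\, m^u_{\cW_s}\big(\mathcal{R}_n \cap f^{-n'}((1-\tilde\epsilon)B_i)\big).
\]

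First I would fix $s$ and sum this inequality over those $i \in J'' \setminus J'''$ for which the bad behaviour occurs at that particular $s$ (call this index set $J_s$; then $J'' \setminus J''' \subseteq J_1 \cup J_2$). Since the $B_i$ are pairwise disjoint, so are the $f^{-n'}((1-\tilde\epsilon)B_i)$, hence
\[
\sum_{i \in J_s} m^u_{\cW_s}\big(\mathcal{R}_n \cap f^{-n'}((1-\tilde\epsilon)B_i)\big) \leq \frac{2}{\epsilon^4}\, m^u_{\cW_s}\Big(\mathcal{R}_n \setminus f^{-n'}(\cL_{n,\tau})\Big) \leq \frac{2}{\epsilon^4}\,\epsilon^{10} = 2\epsilon^6,
\]
where the last bound uses \eqref{eq:LLn} (which gives $m^u_{\cW_s}(f^{-\epsilon n}(\cL_{n,\tau})) \geq 1 - \epsilon^{10}$, and $n' = \epsilon n$). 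On the other hand, by \eqref{ExpEqui2} we have $m^u_{\cW_s}\big(\mathcal{R}_n \cap f^{-n'}((1-\tilde\epsilon)B_i)\big) \in (1-\epsilon^{10}, 1+\epsilon^{10})\mu((1-\tilde\epsilon)B_i)$ for $i \in J'' \subseteq J'(x) \cap J'(x')$, so the left side above is at least $\tfrac12 \sum_{i \in J_s} \mu((1-\tilde\epsilon)B_i)$. Combining, $\sum_{i \in J_s}\mu((1-\tilde\epsilon)B_i) \leq 4\epsilon^6$, and since $\mu(B_i)$ and $\mu((1-\tilde\epsilon)B_i)$ are comparable up to a factor close to $1$ (by the choice of $\tilde\epsilon = \epsilon^{4000}$ and the regularity of the parallelograms, as in \eqref{eq:absc1'}-type estimates), we get $\sum_{i \in J_s}\mu(B_i) \leq 8\epsilon^6$ for $s=1,2$. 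Finally, apply \eqref{ExpEqui} once more: $m^u_{\cW_s}(\mathcal{R}_n \cap f^{-n'}B_i) \leq (1+\epsilon^{10})\mu(B_i)$, so summing over $i \in J'' \setminus J''' \subseteq J_1 \cup J_2$,
\[
m^u_{\cW_s}\Big(\bigcup_{i \in J'' \setminus J'''} f^{-n'}B_i\Big) \leq \sum_{i \in J_1 \cup J_2}(1+\epsilon^{10})\mu(B_i) \leq 2(1+\epsilon^{10})\cdot 8\epsilon^6 \leq \epsilon^2
\]
for $\epsilon$ small, which is the claim.

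The only mild subtlety — and the step I would be most careful about — is keeping the bookkeeping of which $s \in \{1,2\}$ witnesses the failure straight, and making sure the constants absorb correctly (the exponents $\epsilon^6$ versus $\epsilon^2$ give plenty of room, so this is not a real obstacle, merely a place where sloppiness could creep in). One should also double-check that \eqref{ExpEqui} and \eqref{ExpEqui2} are genuinely applicable to every $i \in J''$ and not just to a subfamily: this holds because $J'' = J'(x) \cap J'(x')$ and the Main Proposition guarantees \eqref{ExpEqui}, \eqref{ExpEqui2} for all $i$ in $J'(x)$ (resp. $J'(x')$) simultaneously for the relevant unstable box. Everything else is a disjointness-plus-pigeonhole argument of the type already used repeatedly in Section \ref{sec:main}.
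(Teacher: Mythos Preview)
Your approach is essentially identical to the paper's: define the bad index sets $J_1, J_2$ by which $s$ witnesses the failure of \eqref{eq:lclarge}, use the failure inequality to get a lower bound of order $\eps^4$ times $m^u_{\cW_s}(\cR_n\cap f^{-n'}((1-\tilde\eps)B_i))$ on the mass landing outside $f^{-n'}(\cL_{n,\tau})$, sum over the disjoint $B_i$, and compare with the global bound \eqref{eq:LLn} to conclude $\sum_{i\in J_s}\mu(B_i)\le C\eps^6$. The paper does exactly this, with the same constants up to harmless factors.

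There is one small slip in your final display. You write
\[
m^u_{\cW_s}\Big(\bigcup_{i\in J''\setminus J'''} f^{-n'}B_i\Big)\le \sum_{i}(1+\eps^{10})\mu(B_i),
\]
but \eqref{ExpEqui} only controls $m^u_{\cW_s}(\cR_n\cap f^{-n'}B_i)$, not $m^u_{\cW_s}(f^{-n'}B_i)$. The paper closes this by writing
\[
m^u_{\cW_s}\Big(\bigcup_i f^{-n'}B_i\Big)\le m^u_{\cW_s}\Big(\cR_n\cap\bigcup_i f^{-n'}B_i\Big)+m^u_{\cW_s}(\cR_n^c),
\]
and noting that $m^u_{\cW_s}(\cR_n^c)$ is of order $\eps^{10}$ (e.g.\ by summing the lower bound in \eqref{ExpEqui} over all of $J''$). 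This extra term is harmless given the room between $\eps^6$ and $\eps^2$, so your argument goes through once you insert it.
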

\begin{proof}
By definition  $J''\setminus J'''=\hJ_1\cup \hJ_2$
where $\hJ_s$ is the set of  indices such that \eqref{eq:lclarge} fails for $\cW_s.$
Note that for $i\in \hJ_s$ 
$$ m^u_{\cW_s}(f^{-n'}(\cL_{n,\tau}^c)\cap \cR_n \cap f^{-n'}((1-\teps)B_i))=$$
$$m^u_{\cW_s}(\cR_n \cap f^{-n'}((1-\teps)B_i))- m^u_{\cW_s}(f^{-n'}(\cL_{n,\tau})\cap\cR_n \cap f^{-n'}((1-\teps)B_i))\geq$$
$$ \frac{\eps^4}{2}m^u_{\cW_s}(\cR_n \cap f^{-n'}((1-\teps)B_i))\geq \frac{\eps^4}{3}\cdot \mu(B_i),
$$
the last inequality by Main Proposition and since $\teps=\eps^{4000}$.
Therefore
$$ \eps^{10} \stackrel{\eqref{eq:LLn}}{\geq} m^u_{W^s} (f^{-n'}(\cL_{n, \tau}^c))\geq $$$$
m^u_{\cW_s}(f^{-n'}(\cL_{n,\tau}^c)\cap \cR_n)\geq 
\sum_{i\in \hJ_s} m^u_{\cW_s}(f^{-n'}(\cL_{n,\tau}^c)\cap \cR_n \cap f^{-n'}((1-\teps) B_i))\geq 
\frac{\eps^4}{3} \sum_{i\in \hJ_s} \mu(B_i). 
 $$
Hence 
$\DS \sum_{i\in \hJ_s} \mu(B_i)\leq 3 \eps^6 $. 
Since $J''\setminus J'''=\hJ_1\cup \hJ_2$ it follows that 
$$  \sum_{i\in J''\setminus J'''} \mu(B_i)\leq 6 \eps^6 . $$ 
Now the Main Proposition gives
$$  \sum_{i\in J''\setminus J'''}m^u_{\cW_s} (\cR_n \cap f^{-n'} B_i) \leq 8 \eps^6 . $$ 
Since 
$$ m^u_{\cW_s} \left(\bigcup_{i\in J''\setminus J'''} f^{-n'} B_i \right)\leq
m^u_{\cW_s} \left(\cR_n \bigcap \left[\bigcup_{i\in J''\setminus J'''} f^{-n'} B_i\right] \right)+
m^u_{\cW_s}(\cR_n^c) $$
the result follows.
\end{proof}

\subsection{Abundance of Markov returns.}
  By Lemma \ref{JJ-JJJ}
it is  enough to construct for every $i\in J'''$, an $\epsilon/10$- measure preserving map 
\begin{multline}\label{eq:defth}
\theta_i: \left(\mathcal{R}_n\cap f^{-n'}(\cL_{n,\tau})\cap  f^{-n'}(B_i), m^u_{\cW_1\cap f^{-n'}(B_i)}\right)\to 
\\
\left(\mathcal{R}_n\cap f^{-n'}(\cL_{n,\tau})\cap  f^{-n'}(B_i), m^u_{\cW_2\cap f^{-n'}(B_i)}\right)
\end{multline}
such that \eqref{eq:ham} holds for $x$ and $\theta_ix$. From now, we fix $i\in J'''$ and drop it from the notation. Recall that $\tilde{\epsilon}=\epsilon^{4000}$. Fix $s=1,2$. For any $\bar{z}\in P_\tau\cap f^{n'}\cW_s\cap B$ let $\cW(\bar{z})=W^u_{\bar{z},\tau}\cap B$. Notice that if $\bar{z'}\in \cW(\bar{z})\cap P_\tau \cap f^{n'}(\cW_s)$, then $\cW(\bar{z'})=\cW(\bar{z})$. Moreover, if 
$\bar{z}\in P_\tau\cap f^{n'}\cW_s\cap (1-\tilde{\eps})B$ and 
$d(f^{-n'}\bar{z},\partial \cW_s)\geq c^{-n'}$ (for some small $c>0$), then 
 the size 
 \footnote{ This means that for some point $\tilde{z}\in\cW(\bar{z})$, $\cW(\bar{z})$ is inside an unstable cube of $\tilde{z}$ of side length $(1+\epsilon^{10})\xi_n$ and $\cW(\bar{z})$ contains the unstable cube of $\tilde{z}$ of side length $(1-\epsilon^{10})\xi_n$. } of $\cW(\bar{z})$
 is $\in (1-\epsilon^{10},1+\epsilon^{10})\xi_n$ (recall that $\hcW_i\subset B_i$  is an unstable cube of size $\xi_n$).
 From the above it follows that for some $\{z_{s,\ell}\}_{\ell=1}^{ m_s}\subset \cL_{n,\tau}$,
$$
\{x\in \cW_s\cap f^{-n'}(\cL_{n,\tau})\cap f^{-n'}((1-\tilde{\eps})B)\;:\; d(x,\partial \cW_s)\geq c^{-n'}\}\subset \bigcup_{\ell=1}^{m_s}f^{-n'}(\cW(z_{s,\ell}))
$$
and
\be\label{eq:toadsub}
\{x\in \cW_s: d(x,\partial \cW_s)\geq c^{-n'}\}\cap\bigcup_{\ell=1}^{m_s}f^{-n'}(\cW(z_{s,\ell}))\subset \cW_s\cap f^{-n'}(B),
\ee
where 
\be\label{xij}
\text{the size of }\;\;\; \cW(z_{s,\ell})\;\;\;\text{ is }\geq (1-\epsilon^{10})\xi_n
\ee
Let $W_{s,\ell}:=\cW(z_{s,\ell})$, $\tW_{s, \ell}=W_{s,\ell}\cap\cR_n.$

\begin{lemma}
\label{AlmostFill2}
$$ \frac{m^u_{\cW_s} \left(\mathcal{R}_n\bigcap 
\left(\bigcup_{\ell=1}^{m_s} f^{-n'} \tW_{s,\ell}\right)\right)}{\mu(B)}
\in \left[1-10\eps^2, 1+10 \eps^2\right]. $$
\end{lemma}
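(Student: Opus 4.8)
The plan is to prove the two--sided estimate by treating the upper and lower bounds separately: the upper bound is almost immediate from the Main Proposition, and all the work is in the lower bound. For the \textbf{upper bound}, since $m^u_{\cW_s}$ is supported on $\cW_s$ and $\tW_{s,\ell}\subset W_{s,\ell}$, the set $\mathcal{R}_n\cap\bigcup_\ell f^{-n'}\tW_{s,\ell}$ is contained, after removing the thin collar $\{x\in\cW_s:d(x,\partial\cW_s)<c^{-n'}\}$ from \eqref{eq:toadsub}, in $\mathcal{R}_n\cap\cW_s\cap f^{-n'}(B)$. The collar has $m^u_{\cW_s}$--measure $O(c^{-n'})$ (because $\cW_s$ has the fixed size $\xi$, so $m^u_{\cW_s}$ is comparable to Lebesgue there), and $c^{-n'}\ll\mu(B)$ for $n$ large since $\mu(B)=\mu(B_i(e^{-\eta_2\eps n}))$ decays only at the slow rate $e^{-O(\eta_2\eps)n}$ while the backward--contraction scale decays at rate $e^{-(\lambda-\delta)\eps n}$ with $\lambda-\delta\gg\eta_2$. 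Applying \eqref{ExpEqui} (valid for $\cW_s$ since $i\in J'''$) then yields $m^u_{\cW_s}(\mathcal{R}_n\cap\bigcup_\ell f^{-n'}\tW_{s,\ell})\le(1+\eps^{10})\mu(B)+o(\mu(B))\le(1+\eps^{2})\mu(B)$.

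For the \textbf{lower bound} I would run the following chain and make it quantitative. By \eqref{ExpEqui2} together with \eqref{eq:absc1'} (note $\tilde{\eps}=\eps^{4000}<\bar{\eps}$, so $(1-\tilde{\eps})B\supseteq(1-\bar{\eps})B$) one gets $m^u_{\cW_s}(\mathcal{R}_n\cap f^{-n'}((1-\tilde{\eps})B))\ge(1-\eps^{10})\mu((1-\tilde{\eps})B)\ge(1-2\eps^{10})\mu(B)$; since $i\in J'''$, \eqref{eq:lclarge} upgrades this to $m^u_{\cW_s}(\mathcal{R}_n\cap f^{-n'}(\cL_{n,\tau})\cap f^{-n'}((1-\tilde{\eps})B))\ge(1-2\eps^{4})\mu(B)$, and deleting the same collar keeps it $\ge(1-3\eps^{4})\mu(B)$. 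For any $x$ in this last set, the inclusion defining the $z_{s,\ell}$ (the containment, in \eqref{eq:toadsub}, of the regular inner part of $\cW_s$ mapping into $(1-\tilde{\eps})B$ inside $\bigcup_\ell f^{-n'}W_{s,\ell}$) gives $f^{n'}x\in W_{s,\ell}$ for some $\ell$; combined with $x\in\mathcal{R}_n$ we would be done if we also knew $f^{n'}x\in\mathcal{R}_n$, which is exactly the additional requirement imposed by $\tW_{s,\ell}=W_{s,\ell}\cap\mathcal{R}_n$.

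\textbf{The main obstacle} is therefore to show that, after discarding a set of $m^u_{\cW_s}$--measure at most $\eps^{2}\mu(B)$, the conditions $f^{n'}x\in\cL_{n,\tau}$ and $f^{n'}x\in(1-\tilde{\eps})B$ already force $f^{n'}x\in\mathcal{R}_n$. Inside $B$ the lamination $\mathcal{R}_n$ coincides with $\bigcup\{\tilde{R}_{s',j}(\tilde{r}_n):B_{s'}(\xi_n,\tilde{r}_n)\cap B\neq\emptyset\}$, a stack (in the centre--stable direction) of thin plates which, by Lemma \ref{lem:new1} and the local product structure of Corollary \ref{cor:prost'}, exhaust all but an $O(\eps^{\bb/16})$--fraction of $B$. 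The plaque $W_{s,\ell}\subset f^{n'}\cW_s$ is an unstable piece of size $\ge(1-\eps^{10})\xi_n$ (by \eqref{xij}) lying in $B$, and since each thin plate it meets has centre--stable width $\tilde{r}_n<e^{-\eta_2\eps n}$, Lemma \ref{lem:compl} shows $W_{s,\ell}$ crosses completely all but a negligible--mass collection of them; on each plate it crosses completely, Corollary \ref{cor:uns} (i.e.\ absolute continuity of the fake centre--stable holonomy, Proposition \ref{lem:abscont}) between the reference plaque $\hcW_{s'}$ and $W_{s,\ell}$ gives $m^u_{W_{s,\ell}}(\mathcal{R}_n\cap B_{s'}(\xi_n,\tilde{r}_n))\ge(1-\hat{\eps})m^u_{W_{s,\ell}}(B_{s'}(\xi_n,\tilde{r}_n))$, so $m^u_{W_{s,\ell}}(\mathcal{R}_n)\ge1-\hat{\eps}$ for all but a negligible--mass collection of the $W_{s,\ell}$. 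Pulling back by $f^{-n'}$, whose Jacobian on each $W_{s,\ell}$ is essentially constant by property \textbf{F2} of Lemma \ref{lem:ff}, and summing over $\ell$ then converts the ``$\cL_{n,\tau}$ at time $n'$'' estimate into the ``$\mathcal{R}_n$ at time $n'$'' estimate with loss at most $\eps^{2}\mu(B)$, completing the lower bound. This step is the delicate one precisely because the centre--stable thickness $\tilde{r}_n$ of $\mathcal{R}_n$ is strictly smaller than the thickness $e^{-\eta_2\eps n}$ of $B$, so one cannot simply discard a shell near $\partial B$: one must genuinely use that the thin plates of $\mathcal{R}_n$ tile the interior of $B$ and that the image plaques $W_{s,\ell}$ cross essentially all of them, with every crossing, holonomy and product--structure input applied uniformly at the exponentially small scales at which the Main Proposition, Lemma \ref{lem:new1}, Corollary \ref{cor:prost'} and Proposition \ref{lem:abscont} were established.
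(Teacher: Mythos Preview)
Your upper bound and the skeleton of your lower bound match the paper. The substantive divergence, and the place where your argument has a genuine gap, is the step you flag as the ``main obstacle'': showing that $m^u_{W_{s,\ell}}(\cR_n^c)$ is small for each $\ell$.

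You propose to view $\cR_n\cap B$ as a center--stable stack of thin plates $\tilde R_{s',j}(\tilde r_n)$ coming from the small boxes $B_{s'}(\xi_n,\tilde r_n)$ meeting $B$, and then to apply Lemma~\ref{lem:compl} and Corollary~\ref{cor:uns} plate by plate. This picture does not hold up: the boxes $B_{s'}$ have the \emph{same} unstable width $\xi_n$ as $B$, and since the families $\{B_i(e^{-\eta_2\eps n})\}$ and $\{B_{s'}(\tilde r_n)\}$ come from separate applications of Lemma~\ref{lem:setsB}, a typical $B_{s'}$ meeting $B$ is shifted relative to $B$ in the unstable direction rather than sitting inside it. Consequently $W_{s,\ell}$, whose boundary lies on the unstable faces of $B$, will have $\partial W_{s,\ell}\cap B_{s'}\neq\emptyset$ for a \emph{typical} (not merely boundary) choice of $s'$, so the hypothesis of Lemma~\ref{lem:compl} fails and the holonomy estimate of Corollary~\ref{cor:uns} is unavailable there.

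The paper bypasses this entirely by a single application of Proposition~\ref{prop:3.2} inside $B$ itself, with $\cW_z=W_{s,\ell}$. The hypotheses are met because $z_{s,\ell}\in\cL_{n,\tau}\cap(1-\tilde\eps)B$, the size of $W_{s,\ell}$ is at least $(1-\eps^{10})\xi_n$ by~\eqref{xij}, and complete crossing of $B$ by $W^u_{z_{s,\ell},\tau}$ (which has size $\tau\gg\xi_n$) follows from Lemma~\ref{lem:compl}. This yields one $\hat\eps$--measure preserving holonomy $\pi_{s,\ell}:\hcW\to W_{s,\ell}$. Since by construction (property~\textbf{F3}) $m^u_{\hcW}(\cR_n^c)\leq 100\eps^{\bb/16}$, one gets $m^u_{W_{s,\ell}}(\cR_n^c)\leq\eps^2$ for \emph{every} $\ell$ immediately, with no stacking bookkeeping. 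The final passage back to $m^u_{\cW_s}$ uses the distortion estimate~\eqref{SRBProd} for $f^{-n'}$ on unstable pieces (via Corollary~\ref{CrWuBC} and Lemma~\ref{admisibleconv}); your reference to~\textbf{F2} here is slightly off, since~\textbf{F2} controls forward Jacobians $Df^n$ on the $R_{i,j}$, not the backward Jacobians on $W_{s,\ell}$ that are needed.
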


\begin{proof}
By \eqref{eq:lclarge} and Main Proposition for
$(1-\tilde{\eps})B$ it follows that
$$
m^u_{\cW_s}\Big(\mathcal{R}_n\cap f^{-n'}(\cL_{n,\tau})\cap f^{-n'}((1-\tilde{\eps})B)\Big)\geq  $$$$(1-2\epsilon^4)m^u_{\cW_s}\Big(\mathcal{R}_n\cap f^{-n'}((1-\tilde{\eps})B)\Big)\geq$$$$
(1- 3\epsilon^4)\mu((1-\tilde{\eps})B)\geq (1-\epsilon^3)\mu(B).
$$
Moreover, $m^u_{\cW_s}(\{x\in \cW_s\;:\; d(x,\partial \cW_s)\leq c^{-n'}\})\leq C(\epsilon)\cdot c^{-n'}\leq \epsilon^{10} \mu(B)$,
for large enough $n$. 
This together with Main Proposition shows that

\begin{equation}
\label{AlmostFill}
\frac{m^u_{\cW_s} \left(\mathcal{R}_n\bigcap 
\left(\bigcup_{\ell=1}^{m_s} f^{-n'} W_{s,\ell}\right)\right)}{\mu(B)}
\in \left[1-\eps^2, 1+\eps^2\right].
\end{equation}

Next let $z', z''\in W_{s,\ell}.$ Then
\be \label{SRBProd}
\frac{J(f^{-n} (z'))}{J(f^{-n} (z''))}=
\prod_{j=0}^{n'-1} \left[ \frac{J(f^{-1} (f^{-j} z'))}{J(f^{-1} (f^{-j} z''))}\right]\in (1-\eps^2, 1+\eps^2) 
\ee
where the last step uses Corollary \ref{CrWuBC} and Lemma \ref{admisibleconv}  (applied to  $L^{(k)}=TW_{s,\ell}\circ f^{k-n'}$).

Let 
$\DS \tB=B\cap \cR_n,$ $\DS \brcW=\hcW\cap \cR_n. $
We  apply Proposition \ref{prop:3.2} for $\hcW$ and $\cW_z=W_{s,\ell}\subset B$, with $z=z_{s,\ell}$. Notice that $z_{s,\ell}\in \cL_{n,\tau}$ and by \eqref{xij}, $\zeta\geq (1-\epsilon^{10})\xi_n$.
For $s=1,2$ let $\pi_{s,\ell}:\brcW\to \tW_{s,\ell}$ be given by Proposition \ref{prop:3.2}. 
By construction $m^u_{\hcW}(\cR_n^c)\leq 100 \eps^{\bb/16}$ and since, by Proposition \ref{prop:3.2},
$\pi_{s, \ell}$ can be considered as an $\eps^2$ measure preserving map $\hcW\to W_{s,\ell}$ we
conclude that $m^u_{W_{s, \ell}} (\cR_n^c)\leq \eps^2.$
Combining this with \eqref{AlmostFill} and \eqref{SRBProd} we obtain the result.
\end{proof}

\subsection{Construction of coupling between unstable leaves.}
\label{SSCouple}
\begin{proof}[Proof of Proposition \ref{lem:umatch}]

By  Lemma \ref{AlmostFill2}
it is  enough to construct an $\epsilon/100$ -measure preserving map

\begin{multline}\label{eq:th'}
\theta':\left(\mathcal{R}_n\bigcap \left(\bigcup_{\ell=1}^{m_1} f^{-n'}\left(\tW_{1,\ell}\right)\right), \;m^u_{\bigcup_{\ell=1}^{m_1} f^{-n'}(\tW_{1,\ell})}\right)\to \\
\left(\mathcal{R}_n\bigcap
\left(\bigcup_{\ell=1}^{m_2} f^{-n'}\left(\tW_{2,\ell}\right)\right), 
\;m^u_{\bigcup_{\ell=1}^{m_2} f^{-n'}(\tW_{2,\ell})}\right)
\end{multline}
so that \eqref{eq:ham} holds for $x$ and $\theta' x$.

Let $\tau_{s,\ell}= \pi_{s,\ell}^{-1}\circ f^{n'}$. 
Let $z', z''$ be two points in $\hcW.$ Then
$$ \frac{J(\tau^{-1}_{s, \ell}(z'))}{J(\tau^{-1}_{s, \ell}(z''))}
=\frac{J(\pi _{s, \ell}(z'))}{J(\pi_{s, \ell}(z''))} \times
\frac{J(f^{-n'} (\pi_{s, \ell}(z')))}{J(f^{-n}(\pi_{s, \ell}(z'')))}
\in \left(1-\eps^{3/2}, 1+\eps^{3/2}\right)$$
where the first factor is estimated by by Proposition \ref{prop:3.2} and the second is estimated
by \eqref{SRBProd}.

Thus for any subset $Q\subset \brcW$ we have
$$ m^u_{\cW_s} \left(\bigcup_\ell  \tau^{-1}_{s, \ell}(Q)\right)=
\sum_\ell \int_Q J(\tau_{s, \ell}^{-1} z) dz $$
$$\in 
(1-\eps^{3/2}, 1+\eps^{3/2}) 
\frac{m^u_{\hcW}(Q)}{m^u_{\hcW}(\brcW \cap f^{n'} \cR_n)}
\sum_{\ell} \int_{\brcW\cap f^{n'}\cR_n} J(\tau_{s, \ell}^{-1} z) dz
 $$
\be\label{PreQ}
=(1-\eps^{3/2}, 1+\eps^{3/2}) 
\frac{m^u_{\hcW}(Q)}{m^u_{\hcW}(\brcW\cap f^{n'} \cR_n)}
m^u_{\cW_s} \left(\cR_n \cap \left[\bigcup_{\ell} f^{-n'} \tW_{s, \ell}\right]\right). 
\ee
Now divide $\hcW$ into cubes $Q_k$ of size $\lambda^{-2n}$ with $\lambda=\|f\|_{C^1}.$
By  Lemma \ref{AlmostFill2} and \eqref{PreQ} (applied to $Q_k\cap f^{n'} \cR_n$),
for each $k$
\be\label{QkRatio}
 \frac{m^u_{\cW_1} \left(\cR_n\cap\left[ \bigcup_\ell  \tau^{-1}_{1, \ell}(Q_k)\right]\right)}
{m^u_{\cW_2} \left(\cR_n\cap \left[ \bigcup_\ell  \tau^{-1}_{2, \ell}(Q_k)\right]\right)}
\in \left(1-10 \eps^{3/2}, 1+10 \eps^{3/2}\right).  \ee

Let $\theta$ be any $10\eps^{3/2}$ measure preserving map 
$$
\theta:\mathcal{R}_n\cap\bigcup_{\ell=1}^{m_1}f^{-n'}(\tW_{1, \ell}) \to
\mathcal{R}_n\cap\bigcup_{\ell=1}^{m_2}f^{-n'} (\tW_{2, \ell} )
$$
which, for each $k$, maps
$$ \cR_n\cap\left[ \bigcup_\ell  \tau^{-1}_{1, \ell}(Q_k)\right]\to 
\cR_n\cap\left[ \bigcup_\ell  \tau^{-1}_{2, \ell}(Q_k)\right] $$
(such a map exists by \eqref{QkRatio}). 
We will show that for $n'\leq t\leq n$
\be\label{eq:clx'}
d(f^t x,f^t(\theta(x)))<\epsilon.
\ee
This will give \eqref{eq:ham} and finish the proof of the proposition.
By construction there is a point $y$ such that $d(f^{n'}x, f^{n'} y)\leq \lambda^{-2n}$
and
$f^{n'}y$ and $f^{n'}(\theta(x))$ are connected by $\pi_{s,\ell}$ and are in $B$. 
Since $\pi_{s,\ell}$ is the holonomy for the $\cF_{i,j}$ foliation, 
 {\bf F4} in Lemma~\ref{lem:ff} gives that for $n'\leq t\leq n.$
 $$ d(f^t y, f^t \theta(x))\leq \frac{\epsilon}{2} .$$
On the other hand 
$$ d(f^t x f^t y)\leq \lambda^{t-n'} d(f^{n'}x, f^{n'} y)\leq \lambda^{-n} .$$
Now \eqref{eq:clx'} follows by the triangle inequality.
\end{proof}

\section{Bernoulli property of skew products.}
\label{ScSkew}
We consider a skew product $F$ acting on $X\times Y$ by the formula
$$ F(x,y)=(fx, \tau(x)y) $$
where $f$ is a diffeo of $X$ preserving a measure $\mu_X,$ 
$Y$ admits an action of a Lie group $G$ preserving a measure $\mu_Y$
and $\tau: X\to G$ is a smooth function. $F$ preserves the measure 
$\mu=\mu_X\times \mu_Y.$

\begin{theorem}
\label{ThEMSkew}
\cite[Theorem 4.1(a) and Remark 4.2]{DDKN1}
If $f$ and $G$ are exponentially mixing and if there are positive constants $C, \eps_1, \eps_2$ such that
$$  \mu_X(x: \|\tau_N(x)\|\leq \eps_1 N)\leq C e^{- \eps_2 N}$$ 
then $F$ is exponentially mixing.
\end{theorem}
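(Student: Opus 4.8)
\textbf{Proof plan for Theorem \ref{ThEMSkew}.}
The statement is essentially quoted from \cite{DDKN1}, so the plan is to recall the structure of that argument and indicate how the three hypotheses feed into it. The observable on $X\times Y$ we must control is a product $\phi(x,y)\psi(F^n(x,y))$; by a standard reduction (expanding in an orthonormal basis of Fourier/representation modes on $Y$ and using that smooth functions are approximated by finitely many such modes with fast-decaying coefficients) it suffices to treat observables of the form $\phi_1(x)\chi_1(y)\cdot \phi_2(f^nx)\chi_2(\tau_n(x)y)$, where $\chi_1,\chi_2$ lie in a single nontrivial isotypic component of the $G$-action on $L^2(Y,\mu_Y)$. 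First I would integrate over the fiber: for fixed $x$, $\int_Y \chi_1(y)\overline{\chi_2(\tau_n(x)y)}\,d\mu_Y(y)$ is a matrix coefficient of the unitary $G$-representation evaluated at the group element $\tau_n(x)$, and exponential mixing of the $G$-action gives that this quantity is bounded by $C e^{-\eta_G \ell(\tau_n(x))}$ where $\ell(\cdot)$ measures the ``size'' (word length / distance to identity) of the group element.

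The key point is then to split $X$ according to whether $\|\tau_n(x)\|$ is large or small. On the set where $\|\tau_n(x)\|\geq \eps_1 n$, the fiber integral is already exponentially small, $\lesssim e^{-\eta_G\eps_1 n}$, uniformly in $x$; integrating against the (bounded) base observables $\phi_1,\phi_2$ contributes an exponentially small term to the correlation, with the constant controlled by $\|\phi_i\|_{C^0}$, hence by $\|\phi_i\|_\br$. On the complementary set $\{x: \|\tau_n(x)\|\leq \eps_1 n\}$, we do not have fiberwise smallness, but the hypothesis $\mu_X(\{x:\|\tau_N(x)\|\leq \eps_1 N\})\leq C e^{-\eps_2 N}$ tells us this set has exponentially small $\mu_X$-measure; bounding the fiber integral trivially by $\|\chi_1\|_{C^0}\|\chi_2\|_{C^0}$ and the base part by $\|\phi_1\|_{C^0}\|\phi_2\|_{C^0}$, this part also contributes $\lesssim e^{-\eps_2 n}$ to the correlation. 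Adding the two ranges and taking $\eta$ to be a suitable fraction of $\min(\eta_G\eps_1,\eps_2)$ yields the desired bound $Ce^{-\eta n}\|\phi\|_\br\|\psi\|_\br$ for product observables, and the general case follows by the approximation step, possibly with a loss in $\br$ and $\eta$ as permitted by the footnote after \eqref{EMr}.

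There is one subtlety I would be careful about: exponential mixing of $f$ on $X$ has not actually been used in the two-range dichotomy above, and indeed it enters when one wants the cross terms between different fiber modes to decay, and more importantly when $\chi_1$ or $\chi_2$ is the trivial (constant) fiber mode, in which case the correlation reduces to a correlation of the base observables $\phi_1\cdot(\text{something})(f^nx)$ and one invokes exponential mixing of $(f,\mu_X)$ directly. So the full argument organizes the $L^2(Y)$-decomposition into the trivial component (handled by mixing of $f$) and the nontrivial components (handled by mixing of $G$ together with the large-deviation bound on $\tau_n$), and then sums. The main obstacle is the bookkeeping in the mode-by-mode estimate: one must ensure that the number of relevant modes and the norms of the truncation errors grow only polynomially (or are absorbed into $\|\phi\|_\br,\|\psi\|_\br$) so that multiplying by finitely many exponentially small fiber estimates still gives a single clean exponential bound; this is exactly the ``interpolation/approximation'' technology alluded to in Lemma \ref{LmEMAllCr} and carried out in \cite{DDKN1}, and I would simply cite it rather than reproduce it.
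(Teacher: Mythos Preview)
The paper does not give a proof of Theorem~\ref{ThEMSkew}; it is quoted verbatim from \cite[Theorem~4.1(a) and Remark~4.2]{DDKN1} and used only as a black box to generate the examples in Section~\ref{ScSkew}. You correctly recognize this and present a plausible outline of the argument from \cite{DDKN1}: decompose observables along the $G$-representation on $L^2(Y)$, treat the trivial component using exponential mixing of $f$, and for the nontrivial components bound the fiber integral by a matrix coefficient of the $G$-action at $\tau_n(x)$, then split according to whether $\|\tau_n(x)\|$ is above or below $\eps_1 n$, invoking exponential mixing of $G$ on the large set and the large-deviation hypothesis on the small set.

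Your sketch is essentially the right shape and there is no genuine gap for the purposes of this paper. One small refinement worth noting if you were to flesh it out: in the actual argument of \cite{DDKN1} the exponential mixing of the base also plays a role beyond the trivial mode, because the large-deviation estimate as stated controls $\|\tau_N\|$ only at the single time $N$, and to make the dichotomy work cleanly one typically splits $n=n_1+n_2$ and uses mixing of $f$ on the first block to decorrelate the base observables from the event $\{\|\tau_{n_2}\|\leq \eps_1 n_2\}$; without this the ``bad set'' term is an integral of $\phi_1\cdot(\phi_2\circ f^n)$ over a set depending on all of $\tau_0,\dots,\tau_{n-1}$, and one needs some care to ensure the $\mu_X$-measure bound alone suffices. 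This is exactly the kind of bookkeeping you flag at the end and defer to \cite{DDKN1}, which is the appropriate move here.
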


Here we consider the case where $f$ is a volume preserving Anosov diffeo,
$G=SL_d(\reals)$ and $Y=G/\Gamma$ where $\Gamma$ is a cocompact lattice. We consider two types 
skewing functions. \medskip

(a) {\em Generalized $T, T^{-1}$ transformations.} Let
$\DS \tau(x)=\text{diag}\left(e^{\alpha_1(x)},  e^{\alpha_2(x)}, \dots ,e^{\alpha_d(x)}  \right)$
where $\DS \sum_{k=1}^d \alpha_k=0.$  In this case the conditions of Theorem \ref{ThEMSkew}
are satisfied provided there is $k$ such that $\mu(\alpha_k(x))\neq 0$ 
(see \cite[\S 8.2]{DDKN1}). Thus if $\tau$ has non-zero drift then $F$ is Bernoulli.
By contrast, in the zero drift case $F$ is {\bf not} Bernoulli by \cite[Example 4.3]{DDKN2}.
\medskip

(b) $\tau$ is close to identity and it is s pinching and
twisting in the sense of \cite{AV07}. In this case the conditions of Theorem \ref{ThEMSkew}
are satisfied due to \cite[Theorem 1.5]{GS19}. In particular, if we assume that $d=2$ then
the analysis of \cite[\S 3.7]{GS19} shows that if the conditions of Theorem \ref{ThEMSkew}
fail than either $\tau$ preserves a Riemannian metric on $Y$ or there is $N$ such that
$F^N$ preserves a line field. In the first case $F$ is not ergodic, while in the second case
(a power of)
$F$ is conjugated to the map
$$ F(x,y)=\left(f(x), \left(\begin{array}{cc} \alpha(x) & \beta(x) \\ 0 & \alpha^{-1}(x) \end{array}\right)
\right). $$
In case $\mu(\ln \alpha)\neq 0$ 
the conditions of Theorem \ref{ThEMSkew} are still satisfied,
so that $F$ is Bernoulli. On the other hand, in case 
$\mu(\ln \alpha)=0$ it seems likely that combining the methods of 
\cite{DDKN2} and \cite{KRHV} one can show that $F$ is {\bf not} Bernoulli.

We see that in both cases (a) and (b) our main theorem gives optimal results.

\appendix
\section{The Pesin Theory}
\label{AppPesin}
\subsection{Estimates of norms.}

We start with discussing L1. and L2. in Lemma \ref{lyapchart}:
\begin{lemma}\label{holLx}
For every $\delta<\delta_0$, $L_{x,\delta}$ in Lemma \ref{lyapchart} can be chosen so that $L_x:\R^D\to T_xM$ are isometries between $|\cdot|'_ {x,\delta}$ and euclidean metric in $\R^D$ and so that for every $\tau>0$,  on $P_\tau$, $x\to L_{x,\delta}$ is $\alpha_2$- H\"older. 
\end{lemma}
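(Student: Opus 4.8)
The plan is to construct $L_{x,\delta}$ explicitly by orthonormalizing an adapted basis with respect to the Lyapunov inner product $|\cdot|'_{x,\delta}$, and then to track H\"older dependence through each ingredient of the construction. First I would recall that the Lyapunov inner products $\langle\cdot,\cdot\rangle'_{x,\delta}$ on $E^u(x)$ and on $E^{cs}(x)$ are defined by the convergent series in the excerpt, and that (as in \cite[Ch.~5]{BP}) the resulting norm is comparable to the ambient norm with a constant controlled by $\mathfrak{R}_{\delta/4}(x)$, hence uniformly bounded on $\hat P_\tau$. The map $L_{x,\delta}$ is then obtained by choosing, measurably, an ambient-orthonormal basis adapted to the splitting $T_xM=E^u(x)\oplus E^{cs}(x)$ and applying Gram--Schmidt in the inner product $\langle\cdot,\cdot\rangle'_{x,\delta}$ (treating $E^u$ and $E^{cs}$ separately, so that $L_{x,\delta}(\R^u)=E^u(x)$ and $L_{x,\delta}(\R^{cs})=E^{cs}(x)$, i.e.\ property $L2$ of Lemma~\ref{lyapchart} in its geometric form). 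By construction $L_{x,\delta}$ is an isometry from the Euclidean metric on $\R^D$ to $|\cdot|'_{x,\delta}$, which is property $L1$.

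The substance is the H\"older continuity on $P_\tau$. I would break this into three steps. Step one: the splitting $x\mapsto E^u(x)$, $x\mapsto E^{cs}(x)$ is $(K,\alpha_1)$-H\"older on $\hat P_\tau$ — this is exactly Lemma~\ref{LmIDHold}. Consequently an adapted ambient-orthonormal frame can be chosen $\alpha_1$-H\"older on $P_\tau$ (locally; globally one covers $P_\tau$ by finitely many charts and pays only in the constant $K(\tau)$). Step two: the Lyapunov inner product depends H\"older-continuously on $x$ along $P_\tau$. Here one writes $\langle u,v\rangle'_{x,\delta}=\sum_{m\le 0}\langle D_xf^m u, D_xf^m v\rangle_{f^mx}e^{-2\lambda m-2\delta|m|}$ (and the analogous sum for $E^{cs}$), and estimates the difference of two such series for nearby $x,x'\in P_\tau$ by splitting at a cutoff $|m|\le M$: for $|m|\le M$ one uses that $f$ is $C^{1+\alpha}$, so the finite products $D_xf^m$ are H\"older in $x$ with a constant growing like $e^{CM}$, while the tails $|m|>M$ are bounded by $C\,\mathfrak{R}_{\delta/4}^2 e^{-\delta M}$ uniformly on $P_\tau$ (using Definition~\ref{DefLyapReg}(3)--(4) and the regularity-function monotonicity). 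Optimizing $M\sim c\log(1/|x-x'|)$ yields a genuine H\"older bound $|x-x'|^{\alpha_2'}$ with $\alpha_2'$ depending only on $\alpha$, $\lambda$, $\delta$, $\|f\|_{C^1}$ and $\dim M$. Step three: Gram--Schmidt is a smooth (rational, with nonvanishing denominators bounded away from $0$ by the uniform comparability of norms on $P_\tau$) function of the input frame and the input inner product, hence composing the H\"older maps from Steps one and two gives that $x\mapsto L_{x,\delta}$ is $\alpha_2$-H\"older on $P_\tau$, where $\alpha_2=\min(\alpha_1,\alpha_2')$, compatibly with the normalization $\alpha_2\le\min(\alpha/2,\alpha_1)$ recorded in the excerpt.

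The main obstacle is Step two: the naive bound on $\|D_xf^m-D_{x'}f^m\|$ degrades exponentially in $|m|$, so one must interpolate carefully between this and the exponentially decaying tail weights, and one must check that the resulting H\"older exponent can be taken \emph{uniform over $\tau$} (only the constant $K(\tau)$ is allowed to blow up as $\tau\to 0$). This uniformity is what lets $\alpha_2$ be an absolute constant of the system, as claimed; it comes from the fact that the tail estimates involve $\mathfrak{R}_{\delta/4}(x)$, which is $\ge$ a power of $\tau$ on $P_\tau$, only multiplicatively in the \emph{constant}, not in the exponent. I would then note that this is precisely the content of \cite[Prop.~5.1]{BRHZ} / \cite[Thm.~5.6.1]{BP}, so the proof can legitimately be presented as: assemble Lemma~\ref{LmIDHold} with the standard H\"older-continuity-of-Lyapunov-metric estimate and the smoothness of Gram--Schmidt, deferring the classical interpolation computation to those references.
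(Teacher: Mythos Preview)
Your proposal is correct and follows the same approach as the paper: invoke Lemma~\ref{LmIDHold} for H\"older continuity of the splitting, choose a H\"older adapted frame, and apply Gram--Schmidt. You are more careful than the paper's two-line proof in that you explicitly argue (your Step two) that the Lyapunov inner product $\langle\cdot,\cdot\rangle'_{x,\delta}$ itself varies H\"older-continuously on $P_\tau$; the paper leaves this point implicit (effectively deferring to \cite{BP, BRHZ}), but it is indeed required for Gram--Schmidt in the $|\cdot|'_{x,\delta}$-metric to output a H\"older family of isometries.
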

\begin{proof} By Lemma
\ref{LmIDHold}, 
 $E^{cs}(x)$ and $E^u(x)$ are $\alpha_1$-H\"older continuous
 on $P_\tau$ (and hence also $\alpha_2$- H\"older continuous since $\alpha_2\leq \alpha_1$).
Taking a H\"older continuous varying basis for $E^{cs}(x)$ and $E^u(x)$ and applying Gram-Schmidt orthogonalization gives the H\"older continuity of $L_x$.
\end{proof}

	\begin{lemma}\label{lem:kx}
		Let $K(x)>0$ be a measurable function. If $\sum_{n\in\Z}K(f^n(x))e^{-\epsilon |n|}<\infty$ $x$ a.e. then there is positive measurable function $K(x; \epsilon)$  such that $K(x)\leq K(x; \epsilon)$ and $e^{-\epsilon}K_{\epsilon}(x)\leq K_{\epsilon}(f(x))\leq e^\epsilon K_{\epsilon}(x)$ for a.e. $x$.
	\end{lemma}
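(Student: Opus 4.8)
\textbf{Plan of the proof of Lemma \ref{lem:kx}.} The idea is the standard trick for converting an integrable (or a.e.\ finite) tempering condition into an honest slowly-varying majorant. Given the measurable function $K(x)>0$ with $\sum_{n\in\Z}K(f^n x)e^{-\epsilon|n|}<\infty$ for a.e.\ $x$, I simply \emph{define}
$$
K_\epsilon(x):=\sum_{n\in\Z}K(f^n x)\,e^{-\epsilon|n|}.
$$
By hypothesis this is finite for a.e.\ $x$, and it is measurable as a countable sum of measurable functions. Taking the $n=0$ term gives immediately $K(x)\le K_\epsilon(x)$, which is the first assertion.

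\textbf{The slowly-varying estimate.} For the second assertion I would compare $K_\epsilon(fx)$ with $K_\epsilon(x)$ by reindexing the sum. Writing
$$
K_\epsilon(fx)=\sum_{n\in\Z}K(f^{n}(fx))\,e^{-\epsilon|n|}=\sum_{m\in\Z}K(f^{m}x)\,e^{-\epsilon|m-1|},
$$
after the substitution $m=n+1$. Now one uses the elementary inequality $|m-1|\ge |m|-1$, valid for every $m\in\Z$, which gives $e^{-\epsilon|m-1|}\le e^{\epsilon}e^{-\epsilon|m|}$; summing over $m$ yields $K_\epsilon(fx)\le e^{\epsilon}K_\epsilon(x)$. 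Symmetrically, using $|m-1|\le |m|+1$ one gets $e^{-\epsilon|m-1|}\ge e^{-\epsilon}e^{-\epsilon|m|}$ and hence $K_\epsilon(fx)\ge e^{-\epsilon}K_\epsilon(x)$. Combining the two bounds gives
$$
e^{-\epsilon}K_\epsilon(x)\le K_\epsilon(fx)\le e^{\epsilon}K_\epsilon(x)\qquad\text{for a.e.\ }x,
$$
which is exactly what is claimed. (The a.e.\ caveat is needed only because $K_\epsilon$ may be $+\infty$ on a null set; on the conull set where it is finite the manipulations above are all legitimate rearrangements of absolutely convergent series.)

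\textbf{Expected main obstacle.} There is essentially no obstacle here: the only point requiring any care is to make sure the series defining $K_\epsilon$ converges a.e.\ (which is precisely the hypothesis) so that the reindexing is a genuine rearrangement of an absolutely convergent series rather than a formal manipulation, and to keep track that the final inequalities hold only almost everywhere. If one wanted $K_\epsilon$ finite \emph{everywhere} one would first discard the invariant null set $\{K_\epsilon=\infty\}$ and redefine $K_\epsilon$ to be, say, $1$ there; this does not affect the stated conclusions. I would also remark that the same construction with $e^{-\epsilon|n|}$ replaced by a stretched kernel gives the sharper tempering used elsewhere, but for the lemma as stated the argument above suffices.
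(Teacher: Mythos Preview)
Your proof is correct and takes exactly the same approach as the paper: the paper's entire proof is the single line ``Define $K_{\epsilon}(x):=\sum_{n\in\Z}K(f^n(x))e^{-\epsilon |n|}$,'' and you have simply supplied the (standard) verification that this definition does the job via the reindexing $m=n+1$ and the inequality $\big||m-1|-|m|\big|\le 1$.
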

	\begin{proof} Define $\DS K_{\epsilon}(x):=\sum_{n\in\Z}K(f^n(x))e^{-\epsilon |n|}$. \end{proof}

We shall use this lemma to assume that the function $\fR_\delta(x)$ from Lemma \ref{ThLRFull}
can be taken sufficiently large. 
Namely given $\delta>0, M>0$ and recalling that by Theorem \ref{ThLRFull} 
a.e. $x$ is $(\lambda, \delta/2)$-regular, we set
 $\widetilde{\fR}_\delta(x)=\max(M, \fR_{\delta/2}(x))$ and $\fR_\delta'=\widetilde{\fR}_\delta(x; \delta).$
 Then $\fR_\delta'$ satisfies all the properties of Definition \ref{DefLyapReg} and additionally $\fR_\delta'(x)>M$.
Thus we will assume in the arguments below that $\fR$ is sufficiently large (this will entail that the
size $\mathfrak{r}_\delta$ of Pesin charts defined by \eqref{PesRad} is sufficiently small).

In the considerations below we will omit $\delta$ in the notation for $|\cdot|'_{x,\delta}$	.
	\begin{lemma}\label{complyap}
		Let $x$  be a $(\lambda, \delta/4)$-Lyapunov regular point, where
		$\delta$ is sufficiently small. For $w\in T_{x}M$ we have 
		$$\frac{1}{\sqrt 2}|w|_x\leq|w|'_x\leq\frac{\sqrt{2}(\mathfrak{R}_{\delta/4}(x))^2}{\sqrt{1-e^{-\delta}}}|w|_x,$$ 
		\be\label{eq:compla}
		|D_xfu|'_{fx}\geq e^{\lambda-\delta}|u|'_x\;\mbox{for}\; u\in E^{u}(x),\;\;\;\;\;\;|D_xfv|'_{fx}\leq e^{\delta}|v|'_x \;\mbox{for}\; v\in E^{cs}(x)\ee
		and
		 $$
		 \|D_xf\|'_{x\to fx}\leq \sqrt{2} \|Df\|_{C^0},\;\;\;\;\;\;\;\;\|(D_xf)^{-1}\|'_{fx\to x}\leq \sqrt{2} \|Df^{-1}\|_{C^0}.$$
		\end{lemma}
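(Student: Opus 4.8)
The Lyapunov norm is built from an exponentially weighted sum over the orbit, so all three groups of estimates are elementary consequences of its definition together with the regularity hypotheses in Definition \ref{DefLyapReg}. I would organize the proof as three separate calculations.

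\emph{Comparability of the norms.} Fix $w\in T_xM$ and split $w=u+v$ with $u\in E^u(x)$, $v\in E^{cs}(x)$. For the lower bound one just drops all terms except $m=0$ in each of the two defining series, giving $|u|'^2_x\geq |u|^2_x$ and $|v|'^2_x\geq |v|^2_x$; since $E^u$ and $E^{cs}$ are declared orthogonal for $|\cdot|'_x$, one gets $|w|'^2_x\geq |u|^2_x+|v|^2_x\geq \tfrac12|w|^2_x$, using $|u|_x+|v|_x\leq\sqrt2\,|w|_x$ up to the angle bound $\angle(E^u,E^{cs})\geq\mathfrak R_{\delta/4}^{-1}$. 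For the upper bound I would estimate each summand of $|u|'^2_x=\sum_{m\leq 0}|D_xf^mu|^2 e^{-2\lambda m-2\delta|m|}$ using item (4) of Definition \ref{DefLyapReg} (with the regular point $x$, $k=0$, and $n=m\leq0$): $|D_xf^m u|\leq\mathfrak R_{\delta/4}(f^m x)e^{m\lambda+|m|\delta}|u|\leq e^{\delta|m|}\mathfrak R_{\delta/4}(x)e^{m\lambda+|m|\delta}|u|$ by property (1), so the summand is $\leq \mathfrak R_{\delta/4}(x)^2 e^{-2\delta|m|}|u|^2$ and the series sums to $\leq \mathfrak R_{\delta/4}(x)^2(1-e^{-2\delta})^{-1}|u|^2\le \mathfrak R_{\delta/4}(x)^2(1-e^{-\delta})^{-1}|u|^2$; the same argument with item (3) handles $v\in E^{cs}$. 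Combining with orthogonality and $|u|_x,|v|_x\le |w|_x$ (angle bound) gives the stated constant $\sqrt2\,\mathfrak R_{\delta/4}(x)^2(1-e^{-\delta})^{-1/2}$.

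\emph{Contraction/expansion of $Df$ in the Lyapunov norm.} This is the standard telescoping identity: for $u\in E^u(x)$,
\[
|D_xf\,u|'^2_{fx}=\sum_{m\le 0}|D_{fx}f^m(D_xf\,u)|^2_{f^{m+1}x}e^{-2\lambda m-2\delta|m|}
=\sum_{m\le 0}|D_xf^{m+1}u|^2 e^{-2\lambda m-2\delta|m|}.
\]
Re-indexing $j=m+1\le 1$ this equals $\sum_{j\le 1}|D_xf^{j}u|^2 e^{-2\lambda(j-1)-2\delta|j-1|}$. Comparing term by term with $e^{2\lambda-2\delta}|u|'^2_x=\sum_{j\le 0}|D_xf^j u|^2 e^{-2\lambda(j-1)-2\delta|j|}$: the $j=1$ term in the first sum is extra and nonnegative, and for $j\le 0$ one has $|j-1|=|j|+1$, so the exponents match and the first series dominates; hence $|D_xf\,u|'_{fx}\ge e^{\lambda-\delta}|u|'_x$. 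For $v\in E^{cs}(x)$ the analogous computation with the series $\sum_{m\ge 0}$ gives $|D_xf\,v|'^2_{fx}=\sum_{j\ge 1}|D_xf^j v|^2 e^{-2\delta|j-1|}\le e^{2\delta}\sum_{j\ge 0}|D_xf^j v|^2 e^{-2\delta|j|}=e^{2\delta}|v|'^2_x$, where the inequality uses $|j-1|\ge|j|-1$. I'd present these two lines carefully since they are the heart of why Lyapunov charts work.

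\emph{Operator norm bounds.} Since $\tfrac1{\sqrt2}|w|_x\le |w|'_x$ and similarly at $fx$, for any $w$ we get $|D_xf\,w|'_{fx}\le \sqrt2\,|D_xf\,w|_{fx}\le\sqrt2\,\|Df\|_{C^0}|w|_x\le \sqrt2\,\|Df\|_{C^0}|w|'_x$ — wait, that last step needs $|w|_x\le |w|'_x$, which is indeed the lower comparison; so $\|D_xf\|'_{x\to fx}\le\sqrt2\,\|Df\|_{C^0}$, and the same for $(D_xf)^{-1}$ with $\|Df^{-1}\|_{C^0}$.

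\emph{Main obstacle.} None of this is deep; the only place requiring care is bookkeeping the angle bound (item (5) of Definition \ref{DefLyapReg}) when passing between $|u|_x^2+|v|_x^2$ and $|w|_x^2$ in the comparability estimate, and making sure the constant $\sqrt2$ (rather than something $\mathfrak R$-dependent) is legitimate there — this works because one only needs the \emph{easy} direction $|u|_x^2+|v|_x^2\le |u|_x^2+2|u|_x|v|_x+|v|_x^2\le\ldots$ no, actually the subtle direction is $|u|_x,|v|_x\le C(x)|w|_x$, which does cost a factor of $\mathfrak R_{\delta/4}(x)$; I would simply absorb it, noting $\mathfrak R_{\delta/4}(x)\ge 1$ can be assumed (by the remark after Lemma \ref{lem:kx}) so the stated constant on the right already dominates. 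The re-indexing in the expansion/contraction step is the other spot where an off-by-one slip is easy, so I'd write those sums out explicitly.
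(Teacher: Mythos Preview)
Your arguments for the comparability of norms and for the expansion/contraction estimate \eqref{eq:compla} are essentially the paper's (the lower bound is exactly the chain $|w|_x\le |u|_x+|v|_x\le |u|'_x+|v|'_x\le\sqrt2\,|w|'_x$, and the angle bound is not needed there; the upper bound and the telescoping/re-indexing are the same as in the paper, with your remark about absorbing the extra $\mathfrak R_{\delta/4}$ factor from the angle into the stated constant being exactly what \eqref{TwoNorms1}--\eqref{TwoNorms2} do).

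The operator-norm bound, however, is genuinely wrong as written. Your chain starts with $|D_xf\,w|'_{fx}\le\sqrt2\,|D_xf\,w|_{fx}$, but the comparison you have just proved goes the other way: $|\cdot|'\ge\tfrac1{\sqrt2}|\cdot|$ gives only $|z|_{fx}\le\sqrt2\,|z|'_{fx}$, not an upper bound for $|z|'_{fx}$ in terms of $|z|_{fx}$. The only available upper bound on $|\cdot|'$ is the one with the factor $\sqrt2\,\mathfrak R_{\delta/4}(x)^2/\sqrt{1-e^{-\delta}}$, so routing through the Riemannian norm can never give the uniform constant $\sqrt2$. (Your Step~3 has the same issue: for a \emph{general} $w$ the lower comparison only gives $|w|_x\le\sqrt2\,|w|'_x$, not $|w|_x\le|w|'_x$.)

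The paper's proof avoids this by using the one structural fact you did not invoke: $D_xf$ preserves the splitting $E^u\oplus E^{cs}$, and both source and target are orthogonal in the Lyapunov norm, so $D_xf$ is block-diagonal and it suffices to bound $\|D_xf|_{E^u}\|'$ and $\|D_xf|_{E^{cs}}\|'$ separately. On each subspace one computes the defining series directly: for $u\in E^u$ the shift identity gives the exact relation $|D_xfu|'^2_{fx}=|D_xfu|^2_{fx}+e^{2\lambda-2\delta}|u|'^2_x$, and now one uses $|u|_x\le|u|'_x$ (which \emph{does} hold on the invariant subspace, from the $m=0$ term) together with $e^{\lambda-\delta}\le\|Df\|_{C^0}$ to conclude $|D_xfu|'^2_{fx}\le 2\|Df\|_{C^0}^2|u|'^2_x$; the $E^{cs}$ case is similar. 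This is the step you are missing.
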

	\begin{proof}
Clearly $|v|_x\leq |v|_x$ for $v\in E^{cs}$ and $v\in E^u$. Now, if $w=w^{cs}+w^u$, then $$|w|_x\leq |w^{cs}|_x+|w^{cs}|_x\leq|w^{cs}|'_x+|w^{cs}|'_x\leq\sqrt{2}|w|'_x.$$ 
Since $x$ is $(\lambda, \delta/4)$- Lyapunov regular,
$\mathfrak{R}_{\delta/4}(f^kx)\leq e^{\delta/4|k|}\mathfrak{R}(x)$ and for $v\in E^u(x)$, $k\leq 0$, $$|D_xf^kv|_{f^kx}\leq \mathfrak{R}_{\delta/4}(f^k(x))e^{k\lambda-k\delta/4}|v|_x.$$ 
Hence $|D_xf^kv|_{f^kx}\leq \mathfrak{R}_{\delta/4}(x)e^{k\lambda-k\delta/2}|v|_x$ and 
$$|v|'^2_x\leq \sum_{m\leq 0}(\mathfrak{R}_{\delta/4}(x))^2e^{2m(\lambda-\delta/2)}e^{-2\lambda m+2\delta m}|v|^2_x=\frac{(\mathfrak{R}_{\delta/4}(x))^2}{1-e^{-\delta}}|v|^2_x.$$ 
A similar computation gives that for $v\in E^{cs}$, $\DS |v|'^2_x\leq \frac{(\mathfrak{R}_{\delta/4}(x))^2}{1-e^{-\delta}}|v|^2_x.$ 

Finally, if we write $w=v^{cs}+v^u$, then 
\be \label{TwoNorms1}
	|w|'^2_x=|v^{cs}|'^2_x+|v^{u}|'^2_x\leq 
	\frac{(\mathfrak{R}_{\delta/4}(x))^2}{1-e^{-\delta}}(|v^{cs}|^2_x+|v^{u}|^2_x).
\ee
 Next denoting by $\theta$ the angle between $v^{cs}$ and $v^u$
and recalling that $\theta\geq \fR_{\delta/4}(x)$ since $x$ is $(\lambda, \delta/4)$--Lyapunov regular, we obtain
$$ |v^{cs}+v^{u}|^2_x=|v^{cs}|_x^2+|v^u|_x^2+2 |v^{cs}|_x |v^u|_x\cos\theta\geq
\left(|v^{cs}|_x^2+|v^u|_x^2 \right)(1-\cos\theta)
+\cos\theta(|v^{cs}|_x+|v^u|_x)^2$$
\be \label{TwoNorms2}
\geq 
\left(|v^{cs}|_x^2+|v^u|_x^2 \right)(1-\cos\theta)
\geq \frac{|v^{cs}|_x^2+|v^u|_x^2}{2 (\fR_{\delta/4}(x))^2}. 
\ee
Combining \eqref{TwoNorms1} and \eqref{TwoNorms2} we get

	$$ |w|'^2_x \leq \frac{(\mathfrak{R}_{\delta/4}(x))^2}{1-e^{-\delta}}2(\mathfrak{R}_{\delta/4}(x))^2(|v^{cs}+v^{u}|^2_x)= \frac{2(\mathfrak{R}_{\delta/4}(x))^4}{1-e^{-\delta}}|v|^2_x). $$

Now let us bound  $\|D_xf\|'_{x\to fx}$ from above, 
all the other inequalities follow essentially the same approach and can be recovered from the computation that follows.

Notice that since $E^{cs}(x)$ and $E^{u}(x)$ are orthogonal w.r.t. the $|\cdot|'_x$ metric and $E^{cs}(fx)$ and $E^{u}(fx)$ are orthogonal w.r.t. the $|\cdot|'_{fx}$ metric and $D_xf$ maps $E^{cs}(x)$ to $E^{cs}(fx)$ and $E^{u}(x)$ to $E^{u}(fx)$, once we control the norms of $D_xf|_{E^{cs}(x)}$ and$D_xf|_{E^u(x)}$, we can easily bound
$D_x  f$. Indeed, if  $\|D_xf|_{E^{cs}(x)}\|'_{x\to fx}\leq A$, \; $\|D_xf|_{E^{u}(x)}\|'_{x\to fx}\leq A$ and $v\in T_xM$, $v=v^{cs}+v^u$, then $$|D_xfv|'^2_{fx}=|D_xfv^{cs}|'^2_{fx}+|D_xfv^u|'^2_{fx}\leq A^2|v^{cs}|'^2_x+A^2|v^{u}|'^2_x=A^2|v|'^2_x.$$

So, take $v\in E^u(x)$. Then 
\begin{eqnarray*}
	|D_xfv|'^2_{fx}&=&\sum_{m\leq 0}|D_{fx}f^m\left(D_xfv\right)|^2_{f^{m+1}x}e^{-2\lambda m+2\delta m}\\
	&=&\sum_{m\leq 0}|D_{x}f^{m+1}v|^2_{f^{m+1}x}e^{-2\lambda (m+1)+2\delta (m+1)}e^{2\lambda-2\delta}\\
	&=&e^{2\lambda-2\delta}\left(|D_xfv|_{fx}^2e^{-2\lambda+2\delta}+\sum_{m\leq 0}|D_{x}f^{m}v|^2_{f^{m}x}e^{-2\lambda m+2\delta m}\right)\\	
	&=&|D_xfv|_{fx}^2+e^{2\lambda-2\delta}|v|'^2_x\leq \|Df\|^2_{C^0}|v|^2_x+e^{2\lambda-2\delta}|v|'^2_x\\
	&\leq&\left(\|Df\|^2_{C^0}+e^{2\lambda-2\delta}\right)|v|'^2_x
	\leq2\|Df\|^2_{C^0}|v|'^2_x
\end{eqnarray*}
where we are using that  $e^{\lambda-\delta}\leq\|Df\|_{C^0}$ 
 since otherwise all Lyapunov exponents
of $f$ would be smaller than $\lambda-\delta.$

Take $v\in E^{cs}(x)$ now and let us bound $|D_xfv|'^2_{fx}$, 
$$	|D_xfv|'^2_{fx}=\sum_{m\geq 0}|D_{fx}f^m\left(D_xfv\right)|^2_{f^{m+1}x}e^{-2\delta m}
	=\sum_{m\geq 0}|D_{x}f^{m+1}v|^2_{f^{m+1}x}e^{-2\delta (m+1)}e^{2\delta}$$
$$	=e^{2\delta}\left(\sum_{m\geq 0}|D_{x}f^{m}v|^2_{f^{m}x}e^{-2\delta m}-|v|_{x}^2\right)
	=e^{2\delta}(|v|'^2_x-|v|^2_x)\leq e^{2\delta}|v|'^2_x\leq  \|Df\|^2_{C^0}|v|'^2_x $$
 where we are using that  $e^\delta\leq\|Df\|_{C^0}$, since we are supposing that $\delta$ is sufficiently 
small, in particular $\delta<\lambda.$
Observe that the bound for vectors on $E^u$ and $E^{cs}$ is not completely symmetric, that is why we wrote both. The bound for $\|(D_xf)^{-1}\|'_{fx\to x}$ is symmetric to $\|D_xf\|'_{x\to fx}$ and hence we omit it. 
\end{proof}

\subsection{Proof of Lemma \ref{cs-foliation}}\label{sec:csfol}
For simplicity we will omit the $\delta$ from the notation for $h_x,\tilde{f}_x$ etc. We start with the following lemma:

 \begin{lemma}\label{basicbound}
For $p\in Lyapreg$  and $\bar z\in\R^D$ let us write $$\left(D_{\bar z}\tilde f_p\right)^{-1}=\left(\begin{matrix}(A_p^{uu})^{-1}+\hat T^{uu}_{\bar z,p}&\hat T^{cu}_{\bar z,p}\\
	\hat T^{uc}_{\bar z,p}&(A_p^{cc})^{-1}+\hat T^{cc}_{\bar z,p}
	\end{matrix}\right)$$ w.r.t. the splitting $\R^u\times\R^{cs}$.  Let 
	\be \label{DefK} \bK:=2\max(\|Df\|_{C^0},\|Df^{-1}\|_{C^0}). \ee 
	We have 
	\begin{enumerate}
	\item $\|(A^{uu}_p)^{-1}\|\leq e^{-\lambda+\delta}$,  $\|D_{\bar y}\tilde f_p\|\leq \bK$ for every $\bar y\in\R^D$;
	\item $\|A^{cc}_p\|\leq e^{\delta}$, $\|(D_{\bar y}\tilde f_p)^{-1}\|\leq \bK$ for every $\bar y\in\R^D$;
	\item $\text{H\"ol}_{\alpha_2}(\hat T^{ab}_{\bar z,p})\leq \delta $ for $a,b=u,c$ in all its combinations, 
	i.e. $$\|\hat T^{ab}_{\bar z,p}-T^{ab}_{\bar z',p}\|\leq  \delta\|\bar{z}-\bar{z}'\|^{\alpha_2},$$ 
	where 
	\be \label{Alpha1} 
	\alpha_2=\min(\alpha/2,\alpha_1); \ee
	\item $\hat T^{ab}_{\bar z,p}=0$ for $\bar z=0$ and for $|\bar z|\geq 1$ for $a,b=u,c$ in all its combinations.
\end{enumerate}

\end{lemma}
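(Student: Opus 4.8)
The plan is to derive everything from the explicit formula $h_p = \exp_p \circ L_{p,\delta}$, the definition $\tilde f_p = h^{-1}_{fp} \circ f \circ h_p$, and the norm comparison in Lemma \ref{complyap}. The block decomposition of $(D_{\bar z}\tilde f_p)^{-1}$ is taken with respect to the splitting $\R^u \times \R^{cs}$, and the diagonal ``linear parts'' $A^{uu}_p$, $A^{cc}_p$ should be read off as $D_0\tilde f_p$ restricted to the invariant subspaces $\R^u$ and $\R^{cs}$ (this uses $L_{p,\delta}(\R^u) = E^u(p)$, $L_{p,\delta}(\R^{cs}) = E^{cs}(p)$ together with property (2) in Lemma \ref{lyapchart} and the $Df$-invariance of the Oseledets splitting, so $D_0\tilde f_p$ is block diagonal). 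The quantities $\hat T^{ab}_{\bar z,p}$ are then by definition the deviation of $(D_{\bar z}\tilde f_p)^{-1}$ from this block-diagonal value at $\bar z = 0$, which immediately gives item (4) at $\bar z = 0$; the vanishing for $|\bar z| \geq 1$ is built into the construction — recall $\tilde f_p$ was extended to be linear (equal to $D_0\tilde f_p$) outside the ball of radius $2\mathfrak r_\delta(p) \leq 1$, and after rescaling radii one arranges this cutoff to happen by radius $1$, so $(D_{\bar z}\tilde f_p)^{-1}$ is constant (block diagonal) there.

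For items (1) and (2): the bound $\|(A^{uu}_p)^{-1}\| \leq e^{-\lambda+\delta}$ is exactly the statement that $D_0\tilde f_p$ expands $\R^u$ vectors by at least $e^{\lambda-\delta}$, which is property (5) of Lemma \ref{lyapchart}; similarly $\|A^{cc}_p\| = \|D_0\tilde f_p|_{\R^{cs}}\| \leq e^\delta$ is the other half of property (5). The global bounds $\|D_{\bar y}\tilde f_p\| \leq \bK$ and $\|(D_{\bar y}\tilde f_p)^{-1}\| \leq \bK$ come from Lemma \ref{complyap}: in the Lyapunov norm $\|D_x f\|'_{x\to fx} \leq \sqrt 2 \|Df\|_{C^0}$ and $\|(D_x f)^{-1}\|'_{fx\to x} \leq \sqrt 2 \|Df^{-1}\|_{C^0}$, and since $h_p$ is built from the isometry $L_{p,\delta}$ of the primed norm (property $L1$), the Euclidean operator norm of $D\tilde f_p$ coincides with the primed operator norm of $Df$ on the ball of radius $\mathfrak r_\delta(p)$; beyond that ball $\tilde f_p$ is linear with the same bound. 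The factor $2$ in $\bK = 2\max(\|Df\|_{C^0}, \|Df^{-1}\|_{C^0})$ comfortably absorbs the $\sqrt 2$. (One should double-check the extension: the extended map is required to keep the same bounds as in Lemma \ref{lyapchart}, so no new constant appears.)

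Item (3), the Hölder continuity of $\hat T^{ab}_{\bar z,p}$ in $\bar z$ with exponent $\alpha_2 = \min(\alpha/2, \alpha_1)$ and constant $\delta$, is the substantive point, and I expect it to be the main obstacle. The idea: $\hat T^{ab}_{\bar z,p}$ is a block of $(D_{\bar z}\tilde f_p)^{-1} - (D_0\tilde f_p)^{-1}$, so it suffices to show $\text{H\"ol}_{\alpha_2}\big((D_{\bar z}\tilde f_p)^{-1}\big) \leq \delta$ (the constant $\delta$ is exactly what one pays for by taking the Pesin chart small enough — this is where the definition \eqref{PesRad} of $\mathfrak r_\delta(p)$ with its $\delta^{-1}$ and $C_1$ factors and the $(\sqrt 2)^{1+\alpha}$ is used). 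Writing $(D_{\bar z}\tilde f_p)^{-1} = L^{-1}_{p,\delta}\, (D_{\hat z}\hat f)^{-1}\, L_{fp,\delta}$ for the appropriate basepoint, with $\hat z = L_{p,\delta}\bar z$, one uses \eqref{eq:fC1} — the $\alpha$-Hölder bound $\|(D_{\hat z_1}\hat f)^{-1} - (D_{\hat z_2}\hat f)^{-1}\| \leq C_1|\hat z_1 - \hat z_2|^\alpha$ — together with $\|L_{p,\delta}\|, \|L_{fp,\delta}^{-1}\| \leq \mathfrak r_\delta(p)^{-1}$ (property (3)) and $\|L_{p,\delta}\| \leq \mathfrak r_\delta(p)^{-1}$ again to pass from $|\hat z_1 - \hat z_2|$ to $|\bar z_1 - \bar z_2|$. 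The upshot is a Hölder constant of the form $C_1 \mathfrak r_\delta(p)^{-2-\alpha}$ times a power, and substituting the formula \eqref{PesRad} makes this $\leq \delta$ on the chart. The reason the exponent drops to $\alpha_2 = \min(\alpha/2, \alpha_1)$ rather than staying $\alpha$ is that $L_{p,\delta}$, while $\alpha_2$-Hölder \emph{in the basepoint} $p$ (Lemma \ref{holLx}), enters here and one must also control how the whole expression varies, and the $\alpha_1$-Hölder regularity of the Oseledets spaces (Lemma \ref{LmIDHold}) feeds in; so the final exponent is the minimum of $\alpha/2$ (coming from \eqref{eq:fC1} combined with the rescaling losses) and $\alpha_1$. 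I would organize the estimate as: (a) fix the basepoint and vary $\bar z$, getting an $\alpha$-Hölder bound via \eqref{eq:fC1}; (b) absorb the chart-size constants using \eqref{PesRad}; (c) note that the smallness of $\mathfrak r_\delta$ forces the Hölder constant below $\delta$ and lets us lower the exponent to $\alpha_2$ on the unit ball (where $|\bar z - \bar z'| \leq 1$, so a smaller exponent only weakens the bound, letting us trade regularity for a cleaner constant). The extension region contributes nothing since the map is linear there.
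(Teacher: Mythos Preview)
Your treatment of items (1), (2), and (4) is correct and matches the paper. The gap is in item (3), and it is a real one: your H\"older constant $C_1\,\mathfrak r_\delta(p)^{-2-\alpha}$ is far too large, and the formula \eqref{PesRad} does \emph{not} make it $\leq \delta$ --- since $\mathfrak r_\delta(p)$ is small, $\mathfrak r_\delta(p)^{-2-\alpha}$ is enormous, and no amount of lowering the exponent on a bounded domain will rescue it. The mistake is that you invoke the crude bound $\|L_{p,\delta}\|,\|L_{p,\delta}^{-1}\|\leq \mathfrak r_\delta(p)^{-1}$ from property (3) of Lemma~\ref{lyapchart}, whereas the paper uses the isometry property $L1$: since $L_{p,\delta}$ is an isometry from $(\R^D,\text{Eucl})$ to $(T_pM,|\cdot|'_p)$, the Euclidean operator norm of $(D_{\bar z_1}\tilde f_p)^{-1}-(D_{\bar z_2}\tilde f_p)^{-1}$ equals the \emph{Lyapunov} operator norm of $(D_{\hat z_1}\hat f_p)^{-1}-(D_{\hat z_2}\hat f_p)^{-1}$, with no $\mathfrak r^{-1}$ factors at all. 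The only cost is converting the Riemannian bound \eqref{eq:fC1} to the Lyapunov norm via Lemma~\ref{complyap}, which gives a factor $\sim \mathfrak R_{\delta/4}(p)^2/\sqrt{1-e^{-\delta}}$.

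The second point you miss is the actual origin of the exponent $\alpha/2$. It is \emph{not} basepoint regularity --- $p$ is fixed throughout item (3), so Lemma~\ref{LmIDHold} and the $\alpha_1$-H\"older dependence of $L_{p,\delta}$ on $p$ are irrelevant here. Rather, the paper writes $|\hat z_1-\hat z_2|'^{\alpha}\leq (\max_i|\hat z_i|')^{\alpha/2}\,|\hat z_1-\hat z_2|'^{\alpha/2}$ and uses $|\hat z_i|'\leq \mathfrak r_\delta(p)$ on the chart; the factor $\mathfrak r_\delta(p)^{\alpha/2}$ is, by the very definition \eqref{PesRad}, exactly the reciprocal of $\delta^{-1}\frac{\mathfrak R^2}{\sqrt{1-e^{-\delta}}}(\sqrt2)^{1+\alpha}C_1$, so the product collapses to $\delta$. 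In short: you must sacrifice half the H\"older exponent to absorb the norm-comparison constant, and this is precisely what $\mathfrak r_\delta$ is engineered for. The statement is then recorded with exponent $\alpha_2=\min(\alpha/2,\alpha_1)$ only because $\alpha_2$ is the common exponent already fixed in Lemma~\ref{lyapchart}; the proof actually delivers $\alpha/2$.
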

\begin{proof}[Proof of Lemma \ref{basicbound}.]
	Let $\hat f_p:T_pM\to T_{fp}M$ be so that $\tilde f_p=L_{fp}^{-1}\circ \hat f_p\circ L_p$. 
	Thus $\hat f_p(z) = \exp_{fp}^{-1} \circ  f\circ \exp_p(z)$ if $\|z\|\leq \mathfrak{r}_{\delta}(p)$  and $\hat f_p(z)=D_p f(z)$ if $\|z\|>2\mathfrak{r}_{\delta}(p)$.
		Since $L_x$ is a linear isometry from $\R^D$ with euclidean metric and $T_xM$ 
		with $|\cdot|'_x$ metric, we can 
	 prove the statements for $\hat f_p$ w.r.t. $|\cdot|'_p$ and $|\cdot|'_{fp}$. Let us write 
	$$\left(D_{\hat z}\hat f_p\right)^{-1}=\left(\begin{matrix}(\hat A_p^{uu})^{-1}+\hat{\hat T}^{uu}_{\hat z,p}&\hat{\hat T}^{cu}_{\hat z,p}\\
		\hat{\hat T}^{uc}_{\hat z,p}&(\hat A_p^{cc})^{-1}+\hat{\hat T}^{cc}_{\hat z,p}
	\end{matrix}\right)$$ w.r.t. the splitting $E^u(fp)\oplus E^{cs}(fp)\to E^u(p)\oplus E^{cs}(p)$.  
In particular $$\hat A_p^{uu}=D_pf|E^u(p)\quad \text{and}\quad \hat A_p^{cc}=D_pf|E^{cs}(p).$$ 
Now \eqref{eq:compla}  gives the bound on $(A^{uu}_p)^{-1}$ and $A^{cc}_p$. 

Now we bound the H\"older norm. Combining \eqref{eq:fC1} with Lemma \ref{complyap} we get  that 
$$
	\|(D_{\hat z_1}\hat f_p)^{-1}-(D_{\hat z_2}\hat f_p)^{-1}\|'_{T_{fp}M\to T_pM}\leq \frac{(\mathfrak{R}_{\delta/4}(p))^2}{\sqrt{1-e^{-\delta}}}\sqrt{2}C_1|\hat z_1-\hat z_2|^{\alpha}_{p} \leq $$
$$ \frac{(\mathfrak{R}_{\delta/4}(p))^2}{\sqrt{1-e^{-\delta}}}(\sqrt{2})^{1+\alpha}C_1|\hat z_1-\hat z_2|'^{\alpha}_{p}	\leq \frac{(\mathfrak{R}_{\delta/4}(p))^2}{\sqrt{1-e^{-\delta}}}(\sqrt{2})^{1+\alpha}C_1(\max_{i=1,2}|\hat z_i|'^{\alpha/2})|\hat z_1-\hat z_2|'^{\alpha/2}_{p}	
$$
	if $|\hat z_1|_p,|\hat z_2|_p\leq \frac{1}{C_1}$.

Finally (see \eqref{PesRad}),  if $|\hat z_1|'_p,|\hat z_2|'_p\leq \mathfrak{r}_{\delta}(p)$ then 
	$$\|(D_{\hat z_1}\hat f_p)^{-1}-(D_{\hat z_2}\hat f_p)^{-1}\|'_{T_{fp}M\to T_pM}\leq\delta |\hat z_1-\hat z_2|'^{\alpha/2}_{p}.$$

Since $E^{cs}(p)$ and $E^u(p)$ are orthogonal w.r.t. $|\cdot|'_p$ this gives the bound for $\hat{\hat T}^{ab}_{\hat z,p}$.

Now we can bound also \begin{eqnarray*}
	\|(D_{\hat z}f_p)^{-1}\|'_{fp\to p}&\leq& \|(D_{0}f_p)^{-1}\|'_{fp\to p}+\|(D_{0}f_p)^{-1}-(D_{\hat z}f_p)^{-1}\|'_{fp\to p}\\&\leq& \sqrt{2}\|Df\|_{C^0}+2^{\alpha/2}\delta \mathfrak{r}_{\delta}^{\alpha/2}(p)\leq 2\|Df\|_{C^0},
\end{eqnarray*}
by taking $\delta$ small enough.
Similarly we can bound H\"older constants for $D_{\bar z}f_p$ and its norm. 
\end{proof}

We shall assume that $\delta>0$ is small enough in terms of   $\bK$ given by \eqref{DefK}
and $\lambda$ such that the following inequalities are satisfied: 
$$\frac{3\delta}{1-e^{-\lambda+\sqrt\delta}}\leq\frac{1}{2\bK^2},\;\;\;\;
(e^{-\lambda+\delta}+\delta) e^{2\delta}\leq e^{-\lambda+\sqrt\delta},$$
and
 $$e^{-\lambda+\sqrt\delta}+2\delta<1,\;\;\;\;\left(1+e^{-\lambda+\sqrt\delta}\right)e^{3\delta}<2,\;\;\;\; \frac{1}{1-2\delta e^{\delta}}\leq e^{3\delta},\;\;\;\; \delta<\frac{1}{100}.$$

We assume also that the number $\alpha_6>0$ is small so that 
$$\bK^{\alpha_6}e^{-\lambda+\sqrt\delta}<1.$$

\begin{lemma}\label{graphtransfcs}
	Let $p\in LyapReg(\delta)$  where $\delta$ is sufficiently small.
	 Given $\bar z\in\R^D$ { and a  map $L:\R^{cs}\to\R^u$ with $\|L\|_{C^0}\leq 1$}, we can define $\Gamma_{cs,p,\bar z}(L):\R^{cs}\to\R^{u}$ so that
	$$\graph\Big(\Gamma_{cs,p,\bar z}(L)\Big)=\left(D_{\bar z}\tilde f_p\right)^{-1}\left(\graph(L)\right).$$ 
	Moreover, for every $\bar z,\bar z_i\in\R^D$ and $\|L\|_{C^0},\|L_i\|_{C^0}\leq 1$, $i=1,2$, then \begin{eqnarray}\label{grder}
		\|\Gamma_{cs,p,\bar z}(L)\|_{C^0}\leq e^{-\lambda+\sqrt\delta}\|L\|_{C^0}+2\delta\min\{1,|\bar z|^{\alpha_6}\}
	\end{eqnarray}
	and 
\begin{eqnarray}\label{grhol}
\|\Gamma_{cs,p,\bar z_1}(L_1)-\Gamma_{cs,p,\bar z_2}(L_2)\|_{C^0}\leq e^{-\lambda+{ \sqrt{\delta}}}\|L_1-L_2\|_{C^0}+6\delta|\bar z_1-\bar z_2|^{\alpha_6}.\end{eqnarray}
	%
\end{lemma}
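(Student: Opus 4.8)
\textbf{Proof plan for Lemma \ref{graphtransfcs}.}

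The plan is to run the standard graph transform argument in the rectifying coordinates supplied by Lemma \ref{lyapchart} and Lemma \ref{basicbound}. Fix $p\in LyapReg(\delta)$ and $\bar z\in\R^D$, and write the inverse derivative in block form as in Lemma \ref{basicbound}:
$$\left(D_{\bar z}\tilde f_p\right)^{-1}=\left(\begin{matrix}(A_p^{uu})^{-1}+\hat T^{uu}_{\bar z,p}&\hat T^{cu}_{\bar z,p}\\ \hat T^{uc}_{\bar z,p}&(A_p^{cc})^{-1}+\hat T^{cc}_{\bar z,p}\end{matrix}\right).$$
Given $L:\R^{cs}\to\R^u$ with $\|L\|_{C^0}\leq 1$, a point $(Lw,w)$ on $\graph(L)$ is mapped by $(D_{\bar z}\tilde f_p)^{-1}$ to a pair $(u',w')$ with
$$u'=\big[(A_p^{uu})^{-1}+\hat T^{uu}_{\bar z,p}\big](Lw)+\hat T^{cu}_{\bar z,p}(w),\qquad w'=\hat T^{uc}_{\bar z,p}(Lw)+\big[(A_p^{cc})^{-1}+\hat T^{cc}_{\bar z,p}\big](w).$$
First I would check that $w\mapsto w'$ is invertible as a map $\R^{cs}\to\R^{cs}$: the main term is $(A_p^{cc})^{-1}$, which has inverse $A_p^{cc}$ of norm $\leq e^\delta$ by Lemma \ref{basicbound}(2), while the perturbation $\hat T^{uc}_{\bar z,p}L+\hat T^{cc}_{\bar z,p}$ has norm $\leq 2\delta$ (since $\|L\|_{C^0}\leq 1$ and the $\hat T$ terms have norm $\leq\delta$, using that they vanish at $0$ so the Hölder bound gives $\|\hat T^{ab}_{\bar z,p}\|\leq\delta\min\{1,|\bar z|^{\alpha_2}\}$, in particular $\leq\delta$). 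Since $e^\delta\cdot 2\delta<1$ for $\delta$ small, a Neumann-series estimate shows $w\mapsto w'$ is a bi-Lipschitz bijection, so we may define $\Gamma_{cs,p,\bar z}(L)$ by $\Gamma_{cs,p,\bar z}(L)(w'):=u'$; this is the unique map whose graph is $(D_{\bar z}\tilde f_p)^{-1}(\graph(L))$.

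For the $C^0$ bound \eqref{grder}: from the formula for $u'$ we get $\|u'\|\leq \|(A_p^{uu})^{-1}\|\,\|L\|_{C^0}+\|\hat T^{uu}_{\bar z,p}\|\,\|L\|_{C^0}+\|\hat T^{cu}_{\bar z,p}\|\,\|w\|$, which by Lemma \ref{basicbound}(1),(3) is at most $(e^{-\lambda+\delta}+\delta)\|L\|_{C^0}+\delta\min\{1,|\bar z|^{\alpha_2}\}\|w\|$. Using $\|w\|\leq \|A_p^{cc}\|\,\|w'\|\cdot(1+O(\delta))\leq e^{2\delta}\|w'\|$ from the inversion step, and then absorbing the $e^{2\delta}$ factors into the exponent via the standing inequality $(e^{-\lambda+\delta}+\delta)e^{2\delta}\leq e^{-\lambda+\sqrt\delta}$, one obtains $\|\Gamma_{cs,p,\bar z}(L)\|_{C^0}\leq e^{-\lambda+\sqrt\delta}\|L\|_{C^0}+2\delta\min\{1,|\bar z|^{\alpha_6}\}$ provided $\alpha_6\leq\alpha_2$ and one chooses $\alpha_6$ small enough that $\bK^{\alpha_6}e^{-\lambda+\sqrt\delta}<1$ (this last condition is what lets the $\alpha_6$-Hölder-type gain survive composition, though for the present lemma $\alpha_6\leq\alpha_2$ with the given standing inequalities suffices; I keep $\alpha_6$ as the paper fixes it). The factor $e^{2\delta}$ on the $\hat T^{cu}$ term contributes the jump from $\delta$ to $2\delta$.

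For the Hölder-in-data bound \eqref{grhol}: given $(\bar z_1,L_1)$ and $(\bar z_2,L_2)$, write $\Gamma_{cs,p,\bar z_1}(L_1)(v)-\Gamma_{cs,p,\bar z_2}(L_2)(v)$ and split the difference into a term coming from $L_1-L_2$ (controlled, as above, by $\|(A_p^{uu})^{-1}\|\,\|L_1-L_2\|_{C^0}+O(\delta)\|L_1-L_2\|_{C^0}\leq e^{-\lambda+\sqrt\delta}\|L_1-L_2\|_{C^0}$ after absorbing the inversion factors) and a term coming from $\bar z_1-\bar z_2$, which is controlled by the $\alpha_2$-Hölder continuity of all the blocks $(D_{\bar z}\tilde f_p)^{-1}$ in $\bar z$, i.e. by $\|\hat T^{ab}_{\bar z_1,p}-\hat T^{ab}_{\bar z_2,p}\|\leq\delta|\bar z_1-\bar z_2|^{\alpha_2}$ together with the Lipschitz-in-data comparison of the preimage base points $v\mapsto w_i$. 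Collecting the several places where a $\delta|\bar z_1-\bar z_2|^{\alpha_6}$ contribution appears (the $uu$, $cu$, $uc$, $cc$ blocks and the perturbed inversion) yields the constant $6\delta$ with exponent $\alpha_6\leq\alpha_2$.

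The routine but slightly delicate part — and the one place to be careful — is the bookkeeping in the inversion of $w\mapsto w'$: one must verify that the Neumann-series correction to $A_p^{cc}$ contributes only $O(\delta)$ errors, both to the $C^0$ norm and to the Hölder-in-$\bar z$ modulus, so that after multiplying the main contraction rate $e^{-\lambda+\delta}$ by the bound $e^{2\delta}$ on $\|w\|/\|w'\|$ one still lands below $e^{-\lambda+\sqrt\delta}$. This is exactly why the paper imposes the standing inequalities $(e^{-\lambda+\delta}+\delta)e^{2\delta}\leq e^{-\lambda+\sqrt\delta}$ and $e^{-\lambda+\sqrt\delta}+2\delta<1$; granting these, every estimate above is a direct application of Lemma \ref{basicbound}, and there is no genuine obstacle.
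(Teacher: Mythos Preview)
Your proposal is correct and follows essentially the same route as the paper: write $(D_{\bar z}\tilde f_p)^{-1}$ in block form via Lemma~\ref{basicbound}, invert the perturbed $cs$-block $w\mapsto w'$ by a Neumann-series estimate, define $\Gamma_{cs,p,\bar z}(L)$ as the composition, and then bound it using the standing inequalities on $\delta$. The paper writes the resulting formula compactly as $\Gamma_{cs,p,\bar z}(L)=\big(((A^{uu}_p)^{-1}+\hat T^{uu}_{\bar z,p})\circ L+\hat T^{cu}_{\bar z,p}\big)\circ (A^{cc}_{\bar z,p,L})^{-1}$ and handles the difference bound \eqref{grhol} via the identity $(L')^{-1}-(L'')^{-1}=(L')^{-1}(L''-L')(L'')^{-1}$, which is exactly the ``Lipschitz-in-data comparison'' you describe.
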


\begin{proof}  
 Denote $$A^{cc}_{\bar z,p,L}=\left(A^{cc}_p\right)^{-1}+\hat T^{cc}_{\bar z,p}+\hat T^{uc}_{\bar z,p}\circ L,$$
and let 	
$$
\Gamma_{cs,p,\bar{z}}(L):=\left(\left(\left(A^{uu}_p\right)^{-1}+\hat T^{uu}_{\bar z,p}\right)\circ L+\hat T^{cu}_{\bar z,p}\right)\circ (A^{cc}_{\bar z,p,L})^{-1}.
$$
For simplicity of notation we denote $\hat L=\Gamma_{cs,p,\bar{z}}(L)$,
$\|\cdot \|=\|\cdot \|_{C^0}.$
Notice that  $(D_{\bar z}\tilde f_p)^{-1}(\graph(L))=\graph(\hat L)$. 

By Lemma \ref{basicbound}(2)
  \be \label{AccAlmDiag}
   \|(A^{cc}_{\bar z,p,L})^{-1}\|\leq \frac{e^{\delta}}{1-e^{\delta}(\|\hat T^{cc}_{\bar z,p}\|+\|\hat T^{cc}_{\bar z,p}\|\|L\|)}\leq \frac{e^{\delta}}{1-e^{\delta}\delta\min\{1,|\bar z|^{\alpha_2}\}(1+\|L\|)}. \ee
  Since  $\|L\|\leq 1$ and $\delta>0$ is small enough, it follows that
   $\DS \|(A^{cc}_{\bar z,p,L})^{-1}\|\leq e^{4\delta}.$ 
  
  Also since $\|L_i\|\leq 1$,  by Lemma \ref{basicbound}(3) 
  it follows that 
  
   $$ \|A^{cc}_{\bar z_1,p,L_1}-A^{cc}_{\bar z_2,p,L_2}\|\leq \delta|\bar z_1-\bar z_2|^{\alpha_2}(1+\|L_1-L_2\|).$$
   
Hence  using the identity 
$(L')^{-1}-(L'')^{-1}=(L')^{-1} (L''-L')(L'')^{-1}$
valid for arbitrary invertible linear maps $L', L''$ we get
\begin{eqnarray*}
		\|\left(A^{cc}_{\bar z_1,p,L_1}\right)^{-1}-\left(A^{cc}_{\bar z_2,p,L_2}\right)^{-1}\|\leq e^{8\delta}\delta|\bar z_1-\bar z_2|^{\alpha_2}(1+\|L_1-L_2\|).		
	\end{eqnarray*} 	
	Combining this estimate with \eqref{AccAlmDiag} and Lemma \ref{basicbound}(2) we get
 $$\|\hat L\|\leq \left((e^{-\lambda+\delta}+\delta)\|L\|+\delta\min\{1,|\bar z|^{\alpha_2}\}\right)e^{2\delta}\leq e^{-\lambda+\sqrt\delta}\|L\|+2\delta\min\{1,|\bar z|^{\alpha_2}\}$$ 
 which gives \eqref{grder}.
 
 Moreover, 
	\begin{eqnarray*}
		&&\|\Gamma_{cs,p,\bar z_1}(L_1)-\Gamma_{cs,p,\bar z_2}(L_2)\|\\&=&\|\left(\left(\left(A^{uu}_p\right)^{-1}+\hat T^{uu}_{\bar z_1,p}\right)\circ L_1+\hat T^{cu}_{\bar z_1,p}\right)\circ (A^{cc}_{\bar z_1,p,L_1})^{-1} \\
		&-&\left(\left(\left(A^{uu}_p\right)^{-1}+\hat T^{uu}_{\bar z_2,p}\right)\circ L_2+\hat T^{cu}_{\bar z_2,p}\right)\circ (A^{cc}_{\bar z_2,p,L_2})^{-1}\| \\
				&\leq&\|\left(\left(\left(A^{uu}_p\right)^{-1}+\hat T^{uu}_{\bar z_1,p}\right)\circ L_1+\hat T^{cu}_{\bar z_1,p}\right)\circ\left((A^{cc}_{\bar z_1,p,L_1})^{-1}-(A^{cc}_{\bar z_2,p,L_2})^{-1}\right)\| \\
			&+&\|\left(\left(\left(\left(A^{uu}_p\right)^{-1}+\hat T^{uu}_{\bar z_1,p}\right)\circ L_1+\hat T^{cu}_{\bar z_1,p}\right)-\left(\left(\left(A^{uu}_p\right)^{-1}+\hat T^{uu}_{\bar z_2,p}\right)\circ L_2+\hat T^{cu}_{\bar z_2,p}\right)\right)\circ (A^{cc}_{\bar z_2,p,L_2})^{-1}\|
				\end{eqnarray*}
		\begin{eqnarray*}
			&\leq&\|\left(\left(A^{uu}_p\right)^{-1}+\hat T^{uu}_{\bar z_1,p}\right)\circ L_1+\hat T^{cu}_{\bar z_1,p}\|\|(A^{cc}_{\bar z_1,p,L_1})^{-1}-(A^{cc}_{\bar z_2,p,L_2})^{-1}\|
		\\
		&+&\|\left(\left(\left(A^{uu}_p\right)^{-1}+\hat T^{uu}_{\bar z_1,p}\right)\circ L_1+\hat T^{cu}_{\bar z_1,p}\right)-\left(\left(\left(A^{uu}_p\right)^{-1}+\hat T^{uu}_{\bar z_2,p}\right)\circ L_2+\hat T^{cu}_{\bar z_2,p}\right)\|\| (A^{cc}_{\bar z_2,p,L_2})^{-1}\|\\
					&\leq&{e^{8\delta}}\delta|\bar z_1-\bar z_2|^{\alpha_2}(1+\|L_1-L_2\|)
		+\left((e^{-\lambda+\delta}+\delta)\|L_1-L_2\|+2\delta |\bar z_1-\bar z_2|^{\alpha_2}\right)e^{2\delta}
		\end{eqnarray*}
\hskip3mm		$\DS \leq{e^{-\lambda+\sqrt\delta}}\|L_1-L_2\|+6\delta|\bar z_1-\bar z_2|^{\alpha_2}$
 proving \eqref{grhol}.
	\end{proof}

\begin{lemma}\label{distalongcs} 
Let $p\in LyapReg$ and let $\bar z_1,\bar z_2\in\R^{D}$, then $$|\tilde f_p(\bar z_1)-\tilde f_p(\bar z_2)|\leq 2\bK|\bar z^u_1-\bar z^u_2|+e^{2\delta}|\bar z^{cs}_1-\bar z^{cs}_2|,$$ 
 where $\bK=2\max(\|Df\|_{C^0},\|Df^{-1}\|_{C^0})$ (see \eqref{DefK}).

\noindent
Moreover, if $|\bar z^u_1-\bar z^u_2|\leq \frac{1}{2\bK^2}|\bar z^{cs}_1-\bar z^{cs}_2|$ then  
$$|\tilde f_p(\bar z_1)-\tilde f_p(\bar z_2)|\leq {e^{3\delta}}|\bar z_1-\bar z_2|.$$ 
\end{lemma}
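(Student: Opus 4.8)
The statement of Lemma \ref{distalongcs} is a direct estimate on the map $\tilde f_p$ using the building blocks established in Lemma \ref{basicbound} and the bounds recorded just before the statement. The plan is to write $\tilde f_p(\bar z_1)-\tilde f_p(\bar z_2)$ as an integral of its derivative along the segment joining $\bar z_1$ and $\bar z_2$, decompose that derivative into its block form with respect to $\R^u\times\R^{cs}$, and estimate each block. Concretely, I would write
$$
\tilde f_p(\bar z_1)-\tilde f_p(\bar z_2)=\int_0^1 D_{\bar z(t)}\tilde f_p\,(\bar z_1-\bar z_2)\,dt,\qquad \bar z(t)=(1-t)\bar z_2+t\bar z_1,
$$
and split $\bar z_1-\bar z_2=(\bar z_1^u-\bar z_2^u)+(\bar z_1^{cs}-\bar z_2^{cs})$. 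For the first bound, I estimate the $\R^u$-component of the displacement by $\|D_{\bar z(t)}\tilde f_p\|\le\bK$ (Lemma \ref{basicbound}(2), equivalently Lemma \ref{lem:boder}), giving a contribution $\le \bK|\bar z_1^u-\bar z_2^u|$; actually one gets $2\bK$ after accounting for the mixed (off-diagonal) term coming from the $\R^u$-displacement when one uses the block structure carefully — this is where the factor $2\bK$ rather than $\bK$ enters. For the $\R^{cs}$-component, the relevant block of $D_0\tilde f_p$ restricted to $\R^{cs}$ has norm $\le e^\delta$ (from $\|D_0\tilde f_{x,\delta}(v)\|\le e^\delta|v|$ for $v\in\R^{cs}$ in Lemma \ref{lyapchart}(5)), and the off-diagonal corrections are $O(\delta)$ by Lemma \ref{basicbound}(3), so the total is $\le e^{2\delta}|\bar z_1^{cs}-\bar z_2^{cs}|$ after absorbing the $\delta$-error into the exponent (using $\delta<1/100$). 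Summing the two gives the first displayed inequality.

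For the second (sharper) bound, the hypothesis $|\bar z_1^u-\bar z_2^u|\le \frac{1}{2\bK^2}|\bar z_1^{cs}-\bar z_2^{cs}|$ lets me trade the $2\bK|\bar z_1^u-\bar z_2^u|$ term for something of the same order as $|\bar z_1^{cs}-\bar z_2^{cs}|$: indeed $2\bK|\bar z_1^u-\bar z_2^u|\le \frac{1}{\bK}|\bar z_1^{cs}-\bar z_2^{cs}|\le \delta|\bar z_1^{cs}-\bar z_2^{cs}|$ provided $\delta$ is small relative to $1/\bK$ (which is among the standing smallness assumptions, since $\bK$ is fixed once $f$ is and $\delta\le\epsilon^{10}$ can be taken as small as needed). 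Then, noting $|\bar z_1-\bar z_2|\ge |\bar z_1^{cs}-\bar z_2^{cs}|$ and also $|\bar z_1-\bar z_2|\le 2|\bar z_1^{cs}-\bar z_2^{cs}|$ under the cone hypothesis, I get
$$
|\tilde f_p(\bar z_1)-\tilde f_p(\bar z_2)|\le (e^{2\delta}+\delta)|\bar z_1^{cs}-\bar z_2^{cs}|\le e^{3\delta}|\bar z_1-\bar z_2|,
$$
where the last step uses the standing inequality $(1+e^{-\lambda+\sqrt\delta})e^{3\delta}<2$ (or simply $e^{2\delta}+\delta\le e^{3\delta}$ for $\delta<1/100$) together with $|\bar z_1-\bar z_2|\ge |\bar z_1^{cs}-\bar z_2^{cs}|$. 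One should double-check constants so that the cone hypothesis with the specific constant $\frac{1}{2\bK^2}$ is exactly what is needed; this is a matter of bookkeeping.

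\textbf{Main obstacle.} There is no serious conceptual difficulty here — everything reduces to the block estimates of Lemma \ref{basicbound} and the norm bounds of Lemma \ref{lyapchart}. The one point requiring care is tracking the off-diagonal ($\R^u\to\R^{cs}$ and $\R^{cs}\to\R^u$) blocks of $D_{\bar z}\tilde f_p$ so that the $\R^{cs}$-displacement genuinely only picks up the $e^{2\delta}$ factor and not an extra $\bK$, and conversely verifying that the factor $2\bK$ (rather than $\bK$) on the $\R^u$ term is what the decomposition actually yields; getting these constants to match the statement exactly, using the list of standing smallness inequalities recorded before the lemma, is the only place where one must be attentive. Everything else is a routine application of the fundamental theorem of calculus along segments.
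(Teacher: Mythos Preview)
Your approach — the mean value theorem plus the block estimates from Lemma \ref{basicbound} — is exactly the paper's, and your argument for the first displayed inequality is correct.

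For the second inequality there is a slip: you write $\frac{1}{\bK}|\bar z_1^{cs}-\bar z_2^{cs}|\le \delta|\bar z_1^{cs}-\bar z_2^{cs}|$ ``provided $\delta$ is small relative to $1/\bK$'', but this is backwards — if $\delta$ is small then $\delta<1/\bK$, not $1/\bK\le\delta$. Since $\bK=2\max(\|Df\|_{C^0},\|Df^{-1}\|_{C^0})\ge 2$ is a fixed constant depending only on $f$, while $\delta$ is taken arbitrarily small, the inequality $1/\bK\le\delta$ simply fails, and your chain breaks at this step. The repair is to avoid routing through the first inequality (which is too lossy here) and instead exploit the orthogonality of $\R^u$ and $\R^{cs}$ directly: write $\tilde f_p(\bar z_1)-\tilde f_p(\bar z_2)=D_0\tilde f_p(\bar z_1-\bar z_2)+R$ with $|R|\le\delta|\bar z_1-\bar z_2|$, so that $|D_0\tilde f_p(\bar z_1-\bar z_2)|^2=|A_p^{uu}(\bar z_1^u-\bar z_2^u)|^2+|A_p^{cc}(\bar z_1^{cs}-\bar z_2^{cs})|^2$, and then control the first summand via the cone hypothesis. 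In the actual applications (the proof of Lemma \ref{cs-foliation} and Corollary \ref{expcs}) the relevant aperture is in fact $\frac{3\delta}{1-e^{-\lambda+\sqrt\delta}}=O(\delta)$, under which $|A_p^{uu}(\bar z_1^u-\bar z_2^u)|=O(\bK\delta)|\bar z_1^{cs}-\bar z_2^{cs}|$ and the bound $e^{3\delta}$ follows immediately.
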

\begin{proof} The proof is a simple consequence of the mean value theorem and Lemma \ref{basicbound}.
 \end{proof}

For $n\in \N$ and $k\leq n$ let $L^{cs}_k:\R^{cs}\to\R^{u}$ be such that $$\left(D_{\tilde f^{(n-k)}_{f^kx}(\bar z)}\tilde f^{(k)}_{f^{n-k}x}\right)^{-1}(\R^{cs})=\graph(L_k)$$

\begin{proof}[Proof of Lemma \ref{cs-foliation}]
	We define inductively $\eta_k:\R^{cs}\to\R^u$ by 
	 letting $\eta_0$ to be the constant map
	$\eta_0(\brz)\equiv \left(\tilde f_x^{(n)}(\bar y)\right)^u$ and setting
	$$\graph(\eta_{k})=\left(\tilde f_{f^{n-k}x}\right)^{-1}(\graph(\eta_{k-1}))$$ 
	for $1\leq k\leq n$.  Then we define $\tilde\eta^{cs,n}_{x,\bar y}=\eta_n$. 
	We have that $\graph(L_k)$ is the tangent plane to the $\graph(\eta_k)$ at the point $\brz_k=\tilde f_{f^n x}^{-k} (\brz)$, 
	$$(\bar z^{cs},\eta_k(\bar z^{cs}))=\left(\tilde f^{(n-k)}_{f^kx}\right)(\bar z,\eta_0(\bar z)). 
	$$
	Applying \eqref{grder} inductively to $L_k$ (with $L_0=0$), we get
	 $$\|D\eta_k\|_{C^0}\leq {e^{-\lambda+\sqrt\delta}}\|D\eta_{k-1}\|_{C^0}+3\delta$$ and hence 
	$$\|D\eta_k\|_{C^0}\leq \frac{3\delta}{1-{e^{-\lambda+ \sqrt\delta}}}$$ 
	for all $k\in[0,n]$. 
	
	Recall that $\delta$ is such that $\frac{3\delta}{1-{e^{-\lambda+\sqrt\delta}}}\leq\frac{1}{2{\bK}^2}$. 
	Note that for $\bar z_1,\bar z_2\in \graph(\eta_n)=W^{cs,n}_x(\bar y)$,  we have
	$ |\bar z_1^u-\bar z_2^u|\leq \frac{|\bar z_1^{cs}-\bar z_2^{cs}|}{2\bK^2}$.
	Hence  Lemma \ref{distalongcs} gives that for $k\in [0,n]$, $$|\tilde f_x^{(n-k+1)}(\bar z_1)-\tilde f_x^{(n-k+1)}(\bar z_2)|\leq {e^{3\delta}}|\tilde f_x^{(n-k)}(\bar z_1)-\tilde f_x^{(n-k)}(\bar z_2)|.$$

Let $L_i^{(k)}$ satisfy 	
$$\left(D_{\tilde f^{(n-k)}_{f^kx}(\bar z_i)}\tilde f^{(k)}_{f^{n-k}x}\right)^{-1}(\R^{cs})=graph(L_i^{(k)}).$$
	
 Assuming inductively that 
 \eqref{TSpHolder} 
holds for $k-1$, that is
 $$\|L_1^{(k-1)}{(\bar{z}_1)}-L_1^{(k-1)}{(\bar{z}_2)}\|\leq \frac{12\delta}{1-{e^{-\lambda+\sqrt\delta}}}|\tilde f_x^{(n-k+1)}(\bar z_1)-\tilde f_x^{(n-k+1)}(\bar z_2)|^{\alpha_6},$$
(note that  \eqref{TSpHolder} trivially holds for $k=0$)
 and applying 
 \eqref{grhol} we get 
 \begin{eqnarray*}
		\|L_1^{(k)}{(\bar{z}_1)}-L_1^{(k)}{(\bar{z}_2)}\|
		&\leq&{e^{-\lambda+\sqrt\delta}}\|L_1^{(k-1)}{(\bar{z}_1)}-L_1^{(k-1)}{(\bar{z}_2)}\|+6\delta |\tilde f_x^{(n-k+1)}(\bar z_1)-\tilde f_x^{(n-k+1)}(\bar z_2)|^{\alpha_6}\\
		&\leq&\left(\frac{12\delta{e^{-\lambda+\sqrt\delta}}}{1-{e^{-\lambda+\sqrt\delta}}}+6\delta\right) |\tilde f_x^{(n-k+1)}(\bar z_1)-\tilde f_x^{(n-k+1)}(\bar z_2)|^{\alpha_6}\\
		&\leq&\left(\frac{2 {e^{-\lambda+\sqrt\delta}}}{1-{e^{-\lambda+\sqrt\delta}}}+1\right) 6\delta {e^{3\alpha\delta}}|\tilde f_x^{(n-k)}(\bar z_1)-\tilde f_x^{(n-k)}(\bar z_2)|^{\alpha_6}\\	
		&=&	\left(1+{e^{-\lambda+\sqrt\delta}}\right){e^{3\alpha_6\delta}} \frac{6\delta}{1-{e^{-\lambda+\sqrt\delta}}} |\tilde f_x^{(n-k)}(\bar z_1)-\tilde f_x^{(n-k)}(\bar z_2)|^{\alpha_6}
		\end{eqnarray*}
\hskip40mm		$\DS \leq	 \frac{12\delta}{1-{e^{-\lambda+\sqrt\delta}}} |\tilde f_x^{(n-k)}(\bar z_1)-\tilde f_x^{(n-k)}(\bar z_2)|^{\alpha_6}		$
	\end{proof}

\begin{corollary}\label{expcs}
	Let $x\in LyapReg$. If $\bar z_1,\bar z_2$ belong to the same $\tW_{x}^{cs,n}$ leaf then $$|\tilde f_x^{(k)}(\bar z_1)-\tilde f_x^{(k)}(\bar z_2)|\leq {e^{3k\delta}}|\bar z_1-\bar z_2|$$ for $k\in[0,n]$.
\end{corollary}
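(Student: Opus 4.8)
The statement is a Corollary to Lemma \ref{cs-foliation}, and the natural strategy is an induction on $k$ together with the key geometric fact, already established inside the proof of Lemma \ref{cs-foliation}, that two points lying on the same fake center-stable leaf $\tW^{cs,n}_x(\bar y)$ have their unstable components much shorter than their center-stable components; precisely, since $\tW^{cs,n}_x(\bar y)=\graph(\tilde\eta^{cs,n}_{x,\bar y})$ with $\|D\tilde\eta^{cs,n}_{x,\bar y}\|_{C^0}\le \frac{3\delta}{1-e^{-\lambda+\sqrt\delta}}\le \frac{1}{2\bK^2}$ (this last inequality is exactly one of the smallness assumptions imposed on $\delta$ before Lemma \ref{graphtransfcs}), any $\bar z_1,\bar z_2$ on the leaf satisfy $|\bar z_1^u-\bar z_2^u|\le \frac{1}{2\bK^2}|\bar z_1^{cs}-\bar z_2^{cs}|$.

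\textbf{First step.} I would observe that for any $k\in[0,n]$, the image $\tilde f_x^{(k)}(\tW^{cs,n}_x(\bar y))$ is again a fake center-stable leaf, namely $\tW^{cs,n-k}_{f^kx}(\tilde f_x^{(k)}(\bar y))$, by the equivariance identity recorded right after Definition \ref{DefFakeCS}. Hence Lemma \ref{cs-foliation} applies to this iterated leaf as well, giving $\|D\tilde\eta^{cs,n-k}_{f^kx,\cdot}\|_{C^0}\le \frac{3\delta}{1-e^{-\lambda+\sqrt\delta}}\le \frac{1}{2\bK^2}$. Consequently, for every $k\in[0,n]$ the two iterated points $\tilde f_x^{(k)}(\bar z_1)$ and $\tilde f_x^{(k)}(\bar z_2)$ lie on a common fake center-stable leaf for $f^kx$, and therefore again satisfy the cone condition $|(\tilde f_x^{(k)}(\bar z_1))^u-(\tilde f_x^{(k)}(\bar z_2))^u|\le \frac{1}{2\bK^2}|(\tilde f_x^{(k)}(\bar z_1))^{cs}-(\tilde f_x^{(k)}(\bar z_2))^{cs}|$.

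\textbf{Second step.} With the cone condition available at every time $k\in[0,n-1]$, I apply the second (conditional) assertion of Lemma \ref{distalongcs} with base point $f^kx$ and the pair of points $\tilde f_x^{(k)}(\bar z_1)$, $\tilde f_x^{(k)}(\bar z_2)$: since these satisfy $|\,\cdot^u_1-\cdot^u_2\,|\le \frac{1}{2\bK^2}|\,\cdot^{cs}_1-\cdot^{cs}_2\,|$, Lemma \ref{distalongcs} yields
\[
|\tilde f_x^{(k+1)}(\bar z_1)-\tilde f_x^{(k+1)}(\bar z_2)|=|\tilde f_{f^kx}(\tilde f_x^{(k)}(\bar z_1))-\tilde f_{f^kx}(\tilde f_x^{(k)}(\bar z_2))|\le e^{3\delta}|\tilde f_x^{(k)}(\bar z_1)-\tilde f_x^{(k)}(\bar z_2)|.
\]
Iterating this one-step estimate from $0$ up to $k$ (a finite geometric product) gives $|\tilde f_x^{(k)}(\bar z_1)-\tilde f_x^{(k)}(\bar z_2)|\le e^{3k\delta}|\bar z_1-\bar z_2|$ for all $k\in[0,n]$, which is exactly the claim.

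\textbf{Main obstacle.} There is no serious obstacle here — the corollary is essentially a repackaging of the argument already run inside the proof of Lemma \ref{cs-foliation}, where precisely this inequality was used as an intermediate estimate. The only point requiring a little care is the observation that $\tilde f_x^{(k)}$ maps the leaf $\tW^{cs,n}_x(\bar y)$ to a leaf of the same type for $f^kx$ with $n$ replaced by $n-k$, so that the uniform gradient bound (and hence the cone condition) propagates along the orbit; this is immediate from the equivariance relation following Definition \ref{DefFakeCS}. One should also note that since the bound on $\|D\tilde\eta\|_{C^0}$ in Lemma \ref{cs-foliation} holds for \emph{every} $\bar y\in\R^D$, no regularity hypothesis on $\bar z_1,\bar z_2$ beyond lying on a common leaf is needed.
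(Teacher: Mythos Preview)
Your proof is correct and follows exactly the paper's approach: the paper's one-line proof (``apply inductively the second inequality of Lemma \ref{distalongcs}'') relies precisely on the cone condition $|\bar z_1^u-\bar z_2^u|\le \frac{1}{2\bK^2}|\bar z_1^{cs}-\bar z_2^{cs}|$ being preserved along the orbit via the equivariance of the fake center-stable leaves, which you have spelled out in detail. The only difference is that you make explicit the intermediate observation (already appearing inside the proof of Lemma \ref{cs-foliation}) that $\tilde f_x^{(k)}$ carries $\tW^{cs,n}_x(\bar y)$ to $\tW^{cs,n-k}_{f^kx}(\tilde f_x^{(k)}\bar y)$.
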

\begin{proof}
This follows by applying inductively the second inequality of Lemma \ref{distalongcs}.
\end{proof}

\subsection{H\"older foliations.}\label{sub:hf}

\begin{proof}[Proof of Lemma \ref{lem:compl}]
Recall the coordinates $(a,b)$ 
 introduced in \eqref{BXiR}.
 Let $\tb=b-\eta_z(a).$ 
 In coordinates $(a, \tb)$ 
 $\cW$ is a graph of a function
$ \tb=\teta(a)$ where $ \teta$ is defined for $\|a\|\leq \xi$ 
(since $ \partial{\cW}\bigcap B_i(r)=\emptyset$) and moreover there is $\bra$ with 
$\|\bra\|\leq (1-\heps)\xi$ such that $\|\teta(\bra)\|\leq (1-\heps) r.$
By H\"older 
continuity of $E^u$ on $P_\tau$ 
 (Lemma \ref{LmIDHold})
it follows that on  the ball of radius $\xi$
$$
 \| \teta(a)-\teta(\bra) \|\leq K(\tau) \xi^{1+\alpha_2}, $$
whence
$$ \|\teta(a)\|\leq (1-\heps) r+K(\tau) \xi^{1+\alpha_2} \leq r.
$$
Next, pick $y\in \hcW.$  By Lemma \ref{cs-foliation},
$\cF(y)$ is given by a graph of a function 
$a= \phi(b)$
with $\nabla\phi$ small. 
 Applying the Implicit Function Theorem to the equation
$\DS a=\phi(\tb+\eta_z(a))$  we see that $\cF(y)$ can also be given by the equation
$a=\tphi(\tb)$
with $\nabla\tphi$ small.
 Since $y\in (1-\heps)B_i$ we have
$\DS \|\tphi(0)\|\leq (1-\eps)\xi.$ Therefore on  the ball of radius $r$
$$ \|\tphi(\tb)\|\leq (1-\eps)\xi+C r\leq \xi. $$ 
Now the fact that $\cW$ and $\cF(y)$ intersect follows from transversality.
More precisely, we need to show that the system
$$ \tb=\teta(a), \quad a=\tphi(\tb)$$ has the unique solution. 
The existence of the solution follows from the Fixed Point Theorem applied to
map of $\{\|a\|\leq \xi\}\times \{\|\tb\|\leq r\}$ defined by
$\Phi(a,b)=(\tphi(b), \teta(a)).$
The uniqueness follows since if there were two intersection points, then the line joining them would belong 
to both $\cC^u_{\fb}$ and $\cC^{cs}_{\fb}.$ Alternatively, the H\"older regularity of $E^u$ and $E^{cs}$ 
(see \eqref{TSpHolder} and \ref{fakeu-foliation})
imples that $\Phi$ is a contraction if both $r$ and $\xi$ are small. 
\end{proof}

\subsection{Hadamard Perron, unstable foliation}
In this subsection we state the results analogous  to ones proven in \S \ref{sec:csfol} 
but with the unstable direction instead of the center stable directions. The proofs for the unstable direction are analogous to the proofs for the center stable direction and hence we will omit them.

\begin{lemma}\label{fakeu-foliation}
 There exists $\alpha_4,\tdelta_f>0$ such that the following holds. Fix $\delta\in (0,\tdelta_f)$, 
and $x\in LyapReg$. Then for every $\bar y\in\R^D$, there is $\tilde\eta^{u}_{x,\bar y}:\R^{u}\to\R^{cs}$ such that 
$$\tW^{u}_{x}(\bar y)=graph\left(\tilde\eta^{u}_{x,\bar y}\right),$$ 
$$\|D\tilde\eta^{u}_{x,\bar y}\|_{C^0}\leq \frac{3\delta}{1-{e^{-\lambda+\sqrt\delta}}}$$ 
and 
$$[D\tilde\eta^{u}_{x,\bar y}]_{{C^{\alpha_4}}}\leq  \frac{12\delta}{1-{e^{-\lambda+\sqrt\delta}}}. $$
\end{lemma}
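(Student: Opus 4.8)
The plan is to mirror, \emph{mutatis mutandis}, the argument of \S\ref{sec:csfol}, interchanging the roles of the expanding and the non-expanding subspaces. Under the inverse charts $(D_{\bar z}\tilde f_x)^{-1}$ the subspace $\R^u$ is strongly contracted, at rate $\|(A^{uu}_x)^{-1}\|\le e^{-\lambda+\delta}$, while $\R^{cs}$ is contracted at most mildly, at rate $\ge e^{-\delta}$; this is precisely the content of Lemma~\ref{basicbound}, which already bounds all four blocks of $(D_{\bar z}\tilde f_x)^{-1}$ uniformly and with H\"older moduli $\le\delta$. Hence the ratio of the two contraction rates is $\le e^{-\lambda+2\delta}<e^{-\lambda+\sqrt\delta}$, exactly as in the center-stable case. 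First I would introduce the unstable graph transform $\Gamma_{u,x,\bar z}$ acting on maps $L\colon\R^u\to\R^{cs}$ with $\|L\|_{C^0}\le 1$ by the requirement $\graph\big(\Gamma_{u,x,\bar z}(L)\big)=(D_{\bar z}\tilde f_{f^{-1}x})^{-1}\big(\graph L\big)$, written out in block form as in the definition of $\Gamma_{cs,p,\bar z}$; then, copying verbatim the computation in the proof of Lemma~\ref{graphtransfcs} (now with $\R^u$ as the base), one gets the analogues
$$\|\Gamma_{u,x,\bar z}(L)\|_{C^0}\le e^{-\lambda+\sqrt\delta}\|L\|_{C^0}+2\delta\min\{1,|\bar z|^{\alpha_4}\},$$
$$\|\Gamma_{u,x,\bar z_1}(L_1)-\Gamma_{u,x,\bar z_2}(L_2)\|_{C^0}\le e^{-\lambda+\sqrt\delta}\|L_1-L_2\|_{C^0}+6\delta|\bar z_1-\bar z_2|^{\alpha_4},$$
with $\alpha_4>0$ and $\tdelta_f>0$ chosen by the same inequalities that fixed $\alpha_3,\delta_f$ (replacing $\alpha_2$ by $\min(\alpha/2,\alpha_1)$ wherever it enters, and keeping $\bK^{\alpha_4}e^{-\lambda+\sqrt\delta}<1$).

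\textbf{Construction of the leaf and its bounds.} Fixing $\bar y\in\R^D$, I would build $\tilde\eta^u_{x,\bar y}$ by iterating the graph transform backwards: set $\eta_0\equiv\bar y^{cs}$ (the constant section through $\bar y$) and $\graph(\eta_k)=(\tilde f_{f^{-k}x})^{-1}(\graph(\eta_{k-1}))$, transported into the chart at $x$ through the composition of inverse charts. Iterating the first displayed estimate gives $\|D\eta_k\|_{C^0}\le \tfrac{3\delta}{1-e^{-\lambda+\sqrt\delta}}$ for all $k$, uniformly in $k$, in $\bar y$, and in $x\in LyapReg$. Unlike in \S\ref{sec:csfol}, where only $n$ steps were needed, here one must pass to the limit $k\to\infty$: the sequence $(\eta_k)$ is $C^0$-Cauchy because the graph transform contracts $C^0$-distance between nearby graphs by the factor $e^{-\lambda+\sqrt\delta}<1$ (this uses the unstable analogue of Lemma~\ref{distalongcs}, i.e.\ that $\tilde f^{-1}$ contracts along leaves with slope $\le\tfrac{1}{2\bK^2}$ by a definite factor, which is already recorded for the forward dynamics in Lemma~\ref{lem:unstman} and is equivalent to \eqref{BackContrfK}). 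The limit $\tilde\eta^u_{x,\bar y}:=\lim_k\eta_k$ satisfies $\tW^u_x(\bar y)=\graph(\tilde\eta^u_{x,\bar y})$ and inherits the $C^0$-derivative bound. For the H\"older bound on $D\tilde\eta^u_{x,\bar y}$ I would run the same induction on $k$ as in the proof of Lemma~\ref{cs-foliation}: assuming $[D\eta_{k-1}]_{C^{\alpha_4}}\le\tfrac{12\delta}{1-e^{-\lambda+\sqrt\delta}}$ and combining the second displayed estimate above with the exponential expansion of distances along an unstable leaf under $\tilde f$ (the analogue of Corollary~\ref{expcs}, immediate from the contraction of $\tilde f^{-1}$ along the leaf), the coefficients close exactly as there, giving $[D\eta_k]_{C^{\alpha_4}}\le\tfrac{12\delta}{1-e^{-\lambda+\sqrt\delta}}$; the bound then passes to the limit by lower semicontinuity of the H\"older seminorm. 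Distinct leaves $\tW^u_x(\bar y)$ are disjoint or equal by transversality (a leaf is a graph over $\R^u$ with tangent spaces in a thin cone around $\R^u$), so one indeed obtains a foliation.

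\textbf{Main obstacle.} Everything except one point is a line-by-line transcription of \S\ref{sec:csfol}; the one genuinely new ingredient is the convergence of the backward graph-transform iteration (the center-stable lemma was a finite, time-$n$ object and needed no limit). This is classical Hadamard--Perron, and is exactly what \cite[Ch.~7]{BP} (cf.\ Lemma~\ref{lem:unstman} and Corollary~\ref{CrWuBC}, already invoked in the text) provides; equivalently one notes $\tW^u$ is the stable foliation of $\tilde f^{-1}$, so existence of the foliation may simply be quoted, and the only content left to supply is the explicit bounds on $\|D\tilde\eta^u_{x,\bar y}\|_{C^0}$ and $[D\tilde\eta^u_{x,\bar y}]_{C^{\alpha_4}}$, which the induction above delivers. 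Accordingly I expect the proof to be short, with the bookkeeping of constants ($\alpha_4,\tdelta_f$ and the cone aperture used for transversality) being the only mildly delicate part.
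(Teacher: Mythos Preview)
Your overall strategy is exactly what the paper intends: it states that the proof is analogous to \S\ref{sec:csfol} and omits the details, and you correctly identify the one genuinely new feature (the need to pass to a limit $k\to\infty$, since $\tW^u_x(\bar y)$ is an honest invariant manifold rather than a finite-time pullback).

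There is, however, a direction error in your graph transform. You define $\Gamma_{u,x,\bar z}$ via $(D_{\bar z}\tilde f_{f^{-1}x})^{-1}$ and argue that it contracts because $\R^u$ is strongly contracted under the inverse. But for a graph $L:\R^u\to\R^{cs}$, applying $(D\tilde f_p)^{-1}$ sends the slope (ignoring the $\hat T$ perturbations) to $(A^{cc}_p)^{-1} L\, A^{uu}_p$, and Lemma~\ref{basicbound} gives no useful upper bound on $\|(A^{cc}_p)^{-1}\|$ or $\|A^{uu}_p\|$ beyond $\bK$; the strong contraction of the base $\R^u$ is exactly the wrong thing here, since inverting it forces composition with $A^{uu}_p$ on the right. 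The correct transform uses the \emph{forward} derivative, $\graph(\Gamma_{u,p,\bar z}(L))=D_{\bar z}\tilde f_p(\graph L)$: then the new slope is (up to the $\hat T$ terms) $A^{cc}_p L (A^{uu}_p)^{-1}$, with norm $\le e^{\delta}\cdot e^{-\lambda+\delta}\|L\|\le e^{-\lambda+\sqrt\delta}\|L\|$, yielding precisely the estimates you wrote down. Correspondingly, the leaf at $x$ is built by pushing the $\R^u$-parallel foliation \emph{forward} from the chart at $f^{-k}x$ via $\tilde f_{f^{-1}x}\circ\cdots\circ\tilde f_{f^{-k}x}$, not by pulling back. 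With this correction the rest of your outline (Cauchy convergence, the $C^0$ bound on $D\tilde\eta^u_{x,\bar y}$, and the H\"older induction) goes through verbatim and matches the paper's intended argument.
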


	%



\begin{corollary} \label{CrWuBC}
	If $\bar z_1,\bar z_2$ belong to the same $\tW_{f^kx}^{u}$ leaf then $$|(\tilde f_x^{(k)})^{-1}(\bar z_1)-(\tilde f_x^{(k)})^{-1}(\bar z_2)|\leq e^{k(-\lambda+2\delta)}|\bar z_1-\bar z_2|$$ for $k\in[0,n]$.
\end{corollary}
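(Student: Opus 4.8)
\textbf{Proof plan for Corollary \ref{CrWuBC}.}

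The plan is to mimic exactly the derivation of Corollary \ref{expcs}, with the roles of the center-stable and unstable directions interchanged and with time reversed. Concretely, I would first observe that it suffices to establish the one-step estimate: if $\bar z_1, \bar z_2$ lie on the same leaf $\tW^{u}_{f^k x}(\bar y)$ for some $k$ (equivalently, on the corresponding fake-unstable leaf in the chart at $f^k x$), then
$$
|(\tilde f_{f^{k-1}x})^{-1}(\bar z_1) - (\tilde f_{f^{k-1}x})^{-1}(\bar z_2)| \leq e^{-\lambda + 2\delta} |\bar z_1 - \bar z_2|,
$$
and moreover that $(\tilde f_{f^{k-1}x})^{-1}(\bar z_1)$ and $(\tilde f_{f^{k-1}x})^{-1}(\bar z_2)$ again lie on the same $\tW^u$-leaf (at the previous time), so that the estimate can be iterated $k$ times to give the factor $e^{k(-\lambda+2\delta)}$. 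The invariance of the $\tW^u$-leaves under $(\tilde f)^{-1}$ is immediate from the definition of $\tW^u$ via the graph transform in Lemma \ref{fakeu-foliation} (it is the $(\tilde f)^{-1}$-image of $\R^u$ pushed through the relevant iterate), exactly parallel to the observation following Definition \ref{DefFakeCS} for the center-stable case. This is the analogue, already recorded in Lemma \ref{lem:unstman}, of the inequality $|(\tilde f_{f^{-1}x})^{-1}(\bar z_1)-(\tilde f_{f^{-1}x})^{-1}(\bar z_2)|\leq e^{(-\lambda+2\delta)}|\bar z_1-\bar z_2|$ for points on $\tW^u_x$.

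For the one-step estimate itself, I would use the unstable counterpart of Lemma \ref{distalongcs}: on a leaf of $\tW^u$ the tangent directions lie in a narrow cone $\cC^u_{\fb}$ (by the $C^0$-bound $\|D\tilde\eta^u_{x,\bar y}\|_{C^0}\leq 3\delta/(1-e^{-\lambda+\sqrt\delta})$ from Lemma \ref{fakeu-foliation}), so the $\R^{cs}$-component of $\bar z_1 - \bar z_2$ is dominated by a small multiple of the $\R^u$-component. Then, writing $(\tilde f_{f^{k-1}x})^{-1}$ in block form via Lemma \ref{basicbound} (with $\|(A^{uu}_p)^{-1}\|\leq e^{-\lambda+\delta}$, the off-diagonal blocks $\hat T^{ab}$ having norm $\leq\delta$, and the unstable cone being preserved by $D\tilde f^{-1}$ up to the factor $e^{-\lambda+4\delta}$ in Lemma \ref{cones}), the mean value theorem along the segment joining $\bar z_1$ to $\bar z_2$ gives contraction by $e^{-\lambda+\delta}$ from the leading block plus an $O(\delta)$ error from the off-diagonal terms and the cone width, which is absorbed into the exponent $e^{-\lambda+2\delta}$ for $\delta$ small. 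Iterating over $k=1,\dots,n$ (using that at each intermediate time the images remain on a $\tW^u$-leaf and the relevant charts are those along the orbit of $x$) yields the claimed bound.

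I do not expect a genuine obstacle here: this is the time-reversed, direction-swapped twin of Corollary \ref{expcs}, and every ingredient (Lemma \ref{basicbound}, Lemma \ref{cones}, Lemma \ref{fakeu-foliation}, and the distance-along-leaves estimate) is already available in the excerpt, which is precisely why the authors state that "the proofs for the unstable direction are analogous to the proofs for the center stable direction and hence we will omit them." The only point requiring a little care is bookkeeping the constants so that all the $\delta$-sized errors — from the off-diagonal blocks of $D\tilde f^{-1}$, from the cone aperture, and from the slight non-linearity of $\tilde f$ controlled by $\mathrm{Lip}(\tilde f^{-1}_{x,\delta}-D_0\tilde f^{-1}_{x,\delta})\leq\delta$ in Lemma \ref{lyapchart} — fit inside the gap between $e^{-\lambda+\delta}$ and $e^{-\lambda+2\delta}$, which holds once $\delta$ is chosen small relative to $\lambda$ exactly as in the standing assumptions at the start of \S\ref{sec:csfol}.
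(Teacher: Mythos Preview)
Your proposal is correct and follows essentially the same approach as the paper: the paper explicitly states that the unstable-direction proofs are analogous to the center-stable ones and omits them, so the intended argument is exactly the time-reversed, direction-swapped version of Corollary~\ref{expcs} that you describe, namely the one-step contraction (already recorded in Lemma~\ref{lem:unstman}) obtained from the unstable analogue of Lemma~\ref{distalongcs}, iterated $k$ times using the invariance of the $\tW^u$-leaves.
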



We finish with the following lemma:
\begin{lemma}\label{admisibleconv} 
There exists $\alpha_7>0$ such that the following holds:
Let $\bar z\in \R^D$ and $E=graph(L)$ where $L:\R^u\to\R^{cs}$ and $\|L\|\leq  \bf q\leq 1$.  Then $D_{\bar z}\tilde f_x^{(k)}(E)=graph (L_k)$ where $L_k:\R^u\to\R^{cs}$ and $\|L_k\|\leq 2{\bf q}$. Moreover, if we call $\bar z_k=\tilde f_{x}^{(k)}(\bar z)$  then $$\|L_k\|\leq {e^{k(-\lambda+\sqrt\delta)}}\|L\|+2\delta\sum_{i=1}^k{e^{(k-i)(-\lambda+\sqrt\delta)}}\min\{1,|\bar z_k|^{\alpha_7}\}.$$

	Finally if $L_i$ and $\bar z_i$ are as above, $\|L_i\|\leq 1$, and we define  $L^{(k)}_i$ as above  we  get that 
	\be\label{eq:adml}
\|L^{(k)}_1-L^{(k)}_2\|\leq {e^{k(-\lambda+\sqrt\delta)}}\|L_1-L_2\|+6\delta\sum_{i=1}^{k}{e^{(k-i)(-\lambda+\sqrt\delta)}}|\tilde f_x^{(i)}(\bar z_1)-\tilde f_x^{(i)}(\bar z_2)|^{\alpha_7}.\ee
\end{lemma}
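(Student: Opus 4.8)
The statement to prove is Lemma~\ref{admisibleconv}, which concerns the graph transform along the unstable direction acting on tangent planes that are graphs of maps $\R^u\to\R^{cs}$ with small slope.

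\textbf{Overall approach.} The plan is to mirror exactly the structure of Lemma~\ref{graphtransfcs} and its consequences (the center-stable analogue), but with the roles of $\R^u$ and $\R^{cs}$ interchanged. The single-step graph transform is built from the block decomposition of $D_{\bar z}\tilde f_x$ (not its inverse, as in the $cs$ case), and the contraction now comes from the fact that on the $\R^u$ factor the forward derivative \emph{expands} by at least $e^{\lambda-\delta}$ (Lemma~\ref{basicbound}(1) bounds $\|(A_p^{uu})^{-1}\|\leq e^{-\lambda+\delta}$), so the graph transform contracts slopes of $\R^u$-graphs by roughly $e^{-\lambda+\delta}$, with an error controlled by the H\"older-small off-diagonal blocks $\hat T^{ab}_{\bar z,p}$.

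\textbf{Key steps, in order.} First I would define the single-step graph transform $\tilde\Gamma_{u,x,\bar z}(L)$ for $L:\R^u\to\R^{cs}$ with $\|L\|_{C^0}\leq 1$ by writing $D_{\bar z}\tilde f_x=\left(\begin{smallmatrix}A^{uu}_p+\ldots & \ldots\\ \ldots & A^{cc}_p+\ldots\end{smallmatrix}\right)$ and solving $D_{\bar z}\tilde f_x(\graph L)=\graph(\tilde\Gamma L)$ explicitly, obtaining a formula of the shape $\tilde\Gamma_{u,x,\bar z}(L)=\big((\ldots)\circ L+(\ldots)\big)\circ(A^{uu}_{\bar z,p,L})^{-1}$ analogous to the one in Lemma~\ref{graphtransfcs}; here $A^{uu}_{\bar z,p,L}=A^{uu}_p+(\text{H\"older-small terms})$ and $\|(A^{uu}_{\bar z,p,L})^{-1}\|\leq e^{-\lambda+\delta}+O(\delta)$ by Lemma~\ref{basicbound}. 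Second, I would derive the one-step estimates $\|\tilde\Gamma_{u,x,\bar z}(L)\|_{C^0}\leq e^{-\lambda+\sqrt\delta}\|L\|_{C^0}+2\delta\min\{1,|\bar z|^{\alpha_7}\}$ and the corresponding two-variable Lipschitz bound $\|\tilde\Gamma_{u,x,\bar z_1}(L_1)-\tilde\Gamma_{u,x,\bar z_2}(L_2)\|_{C^0}\leq e^{-\lambda+\sqrt\delta}\|L_1-L_2\|_{C^0}+6\delta|\bar z_1-\bar z_2|^{\alpha_7}$, exactly as in \eqref{grder}, \eqref{grhol}, using the smallness assumptions on $\delta$ (and choosing $\alpha_7\leq\alpha_2$, small enough that $\bK^{\alpha_7}e^{-\lambda+\sqrt\delta}<1$). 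Third, I would iterate: writing $L_k$ for the $k$-fold transform along the orbit segment $\bar z,\tilde f_x(\bar z),\dots$ and $\bar z_k=\tilde f_x^{(k)}(\bar z)$, the recursion $\|L_k\|\leq e^{-\lambda+\sqrt\delta}\|L_{k-1}\|+2\delta\min\{1,|\bar z_k|^{\alpha_7}\}$ unrolls to the claimed geometric sum, and the bound $\|L_k\|\leq 2{\bf q}$ follows since the geometric series of the error terms is dominated by ${\bf q}$ when $\delta$ is small. Finally, for \eqref{eq:adml}, I would apply the two-variable one-step bound along the two orbit segments $\tilde f_x^{(i)}(\bar z_1)$ and $\tilde f_x^{(i)}(\bar z_2)$ and unroll the recursion $\|L^{(k)}_1-L^{(k)}_2\|\leq e^{-\lambda+\sqrt\delta}\|L^{(k-1)}_1-L^{(k-1)}_2\|+6\delta|\tilde f_x^{(k)}(\bar z_1)-\tilde f_x^{(k)}(\bar z_2)|^{\alpha_7}$ to get the stated sum.

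\textbf{Main obstacle.} The computations are all routine linear algebra estimates identical in spirit to Lemmas~\ref{basicbound} and~\ref{graphtransfcs}; the only genuine point requiring care is the bookkeeping of exponents. One must verify that the off-diagonal and distortion errors, after being passed through $(A^{uu}_{\bar z,p,L})^{-1}$ (which expands by $\lesssim e^{-\lambda+\delta}$ rather than contracting), still only degrade the rate from $e^{-\lambda+\delta}$ to $e^{-\lambda+\sqrt\delta}$ — i.e. that $(e^{-\lambda+\delta}+\delta)e^{2\delta}\leq e^{-\lambda+\sqrt\delta}$, which is one of the standing smallness assumptions on $\delta$ imposed before Lemma~\ref{graphtransfcs}. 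Hence no new smallness condition is needed beyond possibly shrinking $\alpha_7$, and the proof reduces to transcribing the $cs$-argument with $u\leftrightarrow cs$ swapped; I would therefore simply state that the proof is identical to that of Lemma~\ref{graphtransfcs} (and Lemma~\ref{basicbound}) after interchanging the two factors and replacing $D\tilde f_x^{-1}$ by $D\tilde f_x$, and spell out only the unrolling of the recursions leading to the two displayed inequalities.
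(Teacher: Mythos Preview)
Your proposal is correct and matches the paper's approach exactly: the paper's entire proof is the single sentence ``The proof is analogous to the proof of Lemma~\ref{cs-foliation},'' and your outline spells out precisely what that analogy entails (swap $\R^u\leftrightarrow\R^{cs}$, replace $(D\tilde f_x)^{-1}$ by $D\tilde f_x$, reproduce the one-step bounds \eqref{grder}--\eqref{grhol}, and unroll the recursion). If anything, you give considerably more detail than the paper does.
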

\begin{proof} The proof is analogous to the proof of Lemma \ref{cs-foliation}.
\end{proof}

\section{Exponential mixing.}
\label{AppEM}

Recall that a $C^{1+\alpha}$ map $f$ on $M$ preserving a measure $\mu$ is exponentially mixing if there are constants
$C>0$,$\br>0$ and $\eta_\br>0$ such that
\be \label{EtaR}
\left|\int_M\phi(x)\psi(f^nx)d\mu-\int_M\phi \,d\mu \int_M\psi \,d\mu \right|
\leq Ce^{-\eta_\br n}\|\phi\|_\br \|\psi\|_\br,
\ee
for all $\phi,\psi\in C^{\br}(M)$.
Examples of exponentially mixing maps
include  volume preserving
Anosov diffeomorphsims \cite{Bow, PP90}, time one maps of contact Anosov flows \cite{L04},
mostly contracting systems \cite{C04, D00} (including the examples constructed in \cite{SW00},
and \cite{D04})
partially hyperbolic translations on
homogeneous spaces \cite{KM96},
and partially hyperbolic automorphisms of nilmanifolds \cite{GS}.
We also note that a product of exponentially mixing diffeomorphisms is exponentially mixing
(see e.g. \cite[Theorem A.4]{DFL}) and, more generally, sufficient conditions for mixing of skew 
products with exponentially mixing base and fibers are discussed in \cite{DDKN1} 
(see Theorem \ref{ThEMSkew} of the present paper).

Here we discuss several properties of exponential mixing maps used in our proof.

\begin{lemma}
\label{LmEMAllCr}
Suppose that $\mu$ is a smooth measure. Then if \eqref{EtaR} holds for some $\br>0$ then it
holds for all $\tilde{\br}>0$ (with possibly a different exponent $\eta_{\tilde{\br}}$).
\end{lemma}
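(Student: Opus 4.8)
The plan is the standard mollification (interpolation) argument. First, if $\tilde{\br}\ge\br$ there is nothing to do: since $M$ is compact, $\|\cdot\|_\br\le C\|\cdot\|_{\tilde{\br}}$, so \eqref{EtaR} for $\tilde{\br}$ follows from \eqref{EtaR} for $\br$ with the same exponent $\eta_{\tilde{\br}}=\eta_\br$. So from now on I would assume $0<\tilde{\br}<\br$, and the point is to trade a loss of regularity against a loss in the mixing rate.

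Fix once and for all a finite atlas with a subordinate smooth partition of unity and a compactly supported smooth mollifier, and for $t\in(0,1]$ let $\phi\mapsto\phi_t$ denote the induced smoothing operator on functions on $M$. The two facts I would use are the uniform estimates
\begin{equation}\label{EqMollEst}
\|\phi-\phi_t\|_{C^0}\le C\,t^{\tilde{\br}}\,\|\phi\|_{\tilde{\br}},\qquad \|\phi_t\|_\br\le C\,t^{-(\br-\tilde{\br})}\,\|\phi\|_{\tilde{\br}},
\end{equation}
valid for all $\phi\in C^{\tilde{\br}}(M)$, together with the trivial bound $\|\phi_t\|_{C^0}\le C\|\phi\|_{C^0}$ (for integer exponents these are elementary properties of convolution; for general real exponents it is cleanest to pass through H\"older--Zygmund norms). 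Given $\phi,\psi\in C^{\tilde{\br}}(M)$, I would split the correlation $\int_M\phi\,(\psi\circ f^n)\,d\mu-\int_M\phi\,d\mu\int_M\psi\,d\mu$ as the analogous correlation for the smoothed pair $\phi_t,\psi_t$ plus four remainder terms, each of which is a product of an $L^1(\mu)$ factor and an $L^\infty$ factor in which one of $\phi-\phi_t$ or $\psi-\psi_t$ appears; since $\mu$ is an $f$-invariant probability measure, the first estimate in \eqref{EqMollEst} bounds all remainders by $C\,t^{\tilde{\br}}\|\phi\|_{\tilde{\br}}\|\psi\|_{\tilde{\br}}$. The smoothed term is controlled by applying \eqref{EtaR} to $\phi_t,\psi_t\in C^\br(M)$ and then invoking the second estimate in \eqref{EqMollEst}, which gives $C\,e^{-\eta_\br n}t^{-2(\br-\tilde{\br})}\|\phi\|_{\tilde{\br}}\|\psi\|_{\tilde{\br}}$. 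Altogether
\begin{equation*}
\Big|\int_M\phi\,(\psi\circ f^n)\,d\mu-\int_M\phi\,d\mu\int_M\psi\,d\mu\Big|\le C\big(t^{\tilde{\br}}+e^{-\eta_\br n}t^{-2(\br-\tilde{\br})}\big)\|\phi\|_{\tilde{\br}}\|\psi\|_{\tilde{\br}},
\end{equation*}
and the final step is to choose $t=e^{-cn}$ with $c>0$ small enough that $\eta_\br-2c(\br-\tilde{\br})>0$; this yields \eqref{EtaR} for $\tilde{\br}$ with $\eta_{\tilde{\br}}=\min\big(c\tilde{\br},\ \eta_\br-2c(\br-\tilde{\br})\big)>0$, which, as observed in the introductory footnote, degrades as $\tilde{\br}\downarrow0$.

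The only genuinely technical ingredient is the construction of the smoothing operator on the manifold together with the two uniform bounds in \eqref{EqMollEst}; for non-integer regularity this is most transparent in terms of Zygmund spaces, with the $C^k$-norms compared to these, but no dynamics enters at this point. Everything else --- the algebraic splitting of the correlation, the $L^1$--$L^\infty$ bounds on the error terms, and the one-parameter optimization in $t$ --- is routine bookkeeping.
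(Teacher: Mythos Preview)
Your proposal is correct and follows essentially the same mollification argument as the paper: approximate $\phi,\psi$ by smoothed functions, apply \eqref{EtaR} to the smoothed pair, bound the remainders by the $C^0$ approximation error, and optimize over the smoothing scale. The only cosmetic difference is that you state the sharper interpolation-type bound $\|\phi_t\|_\br\le Ct^{-(\br-\tilde{\br})}\|\phi\|_{\tilde{\br}}$ whereas the paper uses the cruder $\|\phi_\eps\|_\br\le C\eps^{-\br}\|\phi\|_{\tilde{\br}}$ (and then optimizes exactly to get $\eta_{\tilde{\br}}=\tilde{\br}\eta_\br/(2\br+\tilde{\br})$), but this affects only the explicit value of the exponent, not the structure of the proof.
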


\begin{proof}
Note that if \eqref{EtaR} holds 
for some $\br>0$ then it holds for all larger $\tilde{\br}$ with 
$\eta_{\tilde{\br}}=\eta_\br.$ Therefore it suffices to prove that \eqref{EtaR} holds 
for all sufficiently small $\tilde{\br}.$ So we will assume below that $\tilde{\br}\leq 1.$ 
Note that given a function $\phi$ and $\eps>0$ we can find a function $\phi_\eps$ such that
$\DS \|\phi_\eps\|_{\br}\leq C_1 \eps^{-\br} \|\phi\|_{\tilde{\br}}$ and 
$\DS \|\phi-\phi_\eps\|_{C^0} \leq C_1 \eps^{\tilde{\br}} \|\phi\|_{\tilde{\br}}.$
Indeed, applying a partition of unity we can assume that $\phi$ is supported in a single coordinate chart. Using the coordinates coming from that chart  it suffices to prove the result for functions supported 
on compact domain in $\reals^d$, in which case we can just take
$$\phi_\eps(x)=\frac{1}{\eps^d} \int_{\reals^d} \phi(y) p\left(\frac{y-x}{\eps}\right) dy $$
where $p(\cdot)$ is a probability density supported on a unit ball in $\reals^d.$

Now given arbitrary $\phi, \psi\in C^{\tilde{\br}}$ let $\phi_\eps$ and $\psi_\eps$ denote 
the approximations described above. Then
$$ \int_M\phi(x)\psi(f^nx)d\mu=\int_M \phi_\eps(x) \int_M \psi_\eps(f^n x)
+O\left(\eps^{\tilde{\br}} \|\phi\|_{\tilde{\br}} \|\psi\|_{\tilde{\br}}\right) $$
$$ =\int_M\phi_\eps \,d\mu \int_M\psi_\eps \,d\mu +O\left(\eps^{\tilde{\br}} \|\phi\|_{\tilde{\br}} \|\psi\|_{\tilde{\br}}\right) 
+O\left(\eps^{-2 \br} e^{-\eta_\br n} \|\phi\|_{\tilde{\br}} \|\psi\|_{\tilde{\br}}\right) $$
$$ =\int_M\phi \,d\mu \int_M\psi \,d\mu +O\left(\eps^{\tilde{\br}} \|\phi\|_{\tilde{\br}} \|\psi\|_{\tilde{\br}}\right) 
+O\left(\eps^{-2 \br} e^{-\eta_\br n} \|\phi\|_{\tilde{\br}} \|\psi\|_{\tilde{\br}}\right) $$
Taking $\eps=e^{-\eta_\br n/(2\br+\tilde{\br})}$ we obtain that $f$ is exponentially mixing on $C^{\tilde{\br}}$
with
\be \label{EtaTbr}
\eta_{\tilde{\br}}=\frac{\tilde{\br} \eta_\br}{2\br+\tilde{\br}} \ee
as claimed.
\end{proof}

Recall that parallelograms $B(\xi, r)$ are defined by \eqref{BXiR}.
\begin{lemma}\label{lem:expmixB}
There exists $\eta_2>0$ such that for every $\hat{\epsilon}>0$ there exists $n_{\hat{\epsilon}}\in \N$ such that for every $n\geq n_{\epsilon'}$,  every $\xi,r,\xi',r'\geq e^{-3\eta_2 n}$ we have 
$$
\mu(f^{n}(B(\xi,r))\cap B(\xi',r'))\in (1-\hat{\epsilon},1+\hat{\epsilon})\mu(B(\xi,r)\mu(B(\xi',r')).
$$
\end{lemma}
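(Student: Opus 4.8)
The statement asserts that exponential mixing for smooth functions implies a quantitative mixing estimate on parallelograms $B(\xi,r)$ when $\xi,r,\xi',r'$ are not too small (larger than $e^{-3\eta_2 n}$). The obvious strategy is to approximate the indicator functions $\mathbf{1}_{B(\xi,r)}$ and $\mathbf{1}_{B(\xi',r')}$ by smooth functions and apply \eqref{EMr}. First I would use Lemma \ref{LmEMAllCr} (the interpolation lemma) so that we may assume $f$ is exponentially mixing on $C^1(M)$ with some exponent $\eta_1>0$; this is convenient because $C^1$-norms of mollified indicators of sets with moderately regular boundary are easy to control. The key point is a standard mollification estimate: given a Borel set $A\subseteq M$ whose boundary has ``thickness'' comparable to some scale $\rho$ (meaning the $\rho$-neighborhood $V_\rho(\partial A)$ has measure $\mu(V_\rho(\partial A))\leq C\rho/\min(\text{side lengths})$), one can find $\phi_A\in C^1(M)$ with $0\le\phi_A\le 1$, $\phi_A\equiv 1$ on $A\setminus V_\rho(\partial A)$, $\phi_A\equiv 0$ outside $V_\rho(\partial A)\cup A$, $\|\phi_A\|_{C^1}\leq C/\rho$, and $\|\mathbf{1}_A-\phi_A\|_{L^1(\mu)}\leq \mu(V_\rho(\partial A))$.

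Now I would apply this to $A=f^n(B(\xi,r))$ (no, better: apply mixing as written to $\phi=\mathbf{1}_{B(\xi,r)}$-approximant and $\psi=\mathbf{1}_{B(\xi',r')}$-approximant, since $\int\phi(x)\psi(f^nx)d\mu$ compares $B(\xi,r)$ with $f^{-n}B(\xi',r')$, equivalently $f^nB(\xi,r)$ with $B(\xi',r')$). The parallelograms $B(\xi,r)=h_z(\{\|a\|_\infty\le\xi,\ \|b-\eta_z(a)\|_\infty\le r\})$ have boundaries that are $C^{1+\alpha_4}$ graphs over coordinate hyperplanes (using the regularity of $\eta_z$ from Lemma \ref{lem:unstman} on the Pesin set), so they are ``regular'' in the above sense at scale $\rho$ with $\mu(V_\rho(\partial B(\xi,r)))\le C(\tau)\rho(\xi^{d^u-1}r^{d^{cs}}+\xi^{d^u}r^{d^{cs}-1})/\big(\xi^{d^u}r^{d^{cs}}\big)\cdot\mu(B(\xi,r))\le C(\tau)(\rho/\xi+\rho/r)\mu(B(\xi,r))$, using \eqref{MesMultiDisc} to relate $\mu(B(\xi,r))$ to $\xi^{d^u}r^{d^{cs}}$. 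Choosing $\rho=e^{-10\eta_2 n}$ (much smaller than $e^{-3\eta_2 n}$) makes the relative error $\rho/\min(\xi,r,\xi',r')\leq e^{-7\eta_2 n}$, exponentially small. Plugging into \eqref{EMr}:
$$
\left|\int\phi\,\psi\circ f^n\,d\mu-\int\phi\,d\mu\int\psi\,d\mu\right|\leq C e^{-\eta_1 n}\|\phi\|_1\|\psi\|_1\leq Ce^{-\eta_1 n}\rho^{-2}=Ce^{-\eta_1 n+20\eta_2 n}.
$$

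The final step is to collect errors. Writing $\phi=\mathbf{1}_{B(\xi,r)}+(\phi-\mathbf{1}_{B(\xi,r)})$ and similarly for $\psi$, and using $\mu(B(\xi,r))\gtrsim e^{-C\eta_2 n}$ (from \eqref{MesMultiDisc}, since all side lengths exceed $e^{-3\eta_2 n}$), one gets
$$
\big|\mu(f^nB(\xi,r)\cap B(\xi',r'))-\mu(B(\xi,r))\mu(B(\xi',r'))\big|\leq Ce^{-\eta_1 n+20\eta_2 n}+Ce^{-7\eta_2 n},
$$
and dividing by $\mu(B(\xi,r))\mu(B(\xi',r'))\gtrsim e^{-2C\eta_2 n}$ gives relative error $\le Ce^{-\eta_1 n+C'\eta_2 n}+Ce^{-5\eta_2 n}$ (say), which is $<\hat\epsilon$ once $n$ is large, \emph{provided $\eta_2$ was chosen small enough that $\eta_1>C'\eta_2$}. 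This is precisely the role of the footnote's formula for $\eta_2$ in terms of $\eta$, $\br$, $D$. The main obstacle — and the only genuinely delicate point — is the boundary-thickness estimate: one must verify that $\partial B(\xi,r)$, which lives in a single Lyapunov/exponential chart $h_z$ with $z\in P_\tau$, has a $\rho$-neighborhood of measure $O((\rho/\xi+\rho/r)\mu(B(\xi,r)))$ uniformly in $z\in P_\tau$. This uses (i) the uniform $C^{1+\alpha_4}$ bound on $\eta_z$ from Lemma \ref{lem:unstman}, (ii) the uniform bounds $\|h_z^{\pm1}\|_{\mathrm{Lip}}\le\mathfrak r_\delta^{-1}(z)\le\tau^{-1}$ on $P_\tau$ from Lemma \ref{lyapchart}, and (iii) the H\"older control of the density $\brho$ from the discussion around \eqref{MesMultiDisc}; together these transfer the elementary Euclidean estimate $\mathrm{Leb}(V_\rho(\partial Q))\le C(\rho/\xi+\rho/r)\mathrm{Leb}(Q)$ for a coordinate parallelepiped $Q$ back to $M$ with only bounded distortion. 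Everything else is routine mollification and bookkeeping of exponential rates.
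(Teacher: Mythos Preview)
Your proposal is correct and follows essentially the same route as the paper: approximate $\mathbf 1_{B(\xi,r)}$ and $\mathbf 1_{B(\xi',r')}$ by smooth cutoffs in the chart coordinates, bound the $C^1$-norm by the inverse mollification scale and the $L^1$-error by the measure of a boundary layer (using the uniform $C^{1+\alpha_4}$ regularity of $\eta_z$ on $P_\tau$, the Lipschitz bounds on $h_z^{\pm1}$, and \eqref{MesMultiDisc}), apply exponential mixing for Lipschitz functions, and then choose $\eta_2$ small enough that the combined error is dominated by $\mu(B(\xi,r))\mu(B(\xi',r'))$. The paper's version differs only cosmetically: it builds the cutoff explicitly as a product $\psi_{\xi,\eps}(a)\psi_{r,\eps}(\tb)$ in the straightened coordinates $(a,\tb)=(a,b-\eta_z(a))$, works with the $L^2$-norm of the approximation error, and optimizes the mollification scale $\eps$ rather than fixing it at $e^{-10\eta_2 n}$; this yields the explicit formula \eqref{Eta2-Eta} for $\eta_2$, but the mechanism is identical to yours.
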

\begin{proof}
The proof follows a standard argument where we approximate the characteristic function of $B(\xi,r)$ and $B(\xi',r')$ by smooth functions with a controlled error. We then deduce the result from exponential mixing for smooth functions. We provide the details to obtain an explicit expression for $\eta_2.$

Let $\psi_{\rho, \eps}$ denote a function which is 1 on the cube of size $\rho$ centered at the origin,
is $0$ outside of the cube of size $\rho+\eps$ and has Lipshitz norm  of order $O(1/\eps).$
Recalling coordinates $(a, b)$ introduced in \eqref{BXiR} and let $\tb=b-\eta_z(a).$ 
Consider the function $\Psi_\eps(x)=\psi_{\xi, \eps}(a(x))  \psi_{r, \eps}(b(x))$ and let $K(\tau):=\max_{x\in P_\tau}\brho(x)$ (see \eqref{MesMultiDisc}).

Then $$ \|\Psi_\eps\|_{Lip}=O(\eps), \quad
\|\Psi_\eps-1_{B(\xi, r)}\|_{L^2}= O\Big(K(\tau)\sqrt{\eps \Delta}\Big), $$
where $\DS \Delta=\xi^{d^u-1} r^{d_{cs}} +\xi^{d^u} r^{d_{cs}-1}$
and the  second equality relies on \eqref{MesMultiDisc}.
Thus letting $\Psi_{\eps}'$ be the similar approximation for $1_{B(\xi', r')}$ we get
$$ \mu(f^{n}(B(\xi,r))\cap B(\xi',r'))
=\mu\left(\Psi_\eps \left(\Psi_\eps'\circ f^n\right)\right) 
+O\left( K(\tau)\sqrt{\eps \Delta} \right)
$$
$$=\mu(\Psi_\eps)\mu(\Psi'_\eps)
+O\left( K(\tau)\sqrt{\eps \Delta} \right)
+O\left(\eps^{-2} e^{-\eta_1 n}\right) 
=$$$$
\mu(B(\xi,r)\mu(B(\xi',r'))
+O\left(K(\tau) \sqrt{\eps \Delta} \right)
+O\left(\eps^{-2} e^{-\eta_1 n}\right) $$
where $\eta_1$ is the mixing exponent for Lipschitz functions.
Choosing $\eps=e^{-\eta_1n/5}/\Delta^{1/10}$ we get both $O(\cdot)$ terms are of the same order,
whence 
\be \label{ParaMix}
  \mu(f^{n}(B(\xi,r))\cap B(\xi',r'))=\mu(B(\xi,r)\mu(B(\xi',r'))+O\left(K(\tau)\Delta^{2/5} e^{-\eta_1 n/5}\right). 
\ee  
We want that the first term on the RHS to be much larger than the second. Notice that $K(\tau)$ only depends on $\tau$ which depends on the $\epsilon$, so this term will be absorbed simply by taking large enough $n$.
By \eqref{MesMultiDisc} the first term is of order $\xi^{2 d_u} r^{2 d_{cs}}$. 
Now a direct computation using
the bound $\xi,r,\xi',r'\geq e^{-3\eta_2 n}$ shows that 
the first term in \eqref{ParaMix} dominates 
provided that
\be \label{Eta2-Eta}
\eta_2<\eta_1\left(\frac{ D}{5}-\frac{2}{15}\right).
\ee
where  $D=\dim(M)$ and $\eta_1$ is given by \eqref{EtaTbr}, with $\tr=1$, that is,
$\DS \eta_1=\begin{cases} \eta & \text{if } \br<1, \\
 \frac{\eta}{1+2\br} & \text{if } \br\geq 1. \end{cases} $
\end{proof}

We will finish this section with proving Lemma \ref{cor:CE}:

 \begin{proof}[Proof of Lemma \ref{cor:CE}]
Since $\#\supp(\mu)>1$, there exists $\kappa>0$ and $p,p'\in supp(\mu)$ with $d(p,p')>\kappa$. Let $A_1,A_2$ be balls of radius $\kappa/10$ centered at $p,p'$ respectively. By definition $c'=\min(\mu(A_1),\mu(A_2))>0$. Let $B\subset M$ be a set with $\mu(B)\geq 1-c'/2$. 

We have the following claim:\\
{\bf CLAIM:} There exist $\bar{\kappa}>0$ such that for every $r>0$, $i=1,2$ there exists $z^i_r\in A_i\cap B$ with 
$$
\mu(O_{r}(z^i_r))> \bar{\kappa}\cdot vol(O_{r}(z^i_r)).
$$ 
\begin{proof}[Proof of {\bf CLAIM}] Consider the Besicovitch cover $\{O_{r}(z)\}_{z\in A_i\cap B}$ of the set $A_i\cap B$. By the Besicovitch covering theorem, $\DS A_i\cap B\subset \bigcup_{i=1}^{c_{dim M}}\bigcup_{j\leq m_i}O_{r}(z_{ij})$, where for every $i\leq c_{dim M}$ and every $j,j'\leq m_i$, $j\neq j'$, $O_r(z_{ij})\cap O_{r}(z_{ij'})=\emptyset$.
Then 
$$
c'/2\leq \mu (A_i\cap B)\leq \sum_{i\leq c_{dim M}}\sum_{j\leq m_i}\mu(O_{r}(z_{ij}))\leq \bar{\kappa}\sum_{i\leq c_{dim M}}\sum_{j\leq m_i}vol(O_{r}(z_{ij}))\leq \bar{\kappa}c_{dim M}vol(M),
$$
which is a contradiction if $\bar{\kappa}$ is small enough.
\end{proof}
We will use the above claim for $r=e^{-\bar{\eta} n}$, for $\bar{\eta}$ to be determined in what follows. Let $\phi_i\in C^{\infty}(M)$ be a function such that $0\leq \phi_i\leq 1$,  $\phi_i\equiv 1$ on $O_{r}(z^i_r)$ and $\phi_i\equiv 0$ outside $O_{2r}(z^i_r)$. 
 Suppose that $\bar{\eta}\br \leq \eta/4$
so that 
$\|\phi_i\|_\br\leq C'\cdot e^{\eta n/4}$. Using exponential mixing for $\phi_1$ and $\phi_2$, by the above bounds on $C^\br$ norms, we get for any $i,j\in \{1,2\}$,
$$
\Big|\int_{M}\phi_i\circ f^n\cdot \phi_j d\mu -\int_M \phi_id\mu\int_M\phi_jd\mu\Big|\leq 
CC'\cdot e^{-\eta n/2}.
$$
 Suppose now that $\breta d <\eta/5.$
Since $\phi_i$ equals $1$ on $O_{r}(z^i_r)$, by the {\bf CLAIM} we get that:
$$
\int_{M}\phi_i\circ f^n\cdot \phi_j d\mu\geq \bar{\kappa}\cdot vol(O_{r}(z^i_r))\cdot vol(O_{r}(z^i_r))- CC'e^{-\eta n/2}>0, 
$$
Since $\phi_i$ is $0$ outside  $O_{2r}(z^i_r)$, the above for $i=1$ and $j=1,2$ gives that $f^n(O_{2r}(z^1_r))$ intersects both  $O_{2r}(z^1_r)$ and  $O_{2r}(z^2_r)$.  Since $z^i_r\in A_i$ and $d(A_1,A_2)\geq \kappa/2$ it follows that 
$$
\diam( f^n(O_{2r}(z^1_r))\geq \kappa/4.
$$
This finishes the proof by taking 
\be \label{Heta}
\heta=\bar{\eta}/2=\eta \min\left(\frac{1}{10 D}, \frac{1}{8\br}\right).
\ee
and $\DS c= \min\left(\frac{\kappa}{4}, \frac{c'}{2}\right).$ 
\end{proof}

\section{Covering.}
\begin{lemma} 
\label{LmUniCover}
For each $K$ there exists $\eps_0$ such that for $\eps\leq\eps_0$ the following holds.
Let $(\Omega, \nu)$ be a measure space, $B \subset D\subset \Omega$ be sets such that 
$\nu(B)\leq \eps^3 \nu(D).$ Let $ \{Q_x\}_{x\in D}$ 
be a measurable family of sets such that
if $R_y=\{x: y\in Q_x\}$ then there is $v$ such that for all $x, y$
\be\label{Round}
\frac{v}{K}\leq \nu(Q_x)\leq K v, \quad
\nu(R_y)\leq K v. \ee 
Then 
$$ \nu(x\in D: \nu(Q_x\cap B)\geq \eps \nu(Q_x))\leq \eps \nu(D). 
$$
\end{lemma}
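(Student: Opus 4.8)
\textbf{Proof proposal for Lemma \ref{LmUniCover}.}

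The plan is to verify the claim by a double-counting (averaging) argument on the measure $\nu$, using the two-sided bounds in \eqref{Round} to turn the local smallness hypothesis $\nu(B)\leq\eps^3\nu(D)$ into a bound on the bad set $G:=\{x\in D:\nu(Q_x\cap B)\geq\eps\,\nu(Q_x)\}$. First I would write, using Fubini/Tonelli and the definition of $R_y$,
\[
\int_D \nu(Q_x\cap B)\,d\nu(x)=\int_D\int_\Omega \mathbf 1_{B}(y)\mathbf 1_{Q_x}(y)\,d\nu(y)\,d\nu(x)=\int_B \nu(R_y)\,d\nu(y)\leq Kv\,\nu(B),
\]
where the last inequality uses the second part of \eqref{Round}. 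Combining with the hypothesis $\nu(B)\leq\eps^3\nu(D)$ gives $\int_D\nu(Q_x\cap B)\,d\nu(x)\leq Kv\eps^3\nu(D)$.

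Next I would bound the left side from below on $G$. For $x\in G$ we have $\nu(Q_x\cap B)\geq\eps\,\nu(Q_x)\geq \eps v/K$ by the first part of \eqref{Round}. Hence
\[
Kv\eps^3\nu(D)\;\geq\;\int_G\nu(Q_x\cap B)\,d\nu(x)\;\geq\;\frac{\eps v}{K}\,\nu(G),
\]
and therefore $\nu(G)\leq K^2\eps^2\nu(D)\leq\eps\,\nu(D)$ as soon as $\eps\leq\eps_0:=K^{-2}$ (so that $K^2\eps\leq1$). This is exactly the assertion. The only measurability point to address is that $x\mapsto\nu(Q_x\cap B)$ is measurable, so that $G$ is a measurable set; this follows from the assumed measurability of the family $\{Q_x\}$ (Fubini again applied to $\mathbf 1_B(y)\mathbf 1_{Q_x}(y)$), and I would state it in one line.

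I do not expect a genuine obstacle here: the estimate is a clean two-sided averaging argument, and the constant $\eps_0$ comes out explicitly as $K^{-2}$ (any smaller value works too). The only mild subtlety is bookkeeping the direction of the inequalities in \eqref{Round} — one uses the \emph{upper} bound $\nu(R_y)\leq Kv$ in the first step and the \emph{lower} bound $\nu(Q_x)\geq v/K$ in the second — but there is nothing deep. If one wanted the cleaner threshold $\eps^3$ in the hypothesis to be used with more room, one could even conclude $\nu(G)\leq K^2\eps^2\nu(D)$, which is stronger than needed; I would just record the weaker statement matching the lemma.
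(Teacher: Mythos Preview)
Your argument is correct and is essentially the same as the paper's: both use Fubini to bound $\int_D \nu(Q_x\cap B)\,d\nu(x)$ via $\nu(R_y)\leq Kv$, then apply the lower bound $\nu(Q_x)\geq v/K$ and Markov's inequality to get $\nu(G)\leq K^2\eps^2\nu(D)$, yielding $\eps_0=K^{-2}$. The only cosmetic difference is that the paper rephrases the computation probabilistically (choosing $X$ uniformly in $D$, then $Y$ uniformly in $Q_X$, and bounding the density of $Y$), but the underlying estimate is identical to yours.
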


\begin{proof}
Let $Y$ be a random point in $\Omega$ obtained as follows. First choose $X$ uniformly from $D$
and then choose $Y$ uniformly in $Q_X.$ Note that $Y$ has bounded density with respect to $\nu,$ 
namely the density is
$$ p(y)=\frac{\nu(R_y)}{\int_D \nu(Q_x) d\nu(x)}\leq \frac{K^2}{\nu(D)}. $$
Thus 
$$ \mathbb{P}(Y\in B)=\int_B p(y) d\nu(y)\leq K^2 \eps^3. $$
On the other hand 
$$ \mathbb{P}(Y\in B)=\frac{1}{\nu(D)} \int_D \frac{\nu(Q_x\cap B)}{\nu(Q_x)} dx. $$
So by Markov inequality the set of $x$ where the integrand is greater than $\eps$ has measure
smaller than $\frac{\eps^2}{K^2} \nu(D). $
\end{proof}


\begin{thebibliography}{99}
\bibitem{ALP} Alves J. F., Luzzatto S., Pinheiro, V.
{\em Markov structures and decay of correlations for non-uniformly expanding dynamical systems,}
Ann. Inst. H. Poincare {\bf 22} (2005) 817--839.

\bibitem{AP10} Alves J. F., Pinheiro, V.
{\em Gibbs-Markov structures and limit laws for partially hyperbolic attractors with mostly expanding central direction,} 
Adv. Math. {\bf 223}  (2010) 1706--1730.

\bibitem{AnSin}  Anosov D. V., Sinai Ya. G.
{\em Certain smooth ergodic systems,} Russ. Math. Surv. {\bf 22} (1967) 103--167.

\bibitem{AV07}  Avila A., Viana M. {\em Simplicity of Lyapunov spectra: a sufficient criterion,} 
Port. Math. {\bf 64} (2007) 311--376.

\bibitem{AVW15} Avila A.,Viana M., Wilkinson A. 
{\em Absolute continuity, Lyapunov exponents and rigidity I: geodesic flows,} JEMS 
{\bf 17} (2015) 1435--1462.

\bibitem{BP} L. Barreira, Y. Pesin  {\it Non Uniform Hyperbolicity. Dynamics of systems with nonzero Lyapunov exponents.} Encyclopedia of Math. and  Appl. {\bf 115} (2007) Cambridge Univ. Press, Cambridge, xiv+513 pp. 

\bibitem{BG} Bj\"orklund M.,  Gorodnik A. {\em Central limit theorems for group actions which are exponentially mixing of all orders,} J. Anal. Math. {\bf 141} (2020) 457--482.

\bibitem{Bow74} Bowen R. {\em Bernoulli equilibrium states for Axiom A diffeomorphisms,}
Math. Systems Theory {\bf 8} (1974/75)  289--294.

\bibitem{Bow} Bowen, R.: {\it Equilibrium states and the ergodic theory of Anosov diffeomorphisms. 2d revised edition.}
Springer Lecture Notes in Math. {\bf 470} (2008), viii+75 pp.


\bibitem{BrPes} 
Brin M. I., Pesin Ya. B.
{\em Partially hyperbolic dynamical systems,} (Russian)
Izv. Akad. Nauk SSSR Ser. Mat. {\bf 38} (1974) 170--212.

	\bibitem{BRH} A. Brown, F. Rodriguez Hertz {\it Measure rigidity for random dynamics on surfaces and related skew products.} arXiv:1506.06826 

	\bibitem{BRHZ} A. Brown, F. Rodriguez Hertz, Z. Wang {\it Smooth ergodic theory of $\Z^d$-actions.} arXiv:1610.09997 


\bibitem{BW10} Burns K., Wilkinson A. 
{\it On the ergodicity of partially hyperbolic systems,} Ann. of Math. {\bf 171} (2010) 451--489. 

\bibitem{C04} Castro A. {\it Fast mixing for attractors with a mostly contracting central direction,} 
Ergodic Th. Dynam. Systems {\bf 24} (2004) 17--44.

\bibitem{CH96}
Chernov N. I., Haskell C. 
{\em Nonuniformly hyperbolic K-systems are Bernoulli,} 
Ergodic Theory Dynam. Systems {\bf 16} (1996) 19--44.

\bibitem{CFS} Cornfeld I. P., Fomin S. V., Sinai Ya. G. 
{\em Ergodic theory,} Grundlehren der Mathematischen Wissenschaften {\bf  245} (1982)
 Springer, New York,  x+486 pp.

\bibitem{D00}  Dolgopyat D. {\em On dynamics of mostly contracting diffeomorphisms,}
  Comm. Math. Phys. {\bf 213} (2000)  181--201.
  
\bibitem{D04}   
Dolgopyat D. {\em On differentiability of SRB states for partially hyperbolic systems,} 
Invent. Math. {\bf 155} (2004) 389--449. 


\bibitem{DDKN1} Dolgopyat D., Dong C.,  Kanigowski A., Nandori P.
{\em Mixing properties of generalized $T, T^{-1}$ transformations,} arXiv:2004.07298,
to appear in Israel J. Math.

\bibitem{DDKN2} Dolgopyat D., Dong C.,  Kanigowski A., Nandori P.
{\em Flexibility of statistical properties for smooth systems satisfying the central limit theorem,}
arXiv:2006.02191.

\bibitem{DFL} Dolgopyat D., Fayad B.,  Liu S.
{\em Multiple Borel Cantelli Lemma in dynamics and MultiLog law for recurrence,}
arXiv:2103.08382.

\bibitem{DK19} Dong C.,  Kanigowski A.
{\em Bernoulli property for certain skew products over hyperbolic systems,}
 arXiv:1912.08132.
 
\bibitem{GS} Gorodnik A., Spatzier R. {\em Exponential mixing of nilmanifold automorphisms,} 
J. Anal. Math. {\bf 123} (2014) 355--396.

\bibitem{GS19} Gouezel S., Stoyanov L.
{\em Quantitative Pesin theory for Anosov diffeomorphisms and flows,} 
Ergodic Th. Dynam. Systems {\bf 39} (2019) 159--200.

\bibitem{GPS} Grayson M., Pugh C., Shub M.
{\em Stably ergodic diffeomorphisms,}
Ann. of Math.  {\bf 140} (1994) 295--329.


\bibitem{HPS} Hirsch M. W., Pugh C. C.,  Shub M. 
{\em Invariant manifolds,} Lecture Notes in Math. {\bf 583} (1977)  Springer,
 Berlin-New York. ii+149 pp.
 
\bibitem{Host91}  Host B.
{\em Mixing of all orders and pairwise independent joinings of systems with singular spectrum,} 
Israel J. Math. {\bf 76} (1991) 289--298.

\bibitem{Ka-Bern} Kanigowski A. {\em Bernoulli property for homogeneous systems,}
arXiv:1812.03209.

\bibitem{KRHV} Kanigowski A., Rodriguez Hertz F., Vinhage K.
{\em On the non-equivalence of the Bernoulli and K properties in dimension four,} 
J. Mod. Dyn. {\bf 13} (2018) 221--250. 

\bibitem{Ka80} Katok A. {\em Smooth non-Bernoulli K-automorphisms,} 
Invent. Math. {\bf 61} (1980) 291--299. 

\bibitem{HK} 
Katok A., Hasselblatt B. {\em Introduction to the modern theory of dynamical systems,} 
With a supplementary chapter by Katok and L. Mendoza. Encyclopedia of Math and Appl. {\bf 54} (1995)
Cambridge Univ. Press, Cambridge,  xviii+802 pp.


\bibitem{Kat71} Katznelson Y. 
{\em Ergodic automorphisms of $\mathbb{T}^n$ are Bernoulli shifts,} 
Israel J. Math. {\bf 10} (1971) 186--195. 

\bibitem{KM96} Kleinbock, D. Y., Margulis, G. A.:
{\em Bounded orbits of nonquasiunipotent flows on homogeneous spaces,} 
{\it AMS Transl. Ser. 2} {\bf 171} (1996), 141--172.


\bibitem{Led81}
Ledrappier F.
{\em Some properties of absolutely continuous invariant measures on an interval,}
Ergodic Theory Dynam. Systems {\bf 1} (1981) 77--93.

\bibitem{L04}  Liverani, C.,{\em On contact Anosov flows},  Ann. of Math. (2), {\bf 159} (2004), no. 3, 1275--1312.

\bibitem{OrnsteinWeiss} 
Ornstein D. S., Weiss B. {\em Geodesic flows are Bernoullian,} 
Israel J. Math. {\bf 14} (1973) 184--198.

\bibitem{OW2} Ornstein D. S., Weiss B. {\em On the Bernoulli nature of systems with some hyperbolic structure}, Ergod. Theory Dyn. Sys. {\bf 18}(2), (1998), 441-456.


\bibitem{PP90}  Parry, W., Pollicott, M. {\em Zeta functions and the periodic orbit structure of hyperbolic dynamics,} 
Ast\'{e}risque {\bf 187-188} (1990) 268 pp.

\bibitem{Pes76} Pesin Ya. B.
{\em Families of invariant manifolds that correspond to nonzero characteristic exponents,}
Math. USSR-Izv. {\bf 40} (1976) 1261--1305.

\bibitem{Pes77} Pesin Ya. B.
{\em Geodesic flows in closed Riemannian manifolds without focal points,} (Russian)
Izv. Akad. Nauk SSSR  {\bf 41} (1977) 1252--1288.

\bibitem{PS97}    Pugh C. C., Shub M. 
{\em Stably ergodic dynamical systems and partial hyperbolicity,} J. Complexity {\bf 13} (1997) 125--179.

\bibitem{PS00}    Pugh C. C. Shub M. 
{\em Stable ergodicity and julienne quasi-conformality,} JEMS {\bf 2} (2000) 1--52.

\bibitem{PSW97}    Pugh C. C., Shub M., Wilkinson A. {\em H\"older foliations,} Duke Math. J. 
{\bf 86} (1997) 517--546. 

\bibitem{Rat74} Ratner M.
{\em Anosov flows with Gibbs measures are also Bernoullian,}
Israel J. Math. {\bf 17} (1974) 380--391.

\bibitem{Roh49} Rohlin, V. A.
{\em On endomorphisms of compact commutative groups,}
Izvestiya Akad. Nauk SSSR.  {\bf 13} (1949) 329--340.

\bibitem{Rokhlin} Rohlin V. A. {\em On the fundamental ideas of measure theory,} 
AMS Translation {\bf 71} (1952) 55 pp.



\bibitem{RS61} Rohlin V. A., Sinai Ya. G.
{\em The structure and properties of invariant measurable partitions,} (Russian) 
Dokl. Akad. Nauk SSSR {\bf 141} (1961) 1038--1041. 


\bibitem{Rud} Rudolph D. {\em Asymptotically Brownian skew products give non-loosely Bernoulli K-automorphisms.} Invent. Math., {\bf91} (1988) 105--128.

\bibitem{SW00} Shub M., Wilkinson A.
{\em Pathological foliations and removable zero exponents,} Invent. Math. {\bf 139} (2000) 495--508. 

\bibitem{Sin66} Sinai Ya. G.
{\em Classical dynamic systems with countably-multiple Lebesgue spectrum-II,} (Russian)
Izv. Akad. Nauk SSSR {\bf 30} (1966) 15--68.

\bibitem{Sin} Sinai\ Ya.\ G.\,{\em The hierarchy of stochastic properties of deterministic systems}, Encyclopaedia Math.
Sciences {\bf 100} (2000) 106--108.

		

	
	

		

\end{thebibliography}
\end{document}